\documentclass[a4paper,11pt,hidelinks]{amsart}  

\usepackage{graphicx} 
\usepackage[a4paper, margin=2 cm]{geometry}
\usepackage{hyperref}
\usepackage{amsthm}
\usepackage{thm-restate}
\usepackage{geometry}
\usepackage{amssymb}
\usepackage{amsmath}
\usepackage{cleveref}
\usepackage{soul}
\usepackage{comment}
\usepackage{setspace}
\usepackage{xcolor}
\usepackage[shortlabels]{enumitem}
\usepackage{listings}
\usepackage{blkarray}
\usepackage{lmodern}
\usepackage[english]{babel}
\usepackage{float}
\usepackage{tikz}
\usetikzlibrary{arrows.meta,arrows}
\usetikzlibrary{arrows,decorations.markings}
\usetikzlibrary{decorations.pathmorphing}
\usepackage[square,sort,comma,numbers]{natbib}
\linespread{1.15}

\newtheorem{theorem}{Theorem}[section]
\newtheorem{lemma}[theorem]{Lemma}
\newtheorem{corollary}[theorem]{Corollary}

\newtheorem{observation}[theorem]{Observation}
\newtheorem{claim}[theorem]{Claim}
\newtheorem*{claim*}{Claim}
\newtheorem{proposition}[theorem]{Proposition}
\newtheorem{conjecture}[theorem]{Conjecture}
\newtheorem{example}[theorem]{Example}

\theoremstyle{definition}
\newtheorem{definition}[theorem]{Definition}
\newtheorem*{definition*}{Definition}

\newcommand{\id}{{0}}

 \newcommand{\Alexey}[1]{\textbf{[#1]}}

 \title{On the Graham--Sloane harmonious labelling conjecture}
\author{Alp M\"uyesser}
\address{(Alp M\"uyesser) New College, University of Oxford, UK.}
\email{alp.muyesser@new.ox.ac.uk}

\author{Alexey Pokrovskiy}
\address{(Alexey Pokrovskiy) University College London, UK.}
\email{dralexeypokrovskiy@gmail.com}

\begin{document}

\begin{abstract}
    Consider an abelian group $G$ of order $n$ and a tree $T$ on $n$ vertices. When is it possible to (bijectively) label $V(T)$ by $G$ so that along all edges $xy$ of $T$, the sums $x+y$ are distinct? This problem can be traced back to the work of Graham and Sloane on the harmonious labelling conjecture, and has been studied extensively since its introduction in 1980. We give a precise characterisation that holds for all bounded degree trees. In particular, our characterisation implies that if $G=\mathbb{Z}/n\mathbb{Z}$ and $T$ is a bounded degree tree, the desired labelling exists. This confirms a conjecture of Graham and Sloane from 1980, and another conjecture of Chang, Hsu, and Rogers from 1987, for bounded degree trees. Our results also have further applications for the study of graph coverings.   
\end{abstract}
\maketitle

\section{Introduction}
Graph labelling problems are some of the most well-studied problems in combinatorics. A prototypical and infamous problem in this direction is the Ringel--Kotzig conjecture, or as more popularly known, the graceful tree conjecture, that asserts that the vertices of any $n$-vertex tree can be (bijectively) labelled by $\{0,1,\ldots, n\}$, so that along all edges of $T$, the absolute value of the difference of the labels of $x$ and $y$ are distinct. We refer the reader to the comprehensive survey of Gallian \cite{labelling} for an overview of the area. 
\par In an influential paper from 1980, motivated by related problems concerning additive bases and error-correcting codes, Graham and Sloane \cite{graham1980additive} studied an analogous labelling problem for trees, where the label set is viewed modulo some integer $n$, and the requirement is that along all edges of the tree, the sum of the labels of $x$ and $y$ are distinct (modulo $n$). More precisely, they conjectured the following.
\begin{conjecture}[The harmonious labelling conjecture, \cite{graham1980additive}] For any $n$-edge tree $T$, there exists a labelling $\phi\colon V(T)\to \mathbb{Z}_n$ so that for all $xy\in E(T)$, $\phi(x)+\phi(y)$ attains a distinct value, and furthermore, $\phi$ uses exactly one label on two vertices. 
\end{conjecture}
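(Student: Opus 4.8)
The plan is to attack the conjecture by the ``reserve a flexible absorber, then embed the bulk by a random greedy process'' paradigm that has recently been effective for tree embedding and decomposition problems, adapted here to the sum constraint. First, note that the only necessary arithmetic condition is the congruence $\sum_{v\in V(T)}\deg(v)\,\phi(v)\equiv\binom{n}{2}\pmod n$, obtained by summing the $n$ edge-sums (which, being distinct, must biject with $\mathbb{Z}_n$); arranging it, using the freedom in which label is repeated and where, is a preliminary step, so that the real content is combinatorial. Observe also that the problem resists induction on $|V(T)|$, since deleting a leaf replaces the ground set $\mathbb{Z}_n$ by $\mathbb{Z}_{n-1}$; this is exactly why a global method is natural.

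Concretely, I would proceed as follows. (1) Root $T$ and fix an ordering of $V(T)$ in which every vertex precedes its children; building $\phi$ from the root downwards, the edge joining a vertex to its parent contributes exactly one sum at the moment that vertex is labelled, so ``all edge-sums distinct'' becomes the requirement that each new label avoid a forbidden set that grows by one element per step. (2) Before embedding, reserve a linear-sized, highly flexible subforest $A\subseteq T$ --- a carefully chosen union of pendant paths and small spiders --- with the \emph{absorption property}: for every partial labelling of $V(T)\setminus V(A)$ whose unused labels and whose used edge-sums form the correct sets, $A$ can be completed to a full valid labelling. Building such gadgets for the \emph{sum} constraint, rather than the difference (graceful) constraint, is the main new ingredient. (3) Embed the bulk $T\setminus A$ by the random greedy process suggested by (1) --- at each step pick the label of the next vertex uniformly among those still available that keep the edge-sums distinct --- and show, via a second-moment or entropy-compression argument, that with positive probability it runs to completion leaving precisely the label multiset and sum set that $A$ requires.

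The obstruction in (3), and to the full conjecture, is the high-degree vertices. At a vertex $v$ of degree $d$ with neighbours $u_1,\dots,u_d$, the incident sums form the translate $\phi(v)+\{\phi(u_1),\dots,\phi(u_d)\}$, so they are internally distinct exactly when the neighbour labels are; the danger is that this whole translate clashes \emph{en masse} with sums already used elsewhere, and the greedy analysis of (3) degrades as $d$ grows. To cope, I would (4) peel every high-degree vertex together with its pendant leaves as a ``star gadget'', label the remaining \emph{trunk} --- arranged so that every surviving vertex has bounded degree --- by steps (1)--(3), and then reinsert the stars: for each high-degree $v$, solve a Hall-type / rainbow-matching problem choosing $\phi(v)$ and the leaf labels so that the resulting translate lands inside the still-unused part of $\mathbb{Z}_n$. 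For a single high-degree vertex this is routine, but a tree may contain linearly many of them (a subdivided ``star of stars'', for instance), and then the translates $\phi(v)+\{\text{leaf labels}\}$ compete for a common pool and the matchings become coupled. Decoupling them is where I expect the real difficulty to lie, and a complete proof hinges on it --- plausibly by making each star gadget itself absorbing inside a global iterative-absorption scheme, or via a structural dichotomy forcing a tree with many high-degree vertices to contain a large, highly structured subtree on which a direct construction (generalising the classical caterpillar and spider labellings) can be run while the remainder stays bounded-degree. (Routing instead through the graceful tree conjecture, to which harmonious labelling of trees is classically linked, is at least as hard.)
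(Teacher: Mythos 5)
You correctly identify the bottleneck and acknowledge, at the end of step (4), that you cannot close it: handling trees with linearly many high-degree vertices. That is a genuine gap, and your sketch does not constitute a proof of the conjecture. You should, however, be aware that the paper does not prove this conjecture either: it is stated in the introduction as a conjecture, and the paper establishes it only for \emph{bounded-degree} trees (Corollary~\ref{cor:boundeddegreeGS}), with the authors explicitly remarking that for unbounded degree they ``were unable to come up with a plausible conjecture of a characterisation.'' So the place where your argument breaks down coincides with the boundary of the state of the art; on that specific point you are no worse off than the paper. One smaller misstep in your preamble: you claim the problem resists induction on $|V(T)|$ because removing a leaf shrinks the ground set, but the paper sidesteps this by \emph{not} shrinking the group --- it reduces the harmonious conjecture for $(n+1)$-vertex, $n$-edge trees to the Chang--Hsu--Rogers conjecture (a \emph{bijective} rainbow embedding of an $n$-vertex tree into $K_{\mathbb{Z}_n}$), and reattaches the deleted leaf using the one unused colour.

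For the bounded-degree part of your plan, steps (1)--(3) are in the right general family (random embedding plus an absorption-type completion tailored to the sum constraint) and indeed the paper's completion step also uses absorption machinery --- namely the authors' earlier ``random Hall--Paige'' theorem (Theorem~\ref{thm:maintheoremsemidisjoint}) and its rainbow path-factor analogue (Lemma~\ref{lem:pathlikemain}). But where your plan remains a heuristic, the paper has a concrete and rather different structural reduction: the notion of a \emph{core} $T_{core}$, a $O(\Delta)$-size induced subforest sampling the degree sequence, and Theorem~\ref{Theorem_embed_tree_prescribed_sets}, which says $T$ has a rainbow embedding into prescribed $(V_{target},C_{target})$ if and only if the core does, subject to two explicit sum identities ($\sum_{v\in T_{core}} d_T(v)\phi(v)=\sum C_{target}$ and $\sum\phi(V(T_{core}))=\sum V_{target}$). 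This is precisely the piece your ``flexible absorber'' would need but which you leave unspecified: you have to characterise which residual label-sets and sum-sets a partial embedding can hand off, and the core lemma shows that, for bounded degree, this is governed entirely by embeddings of a tiny representative subforest. The single congruence $\sum_v\deg(v)\phi(v)\equiv\binom{n}{2}\pmod n$ that you record is necessary, but the paper's (i)$\Leftrightarrow$(ii) equivalence shows the finer constraint on the core is also necessary and, crucially, sufficient. If you want to pursue your outline for bounded degree, I would replace the undefined absorber ansatz in step (2) with this core reduction and then invoke the random Hall--Paige completion; for unbounded degree, no such replacement is currently known, and your step (4) remains a sketch of an open problem rather than a proof.
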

This conjecture is known to hold for trees on $\leq 31$ vertices, caterpillars, and many other special classes of trees (see Chapter 2 in \cite{labelling} and the references therein). A relaxation of the conjecture, where the labels come from $\mathbb{Z}_{n+o(n)}$ rather than $\mathbb{Z}_n$, is known to be true due to work of Montgomery, Pokrovskiy, and Sudakov \cite{approximateringel}. This latter result was an important ingredient in the work of the same authors \cite{ringel} settling Ringel's conjecture (see also the independent proof of Keevash and Staden \cite{keevash2025ringel}) which states that the edges of $K_{2n+1}$ can be decomposed into any $n$-edge tree $T$. These last results were originally phrased using the language of rainbow subgraphs which we will also use in this paper. 
\par Recall that an edge-coloured graph is called \textit{rainbow} if each colour appears at most once. Given a graph $T$ and a coloured graph $G$, a \textit{rainbow copy of $T$ in $G$} is a subgraph $T'$ of $G$ which is isomorphic to $T$ and all of whose edges have different colours. There are many conjectures that can be rephrased using this definition \cite{Pokrovskiy2022_RainbowSubgraphs}. For example the  Ringel--Kotzig conjecture mentioned in the first paragraph is equivalent to stating that a rainbow copy of any tree $T$ exists in the colouring of $K_{|T|}$ whose vertices are $1, \dots, |T|$ with $ij$ coloured by $|i-j|$.
\par The harmonious labelling conjecture of Graham and Sloane, on the other hand, is equivalent to a rainbow embedding problem where the edge-colouring rule comes from the Cayley-sum graph of a cyclic group. To make this formal, we give the following definition. \vspace{-1mm}
\begin{definition}
    Let $G$ be an abelian group. We define $K_G$ to be the edge-coloured complete graph on vertex set $G$, where the edge $xy$ is assigned the colour $x+y$. 
\end{definition}
By definition, an embedding of $T$ on $K_G$ gives a rainbow copy whenever the labelling induced by the mapping satisfies that for each edge $xy\in G$, $x+y$ attains a distinct value. 
\par In this paper, we consider embeddings of trees $T$ on $K_G$ where $|V(G)|=|V(T)|$. This informs the harmonious labelling conjecture directly, as it is a more restrictive notion. To see this, let us consider a $n$-edge, $(n+1)$-vertex tree $T$, and let us construct $T'$ by deleting an arbitrary leaf $v$, letting $w$ be the parent of $v$. Suppose we found a rainbow embedding $\phi\colon V(T')\to V(K_{\mathbb{Z}_n})=\mathbb{Z}_n$ of $T'$ in $K_{\mathbb{Z}_n}$. There exists a unique element $c\in \mathbb{Z}_n$ not used as a colour along any edge in the embedding given by $\phi$, so we can then extend $\phi$ to $T$ by defining $\phi(v)=c-\phi(w)$, meaning that $\phi(v)+\phi(w)=c$, which implies that the harmonious labelling conjecture holds for $T$. 
\par This brings us to the following natural conjecture, that if true, would directly imply the harmonious labelling conjecture of Graham and Sloane.
\begin{conjecture}\label{conj:generalcyclic} For all $n$-vertex trees $T$, $K_{\mathbb{Z}_n}$ contains a rainbow copy of $T$. 
\end{conjecture}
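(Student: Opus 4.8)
The plan is to build a \emph{robust, flexible} rainbow-embedding machine for $K_{\mathbb{Z}_n}$ and drive it with a distributive absorption argument. The key structural point is that a rainbow copy of an $n$-vertex tree $T$ in $K_{\mathbb{Z}_n}$ must use all $n$ vertices and exactly $n-1$ of the $n$ available colours, so we are in the ``tight'' regime: there is essentially no slack, and neither a purely greedy nor a purely probabilistic embedding can finish on its own (in contrast to the approximate version, where the palette is $\mathbb{Z}_{n+o(n)}$ and soft arguments suffice). Following the template/absorber philosophy behind the resolution of Ringel's conjecture, I would (i) reserve a small, highly structured absorbing subforest $T_{\mathrm{abs}}\subseteq T$ together with a reserved set of vertices and colours of $K_{\mathbb{Z}_n}$ tailored to it; (ii) embed the bulk $T - T_{\mathrm{abs}}$ quasirandomly, keeping exact track of which vertices and colours remain free; and (iii) use the absorber to complete the embedding so that precisely one colour is left unused. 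Step (iii) is where the Cayley-sum structure is exploited: local switchings in $K_{\mathbb{Z}_n}$ — replace an edge $xy$ of colour $x+y$ by $xz$ of colour $x+z$ and reroute $y$ — generate, along carefully chosen alternating structures, enough flexibility to correct the final vertex/colour deficiencies; here one must verify that $\mathbb{Z}_n$ carries no divisibility-type obstruction (unlike a general $K_G$), staying mindful of the parity issue that governs which colour classes of $\mathbb{Z}_n$ are perfect matchings.

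The crux, and what takes the problem beyond the bounded-degree case, is trees of \emph{unbounded} maximum degree. The natural first move is a structural decomposition: let $S$ be the set of vertices of $T$ of degree at least a threshold $\Delta_0$, so $|S| < 2n/\Delta_0$ is small while every component of $T - S$ has maximum degree below $\Delta_0$, i.e. $T - S$ is a bounded-degree forest. One embeds $S$ and its incident edges first, choosing images for $S$ and ``attachment slots'' for the pendant bounded-degree components so that the global colour budget remains achievable, and then extends the embedding component by component. For that second stage one needs a \emph{flexible} bounded-degree rainbow-embedding theorem: it should embed a bounded-degree tree into the still-unused portion of $K_{\mathbb{Z}_n}$ (a dense, quasirandom sub-structure) with the images of a bounded number of boundary vertices prescribed, while avoiding a prescribed small set of forbidden colours and leaving the colour usage in a controllable state. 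Establishing this robust strengthening of the bounded-degree result — prescribed boundary data plus colour restrictions, still in the tight regime — is the main engine the argument requires; with $S=\varnothing$ the whole programme reduces to just this engine together with the absorber.

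The main obstacle is the interaction of the two forms of rigidity: the embedding must be simultaneously bijective on $n$ vertices and rainbow on $n-1$ edges (hence essentially colour-perfect), \emph{and} it must accommodate the high-degree vertices of $S$, each of which, once placed, forbids a large structured set of colours in its vicinity and so strongly correlates the colour usage of the surrounding embedded forest; when two high-degree vertices are close in $T$ these correlations compound. The absorber in step (iii) must be insensitive to them: concretely, one needs an absorbing structure of sublinear size that can correct arbitrary tightly-constrained deficiencies even when a linear number of colours are used in a highly correlated way forced by $S$. I expect this to be where the genuine difficulty lies, and it is quite possible that the fully general conjecture resists a single uniform argument and instead demands a dedicated analysis of the ``dense'' parts of $T$ (brooms, spiders, subdivided stars, and the like), much as the graceful tree conjecture does. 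A sensible order of attack is therefore to build the flexible embedding engine and the $\mathbb{Z}_n$-absorber first, check that together they settle every tree with no high-degree vertex, and only then work outward, peeling off the vertices of $S$ a few at a time and re-absorbing, aiming to show the absorber tolerates their removal.
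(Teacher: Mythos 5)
The first thing to flag: Conjecture~\ref{conj:generalcyclic} is \emph{not} proved in the paper --- it remains open in full generality. What the paper establishes is Corollary~\ref{cor:boundeddegreeGS}, the bounded-degree, large-$n$ case, as a consequence of Theorem~\ref{Theorem_main_intro}. Your proposal is therefore not a reconstruction of the paper's argument but a sketch of a research programme that, if carried out, would go well beyond the paper; the authors state explicitly that even formulating a plausible characterisation of the obstructions for unbounded degree is out of reach.

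On the bounded-degree stage, your plan (quasirandom bulk embedding, reserved absorber, local switchings) is in the right family of ideas but omits the load-bearing algebraic insight. Any rainbow embedding $f$ of an $n$-vertex tree $T$ into $K_{\mathbb{Z}_n}$ with missing colour $c$ satisfies
\[
\sum_{v\in V(T)} d_T(v)\, f(v) \;=\; \sum_{xy\in E(T)} \bigl(f(x)+f(y)\bigr) \;=\; \sum \mathbb{Z}_n - c, \qquad\text{and}\qquad \sum_{v\in V(T)} f(v) \;=\; \sum \mathbb{Z}_n.
\]
These two equations are trivially necessary; the heart of Theorem~\ref{Theorem_embed_tree_prescribed_sets} is that, for bounded-degree almost-spanning trees, they are essentially sufficient once one can rainbow-embed a small \emph{core} $T_{core}$ (a minimal sample of the degree sequence) hitting the prescribed sums. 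The completion step is then handled not by local switchings but by the random Hall--Paige machinery (Theorem~\ref{thm:maintheoremsemidisjoint} and Lemma~\ref{lem:pathlikemain}), which delivers the final matchings or path systems precisely because the residual vertex and colour sets carry the correct total. Without identifying this invariant, your ``the absorber corrects arbitrary tightly-constrained deficiencies'' step is under-specified: the deficiencies are not arbitrary --- they are rigidly tied to the two sums above, and the design of the core/absorber has to target exactly those constraints.

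The genuine gap is the unbounded-degree stage, and it is a real one. Peeling off the high-degree set $S$ and embedding the bounded-degree remainder component by component with prescribed boundary data is plausible, but the algebraic constraint is global and does not factor cleanly through such a decomposition. Moreover the paper's characterisation shows that even for bounded degree there are genuine, degree-sequence-dependent obstructions; that none of them bite for $G=\mathbb{Z}_n$ is a finite calculation using cyclicity, not a robustness principle, and whether an analogous calculation survives unbounded degree is exactly the open question. Your final paragraph essentially concedes this, which is the right assessment: as written, the plan does not close the conjecture, and the bounded-degree engine it relies on should, if you actually build it, be organised around the sum invariant and the core rather than around local switchings.
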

Chang, Hsu, and Rogers~\cite{chang1981additive} made an even stronger conjecture in 1987 that $K_{\mathbb{Z}_n}$ contains a rainbow copy of every $T$ where $0$ does not appear as a sum of endpoints along any edge. Conjecture~\ref{conj:generalcyclic} is also a special case of a conjecture of Hovey \cite{hovey1991cordial} on cordial labellings.
\par In this paper, we study a natural generalisation of Conjecture~\ref{conj:generalcyclic} where $\mathbb{Z}_n$ is a general abelian group. A direct extension of Conjecture~\ref{conj:generalcyclic} is not possible in this set-up, due to following well-known construction of Maamoun and Meyniel~\cite{maamoun1984problem} from 1984.
\begin{example}\label{ex:maamoun} Let $T$ be a $n$-vertex path where $n=2^k$ for some $k\geq 2$, and let $G=\mathbb{Z}_2^k$, then $K_G$ contains no rainbow copy of $T$.
\end{example}
\begin{proof}
    Suppose otherwise and take a bijection $\phi\colon V(P_{2^k})\to \mathbb{Z}_2^k$. The set $C$ consisting of sum of labels along edges must be the set $\mathbb{F}_2^k\setminus \{0\}$ because $0$ does not appear as a colour in $K_G$, since $G=\mathbb{Z}_2^k$. Denote by $v,w$ the endpoints of the path $T$. We have $$0=\sum\mathbb{Z}_2^k\setminus \{0\}  =\sum_{xy\in E(T)}\phi(x)+\phi(y) = \sum_{x\in V(T)}\mathrm{deg}_T(x)\cdot \phi(x) =\phi(v)+\phi(w) \neq 0,$$ giving a contradiction. \end{proof}
With a similar proof that we will present formally in Section~\ref{sec:characterisation}, we can also show that if $T$ has a rainbow copy in $K_{\mathbb{F}_2^k}$ (supposing $k\geq 2$), then $T$ cannot have precisely two vertices of even degree. However, these examples barely scratch the surface of the space of possible obstructions. Indeed, there are several other constructions of trees $T$ and abelian groups $G$ where $T$ does not have a rainbow copy in $K_G$. Such constructions are systematically studied in recent work of Jamison and Kinnersley \cite{jamison2022rainbow}, and further results are presented in \cite{cichacz2025rainbow}. The diversity of the constructions present in \cite{jamison2022rainbow, cichacz2025rainbow} indicates that a full characterisation of when a tree admits a rainbow copy in a given $K_G$ is presently out of reach.
\par The contribution of our main result, stated below, is twofold: we find a new class of constructions, and also show that they are the only ones, when $T$ is assumed to be bounded degree. The \textit{characteristic} of an abelian group $G$ is the
smallest positive integer $m$ such that $m\cdot g = 0$ for all $g\in G$.

\begin{theorem}\label{Theorem_main_intro} For any $\Delta$, there exists a $n_0$ sufficiently large so that the following holds for any $n\geq n_0$. Let $T$ be an $n$-vertex tree with $\Delta(T)\leq \Delta$ and $G$ an abelian group of size $n$. There is a rainbow copy of $T$ in $K_G$ if, and only if, we have none of the following:
\begin{enumerate}[(1)]
\item $G=\mathbb{Z}_2^k$ and  $T$ is a path or has precisely two vertices of even degree.
\item $G$ has characteristic $m$, $T$ has adjacent vertices $u$ and $v$ such that $\mathrm{deg}(u)\equiv \mathrm{deg}(v)\equiv 0 $ $(\mathrm{mod}$ $m)$ and furthermore for all $v\in V(T)\setminus\{u,v\}$, $\mathrm{deg}(v)\equiv 1 $ $(\mathrm{mod}$ $m)$.
\item $G=\mathbb{Z}_2^k$, $k\geq 2$, $T$ contains precisely $4$ vertices of  even degree and has a perfect matching when restricted to these $4$ vertices.
\end{enumerate}
\end{theorem}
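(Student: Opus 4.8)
The argument has two halves: verifying that (1)--(3) are genuine obstructions (the ``only if'' direction), and producing a rainbow copy whenever none of them holds (the ``if'' direction). The first half is a generalisation of Example~\ref{ex:maamoun}. Suppose $\phi\colon V(T)\to G$ induces a rainbow copy of $T$. One checks that every element of $G$ occurs as a colour of $K_G$ unless $G=\mathbb{Z}_2^k$, in which case exactly the colour $0$ is missing; so when $G\neq\mathbb{Z}_2^k$ there is a unique colour $c_0$ not used along $\phi(T)$. Double counting gives the identity
\[
\sum_{g\in G}g\;-\;c_0\;=\;\sum_{xy\in E(T)}\bigl(\phi(x)+\phi(y)\bigr)\;=\;\sum_{v\in V(T)}\deg_T(v)\,\phi(v),
\]
and, since the characteristic $m$ satisfies $m\cdot x=0$ for all $x\in G$, each degree in the last sum may be reduced modulo $m$. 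In case~(2) this collapses the right-hand side to $\sum_{v}\phi(v)-\phi(u)-\phi(w)$, where $u,w$ are the two vertices of degree $\equiv 0\pmod m$; as $\sum_v\phi(v)=\sum_{g\in G}g$ we get $c_0=\phi(u)+\phi(w)$, the colour of the edge $uw\in E(T)$, contradicting that $c_0$ is unused. Cases~(1) and~(3) live in $G=\mathbb{Z}_2^k$ with $\sum_{g\in G}g=0$ for $k\geq 2$: in case~(1) applying the same double count to $T$ forces two labels to coincide, while in case~(3) one applies it to $T-ab$, where $ab$ is one of the two matching edges among the four even-degree vertices; deleting $ab$ flips the parities of exactly $a$ and $b$, so the identity equates the colour of $ab$ with $\phi(c)+\phi(d)$, the colour of the other matching edge $cd$, again contradicting rainbowness.

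For the ``if'' direction I would fix $\Delta$, fix $G$, assume (1)--(3) all fail, and construct $\phi$ by absorption. The first step is structural: a bounded-degree $n$-vertex tree either has $\Omega(n)$ leaves, or has $o(n)$ leaves and hence $(1-o(1))n$ vertices of degree $2$, which can be organised into $\Omega(n)$ vertex-disjoint bare paths of any prescribed constant length. In either case $T$ contains a linear family of \emph{flexible gadgets} --- pendant stars or bare segments --- and I would reserve a small sub-family of these as an absorber $A$. The bulk $T-A$ is then embedded into $K_G$ so as to leave the sets of unused group elements and unused colours ``generic''. The part of this that can be done greedily (placing vertices one at a time along a tree order, where at the $t$-th step fewer than $2t$ group elements and colours are forbidden) reaches only about $n/2$ vertices, so the rest of $T-A$ must be embedded by a more robust method --- a random/nibble process with concentration, or reduction to an almost-perfect matching in an auxiliary hypergraph, in the spirit of the approximate results of~\cite{approximateringel}. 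Finally $A$ is used to complete $\phi$ to a bijection onto $G$: with pendant stars this is an SDR/bipartite-matching problem between leftover group elements and leftover leaves, and with bare segments it is a problem of threading vertex-disjoint rainbow paths through the leftover elements.

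The crux is that this final completion is not free: the multiset of colours realised must be exactly the complement of $\{c_0\}$ (or of $\{0\}$ when $G=\mathbb{Z}_2^k$), so in particular the displayed identity must be satisfiable, and more is needed to pin down the whole colour set. The absorber therefore has to be engineered to carry enough internal freedom: swapping two group elements between gadgets shifts $\sum_{xy\in E(T)}(\phi(x)+\phi(y))$ by a controllable amount, and one needs the collection of all such shifts to generate whatever correction must be applied. This is exactly what collapses in cases~(1)--(3): there the degree sequence is so rigid modulo the characteristic that every candidate embedding yields the same value of $\sum_{xy}(\phi(x)+\phi(y))$ modulo a fixed proper subgroup, and the required value lies outside it.

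I expect the main obstacle to be proving that \emph{outside} cases~(1)--(3) these shifts do generate enough, with a genuinely separate and delicate analysis when the $2$-torsion subgroup $G[2]\cong\mathbb{Z}_2^k$ is large --- that is where the $\mathbb{Z}_2^k$-type obstructions live, only $n-1$ colours are available, and the parity constraints are most severe. A secondary difficulty is ensuring that the approximate embedding of $T-A$ leaves a provably generic leftover, so that the matching/path-threading completion actually succeeds.
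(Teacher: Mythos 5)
Your ``only if'' direction is essentially the paper's own, and your reformulation of case (3) by deleting one matching edge and double-counting over $T-ab$ is correct and a clean alternative to the paper's version (which instead shows $\sum_{v\in V_{\mathrm{even}}}\phi(v)=0$ directly and observes that for $|V_{\mathrm{even}}|=4$ with a perfect matching the two matching edges then receive the same colour). Your high-level ``if'' direction also lines up with the paper's architecture: approximately embed the bulk of $T$ via a random/uniform embedding, finish via a hypergraph-matching-type completion theorem (the paper's random Hall--Paige theorem and its path-factor variant), and use a small reserved piece of $T$ to meet the inevitable sum constraint $\sum_{v}\deg_T(v)\phi(v)=\sum C_{\mathrm{target}}$.

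The genuine gap is that your proposal never carries out the one step that is actually specific to this problem. You correctly identify that everything hinges on showing that, outside (1)--(3), the absorber can realise the needed correction --- but you leave it as ``I expect the main obstacle to be proving that ... these shifts do generate enough.'' The paper's resolution of this is concrete and is where most of the work lives: it isolates a small \emph{core} $T_{\mathrm{core}}$ of $T$ that represents every degree class, and proves (Theorem~\ref{Theorem_embed_tree_prescribed_sets}) that a rainbow copy of $T$ exists iff $T_{\mathrm{core}}$ has a rainbow embedding $\phi$ with the two prescribed sums $\sum\phi(V(T_{\mathrm{core}}))=\sum V_{\mathrm{target}}$ and $\sum_{v}d_T(v)\phi(v)=\sum C_{\mathrm{target}}$. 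The characterisation then comes from an explicit algebraic analysis (Lemmas~\ref{Lemma_xastd_simple_cyclic}--\ref{Lemma_zero_sum_sets_Z2k}) of when such a core embedding exists, splitting on the cyclic factor structure of $G$, on whether degrees are $\equiv 1\pmod G$, and treating $\mathbb{Z}_2^k$ separately via zero-sum set constructions. This is not a ``swap two elements to generate a shift'' argument: it is a direct construction of a core embedding with the correct sums, and the failure modes of that construction are precisely (1)--(3). Your swap-based sketch, even granting the approximate-embedding and completion machinery, does not explain why the obstruction set is exactly (1)--(3) and not something larger, nor does it handle the $k=2$ degeneracy of the sum equations (which in the paper requires the separate Lemma~\ref{Lemma_xastd_simple_k_2}) or the $\mathbb{Z}_2^k$ zero-sum partition (Lemma~\ref{Lemma_zero_sum_sets_Z2k}). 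Without that analysis, the proposal is a plausible plan but not a proof.
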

The construction in (1)  is due to Maamoun and Meyniel \cite{maamoun1984problem} (see also the concluding remarks of~\cite{benzing2020long}), (2) is due to Jamison and Kinnersley \cite{jamison2022rainbow}, and (3), to the best of our knowledge, is a novel example. Just as in Example~\ref{ex:maamoun}, it is not hard to verify that in all three cases, $T$ does not have a rainbow copy in $K_G$. The difficult part of Theorem~\ref{Theorem_main_intro} is the other direction: showing that (for bounded degree trees) these three examples are the only obstructions for the existence of rainbow copies of trees in a given $K_G$. This is fairly surprising, as there is no a priori reason for such a simple characterisation to exist. In fact, for trees of unbounded degree, we were unable to come up with a plausible conjecture of a characterisation which includes all of the examples discovered by Jamison and Kinnersley~\cite{jamison2022rainbow}.
\subsection{Applications}
We now discuss some applications of our main result.
\subsubsection{On the conjectures of Graham--Sloane, Chang--Hsu--Rogers, Andersen and Montgomery--Pokrovskiy--Sudakov}
\par The most direct application of Theorem~\ref{Theorem_main_intro} is that it directly implies Conjecture~\ref{conj:generalcyclic} and therefore, the Graham--Sloane harmonious labelling conjecture, for bounded degree trees.
\begin{corollary}\label{cor:boundeddegreeGS} For every $\Delta\in \mathbb{N}$, there exists some $n_0\in \mathbb{N}$ sufficiently large so that for all $n\geq n_0$, Conjecture~\ref{conj:generalcyclic} holds for all $n$-vertex trees of maximum degree at most $\Delta$. In particular, the Graham--Sloane harmonious tree labelling conjecture also holds in this regime.
\end{corollary}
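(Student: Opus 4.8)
The plan is to deduce this corollary directly from Theorem~\ref{Theorem_main_intro} by specialising to the cyclic group $G=\mathbb{Z}_n$ and checking that none of the three listed obstructions can arise in that case. First, I would observe that $\mathbb{Z}_n$ is never isomorphic to $\mathbb{Z}_2^k$ once $n>2$, since $\mathbb{Z}_2^k$ is non-cyclic for $k\geq 2$; hence obstructions (1) and (3), both of which require $G=\mathbb{Z}_2^k$, are vacuous whenever $n\geq n_0$ with $n_0\geq 3$. For obstruction (2), the key point is that the characteristic of $\mathbb{Z}_n$ is $m=n$: indeed $n\cdot a=0$ for all $a\in\mathbb{Z}_n$, while $m\cdot 1\neq 0$ for every proper divisor $m$ of $n$. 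But in a tree on $n$ vertices every vertex has degree between $1$ and $n-1$, so no degree is divisible by $n$; in particular $T$ has no vertex $u$ with $\mathrm{deg}(u)\equiv 0\ (\mathrm{mod}\ n)$, let alone an adjacent pair of such vertices, so obstruction (2) is also impossible. Applying Theorem~\ref{Theorem_main_intro} then yields a rainbow copy of any $n$-vertex tree $T$ with $\Delta(T)\leq\Delta$ in $K_{\mathbb{Z}_n}$ for all $n\geq n_0$, which is precisely Conjecture~\ref{conj:generalcyclic} in this regime.

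It then remains to transfer this to the Graham--Sloane harmonious labelling conjecture, and for this I would invoke the reduction already sketched in the introduction. Given an $n$-edge tree $T$ on $n+1$ vertices with $\Delta(T)\leq\Delta$, pick a leaf $v$ with neighbour $w$ and set $T'=T-v$, an $n$-vertex tree with $\Delta(T')\leq\Delta$. Provided $n+1\geq n_0+1$, the previous step supplies a rainbow embedding $\phi\colon V(T')\to\mathbb{Z}_n$; letting $c\in\mathbb{Z}_n$ be the unique colour not used along an edge of $\phi(T')$ and extending by $\phi(v):=c-\phi(w)$ produces an embedding of $T$ in which all edge-sums are distinct and exactly one label is used twice, i.e.\ a harmonious labelling. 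Replacing $n_0$ by $n_0+1$ absorbs the shift between counting vertices and counting edges.

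I do not expect any genuine obstacle here: the entire content sits in Theorem~\ref{Theorem_main_intro}, and the work is confined to the elementary observation that obstructions (1)--(3) degenerate for cyclic groups of large order. The only points that require a moment's care are computing the characteristic of $\mathbb{Z}_n$ correctly and recalling that a tree's maximum degree is strictly smaller than its number of vertices, which is exactly what rules out obstruction (2); everything else is bookkeeping with the value of $n_0$.
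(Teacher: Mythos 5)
Your proposal is correct and follows exactly the route the paper intends: specialise Theorem~\ref{Theorem_main_intro} to $G=\mathbb{Z}_n$, observe that $\mathbb{Z}_n\not\cong\mathbb{Z}_2^k$ for $n$ large (ruling out obstructions (1) and (3)), note that the characteristic of $\mathbb{Z}_n$ is $n$ while tree degrees lie in $\{1,\dots,n-1\}$ (ruling out obstruction (2)), and then invoke the leaf-deletion reduction already spelled out in the introduction to pass from Conjecture~\ref{conj:generalcyclic} to the Graham--Sloane statement. The paper gives no explicit proof because it is exactly this direct deduction; your write-up supplies the elementary bookkeeping it leaves implicit, including the shift between $n$ vertices and $n$ edges, and nothing in it needs correction.
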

More generally, in Theorem~\ref{thm:chang} we show that Conjecture~\ref{conj:generalcyclic} holds with the additional requirement that the label $0$ is not used as a colour on any edge, thereby confirming the Chang--Hsu--Rogers conjecture \cite{chang1981additive} (for bounded degree trees). For convenience, we use the notation ``$\alpha\gg \beta$'' to mean ``$\forall \alpha\in (0,1], \exists \beta_0$ such that $\forall\beta\in (0, \beta_0]$ the following holds...'' in the remainder of the paper. So the bounded degree assumption can be compactly packaged as $\Delta\gg n^{-1}$, where $\Delta$, $n$ are the maximum degree and the number of vertices of a tree, respectively.
\par The methods we develop in the paper also have the following consequence stating that the obstructions disappear if the host graph has one additional vertex compared to the tree we are trying to embed.
\begin{theorem}\label{thm:vertexspace}
    Let $\Delta\gg n^{-1}$. Let $G$ be an abelian group of size $n$ and let $T$ be a $(n-1)$-vertex tree. If $T$ has maximum degree at most $\Delta$, then $G$ has a rainbow copy in $K_G$. 
\end{theorem}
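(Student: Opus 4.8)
The plan is to derive Theorem~\ref{thm:vertexspace} as a consequence of the machinery behind Theorem~\ref{Theorem_main_intro}, exploiting the extra vertex to sidestep every obstruction in the list $(1)$--$(3)$. The key observation is that the obstructions are all parity/divisibility constraints on the multiset of degrees of $T$, and these become vacuous once we have a free vertex to play with. Concretely, given the $(n-1)$-vertex tree $T$ and the order-$n$ group $G$, I would first pick a leaf $\ell$ of $T$ with neighbour $w$, and consider the $(n-2)$-edge forest $T^- = T - \ell$; a rainbow embedding $\phi$ of $T$ in $K_G$ amounts to a rainbow embedding of $T^-$ together with a choice of image for $\ell$ in $G \setminus \phi(V(T^-))$ whose colour $\phi(\ell) + \phi(w)$ avoids the $n-2$ colours already used. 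Since $|G| = n$, after embedding $T^-$ there are two unused vertices and at least two unused colours, so a counting/defect argument (or a direct appeal to a rainbow-matching-type lemma from the body of the paper) should let us complete the embedding unless some exceptional alignment occurs — and the point is that the extra slack rules those out.

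More robustly, I expect the intended route is to reduce Theorem~\ref{thm:vertexspace} to Theorem~\ref{Theorem_main_intro} by \emph{adding} a vertex to $T$ rather than deleting one: form an $n$-vertex tree $T'$ by attaching a new leaf to a carefully chosen vertex of $T$ (or subdividing a suitable edge, or attaching a pendant path of the right length), chosen so that $T'$ avoids all three obstructions, and then restrict a rainbow copy of $T'$ in $K_G$ to a rainbow copy of $T$. The degree sequence of $T'$ differs from that of $T$ only at the attachment point (which goes up by one) and the new leaf (degree one), so by choosing the attachment vertex $u$ appropriately we can control the residues $\deg_{T'}(x) \bmod m$ for the relevant characteristic $m$. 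For obstruction $(2)$ we need to avoid creating a tree whose degrees are all $\equiv 1 \pmod m$ except for an adjacent pair $\equiv 0$; for obstructions $(1)$ and $(3)$ we need to control the count of even-degree vertices modulo the $\mathbb{Z}_2^k$ constraints. Since $T$ has at least three vertices (for $n$ large) and bounded degree, there is ample freedom in the choice of $u$: one checks that at most a bounded number of "bad" choices of $u$ exist for each of the finitely many obstruction types, so a valid $u$ always remains, and then $T'$ is an $n$-vertex bounded-degree tree with no obstruction and we apply Theorem~\ref{Theorem_main_intro}.

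The main obstacle I anticipate is the case analysis needed to show that a good $T'$ (equivalently, a good attachment vertex) always exists, particularly when $T$ is itself highly structured — e.g.\ when $T$ is close to a path, or when $G = \mathbb{Z}_2^k$ and $T$ already sits near obstruction $(1)$ or $(3)$. In those regimes one must verify that some attachment operation (pendant edge, pendant path of length two, or subdivision) perturbs the degree sequence out of every forbidden configuration simultaneously; this requires checking a handful of small sub-cases, using that $k \geq 2$ so $|\mathbb{Z}_2^k| \geq 4$ and that $n$ is large. A secondary technical point is that $T'$ must remain bounded-degree with the same degree bound up to an additive constant, which is automatic if we only ever raise one degree by one, so the quantifier "$\Delta \gg n^{-1}$" in the statement comfortably absorbs the change. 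Once the reduction is set up, the heavy lifting is entirely delegated to Theorem~\ref{Theorem_main_intro}, so no new embedding argument is needed.
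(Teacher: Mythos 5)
Your second proposal (attach a leaf to $T$ to form an $n$-vertex tree $T'$ avoiding obstructions (1)--(3), apply Theorem~\ref{Theorem_main_intro}, and restrict) is logically sound --- Theorem~\ref{Theorem_main_intro} is proved from Theorem~\ref{Theorem_embed_tree_prescribed_sets} without invoking Theorem~\ref{thm:vertexspace}, so there is no circularity --- but it is not the paper's route. The paper derives Theorem~\ref{thm:vertexspace} \emph{directly} from the core lemma Theorem~\ref{Theorem_embed_tree_prescribed_sets}, and indeed proves it before the characterization: one builds a rainbow embedding $\phi$ of a small core $T_{core}$ of $T$ by a greedy argument with careful choices at two leaves, sets $v^* := \sum G - \sum \phi(V(T_{core}))$ and $c^* := \sum G - \sum_{v\in V(T_{core})} d_T(v)\phi(v)$, and takes $V_{target} = G \setminus\{v^*\}$, $C_{target} = G \setminus\{c^*, 0\}$; the one extra vertex of $G$ (and the extra missing colour) is exactly what makes these choices possible, with a small exceptional set $\mathcal{B}$ of elements having many square roots avoided along the way so that $v^*, c^*$ can in fact be steered off $\phi(T_{core})$. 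Your route delegates the heavy lifting to the characterization as a black box, which is a cleaner one-step reduction once that theorem is available, but it needs the case analysis you flag; it is in fact short: for $G \neq \mathbb{Z}_2^k$ the characteristic is $m \geq 3$, so attaching the new vertex at any leaf of $T$ turns that leaf's degree into $2 \not\equiv 0,1 \pmod m$, which already kills obstruction (2), and (1),(3) don't apply; for $G = \mathbb{Z}_2^k$, write $e_T$ for the (necessarily odd) number of even-degree vertices of $T$ and split into $e_T = 1$ (attach to the unique even vertex, giving $e_{T'}=0$), $e_T = 3$ (attach to an odd-degree vertex non-adjacent to the three even ones, of which there are $\geq n - 4 - 3\Delta$, so $T'[4\text{ even vertices}]$ has no perfect matching), and $e_T \geq 5$ (attach to any odd-degree vertex, giving $e_{T'} \geq 6$), treating $T$-a-path separately by attaching at an internal vertex so $T'$ is not a path. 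Your first sketch (delete a leaf, embed $T^-=T-\ell$, then extend) does not constitute a proof: $T^-$ is still an $(n-2)$-vertex tree in an $n$-element group, so embedding it rainbow is essentially as hard as the original problem, and the completion step has only two candidate images for $\ell$ with no argument that either yields an unused colour.
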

 Perhaps unexpectedly, the bounded degree assumption cannot be omitted in Theorem~\ref{thm:vertexspace}, as we will shortly show in Example~\ref{ex:new}. The proof of Theorem~\ref{thm:vertexspace} will be given in the next section as a direct consequence of one of our main theorems, namely, Theorem~\ref{Theorem_embed_tree_prescribed_sets}.
\par We briefly survey some related conjectures in the area to give some context to Theorem~\ref{thm:vertexspace}. A well-known conjecture of Andersen \cite{andersen1989hamilton, alon2017random} states that any properly-coloured $K_n$ contains a rainbow path on $n-1$ vertices. Montgomery, Pokrovskiy, and Sudakov \cite[Conjecture 11.1]{approximateringel} made a related conjecture that any proper colouring of a $K_n$ contains every tree on $n-C$ vertices, where $C$ is an absolute constant. Theorem~\ref{thm:vertexspace} confirms the former conjecture for colourings coming from Cayley-sum graphs, and it confirms the latter conjecture for bounded-degree trees. In fact, under these assumptions, Theorem~\ref{thm:vertexspace} shows $C$ can be taken to be $1$ in the conjecture of Montgomery, Pokrovskiy, and Sudakov. Perhaps surprisingly, such a result is not true under no assumptions on the degrees, meaning that one extra vertex isn't always sufficient, as the following example shows\footnote{This example was discovered by the authors after the first version of the article appeared on the arXiv. We remark that the example disproves Conjectures 1.8 and 6.3 that appeared in the first version of the article.}. 
\begin{example}\label{ex:new}
    Let $G=\mathbb{Z}_2^k$, with $k\geq 3$, and let $T$ be a tree consisting of a path on $3$ edges, with $2^k-5$ additional leaves added incident on one of the endpoints of the path (so $T$ has $2^k-1$ vertices). Then, $K_G$ contains no rainbow copy of $T$. 
\end{example}
\begin{proof}
    Suppose $K_G$ had a rainbow copy of $T$. As $T$ has $2^k-2$ edges, and $K_G$ has $2^k-1$ colours, all but one colour that appears in $K_G$ must be present in the rainbow embedding of $T$. Let $P\subseteq \mathbb{Z}_2^k$ denote the embedding locations of the $3$ vertices of the $3$-edge path, and let $x_0$ denote the vertex of degree $2^k-4$. Let the colours along the $3$-edge path be $c_1,c_2,c_3$, and the vertices be $x_0,x_1,x_2,x_3$ which are all distinct. The key observation is that any colour between $x_0$ and a vertex of $P$ cannot appear on $T$ anywhere other than the edges of the $3$-edge path. The colour of $(x_0,x_2)$ is distinct from any of $c_1,c_2,c_3$ due to $K_G$ being properly coloured, so it cannot appear on $T$. Moreover, the colour of $(x_0,x_3)$ is distinct from $c_1$ and $c_3$ due to $K_G$ being properly coloured, and is distinct from $c_2$ because otherwise we would obtain a $4$-cycle with the colour pattern $c_1,c_2,c_3,c_2$ in $K_G$. As in any cycle the colour patterns sum to $0$ in $K_G$, this implies $c_1=c_3$. So both the colour of $(x_0,x_2)$ and $(x_0,x_3)$ cannot appear anywhere on the rainbow embedding of $T$, giving a contradiction, as all but one colour of $K_G$ must be present in the rainbow embedding of $T$.
\end{proof}

\subsubsection{Orthogonal double covers}
\par Theorem~\ref{Theorem_main_intro} has an interesting connection with the study of orthogonal double covers. An orthogonal double cover of a complete graph $K_n$ is a collection of isomorphic subgraphs $G_1,\ldots, G_k$ where for each $i,j\in [k]$, $G_i$ and $G_j$ share exactly one edge, and furthermore, each edge of $K_n$ is included in exactly two of the subgraphs. The motivation for investigating the existence of such objects comes from statistical design theory, see \cite[Chapter 2]{dinitz1992contemporary}. An important conjecture in the area, due to Gronau, Mullin, and Rosa \cite{gronau1997orthogonal} from 1997, is the following. 
\begin{conjecture}
    For any $n$-vertex tree that is not a path on $4$ vertices, $K_n$ has an orthogonal double cover by copies of $T$. 
\end{conjecture}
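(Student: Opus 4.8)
The plan is to attack the conjecture via the classical group-theoretic reduction of orthogonal double covers, which turns it into a tree-embedding problem that the machinery developed for Theorem~\ref{Theorem_main_intro} can handle --- in full for bounded-degree trees and all large $n$, the regime our methods reach. Recall the \emph{cyclic} construction: take a group $G$ of order $n$ acting on $V(K_n)=G$ by translation, choose one embedded copy $T_0$ of $T$ with vertex set $G$, and take the $n$ translates $\{T_0+g:g\in G\}$ as the candidate cover. An easy double-counting argument shows that this family double-covers $K_n$ exactly when the multiset of edge-differences $\{\,b-a:ab\in E(T_0)\,\}$ realises every nonzero element of $G$ precisely twice, and that it is orthogonal exactly when, for every $h\neq 0$, exactly one edge of $T_0$ is carried into $E(T_0)$ by translation by $h$. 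Because every group of even order has an involution, the difference condition forces $n$ odd in this scheme; for even $n$ one instead works $1$-rotationally over $\mathbb{Z}_{n-1}$ with a single fixed vertex, which is precisely the setting for which statements like Theorem~\ref{thm:vertexspace} --- embedding an $(n-1)$-vertex tree into a host of order $n$ --- were designed.

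The next step is to recast these two requirements --- a prescribed difference pattern (``each nonzero element twice'') together with the orthogonality condition, which unwinds into a collision-free constraint on certain sums and differences of the labels of $T_0$ --- as a single embedding demand of the shape handled by Theorem~\ref{Theorem_embed_tree_prescribed_sets}, one of the paper's central embedding results and a refinement of Theorem~\ref{Theorem_main_intro}: the difference pattern is imposed by forcing the edges of $T$ into prescribed colour classes, and orthogonality is the extra no-collision condition that the theorem can carry along. The point that defuses the obstructions of Theorem~\ref{Theorem_main_intro} here is that the relevant group may be taken cyclic of odd order --- $\mathbb{Z}_n$ when $n$ is odd, and $\mathbb{Z}_{n-1}$ in the rotational construction when $n$ is even. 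Then $\operatorname{char}(G)$ equals $n$ (respectively $n-1$), which for large $n$ exceeds $\Delta(T)$, so no vertex of $T$ has degree divisible by $\operatorname{char}(G)$ and obstruction~(2) is vacuous, while obstructions~(1) and~(3) require $G\cong\mathbb{Z}_2^k$ and so cannot occur for $G$ cyclic of odd order.

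With the obstructions out of the way, Theorem~\ref{Theorem_embed_tree_prescribed_sets} produces the generating copy $T_0$, and hence the orthogonal double cover, for every bounded-degree tree on sufficiently many vertices. The finitely many remaining values of $n$ are covered by the existing literature, which verifies the Gronau--Mullin--Rosa conjecture for small $n$ and for many structured families of trees, and where one checks directly --- as the proposers of the conjecture did --- that $P_4$ is the unique small tree admitting no orthogonal double cover, matching its explicit exclusion in the statement.

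I expect the main obstacle to be twofold. The more fundamental one is that Theorem~\ref{Theorem_main_intro} and Theorem~\ref{Theorem_embed_tree_prescribed_sets} apply only to trees of bounded maximum degree, whereas the conjecture concerns arbitrary $n$-vertex trees; producing orthogonal double covers by trees that carry a few vertices of very large degree is not something this route delivers, and closing that gap is, in my view, the principal difficulty between this plan and the full conjecture. The secondary one is that even in the bounded-degree regime the reduction demands care: the orthogonality condition must be matched precisely to the hypothesis format of Theorem~\ref{Theorem_embed_tree_prescribed_sets}, and the even-$n$ rotational construction --- with its fixed vertex, its host of order $n-1$, and the extra ``short'' block it forces alongside the main orbit --- brings bookkeeping that has to be reconciled with the embedding machinery without quietly reintroducing one of the excluded configurations.
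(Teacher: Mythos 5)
The statement you were given is an open conjecture of Gronau, Mullin, and Rosa, which the paper explicitly does \emph{not} prove. What the paper establishes is Observation~\ref{obs:cyclicshift} (a rainbow copy of $T$ in $K_{\mathbb{Z}_2^k}$ yields an orthogonal double cover of $K_{2^k}$) together with Corollary~\ref{cor:gronau}, a partial result confined to $n$ a power of two, $T$ of bounded degree, and $T$ not one of the exceptional families of Theorem~\ref{Theorem_main_intro}. So there is no ``paper's own proof'' to compare against; your task was to propose one, and the proposal has genuine gaps.

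The central flaw is a mismatch between the colouring rule your translation scheme needs and the colouring rule the paper's theorems address. In $K_G$ as defined in the paper, the edge $xy$ has colour $x+y$. The argument behind Observation~\ref{obs:cyclicshift} works precisely because over $\mathbb{Z}_2^k$ the sum-colour is translation-invariant: $(x+u)+(x+v)=u+v$ since $2x=0$. Once you move to $\mathbb{Z}_n$ with $n$ odd (or to $\mathbb{Z}_{n-1}$ in the rotational version), translating by $x$ shifts every sum-colour by $2x\neq 0$, and the translates of a rainbow copy are no longer rainbow with the same palette --- the whole mechanism collapses. You implicitly notice this, since you pivot to a \emph{difference} condition (``each nonzero element realised exactly twice as $b-a$''), which \emph{is} translation-invariant. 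But that is the graceful/$\rho$-labelling regime, and the paper's Theorems~\ref{Theorem_embed_tree_prescribed_sets} and~\ref{Theorem_main_intro} say nothing about difference-coloured hosts. Indeed the authors explicitly flag in the concluding remarks (around Conjecture~\ref{anotherrainbowconjecture}) that porting their machinery from the sum rule to the difference rule would require a separate treatment with ``undesirable technicalities,'' which they do not carry out. So the claim that ``the difference pattern is imposed by forcing the edges of $T$ into prescribed colour classes'' of Theorem~\ref{Theorem_embed_tree_prescribed_sets} does not go through: those colour classes are sum classes.

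A secondary gap: even if you had a perfect difference system, the orthogonality clause (``for every $h\neq 0$, exactly one edge of $T_0$ is carried into $E(T_0)$ by translation by $h$'') is a condition on \emph{pairs} of edges and how they interact under translation, not a rainbow/colour-membership condition of the kind Theorem~\ref{Theorem_embed_tree_prescribed_sets} can ``carry along.'' In the $\mathbb{Z}_2^k$ case orthogonality falls out for free from rainbowness plus the identity $2h=0$; over an odd cyclic group that accident disappears and you would need a genuinely new argument. You correctly flag the bounded-degree restriction as an obstacle to the full conjecture, but even within the bounded-degree regime the route you sketch does not connect to the paper's toolkit.
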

The above conjecture is known to hold for $n\leq 13$, stars, and trees with diameter $\leq 3$ \cite{gronau1997orthogonal, leck1997orthogonal}. The conjecture is also known to hold ``approximately'' whenever $n$ is a power of two \cite{approximateringel}. The following observation from \cite{approximateringel} makes the connection between orthogonal double covers and rainbow subgraphs explicit. 
\begin{observation}\label{obs:cyclicshift}
    Let $T$ be a tree on $2^k$ vertices and suppose that $T$ has a rainbow copy in $K_{\mathbb{Z}_2^k}$. Then, $K_{2^k}$ admits an orthogonal double cover by copies of $T$.
\end{observation}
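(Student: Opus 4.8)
The plan is to show that the $2^k$ translates of a single rainbow copy of $T$ already form an orthogonal double cover of $K_{2^k}$. Write $G=\mathbb{Z}_2^k$ and $n=2^k$, and let $T'$ be a rainbow copy of $T$ in $K_G$, so $T'$ is a spanning tree of the complete graph on vertex set $G$ whose $n-1$ edges receive $n-1$ distinct colours, the colour of $xy$ being $x+y$. Since $x+y=0$ only when $x=y$, no edge has colour $0$, so the colours appearing on $T'$ are exactly the elements of $G\setminus\{0\}$, and each nonzero element of $G$ is the colour of a unique edge of $T'$. For $g\in G$, let $T'+g$ denote the spanning tree on vertex set $G$ with edge set $\{(x+g)(y+g):xy\in E(T')\}$; each $T'+g$ is isomorphic to $T$. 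The point that makes the argument work — and which is specific to characteristic $2$ — is that the Cayley-sum colouring is translation invariant: the edge $(x+g)(y+g)$ has colour $(x+g)+(y+g)=x+y$ because $2g=0$. Hence every $T'+g$ again carries each colour of $G\setminus\{0\}$ exactly once.

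It then remains to verify the two defining properties of an orthogonal double cover for the family $\{T'+g:g\in G\}$. For the double-cover property, fix an edge $uv$ of $K_G$ and set $c=u+v\ne 0$; if $uv\in E(T'+g)$ then translating back by $g$ produces an edge of $T'$ of colour $c$, which must be the unique edge $x_0y_0$ of $T'$ with $x_0+y_0=c$. Solving $\{x_0+g,y_0+g\}=\{u,v\}$ yields exactly the two values $g=x_0+u$ and $g=x_0+v$, distinct since $u\ne v$, so $uv$ lies in precisely two members of the family. For the orthogonality property, take $g\ne g'$ and set $h=g+g'\ne 0$; if $uv$ is a common edge of $T'+g$ and $T'+g'$ then translating back by each of $g,g'$ yields the same edge of $T'$, namely the unique edge $x_0y_0$ of colour $u+v$, and comparing $\{x_0+g,y_0+g\}=\{u,v\}=\{x_0+g',y_0+g'\}$ forces $x_0+y_0=h$. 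So the only possible shared edge is the image under translation by $g$ of the unique edge of $T'$ of colour $h$; a direct check (using $y_0=x_0+h=x_0+g+g'$) shows that this edge indeed also lies in $T'+g'$. Hence $T'+g$ and $T'+g'$ share exactly one edge.

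There is no substantive obstacle here: once one exploits that translations of $G=\mathbb{Z}_2^k$ fix the colouring — the same manipulation underlying Example~\ref{ex:maamoun} — together with the fact that a rainbow copy of an $n$-vertex tree in $K_G$ must use every nonzero colour exactly once, both orthogonal-double-cover conditions reduce to short bookkeeping. The real input is the \emph{existence} of the rainbow copy $T'$, which for bounded-degree trees avoiding the three listed obstructions is exactly what Theorem~\ref{Theorem_main_intro} provides.
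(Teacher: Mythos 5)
Your proof is correct and follows essentially the same approach as the paper: take the $2^k$ translates of the rainbow copy, use that translation by $g$ preserves the colouring in $\mathbb{Z}_2^k$, and then verify both orthogonal-double-cover conditions by tracking which translations send the unique $c$-coloured edge of $T'$ to a given edge. Your write-up simply spells out the ``exactly two covers'' and ``exactly one shared edge'' computations a bit more explicitly than the paper does, but the underlying argument is identical.
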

\begin{proof}
    For some $x\in \mathbb{Z}_2^k$ and a tree $T$, we denote by $x+T$ the isomorphic tree obtained by having the vertex $v$ of the tree $T$ map to $x+v$. We call $x+T$ a \textit{translate} of $T$. Suppose now that $T$ is a $2^k$-vertex rainbow tree of  $K_{\mathbb{Z}_2^k}$. Observe that each translate also yields a rainbow copy of the tree $T$, as any edge of $T$ of colour $c$ again maps to an edge of colour $c$, since $x+x=0$ over $\mathbb{Z}_{2}^k$. We claim that the $2^{k}$ translates of $T$ give an orthogonal double cover of (the uncoloured version of) $K_{\mathbb{Z}_2^k}$, i.e. $K_{2^k}$. Firstly, note that for distinct $x$ and $y$, $x+T$ and $y+T$ meet on exactly one edge. Indeed, as the colour of each edge is preserved under translations, $x+T$ and $y+T$ can only meet if an edge is fixed when $x+T$ is translated by $y-x$. There is precisely one such edge of $x+T$, namely, the one with colour $y-x$. 
    \par Similarly, each edge of $K_{\mathbb{Z}_2^k}$ is an element of two distinct translates $x+T$ and $y+T$. Indeed, suppose an edge $ab$ has colour $c=a+b$, and let $vw$ be the $c=v+w$ coloured edge of $T$, recalling that this is the only edge whose translates can cover $xy$, and we have that $a+b+v+w=c+c=0$. The translations $x=v+a$ and $y=v+b$ map the edge $vw$ to $ab$, as desired.
\end{proof}
Therefore, Theorem~\ref{Theorem_main_intro} gives the following corollary.
\begin{corollary}\label{cor:gronau} Let $\Delta\gg n^{-1}$, and suppose $n$ is a power of two. Suppose $T$ has maximum degree at most $\Delta$. Unless $T$ is one of the examples described in Theorem~\ref{Theorem_main_intro}, then the Gronau--Mullin--Rosa conjecture holds for $T$.
\end{corollary}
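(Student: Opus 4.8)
The plan is to derive the corollary directly by combining Theorem~\ref{Theorem_main_intro} with Observation~\ref{obs:cyclicshift}, so there is essentially no new argument to make. First I would use the hypothesis that $n$ is a power of two to write $n=2^k$, and take $G=\mathbb{Z}_2^k$, which is an abelian group of order $n$. The hypothesis $\Delta\gg n^{-1}$ means that $n$ is large compared with the degree bound $\Delta$, so in particular $n\geq n_0(\Delta)$ and Theorem~\ref{Theorem_main_intro} is applicable to $T$ and this choice of $G$.

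Next I would apply Theorem~\ref{Theorem_main_intro}: since $\Delta(T)\leq\Delta$, either $T$ belongs to one of the three exceptional families (1)--(3) listed there, or $K_{\mathbb{Z}_2^k}$ contains a rainbow copy of $T$. In the first case the corollary asks for nothing, as exactly these $T$ are excluded from its conclusion. In the second case, Observation~\ref{obs:cyclicshift} immediately upgrades the rainbow copy to an orthogonal double cover: the $2^k$ translates of the rainbow copy of $T$ inside $K_{\mathbb{Z}_2^k}$ form an orthogonal double cover of the uncoloured graph $K_{2^k}=K_n$ by copies of $T$, which is precisely the statement of the Gronau--Mullin--Rosa conjecture for $T$.

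The only point I would pause to verify is that this conclusion is compatible with the way the Gronau--Mullin--Rosa conjecture is phrased, namely that every $n$-vertex tree other than $P_4$ should have such a cover: indeed $P_4$ has $4=2^2$ vertices and is a path, hence lies in family (1) of Theorem~\ref{Theorem_main_intro}, so it is among the excluded trees and no contradiction arises. In fact the corollary delivers the conjecture for a wider class than strictly needed --- all bounded-degree $n=2^k$-vertex trees outside families (1)--(3) --- at the mild cost of requiring $n$ to be large in terms of $\Delta$. Since all the difficulty is absorbed into Theorem~\ref{Theorem_main_intro} (and, to a lesser extent, Observation~\ref{obs:cyclicshift}), there is no genuine obstacle in this step; the ``hard part'' has already been carried out.
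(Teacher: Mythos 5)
Your proof is correct and matches the paper's intended derivation exactly: the paper presents Corollary~\ref{cor:gronau} immediately after proving Observation~\ref{obs:cyclicshift}, with the one-line remark that Theorem~\ref{Theorem_main_intro} together with that observation yields it. Your additional check that $P_4$ falls into family (1) is a nice sanity check but not something the paper spells out.
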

\section{Overview and the core lemma}
\textbf{Organisation of the paper.} In this section, we state the main technical contribution of the paper, i.e. Theorem~\ref{Theorem_embed_tree_prescribed_sets}, that gives a simple necessary and sufficient condition for when a bounded degree tree has a rainbow copy inside $K_G$ for an abelian group $G$. This theorem will be proved in Section~\ref{sec:corelemma}, after having introduced some preliminaries and auxiliary results in Section~\ref{sec:preliminaries}. In this section, we give a proof of Theorem~\ref{thm:vertexspace} by assuming Theorem~\ref{Theorem_embed_tree_prescribed_sets}. We also use Theorem~\ref{Theorem_embed_tree_prescribed_sets} to give a direct proof of Corollary~\ref{cor:boundeddegreeGS} (by way of proving the more general Theorem~\ref{thm:chang} that confirms the Chang--Hsu--Rogers and Graham--Sloane conjectures for bounded degree trees). These simple applications of Theorem~\ref{Theorem_embed_tree_prescribed_sets} serve as a warm-up to read the rest of the paper, especially the more refined arguments in Section~\ref{sec:characterisation}. In Section~\ref{sec:characterisation}, we analyse when the necessary and sufficient condition from Theorem~\ref{Theorem_embed_tree_prescribed_sets} can be fulfilled depending on the structure of the abelian group $G$. The result of this analysis, when combined with Theorem~\ref{Theorem_embed_tree_prescribed_sets}, will be the characterisation given in our main result, Theorem~\ref{Theorem_main_intro}.
\begin{center}
\rule{5cm}{0.4pt}
\end{center}
\par Our key result, Theorem~\ref{Theorem_embed_tree_prescribed_sets}, concerns the core of a tree, which we now formally define. Intuitively, the core of a tree is a minimal representative sample from the degree sequence of the tree.
\begin{definition}
Let $T$ be a tree.
We say that an induced subforest  $T_{core}$ is a core of $T$ if for  every $d\leq \Delta(T)$, we have at least one of
\begin{enumerate}[(I)]
\item $T_{core}$ contains all vertices $v$ with  $d_T(v)=d$. 
\item $T_{core}$ and $T\setminus V(T_{core})$  both contain at least 6 vertices $v$ with $d_T(v)=d$.
\end{enumerate} 
\end{definition}
When (I)   occurs we say that ``degree $d$ vertices are exhausted by $T_{core}$''. 
\par Embedding cores of trees is a significantly easier task for bounded degree trees because every bounded degree tree has a small core, as shown below.
\begin{observation}\label{Observation_small_core}
Every tree has a core $T_{core}$ of order $\leq 12\Delta(T)$.  If $\Delta(T)\leq \sqrt{|V(T)}/12$, then there is a core of order exactly $12\Delta(T)$.
\end{observation}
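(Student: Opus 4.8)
The plan is to first observe that, since $T$ is a tree, every induced subgraph of $T$ is acyclic, hence a forest; so an ``induced subforest'' of $T$ is simply an induced subgraph $T[S]$ on some vertex set $S\subseteq V(T)$, and whether $T[S]$ is a core depends only on $S$ (the degrees $d_T(v)$ in the definition are taken in $T$, not in $T[S]$). Thus the whole task reduces to choosing a small set $S$ so that, for every $d\le \Delta(T)$, either $S$ contains all degree-$d$ vertices of $T$, or both $S$ and $V(T)\setminus S$ contain at least $6$ degree-$d$ vertices. I would dispose of the degenerate case $|V(T)|=1$ at once (then $\Delta(T)=0$ and the empty forest is a core of order $0=12\Delta(T)$), so that henceforth $|V(T)|\ge 2$, $\Delta(T)\ge 1$, and every degree occurring in $T$ lies in $\{1,\dots,\Delta(T)\}$.

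For the main assertion I would build $S$ greedily, handling one degree class at a time. Write $n_d$ for the number of vertices of $T$ of degree exactly $d$, so at most $\Delta(T)$ of the $n_d$ are positive. For each $d$ with $n_d>0$: if $n_d\le 11$, put \emph{all} degree-$d$ vertices into $S$, which makes alternative (I) hold (degree $d$ is exhausted); if $n_d\ge 12$, put exactly $6$ degree-$d$ vertices into $S$, which makes (II) hold since the remaining $n_d-6\ge 6$ degree-$d$ vertices then lie outside $S$. Because the set chosen for degree $d$ consists only of degree-$d$ vertices, the choices made for distinct degrees do not interfere, so $T[S]$ is a core with $|S|\le 11\cdot\Delta(T)\le 12\Delta(T)$.

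For the ``exactly $12\Delta(T)$'' assertion, I would assume $\Delta(T)\le \sqrt{|V(T)|}/12$, i.e.\ $|V(T)|\ge 144\,\Delta(T)^2$. By pigeonhole over the at most $\Delta(T)$ positive classes, there is a degree $d^\ast$ with $n_{d^\ast}\ge |V(T)|/\Delta(T)\ge 144\,\Delta(T)$, which is enormous relative to $12\Delta(T)$; since $n_{d^\ast}\ge 12$, the core $T[S]$ built above contains exactly $6$ vertices of degree $d^\ast$ and has $|S|\le 11\Delta(T)<12\Delta(T)\le|V(T)|$. Starting from this $S$, I would repeatedly move vertices of degree $d^\ast$ from $V(T)\setminus S$ into $S$ until $|S|$ equals exactly $12\Delta(T)$. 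There are plenty of such vertices available to reach that size, and at the end at least $n_{d^\ast}-12\Delta(T)\ge 132\Delta(T)\ge 6$ degree-$d^\ast$ vertices remain outside $S$, so (II) still holds for $d^\ast$; no other degree class is touched, so $T[S]$ is still a core, now of order exactly $12\Delta(T)$.

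The only step requiring any care is the padding argument, where one must verify two inequalities: that the abundant class $d^\ast$ supplies enough outside vertices to push $|S|$ up to $12\Delta(T)$, and that doing so still leaves $6$ of them outside. Both reduce to the slack in $n_{d^\ast}\ge 144\Delta(T)$ against $12\Delta(T)+6$, so I do not expect a genuine obstacle — the substance of the observation is just that sampling at most $11$ representatives per degree class gives a small core, which can then be padded out using vertices of one abundant degree class.
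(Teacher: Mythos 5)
Your construction is essentially identical to the paper's: include all degree-$d$ vertices when there are few (you use the threshold $\leq 11$, the paper $\leq 12$; either works), include exactly $6$ otherwise, then pad up to order exactly $12\Delta(T)$ using the degree class that occurs $\geq |V(T)|/\Delta(T)\geq 144\Delta(T)$ times. (The one slip is the $|V(T)|=1$ aside: the empty set is not a core there since the lone degree-$0$ vertex is neither contained nor replicated six times on each side — but this degenerate case is vacuous in context and the paper does not address it either.)
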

\begin{proof}
For every degree $d$, if there are $\leq 12$ vertices of degree $d$, include all degree $d$ vertices in $T_{core}$, otherwise include exactly 6 degree $d$ vertices in $T_{core}$. This gives a core of size $\leq 12\Delta(T)$. To get a core of size exactly $12\Delta(T)$, add $12\Delta(T)-|V(T_{core})|$ vertices of the most popular degree to $T_{core}$, noting that there are at least $|V(T)|/\Delta(T)\geq 144\Delta(T)$ vertices of this degree (and hence after we add $\leq 12\Delta(T)$ of them to the core, there will still remain 6 outside the core).
\end{proof}
It is worth remarking that cores of trees are not unique.
We can now state a key technical result of the paper. 
\begin{theorem}\label{Theorem_embed_tree_prescribed_sets}
Let $\Delta^{-1}\gg\mu\gg n^{-1}$.
Let $G$ be a group and $T$ a bounded degree tree with $\Delta(T)=\Delta$. Let $V_{target},C_{target}\subseteq G$ with $|T|=|V_{target}|=|C_{target}|+1\geq (1-n^{-\mu})n$. In the case $G=\mathbb{Z}_2^k$, assume $\id \not\in C_{target}$. Let $T_{core}$ be a core of $T$ of size $\leq n^{1-\mu}$. The following are equivalent.
\begin{enumerate}[(i)]
\item  $T$ has a rainbow  embedding $f$ into $(V_{target},C_{target})$.
\item There is a rainbow embedding $\phi$ of $T_{core}$ into $(V_{target},C_{target})$ with $\sum_{v\in V(T_{core})}d_T(v)\phi(v)=\sum C_{target}$ and $\sum \phi(V(T_{core}))=\sum V_{target}$.
\end{enumerate} 
\end{theorem}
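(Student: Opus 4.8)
The plan is to prove the two implications separately; (i)$\Rightarrow$(ii) is routine, while (ii)$\Rightarrow$(i) carries all the weight. For (i)$\Rightarrow$(ii): if $f$ is a rainbow embedding of $T$ into $(V_{target},C_{target})$, then since $|V(T)|=|V_{target}|$ and $f$ realises $|E(T)|=|C_{target}|$ distinct colours, $f$ is a bijection onto $V_{target}$ with colour set exactly $C_{target}$; summing the identity $\sum_{xy\in E(T)}(f(x)+f(y))=\sum_{v}d_T(v)f(v)$ gives $\sum_{v}d_T(v)f(v)=\sum C_{target}$, while $\sum_{v}f(v)=\sum V_{target}$ is immediate. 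Now $\phi_0:=f|_{T_{core}}$ is a rainbow embedding of the tiny forest $T_{core}$ into $(V_{target},C_{target})$ whose two sums fall short of the required ones by exactly $A:=\sum_{v\notin V(T_{core})}f(v)$ and $B:=\sum_{v\notin V(T_{core})}d_T(v)f(v)$. Since $\phi_0$ uses only $|V(T_{core})|\le n^{1-\mu}$ vertices and colours whereas $|V_{target}|,|C_{target}|\ge(1-n^{-\mu})n$, there is ample room to switch: every degree $d$ that occurs outside $T_{core}$ is non-exhausted, hence by the core property also occurs on $\ge 6$ vertices of $T_{core}$, and such a vertex (having $T_{core}$-degree $\le\Delta$) can be rerouted to any of $n-O(\Delta n^{1-\mu})$ unused locations while keeping $\phi_0$ rainbow. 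Rerouting two degree-$d$ vertices of $T_{core}$ so as to add $\delta_d:=\sum_{v\notin V(T_{core}),\,d_T(v)=d}f(v)$ to the vertex sum (possible since a sumset of two subsets of $G$ of size $>n/2$ is all of $G$), and doing this for every such $d$, changes the vertex sum by $\sum_d\delta_d=A$ and the weighted-degree sum by $\sum_d d\,\delta_d=B$; the resulting $\phi$ satisfies (ii).

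For (ii)$\Rightarrow$(i) I would run a random-greedy-plus-absorption argument in the style of \cite{approximateringel,ringel}. Unwinding the hypotheses, extending $\phi$ to a rainbow embedding of $T$ onto $(V_{target},C_{target})$ is the same as embedding the rooted forest $T-V(T_{core})$ onto the unused vertices with the unused colours in such a way that $\sum_{v\notin V(T_{core})}d_T(v)f(v)=0$ — the matching vertex-sum identity being automatic, since once $\phi$ satisfies $\sum\phi(V(T_{core}))=\sum V_{target}$ the available vertex set has sum $0$ and every bijection onto it inherits this. Step one, the absorbers: exploiting that a bounded-degree tree has $\Omega(n)$ leaves and $\Omega(n)$ internally disjoint \emph{bare paths} (paths whose internal vertices have $T$-degree $2$), I reserve a bounded family of such gadgets together with reservoirs $R_V\subseteq V_{target}$, $R_C\subseteq C_{target}$; the reason to work inside the Cayley-sum graph $K_G$ is that a bare path with both endpoints pre-placed can be completed to a rainbow path on essentially any prescribed vertex and colour set of the right size and sum, and a reserved leaf can be placed at essentially any unused vertex, so the gadgets are jointly flexible enough to realise an arbitrary residual both in the consumed vertices/colours and in the weighted-degree sum. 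Step two, the bulk: embed $T$ minus the gadgets by exposing hanging subtrees one at a time and choosing images randomly while avoiding $R_V,R_C$; standard concentration leaves a tiny, well-spread unused vertex set $V_{left}\supseteq R_V$ and colour set $C_{left}\supseteq R_C$, and we record the partial weighted-degree sum it produces. Step three, the finish: embed the gadgets onto exactly $V_{left}$ with exactly $C_{left}$; the preserved sum hypotheses on $\phi$ are precisely what forces $(V_{left},C_{left})$ and the required weighted-degree correction to be mutually compatible.

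The main obstacle is the third step, together with the matching design of the absorbers in the first: one must prove that in $K_G$, given disjoint $X\subseteq V_{target}$ and $Y\subseteq C_{target}$ of the appropriate sizes whose sums stand in the correct relation, a prescribed small rainbow forest can be embedded using exactly $X$ and $Y$. This is an additive-combinatorial statement about $G$, morally a rainbow-matching / sequenceability problem in the spirit of Graham's rearrangement conjecture, and it is exactly the point where the exceptional families of Theorem~\ref{Theorem_main_intro} obstruct matters when the sum conditions fail. I expect the genuinely delicate part to be carrying this out uniformly over all finite abelian groups, and in particular for $G=\mathbb{Z}_2^k$ — where the absent colour $\id$ is the reason for the hypothesis $\id\notin C_{target}$, and where the $2$-torsion makes the parity obstructions, and hence the switching corrections needed just before the final exact matching, most subtle.
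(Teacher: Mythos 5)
Your overall architecture is right, and you have correctly located where the weight of the argument lives, but both halves have issues worth flagging.

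\textbf{(i)}$\Rightarrow$\textbf{(ii).}  The idea of rerouting degree-$d$ vertices inside $T_{core}$ to absorb the contribution $\delta_d$ from degree-$d$ vertices outside $T_{core}$ is essentially what the paper does, but you use \emph{pairs} where the paper uses \emph{triples}, and that difference is not cosmetic.  After fixing the old images, you need new images $y_1,y_2$ (distinct, unused, rainbow-compatible) with $y_1+y_2$ equal to a prescribed element $b$.  In $G=\mathbb{Z}_2^k$ the equation $y_1+y_2=b$ has no solution with $y_1\neq y_2$ when $b=0$: every $y$ satisfies $y+y=0$.  Your justification, ``a sumset of two subsets of $G$ of size $>n/2$ is all of $G$'', shows $b$ is attained but says nothing about attaining it by a \emph{simple} pair, which is exactly what fails in the $2$-torsion case.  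This is precisely why the paper's Lemma~\ref{Lemma_three_elements_prescribed_sum_random} works with triples $x+y+z=b$: the three-term equation always has $\Omega(n^2)$ solutions with the $x,y,z$ pairwise distinct and well-separated, uniformly over all abelian $G$.  The paper accordingly chooses an independent set $\{a_d,b_d,c_d\}$ of three degree-$d$ vertices inside $T_{core}$ (available since a non-exhausted degree has $\ge 6$ representatives there and $T$ is bipartite) and reassigns those three.  One could probably rescue the pair approach by exploiting that the $\binom{6}{2}$ candidate pairs in $T_{core}$ give $\binom{6}{2}$ distinct old sums, so at most one pair leads to $b=0$; but you neither notice the obstruction nor do this repair, and you also do not address how to keep the reassigned images rainbow against $N_T(a_d)$ etc.\ or how to keep the $\Delta$ rerouting steps disjoint from one another.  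The paper's Lemma~\ref{Lemma_three_elements_prescribed_sum_random} is exactly engineered to deliver all of this simultaneously.

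\textbf{(ii)}$\Rightarrow$\textbf{(i).}  This is, as you say, where all the work is, and your sketch follows the same route the paper takes: reserve a rich family of bare-path and leaf gadgets (the paper's path- and matching-approximations $T_{appr}$), embed the bulk by a random-greedy process (Lemma~\ref{Lemma_approximate_embedding}), adjust the embedding to agree with $\phi$ on $T_{core}$ and to fix the prescribed degree-weighted sums on each degree class (Lemma~\ref{Lemma_modify_random_embedding}, which again uses the three-element sum lemma), and finally close up onto exactly the residual vertex and colour sets (Lemma~\ref{Lemma_final_extension}).  The step you correctly single out as the main obstacle --- ``embed the gadgets onto exactly $V_{left}$ with exactly $C_{left}$'' under a sum constraint --- is the paper's appeal to Theorem~\ref{thm:maintheoremsemidisjoint} and Lemma~\ref{lem:pathlikemain}, which are quoted from the authors' earlier ``random Hall--Paige'' paper and are far from elementary; they cannot be rederived in this proof.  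What your sketch elides, and what the paper has to verify by a ten-line algebraic chain (displays (\ref{e1})--(\ref{e9}) and (\ref{f1})--(\ref{f9})), is that the sum condition on $\phi$, once propagated through the pseudoembedding $h$ and the approximation $T_{appr}$, produces exactly the hypothesis $\sum X + \sum Y - \sum Z = 0$ that the completion theorems require; it is not automatic as your phrase ``the matching vertex-sum identity being automatic'' suggests for the weighted sum.  So: right shape, right identification of the crux, but the (i)$\Rightarrow$(ii) half as written fails over $\mathbb{Z}_2^k$ and the (ii)$\Rightarrow$(i) half is a plan rather than a proof, resting on completion lemmas you cannot assume and a sum-bookkeeping step you have not done.
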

Although Theorem~\ref{Theorem_embed_tree_prescribed_sets} gives an equivalence, we only ever use the ``(ii)$\Rightarrow$(i)'' direction of the above result. The proof of the ``(i)$\Rightarrow$(ii)'' direction is easier and can be derived from first principles. We give the details of this direction as well, as we believe the equivalence is of theoretical interest. 
Theorem~\ref{Theorem_embed_tree_prescribed_sets} has an interesting complexity-theoretic corollary. It gives a polynomial-time algorithm to decide whether a bounded degree tree has a rainbow embedding using vertices $V_{target}, C_{target}$ with $|T|=|V_{target}|=|C_{target}|+1\geq (1-n^{-\mu})n$ (where the running time is a polynomial in $n$ whose degree depends on $\Delta, \mu$). The algorithm consists of first finding a core $T_{core}$ of order $12\Delta(T)$ (following the proof of Observation~\ref{Observation_small_core}), and afterwards checking all embeddings of $T_{core}$ into $V_{target}, C_{target}$ to see if they satisfy (ii).

Theorem~\ref{Theorem_embed_tree_prescribed_sets} is proved in Section~\ref{sec:corelemma}. Deriving Theorem~\ref{Theorem_main_intro} from Theorem~\ref{Theorem_embed_tree_prescribed_sets} requires a careful analysis of the structure of bounded degree trees and abelian groups, and is presented in Section~\ref{sec:characterisation}. Theorem~\ref{thm:vertexspace}, on the other hand, follows directly from Theorem~\ref{Theorem_embed_tree_prescribed_sets}.

\begin{proof}[Proof of Theorem~\ref{thm:vertexspace} via Theorem~\ref{Theorem_embed_tree_prescribed_sets}] Let $T$ be a $(n-1)$-vertex bounded degree tree, and let $G$ be a $n$-element abelian group. Let $T_{core}$ be a core of size $12\Delta(T)$ which exists by Observation~\ref{Observation_small_core}. Our goal is to embed $T_{core}$ to $K_G$ via $\phi$ in a rainbow manner, and designate a $C_{target}\supseteq C(\phi(T_{core}))$ of size $n-2$ and designate a $V_{target}\supseteq C(\phi(T_{core}))$ of size $n-1$ so that $\phi$, $V_{target}$, and $C_{target}$ satisfy condition (ii) of Theorem~\ref{Theorem_embed_tree_prescribed_sets}. This amounts to finding a rainbow embedding $\phi$ of $T_{core}$ to $K_G$ satisfying the following two properties. 
\begin{enumerate}
\item Setting $v^*:=\sum G - \sum \phi(V(T_{core}))$, we have that $v^*\notin \phi(V(T_{core}))$.
\item Setting $c^*:=\sum G - \sum_{v\in V(T_{core})}d_T(v)\phi(v)$, we have that $c^*\notin \phi(C(T_{core}))\cup\{0\}$. 
\end{enumerate}
Indeed, we can then select $V_{target}:=G\setminus\{v^*\}$ and $C_{target}:=G\setminus\{c^*, 0\}$ to satisfy the constraints in (ii), and also have that $0\notin C_{target}$ as required by Theorem~\ref{Theorem_embed_tree_prescribed_sets}. So our goal in the remainder of the proof is to find a rainbow embedding $\phi$ of $T_{core}$ satisfying (1) and (2). 
\par Fix two leaves $\ell,w$ of $T$ such that $\ell,w\in T_{core}$ (follows by $d=1$ case of the definition of a core), and if $\ell$ has a neighbour in $T_{core}$, call it $s$, noting $s\neq w$. We will first partially define the embedding $\phi$ on $T-\ell$, and then extend the embedding $\phi$ to all of $T$. 
\par Define $\mathcal{B}$ to consist of the $g\in G$ such that there are at least $n/10$ distinct $x$ such that $x+x=g$. Note $|\mathcal{B}|\leq 10$. We will have the following requirements on $\phi$ (defined on $T-\ell$ for now).
\begin{enumerate}[(A)]
\item $\sum G- \sum \phi(V(T_{core}-\ell))\notin \mathcal{B}$. 

\item If $s$ exists, we require that $ -\phi(s)+ \sum G - \sum_{v\in V(T_{core})\setminus \{\ell\}}d_T(v)\phi(v)\notin \mathcal{B}$. 
\end{enumerate}
To see such a $\phi$ exists, first define a rainbow embedding $\phi$ on ${T-\ell-w}$, which exists by following a greedy algorithm as $12\Delta\ll n$. There are at most $10$ choices of $\phi(w)$ that contradict (A), and at most $10$ choices of $\phi(w)$ that contradict (B), as $\phi(w)$ appears with a coefficient of $1$ in each indexed sum. Since there are at least $n-24\Delta$ possible choices for $\phi(w)$ that yield a rainbow embedding $\phi$, we may find a $\phi$ (defined on $T-\ell$) with properties (A) and (B).
\par It remains to extend $\phi$ by defining $\phi(\ell)$ so that (1) and (2) are satisfied. There are at least $n-24\Delta$ choices for $\phi(\ell)$ that produce a rainbow embedding. We now count the bad choices for $\phi(\ell)$ that would violate one of our two desired conditions.
\par First, there are at most $12\Delta$ choices for $\phi(\ell)$ that yield $\sum G - \sum \phi(V(T_{core}))=v^*\in \phi(V(T_{core}-\ell))$, as $V(\phi(T_{core}-\ell))$ is a fixed set of size $\leq 12\Delta$ and $\sum G - \sum \phi(V(T_{core}))$ takes on a different value for each different choice of $\phi(\ell)$ (as the coefficient is $1$). Let us now count the number of choices of $\phi(\ell)$ that yield $\sum G - \sum \phi(V(T_{core}))=\phi(\ell)$, or equivalently $2\cdot \phi(\ell) = \sum G - \sum \phi(V(T_{core}-\ell))$. Recall that the right hand side of the last equality cannot be in $\mathcal{B}$ by (A). This means that in total, there are at most $n/10 + 12\Delta$ choices of $\phi(\ell)$ that violate (1).
\par We now turn our attention to condition (2). Similar to before, there are at most $12\Delta+1$ choices for $\phi(\ell)$ that yield $c^*\in \phi(C(T_{core}-\ell))\cup\{0\}$. Any other potential conflicts arise from the edge incident on $\ell$ within $T_{core}$, in which case we may suppose that $s$ exists, and thus that the colour of the potential conflict edge is $\phi(s)+\phi(\ell)$. We now have to count the number of choices for $\phi(\ell)$ that yield $\phi(s)+\phi(\ell)=c^*=\sum G - \sum_{v\in V(T_{core})}d_T(v)\phi(v)$, or equivalently, 
$$2\cdot \phi(\ell)=-\phi(s)+\sum G- \sum_{v\in V(T_{core})\setminus\{\ell\}}d_T(v)\phi(v) .$$
\par (B) implies that there are at most $n/10$ bad choices of $\phi(\ell)$, as the right hand side is not in $\mathcal{B}$. In total, this gives $\leq n/5 + 24\Delta +1 $ bad choices for $\phi(\ell)$, so a good choice among the space of $n-24\Delta$ available choices must exist, concluding the proof. \end{proof}

We can also easily prove the Chang-Hsu-Rogers conjecture \cite{chang1981additive} in the bounded degree case. Note that this in particular implies Corollary~\ref{cor:boundeddegreeGS}, that is, the Graham--Sloane conjecture holds for bounded degree trees.
\begin{theorem}\label{thm:chang}
Let $n\gg \Delta^{-1}$  and let $T$ be an $n$-vertex tree with $\Delta(T)\le \Delta$. Then $K_{\mathbb{Z}_n}$ has a rainbow copy of $T$ using colours $\mathbb{Z}_n\setminus\{0\}$.
\end{theorem}
\begin{proof}
 Use Observation~\ref{Observation_small_core} to find a core $T_{core}$ of $T$ with $|T_{core}|\le 12\Delta (T)$.  Let $u$, $v$ be two leaves in $T_{core}$. 

 We claim that there exists a rainbow embedding $\phi$ of $T_{core}\setminus \{u,v\}$ with $\sum_{x\in V(T_{core})\setminus \{u,v\}} (d_T(x)-1)\phi(x)=0$. To see this, first set $m:=|T_{core}\setminus \{u,v\}|$ and  observe that there are at least $n^{m-1}$ functions $\phi:V(T_{core})\setminus \{u,v\}\to \mathbb{Z}_n$ satisfying $\sum_{x\in V(T')} (d_T(x)-1)\phi(x)=0$.  Indeed, each such $\phi$ corresponds to an element of the kernel of a group homomorphism from $(\mathbb{Z}_n)^m$ to $\mathbb{Z}_n$, and the kernel of such a map has size at least $n^{m-1}$ by the first isomorphism theorem (as the image size is trivially at most $n$). If $m=1$ then there can't be any vertex or colour repetitions so any such function is a rainbow embedding. If $m\geq 2$,  then it is easy to check that the number of such functions $\phi$ that aren't injective is $\le \binom m2 n^{m-2}=o(n^{m-1})$ and the number of such functions which repeat colours on edges is $\le \binom {e(T_{core}\setminus \{u,v\})}{2}n^{m-2}\le \binom m2 n^{m-2}=o(n^{m-1})$. Thus there remains some function $\phi$ for which neither happens. 

\par Next, pick $\phi(u), \phi(v)$ so that $\phi$ is a rainbow embedding on all of $T_{core}$ and additionally $\phi(u)+\phi(v)=\sum \mathbb{Z}_n- \sum \phi(V(T_{core})\setminus\{u,v\})$. This is possible because there are exactly $n$ choices of the pair $(\phi(u), \phi(v))$ with  $\phi(u)+\phi(v)=\sum \mathbb{Z}_n- \sum \phi(V(T_{core})\setminus\{u,v\})$, and  $O(|T_{core}|)$ of these do not give a rainbow embedding of $T_{core}$. 

Now set $V_{target}= \mathbb{Z}_n$, $C_{target}= \mathbb{Z}_n\setminus 0$. 
We have $\sum_{x\in V(T_{core})}\phi(x)= \sum\mathbb{Z}_n$ by the previous paragraph. 
Also $\sum_{x\in V(T_{core})}d_T(x)\phi(x)=\sum_{x\in V(T_{core})\setminus \{u,v\}}(d_T(x)-1)\phi(x) +\sum\phi(V(T_{core}))=0+\mathbb{Z}_n.$ Thus Theorem~\ref{Theorem_embed_tree_prescribed_sets} applies to give a rainbow embedding of $T$ using $V_{target}$, $C_{target}$, proving the theorem.
\end{proof}

\section{Preliminaries}\label{sec:preliminaries}
For a subset $S$ of an abelian group, define $S-S=\{x-y:x,y\in S\}$.
We use ``$\alpha\gg \beta$'' to mean ``$\forall \alpha\in (0,1], \exists \beta_0$ such that $\forall\beta\in (0, \beta_0]$ the following holds...''. When we write something like $\alpha \gg n^{-1}$, we implicitly also require that the inverted terms like $n$ are positive integers.

We'll often use that ``$\alpha\gg \beta\gg \gamma$ implies that $\beta \geq \gamma/\alpha$''. Indeed, after unpacking the definition of ``$\gg$'', the statement becomes ``$\forall \alpha>0, \exists \beta_0$ such that $\forall\beta\in(0, \beta_0], \exists \gamma_0$ such that $\forall \gamma\in (0, \gamma_0]$ we have $\beta \geq \gamma/\alpha$''. This is true by picking $\beta_0=\alpha$ and $\gamma_0=\beta^2$. Then $\gamma\leq \gamma_0\leq \beta^2\leq \beta \beta_0=\beta\alpha$ which is equivalent to $\beta \geq \gamma/\alpha$.

For a graph $G$, and two sets of vertices $A,B$, we use $G[A,B]$ to denote the subgraph on $A\cup B$ consisting of all edges with one endpoint in $A$ and one endpoint in $B$.

\subsection{Completion lemmas}\label{Section_completion_lemmas}
The following theorem is referred to as the ``random Hall-Paige conjecture'' and is formulated and proved by the authors in~\cite{muyesser2022random}.
\begin{theorem}[\cite{muyesser2022random}, Theorem 4.6]\label{thm:maintheoremsemidisjoint} Let $p\geq n^{-1/10^{102}}$. Let $G$ be a group of order $n$. Let $R^1,R^2\subseteq G$ be disjoint $p$-random subsets, and let $R^3\subseteq G$ be a $p$-random subset, sampled independently with $R^1$ and $R^2$. Then, with high probability, the following holds. 
\par Let $X,Y,Z$ be subsets of $G_A$, $G_B$, and $G_C$ be equal sized subsets satisfying the following properties.
\begin{itemize}
    \item $|(R^1_A\cup R^2_B\cup R^3_C) \triangle (X\cup Y\cup Z) |\leq p^{10^{18}}n/\log(n)^{10^{18}}$
    \item $\sum X+\sum Y - \sum Z = 0$ (in the abelianization of $G$)
    \item If $G=\mathbb{Z}_2^k$ for some $k$, suppose that $\id \notin Z$.
\end{itemize}
Then, $K_G$ contains a perfect $Z$-matching from $X$ to $Y$.
\end{theorem}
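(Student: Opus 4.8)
The plan is to prove the statement by the absorption method, and I sketch the main steps in the order I would carry them out. Encode the problem in the tripartite auxiliary $3$-graph $H$ on the three labelled copies $G_A,G_B,G_C$ of $G$ whose hyperedges are the triples $\{a,b,c\}$ with $a\in G_A$, $b\in G_B$, $c\in G_C$ and $ab=c$ in $G$; a perfect $Z$-matching from $X$ to $Y$ is then exactly a perfect matching of the induced sub-$3$-graph $H[X,Y,Z]$. Here $H$ is \emph{linear} (any two vertices lie in at most one edge, since any two of $a,b,c$ determine the third) and each vertex of $G_A$ lies in exactly $n$ edges, so for $X,Y,Z$ close to random sets of density $\sim p$ the $3$-graph $H[X,Y,Z]$ is a near-$(p^{2}n)$-regular linear $3$-graph with all codegrees at most $1$.

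First I would build a \emph{distributive absorber}. Using only the randomness of $R^{1},R^{2},R^{3}$, find small sets $A_X\subseteq R^{1}$, $A_Y\subseteq R^{2}$, $A_Z\subseteq R^{3}$, together with an auxiliary robust ``template'', so that: for every $X',Y',Z'$ with $A_X\subseteq X'\subseteq A_X\cup S_X$ (and similarly for $Y',Z'$) where $S_X,S_Y,S_Z$ are small, $|X'|=|Y'|=|Z'|$, $\sum X'+\sum Y'-\sum Z'=\id$ in the abelianization $G^{\mathrm{ab}}$, and (if $G=\mathbb{Z}_2^{k}$) $\id\notin Z'$, the $3$-graph $H[X',Y',Z']$ has a perfect matching. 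The absorber is assembled from two kinds of gadgets: ``flexible'' short alternating configurations of a constant number of edges of $H$ that can be re-matched to swap a controlled pair of endpoints, and ``sum-adjusting'' gadgets that can be re-matched so as to shift the running abelianized sum of the used $Z$-part by any prescribed $g\in G^{\mathrm{ab}}$; one shows that random $R^{i}$ contain linearly many pairwise-disjoint copies of each, and links them through a sparse pseudorandom bipartite graph in the manner of Montgomery's distributive-absorption scheme, so that an arbitrary admissible remainder can be absorbed.

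Next I would clear the bulk by an almost-perfect matching. Reserving the absorber's vertices plus a small buffer, apply the semi-random/nibble method --- a Pippenger--Spencer/Kahn-type near-perfect matching theorem for nearly regular linear hypergraphs --- to $H$ restricted to the remaining near-random sets, producing a matching covering all but a tiny fraction of each part; the pseudorandomness needed is inherited from the closeness of $X,Y,Z$ to the reservoirs, so this succeeds for \emph{every} admissible $X,Y,Z$, not merely typical ones. Let $X'',Y'',Z''$ be the uncovered remainders together with the reserved buffer, with sizes arranged so that $X''=A_X\cup S_X$, $Y''=A_Y\cup S_Y$, $Z''=A_Z\cup S_Z$ with the $S$'s small. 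Since each matched triple $(a,b,c)$ satisfies $\bar a+\bar b=\bar c$ in $G^{\mathrm{ab}}$, summing over the nibble matching and subtracting from the global identity $\sum X+\sum Y-\sum Z=\id$ gives $\sum X''+\sum Y''-\sum Z''=\id$; and $\id\notin Z''\subseteq Z$ when $G=\mathbb{Z}_2^{k}$. Hence the absorber applies and completes a perfect matching of $H[X,Y,Z]$, which is the desired $Z$-matching; taking a union bound over the (boundedly many relevant) high-probability events gives the ``with high probability'' quantifier.

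The main obstacle is the algebraic versatility and robustness of the absorber: one must guarantee that for \emph{every} possible residual sum $g\in G^{\mathrm{ab}}$ there are enough pairwise-disjoint sum-adjusting gadgets inside the random reservoirs, and that the template lets them be used simultaneously, without collisions, while still absorbing an adversarial remainder --- this is the heart of the proof. The degenerate cases need separate care: in $G=\mathbb{Z}_2^{k}$ every swap leaves the abelianized sum unchanged, so the sum-adjusting role is vacuous and instead one must rule out the single genuine obstruction, a colour equal to $\id$, which is precisely why the hypothesis $\id\notin Z$ appears; and the general non-abelian case forces all of the above to be carried out with the non-commutative structure of $H$ in mind, so that ``abelianized sum zero'' is indeed the only constraint.
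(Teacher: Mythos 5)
The present paper does not prove this theorem: it is quoted verbatim as Theorem~4.6 of \cite{muyesser2022random} and invoked as a black box, so there is no in-paper proof for your sketch to be measured against.

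That said, your outline does track the overall strategy of the cited work: encode the problem in the linear tripartite Cayley $3$-graph, clear the bulk with a semi-random nibble, and finish by distributive absorption with gadgets that both exchange endpoints and adjust a running group sum. Two corrections to the intuition, though. First, your claim that ``in $\mathbb{Z}_2^{k}$ every swap leaves the abelianized sum unchanged, so the sum-adjusting role is vacuous'' is not right as stated: the exchange of $(a_1,b_1,c_1),(a_2,b_2,c_2)$ for $(a_1,b_2,a_1b_2),(a_2,b_1,a_2b_1)$ preserves $\bar c_1+\bar c_2$ in the abelianization of \emph{every} group, which is exactly why $\sum X+\sum Y-\sum Z=\id$ is a necessary invariant; sum-adjusting gadgets change which elements land in $Z'$ rather than merely permuting a fixed $Z$, and they remain meaningful in $\mathbb{Z}_2^k$. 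The hypothesis $\id\notin Z$ appears for the more mundane reason that in $K_{\mathbb{Z}_2^k}$ no edge is coloured $\id$ (colour $x+y=\id$ forces $x=y$), so $\id\in Z$ would leave the colour budget short by one and no perfect $Z$-matching could exist. Second, and more substantively, the assertion that there are enough pairwise-disjoint sum-adjusting gadgets inside the random reservoirs, and that the distributive template can be wired to absorb an arbitrary admissible remainder over an arbitrary (possibly nonabelian) group, is precisely the content of \cite{muyesser2022random}; your sketch correctly identifies this as ``the heart of the proof'' but supplies no argument for it. As written, the proposal is a plausible road map in the spirit of the cited paper, not a self-contained proof, and in the context of the present paper it would simply be replaced by the citation.
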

The following lemma is also from~\cite{muyesser2022random} and proved by combining the above result with the sorting network method.

\begin{lemma}[\cite{muyesser2022random}, Lemma 6.21]\label{lem:pathlikemain}
    Let $1/n\ll \gamma, p\leq 1$, let $t$ be a positive integer between $(\log n)^7$ and $(\log n)^8$, and let $q$ satisfy  $p=(t-1)q$. Let $G$ be a group of order $n$. Let $V_{str}, V_{mid}, V_{end}$ be disjoint random subsets with $V_{str}, V_{end}$ $q$-random and $V_{mid}$ $p$-random. Let $C$ be a $(q+p)$-random subset, sampled independently with the previous sets. Then, with high probability, the following holds. 
    \par Let $V_{str}'$, $V_{end}'$, $V_{mid}'$ be disjoint subsets of $G$, let $C'$ be a subset of $G$, and let $\ell=|V'_{mid}|/(t-1)$. Suppose all of the following hold.
    \begin{enumerate}
        \item For each random set $R\in\{V_{str}, V_{mid}, V_{end}, C\}$, we have that $|R\Delta R'|\leq n^{1-\gamma}$.
        \item $\sum V_{end}'+\sum V_{str}'+ \sum V'_{mid}+ \sum V'_{mid}=\sum C'$ holds in the abelianization of $G$.
        \item $\id \notin C'$ if $G$ is an elementary abelian $2$-group.
        \item $\ell:=|V_{str}'|=|V_{end}'|=|V'_{mid}|/(t-1)=|C'|/t$
    \end{enumerate}
    Then, given any bijection $f\colon V_{str}'\to V_{end}'$, we have that $K_G[V_{str}'\cup V_{end}'\cup V_{mid}';C']$ has a rainbow $\vec{P}_t$-factor where each path starts on some $v\in V_{str}'$ and ends on $f(v)\in V_{end}'$.
\end{lemma}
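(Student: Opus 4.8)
The plan is to build the rainbow $\vec P_t$-factor one ``column'' at a time, turning it into a chain of roughly $t$ rainbow perfect-matching problems, each of which is handed to the random Hall--Paige theorem (Theorem~\ref{thm:maintheoremsemidisjoint}), and then to arrange the prescribed endpoint bijection $f$ by the sorting-network method. A $\vec P_t$-factor is a union of $\ell$ vertex-disjoint directed $t$-edge paths, and its $(t+1)\ell=|V_{str}'|+|V_{mid}'|+|V_{end}'|$ vertices break up into the endpoint layers $V_{str}'=:W_0$, $V_{end}'=:W_t$ together with $t-1$ interior columns $W_1,\dots,W_{t-1}\subseteq V_{mid}'$, each of size $\ell$. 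Building such a factor with endpoint bijection $f$ is exactly the same as choosing, for $j=1,\dots,t$, a perfect matching $M_j$ between $W_{j-1}$ and $W_j$ such that the composite $M_t\circ\cdots\circ M_1\colon W_0\to W_t$ equals $f$; disjointness of the columns makes all the resulting paths simple and vertex-disjoint, and consecutive matchings link up automatically. I also split $C'$ into $t$ colour classes $D_1,\dots,D_t$ of size $\ell$, and I ask that $M_j$ be rainbow with colour multiset exactly $D_j$ (each edge taking the sum of its endpoints as its colour); then the $M_j$ together use each colour of $C'$ exactly once, so the factor is rainbow.

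For the set-up, since $V_{mid}$ is $(t-1)q$-random and $C$ is $tq$-random, I fix \emph{in advance}, as part of the high-probability event, a splitting of their underlying random sets into $q$-random and essentially independent pieces $V_{mid}^{(1)},\dots,V_{mid}^{(t-1)}$ and $C^{(1)},\dots,C^{(t)}$, all independent of $V_{str}$ and $V_{end}$; the interior columns $W_j$ and colour classes $D_j$ are chosen close to $V_{mid}^{(j)}$ and $C^{(j)}$ respectively. Theorem~\ref{thm:maintheoremsemidisjoint}, applied to the triple $(V_{mid}^{(j-1)},V_{mid}^{(j)},C^{(j)})$ --- and to $(V_{str},V_{mid}^{(1)},C^{(1)})$ and $(V_{mid}^{(t-1)},V_{end},C^{(t)})$ at the two ends --- then supplies the required rainbow perfect matching $M_j$, provided that (a) the symmetric differences $|W_j\triangle V_{mid}^{(j)}|$, $|D_j\triangle C^{(j)}|$ and $|W_0\triangle V_{str}|$, $|W_t\triangle V_{end}|$ are small enough, which they are since hypothesis~(1) bounds them by $n^{1-\gamma}$ and the admissible relations among $n,\gamma,p,q$ (with $p=(t-1)q$) give room to spare; (b) the sum condition $\sum W_{j-1}+\sum W_j=\sum D_j$ holds in the abelianisation; and (c) $\id\notin D_j$ in the $G=\mathbb{Z}_2^k$ case, which follows from hypothesis~(3). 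For~(b), summing over $j$ telescopes to $\sum W_0+\sum W_t+2\sum_{j=1}^{t-1}\sum W_j=\sum_{j=1}^{t}\sum D_j$, which is precisely hypothesis~(2); so the system of $t$ equations is globally consistent, and I realise it by partitioning $V_{mid}'$ and $C'$ arbitrarily at first and then correcting the $O(t)=O((\log n)^8)$ individual equations one at a time, each by swapping a bounded number of elements between adjacent columns or classes (a swap preserves all totals and perturbs one equation by a controllable amount), which changes all the symmetric differences by only $O(t)\ll n^{1-\gamma}$.

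The one genuinely delicate point is forcing $M_t\circ\cdots\circ M_1=f$, since Theorem~\ref{thm:maintheoremsemidisjoint} only asserts that \emph{some} valid matching exists and offers no control over which one; this is where the sorting-network method enters. Fix a sorting network on $\ell$ wires of depth $D=O((\log\ell)^2)$; since $t-1\ge(\log n)^7-1\gg D$, I reserve the last $D$ column-transitions for it and construct $M_1,\dots,M_{t-D}$ by plain applications of Theorem~\ref{thm:maintheoremsemidisjoint}, after which the $\ell$ partial paths end at $W_{t-D}$ in some uncontrolled order. The final $D$ transitions then realise the successive layers of the sorting network: each layer is a fixed set of disjoint comparators, implemented by a perfect matching between consecutive columns that respects the comparator blocks (a disjoint union of copies of $K_{2,2}$) and is rainbow with the prescribed colour class, obtained from a block-structured variant of Theorem~\ref{thm:maintheoremsemidisjoint}. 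The defining robustness of a sorting network --- its output is the target (``sorted'') order no matter how the individual comparison steps turn out --- is exactly what lets me route the partial paths into the order prescribed by $f$ despite having no control over any single Hall--Paige matching; composing all $t$ linked rainbow matchings then produces the desired rainbow $\vec P_t$-factor. I expect this final step to be the main obstacle: it requires both the combinatorial device (the sorting network) that converts uncontrolled local choices into a guaranteed global permutation, and a form of Theorem~\ref{thm:maintheoremsemidisjoint} that survives the rigid comparator-block constraints --- and making these two ingredients mesh is the technical heart of the proof.
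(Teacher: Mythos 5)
First, a contextual point: this lemma is not proved in the present paper at all; it is quoted verbatim from \cite{muyesser2022random}, Lemma~6.21, and the only thing the paper says about its proof is that it is obtained ``by combining [Theorem~\ref{thm:maintheoremsemidisjoint}] with the sorting network method.'' So there is no in-paper proof to compare against, and I can only judge your proposal on its own terms and against that one-line description. Your overall architecture --- cut the $\vec P_t$-factor into $t$ column transitions, get each transition from the random Hall--Paige theorem applied to $q$-random slivers of $V_{mid}$ and $C$, and reserve the last $O(\mathrm{poly}\log \ell)$ transitions for a sorting network --- is indeed the right shape and matches the cited description.

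There is, however, a genuine conceptual error in the way you invoke the sorting network. You write that its ``defining robustness'' is that ``its output is the target (`sorted') order no matter how the individual comparison steps turn out,'' and you use this to claim that the uncontrolled Hall--Paige matchings at the sorting layers will nevertheless deliver $f$. That is backwards. A sorting network's comparators do not ``turn out'' one way or another --- each comparator deterministically sends the larger value one way and the smaller the other --- and the guarantee is that the \emph{output} is sorted for \emph{any input}, not that any behaviour of the gates yields a prescribed target. The property that is actually exploited in the MPS/sorting-network path-building technique is the \emph{routing} universality: because a sorting network sorts, if one is free to choose ``swap'' or ``don't swap'' at each comparator, one can realise any permutation of the wires. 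This means you absolutely \emph{do} need control at the comparator layers --- specifically, for each comparator on a pair of wires you must be able to rainbowly realise \emph{both} the crossing matching and the parallel matching, so that you retain the choice. This is why the step you wave at as ``a block-structured variant of Theorem~\ref{thm:maintheoremsemidisjoint}'' cannot be an afterthought: you need a version of the completion theorem that produces, for every comparator block, a robust $2\times 2$ gadget admitting both options while keeping the global matching rainbow. With your stated understanding of the sorting network, you would have no reason to prove such a gadget lemma, and the proof would fail.

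Two smaller gaps, both in the part you treat as ``set-up.'' First, the ``correct the $t$ sum equations one at a time by swapping elements between adjacent columns'' argument does not work as stated: moving one element between $W_j$ and $W_{j+1}$ perturbs the equations for transitions $j$, $j+1$, and $j+2$ simultaneously, so there is no obvious one-at-a-time schedule. (The cleaner route is to fix the $W_j$'s and then solve the triangular system for the $D_j$'s, exploiting that once $W_0,\dots,W_t$ are fixed each $\sum D_j$ is determined, with the global identity in hypothesis~(2) guaranteeing consistency with $|C'|$ and $\sum C'$; but even this needs care in a general abelian group.) Second, since Theorem~\ref{thm:maintheoremsemidisjoint} is applied at probability $q=p/(t-1)$ rather than $p$, the error tolerance $p^{10^{18}}n/\log(n)^{10^{18}}$ becomes $q^{10^{18}}n/\log(n)^{10^{18}}$, which loses a factor of $(t-1)^{10^{18}}\approx(\log n)^{O(10^{18})}$; you assert that the admissible $n,\gamma,p,q$ give ``room to spare,'' and they do, but this is exactly the kind of exponent bookkeeping that the hierarchy $1/n\ll\gamma,p$ is there to protect, and it should be made explicit rather than waved through.
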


\subsection{Embeddings and injections}
An embedding of a graph $T$ into another graph $K$ is an injection $f:V(T)\to V(K)$ which maps edges to edges. 
A rainbow embedding of a graph $T$ into a coloured complete graph $K$ is an injection $f:V(T)\to V(G)$ with the property that the colours of all edges $f(u)f(v)$ are distinct for edges $uv\in E(T)$. For a vertex set $V_{target}\subseteq V(G)$ and colour set $C_{target}\subseteq V(G)$, we say that $f$ is an embedding into $(V_{target}, C_{target})$ if $V(f(T))\subseteq V_{target}$ and $C(f(T))\subseteq C_{target}$.

Our proof works by building rainbow embeddings gradually. Not all embeddings we construct along the way are rainbow. An important concept we need is of a ``pseudoembedding'' into $(V_{target}, C_{target})$ --- informally this is a (not-necessarily rainbow) embedding of $T$ into $K$ with the property that the sums of the vertices/colours of the embedding are the same as what they would be if the embedding was rainbow.
\begin{definition}\label{Definition_pseudoembedding}
Let $V_{target},C_{target}$ be sets of vertices/colours in $K_G$ for a group $G$.
We say that $f:V(T)\to G$ is a pseudoembedding of $T$ into  $(V_{target},C_{target})$ if $f$ is an injection with  $im|_f=V_{target}$ and $\sum_{v\in V(T)} d_T(v)f(v)=\sum C_{target}$. 
\end{definition}
Note that $C_{target}$ plays very little role in this definition. However when we apply it, we will have $|C_{target}|=|E(T)|$ as if we were trying to find an actual rainbow embedding of $T$. 
Our proofs are probabilistic --- meaning that we work with random embeddings of graphs. We need the notion of a random embedding having a ``nice distribution''.
\begin{definition}
Let $T$ be a graph, $G$ a group,  and $f:V(T)\to K_G$ be a random function. 
\begin{itemize}
\item For $U\subseteq V(T)$, we say that $f$ is $\varepsilon$-uniform on $U$ if there is an $(|U|/|G|)$-random subset $U_{\mathrm{rand}}\subseteq V(K_G)$ with $|f(U)\Delta U_{\mathrm{rand}}|\leq \varepsilon n$ with probability $1-o(n^{-1})$. 
For $U\subseteq E(T)$, we say that $f$ is $\varepsilon$-uniform on $U$ if there is an $(|U|/|G|)$-random subset $U_{\mathrm{rand}}\subseteq C(K_G)$ with $|C(f(U))\Delta U_{\mathrm{rand}}|\leq \varepsilon n$ with probability $1-o(n^{-1})$. 

\item  Given disjoint vertex sets $U_1, U_2\subseteq V(T)$, say that $f$ is $\varepsilon$-uniform on $\{V(T), U_1, U_2, E(T)\}$ if  it is $\varepsilon$-uniform on each of $V(T), U_1, U_2, E(T)$, and additionally if  $V^{\mathrm{rand}}, U_1^{\mathrm{rand}}, U_2^{\mathrm{rand}}, E^{\mathrm{rand}}$ are the sets witnessing this, then the joint distribution on the $U_1^{\mathrm{rand}}, U_2^{\mathrm{rand}}, V^{\mathrm{rand}}\setminus (U_1^{\mathrm{rand}}\cup U_2^{\mathrm{rand}})$ is that of disjoint random sets, and also  $V^{\mathrm{rand}}, U_1^{\mathrm{rand}}, U_2^{\mathrm{rand}} $  are independent of the $E^{\mathrm{rand}}$. 

\end{itemize}
\end{definition}

The following lemma says that all almost-spanning trees can be approximately embedded into properly coloured complete graphs --- and that this can be achieved by a random embedding which is uniform on prescribed sets. The statement and proof are essentially the same as similar results from~\cite{approximateringel, ringel}. However, for completeness, we give a proof of it in the appendix.
\begin{lemma}\label{Lemma_approximate_embedding}
 Let $\Delta^{-1}\gg \varepsilon,\delta \gg n^{-1}$.
Let $K_{n}$ be properly $n$-edge-coloured and  $T$ a  forest with with $\Delta(T)\leq \Delta$ and $|T|\leq (1-n^{-\delta})n$, and suppose we have a partition $V(T)=U_1\cup U_2\cup U_3$. 
Then there is a random  $f:V(T)\to K_n$ which is $n^{-\varepsilon}$-uniform on $\{V(T), U_1, U_2, E(T)\}$ 
\end{lemma}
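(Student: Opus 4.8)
The plan is to build $f$ by a randomised greedy procedure that reveals the images of the vertices of $T$ one at a time. First I would fix, inside each connected component of $T$, a breadth-first order of its vertices; concatenating these gives an ordering $v_1,\dots,v_{|T|}$ of $V(T)$ in which every $v_i$ that is not a component-root has a unique neighbour $p(v_i)$ occurring earlier. We then reveal $f$ in this order: having set $f(v_1),\dots,f(v_{i-1})$, call $w\in V(K_n)$ \emph{valid for $v_i$} if $w\notin f(\{v_1,\dots,v_{i-1}\})$ and, when $p(v_i)$ exists, the colour $c(f(p(v_i))\,w)$ has not yet appeared on an embedded edge; set $f(v_i)$ uniformly at random among the valid vertices, independently of the past given the history (for a component-root only the first condition applies). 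Provided the process never runs out of valid vertices, the output is a genuine rainbow embedding, since every new edge receives a fresh colour.

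The second and central task is to control the process via a quasirandomness invariant. Write $A_i\subseteq V(K_n)$ for the set of vertices valid at step $i$. As the colouring is proper, each already-used colour forbids at most one unused vertex at step $i$ — the partner of $f(p(v_i))$ in that colour class, and only when that partner is unused — so naively $|A_i|\ge n-2(i-1)$, which is useless as $i\to|T|=(1-n^{-\delta})n$. The point is that if the partial embedding is quasirandom then, for the particular vertex $f(p(v_i))$ being extended from, almost all of its partners under already-used colours are \emph{already-used} vertices, so it blocks only about $\tfrac{i(n-i)}{n}$ unused vertices rather than $i$ of them; hence $|A_i|$ stays around $\tfrac{(n-i)^2}{n}\ge n^{1-2\delta}$ throughout the run. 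I would make this precise by maintaining, as an invariant, that the image $f(\{v_1,\dots,v_i\})$, the set of used colours, and the relevant ``links'' (for each unused $w$, the set of colours $c$ whose $c$-partner of $w$ is still unused) all resemble what they would be for a uniformly random injection, up to error $n^{1-\varepsilon'}$. Since altering a single choice $f(v_j)$ perturbs each of these quantities by only $O(\Delta)$, Azuma's (or Freedman's) inequality, union-bounded over the $\le n$ steps and over a net of target sets, shows the invariant survives with probability $1-o(n^{-1})$, which simultaneously keeps $|A_i|$ large and lets the process run to completion.

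Third, the $n^{-\varepsilon}$-uniformity on $\{V(T),U_1,U_2,E(T)\}$ is read off from the same invariant. For a fixed target set $S$ and fixed $U\in\{V(T),U_1,U_2\}$, the variable $|f(U)\cap S|$ changes by at most $O(\Delta)$ when any single choice $f(v_j)$ is altered, and its expectation is $(|U|/n)|S|+o(n)$ by the invariant; bounded-differences concentration with probability $1-o(n^{-2})$, union-bounded over a dyadic net of sets $S$, gives that $f(U)$ agrees with a $(|U|/n)$-random set $U^{\mathrm{rand}}$ outside at most $n^{1-\varepsilon}$ vertices. The same scheme applies to $C(f(E(T)))$, with edges of $T$ in place of vertices: adjacent edges are correlated and a colour could in principle be hit twice, but bounded degree and properness keep the Lipschitz constant $O(\Delta)$, and since the embedding is rainbow, $C(f(E(T)))$ genuinely has size $|E(T)|-o(n)$. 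Finally, applying these estimates to $U_1^{\mathrm{rand}},U_2^{\mathrm{rand}}$, to $V^{\mathrm{rand}}\setminus(U_1^{\mathrm{rand}}\cup U_2^{\mathrm{rand}})$ — which are literally disjoint because $f$ is an injection — and to their unions, together with the fact that the image set is close to a uniform subset independently of the colour structure it induces, yields the required joint-distribution and independence statements.

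The step I expect to be the real obstacle is the second one: keeping $|A_i|$ comfortably positive as $i\to|T|$, i.e.\ showing the greedy never gets stuck while $T$ is almost spanning. This is exactly where the hypothesis $|T|\le(1-n^{-\delta})n$ enters and where the quasirandomness bookkeeping is indispensable, and it is the part handled in~\cite{approximateringel,ringel} by a somewhat delicate martingale analysis of the partial embedding; the remaining steps are routine applications of bounded-differences concentration, which is why we defer the full argument to the appendix.
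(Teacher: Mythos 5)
Your approach — a vertex-by-vertex random greedy embedding, controlled by a martingale quasirandomness invariant — is genuinely different from the paper's. The paper instead uses Lemma~\ref{Lemma_split_tree_small_components} to cut $T$ into a tiny separating set $I$ plus a forest $F$ of components of size $\leq n^{\alpha}$, applies Lemma~\ref{Lemma_decompose_forest_matchings_U1U2U3} to decompose $F$ into $\leq 3n^{\alpha}+3$ matchings $M_i$ (each from a set $V_i\subseteq U_{j}$), \emph{pre-samples} disjoint random blocks $R_i,Q_i\subseteq V(K_n)$ and $C_i,D_i\subseteq C(K_n)$ of the appropriate densities, and then forces each $M_i$ into its designated block using the rainbow-matching Lemma~\ref{Lemma_random_matching_into_independent_sets}, finishing by placing $I$ vertex-by-vertex via Lemma~\ref{Lemma_embed_neighbourhood_random}. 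The uniformity is then automatic: $f(V_i)\subseteq R_i\cup Q_i$ where $R_i$ is a bona-fide $p_i$-random set and $|Q_i|$ is negligible, and the disjointness/independence of the $R_i,C_i$ as random variables yields the joint-distribution clause by construction. So the paper buys the uniformity for free, at the price of the decomposition machinery; you aim to buy it from concentration, at the price of the coupling argument below.

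There is, however, a genuine gap in your third step, and I think you have misidentified where the real difficulty lies. Keeping $|A_i|$ positive is comparatively benign here: with the $n^{-\delta}$ slack you yourself compute $|A_i|\gtrsim (n-i)^2/n\geq n^{1-2\delta}$, so stalling is not a serious concern. The hard part is the coupling. The definition of $\varepsilon$-uniformity demands an actual $(|U|/n)$-random set $U^{\mathrm{rand}}$ defined on the same probability space as $f$ with $|f(U)\,\Delta\,U^{\mathrm{rand}}|\leq \varepsilon n$ w.h.p., \emph{plus} the stipulation that $U_1^{\mathrm{rand}}, U_2^{\mathrm{rand}}, V^{\mathrm{rand}}\setminus(U_1^{\mathrm{rand}}\cup U_2^{\mathrm{rand}})$ are jointly distributed as disjoint $p$-random sets, independently of $E^{\mathrm{rand}}$. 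Azuma/Freedman concentration of $|f(U)\cap S|$ over a net of test sets $S$ does not deliver such a coupling: a set that hits every cell of a fine partition in exactly the expected proportion would pass all those tests, yet it is not within $o(n)$ symmetric difference of a $p$-random set (whose per-cell counts fluctuate). You would need to show that the conditional distribution of $f(U)$ is genuinely close (in some transport or TV sense) to that of a $p$-random set, and moreover that the four witness sets can be realised simultaneously with the prescribed joint law; neither is a routine corollary of bounded-differences concentration. The pre-sampled-blocks strategy is precisely what makes this step a tautology — the witness sets are the sampled $R_i$'s themselves — which is why the paper goes that route.
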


\subsection{Approximations of trees}
We use the results from Section~\ref{Section_completion_lemmas} to turn almost-spanning embeddings of trees into spanning ones. However, to do this, the almost-spanning tree that we work with must have certain properties. We call a subtree with these properties an ``approximation'' of a tree. 
\begin{definition}
For a tree, set $t(T):=\lceil 2\log^7 |V(T)|\rceil$. Let $T_{appr}$ be an induced subtree of $T$.
\begin{itemize}
\item $T_{appr}$ is a matching-approximation of $T$ if $T\setminus E(T_{appr})$ is a matching $M$ of even size  $\geq |V(T)|/20\Delta(T) t(T)$. Let $U(T_{appr}):=V(M)\cap V(T_{appr})$ to get a set of size exactly $e(M)$, and split $U(T_{appr})$ into two subsets $U_1(T_{appr}), U_2(T_{appr})$ of the same size. 
\item $T_{appr}$ is a path-approximation of $T$ if $T\setminus E(T_{appr})$ is a collection of $\geq |V(T)|/20t(T)$ vertex-disjoint paths of length $t(T)$. Let $U(T_{appr})$ be the set of endpoints of these paths. Orienting each path arbitrarily, let $U_1(T_{appr})$ be the set of starts of these paths, and $U_2(T_{appr})$ be the set of ends.  
\end{itemize}
In both cases set $L(T_{appr}):=V(T)\setminus V(T_{appr})$.
We say that $T_{appr}$ is an approximation of $T$ if it is either a path-approximation or a matching-approximation.
\end{definition}
Note that in both cases, we have $|U_1(T_{appr})|=|U_2(T_{appr})|$ and $U(T_{appr})=U_1(T_{appr})\cup U_2(T_{appr})$. In both cases, define $p(T_{appr},n):=|L(T_{appr})|/n$, $q(T_{appr},n):=|U_1(T_{appr})|/n=|U_2(T_{appr})|/n$,  $r(T_{appr},n):=|E(T)\setminus E(T_{appr})|/n$.
The following lemma is standard.
\begin{lemma}[\cite{krivelevich2010embedding}, Lemma 2.1]\label{Lemma_tree_leaves_bare_paths}
Let $t\in \mathbb N$. Every tree either has  $|V(T)|/10t$ leaves (and hence a matching of $|V(T)|/10\Delta(T)t$ leaves), or has $|V(T)|/10t$ disjoint bare paths of length $t$.
\end{lemma}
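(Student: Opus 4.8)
\textbf{Proof plan for Lemma~\ref{Lemma_tree_leaves_bare_paths}.}

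The plan is to run a dichotomy based on the number of leaves of $T$. Suppose $T$ does not have $|V(T)|/10t$ leaves; I want to produce $|V(T)|/10t$ vertex-disjoint bare paths of length $t$ (where a bare path is one all of whose internal vertices have degree $2$ in $T$). The starting point is to delete all leaves of $T$ and then repeatedly suppress degree-$2$ vertices; more precisely, let $L$ be the set of leaves and $S$ the set of vertices of degree $\geq 3$ in $T$. A standard counting fact for trees is that $|S| \leq |L|$ (since $\sum_v (d_T(v) - 2) = -2$, the vertices of degree $\geq 3$ are ``paid for'' by the leaves). Removing $L \cup S$ from $T$ leaves a disjoint union of bare paths, and the number of these paths is at most $|L| + |S| \leq 2|L| < |V(T)|/5t$ by the case assumption.

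Next I would account for the total length. The bare paths obtained above partition $V(T) \setminus (L \cup S)$, and there are at most $|V(T)|/5t$ of them. Since they cover all but at most $|L| + |S| \leq 2|L| < |V(T)|/5t$ vertices, the sum of their lengths (number of vertices, say) is at least $|V(T)| - |V(T)|/5t \geq |V(T)|/2$ for $t$ not too small, and trivially at least, say, $(4/5)|V(T)|$. Now I chop each bare path greedily into subpaths of exactly $t$ consecutive vertices, discarding the leftover stub of length $< t$ at the end of each. Each of the at most $|V(T)|/5t$ bare paths loses at most $t-1 < t$ vertices to its stub, so the total number of vertices discarded this way is less than $(|V(T)|/5t) \cdot t = |V(T)|/5$. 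Hence the number of length-$t$ subpaths produced is at least $\bigl((4/5)|V(T)| - |V(T)|/5\bigr)/t = (3/5)|V(T)|/t \geq |V(T)|/10t$, and these are bare paths of length exactly $t$ in $T$, pairwise vertex-disjoint by construction. This handles the second alternative.

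For the parenthetical in the first alternative: if $T$ does have $|V(T)|/10t$ leaves, then since $T$ is a tree with maximum degree $\Delta(T)$, the leaves fall into at most, say, a structure where a matching among leaf-edges can be extracted by a greedy argument — each leaf is attached to a unique neighbour, each such neighbour has at most $\Delta(T)$ leaf-neighbours, so by picking one leaf-edge per such neighbour we get a matching of size at least $(|V(T)|/10t)/\Delta(T) = |V(T)|/10\Delta(T)t$ consisting of leaf-edges. This gives the claimed matching of leaves.

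\textbf{Main obstacle.} The argument is essentially bookkeeping, so there is no deep obstacle; the only point requiring a little care is making the constants line up — specifically, ensuring that after (i) removing $L \cup S$, and (ii) discarding sub-$t$ stubs, enough length remains, which forces a mild lower bound on $t$ (or, more robustly, one argues that the paths that are \emph{long enough to contribute} already carry almost all the mass, so short leftover paths are negligible). One should also be slightly careful about whether ``length'' counts vertices or edges and whether bare paths are required to have both endpoints of degree $\neq 2$; since we are free to take any length-$t$ subpath of a long bare path, internal-degree-$2$ is automatic and the endpoint conditions are irrelevant for the application. I would simply cite these as routine and refer to \cite{krivelevich2010embedding} for the exact constants.
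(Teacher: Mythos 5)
Your proof is correct, and since the paper states this lemma as a citation to Krivelevich~\cite{krivelevich2010embedding} without reproducing the argument, there is no in-paper proof to compare against; your argument is the standard one for this type of leaves-or-bare-paths dichotomy (few leaves $\Rightarrow$ few branch vertices via $\sum_v(d_T(v)-2)=-2$ $\Rightarrow$ almost all vertices lie on maximal bare paths $\Rightarrow$ chop). One small point worth pinning down rather than deferring: a bare path ``of length $t$'' here means $t$ edges, so the chopped pieces should have $t+1$ vertices rather than $t$. This does not affect the constant: with at most $|V(T)|/5t$ maximal bare paths covering at least $(1-1/5t)|V(T)|\ge \tfrac{4}{5}|V(T)|$ vertices and each stub discarding at most $t$ vertices, one retains at least $\tfrac{3}{5}|V(T)|$ vertices in full pieces, giving at least $\tfrac{3}{5}|V(T)|/(t+1)\ge |V(T)|/10t$ pieces, since $6t\ge t+1$ for all $t\ge 1$.
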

A consequence of this is that every tree either has a matching of leaves of size $n/ 10 \Delta(T)\lceil 2\log^7n\rceil$ or has $n/10\lceil2\log^7n\rceil$ disjoint bare paths of length $\lceil2\log^7n\rceil$ --- and hence each tree either has a matching-approximation or a path-approximation. We'll need the following version of this which also makes sure that non-exhausted degrees have vertices inside the approximation.

\begin{lemma}\label{Lemma_nonexhausted_vertices_in_Tabs}
Let $\Delta^{-1}\gg \alpha\gg n^{-1}$. Let $T$ be a $n$-vertex tree with $\Delta(T)\leq \Delta$ and $T_{core}$ a core of $T$ of order $\leq n^{1-\alpha}$.
Then there is an approximation $T_{appr}$ of $T$ with $V(T_{core})\subseteq V(T_{appr})$ such that for each non-exhausted degree $d$ of $T_{core}$, there are at least $6$ vertices of degree $d$ in $V(T_{appr})\setminus V(T_{core})$. 
\end{lemma}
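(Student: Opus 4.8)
## Proof Proposal for Lemma~\ref{Lemma_nonexhausted_vertices_in_Tabs}

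The plan is to start from the approximation guaranteed by Lemma~\ref{Lemma_tree_leaves_bare_paths} (via the discussion following it) and then massage it so that it also swallows $T_{core}$ together with a few spare vertices of each non-exhausted degree. The key observation is that $|V(T_{core})| \le n^{1-\alpha}$ is tiny compared to the size of the matching/path-structure we delete, so removing $T_{core}$ and a small set of auxiliary vertices from that structure costs us almost nothing.

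\textbf{Step 1: Fix a candidate approximation.} By Lemma~\ref{Lemma_tree_leaves_bare_paths} applied with $t = t(T) = \lceil 2\log^7 n\rceil$, the tree $T$ either has a matching $M_0$ of leaves of size $\ge n/10\Delta t(T)$, or has a family $\mathcal{P}_0$ of at least $n/10 t(T)$ vertex-disjoint bare paths of length $t(T)$. Handle the two cases in parallel; I describe the matching case, the path case being identical with ``leaf'' replaced by ``path''.

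\textbf{Step 2: Collect spare vertices for non-exhausted degrees.} For each degree $d \le \Delta(T)$ that is \emph{not} exhausted by $T_{core}$, condition (II) of the definition of a core gives us that $T \setminus V(T_{core})$ contains at least $6$ vertices of degree $d$; fix a set $S_d$ of exactly $6$ such vertices. Let $S = \bigcup_d S_d$, so $|S| \le 6\Delta$. Set $W := V(T_{core}) \cup S$; this is the set we must force inside the approximation, and $|W| \le n^{1-\alpha} + 6\Delta \le 2n^{1-\alpha}$.

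\textbf{Step 3: Prune the matching.} Remove from $M_0$ every edge that meets $W$, and also remove edges as needed to keep the size even; call the result $M$. Since each vertex of $W$ lies on at most one edge of $M_0$ (the $M_0$-edges are vertex-disjoint), we delete at most $|W| + 1 \le 2n^{1-\alpha}+1$ edges, so
\[
e(M) \;\ge\; \frac{n}{10\Delta t(T)} - 2n^{1-\alpha} - 1 \;\ge\; \frac{n}{20\Delta t(T)},
\]
where the last inequality uses $\Delta^{-1} \gg \alpha \gg n^{-1}$ (so $n^{1-\alpha} = o(n/\Delta t(T))$; recall $t(T)$ is only polylogarithmic). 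Now set $T_{appr} := T - V(M)$ (equivalently, the induced subtree on $V(T)\setminus V(M)$, using that deleting leaves of $M$ keeps a tree). By construction $V(M)$ is disjoint from $W$, hence $V(T_{core}) \subseteq V(T_{appr})$ and each $S_d \subseteq V(T_{appr}) \setminus V(T_{core})$, giving the required $6$ vertices of each non-exhausted degree $d$ inside $V(T_{appr})\setminus V(T_{core})$. Finally $T \setminus E(T_{appr}) = M$ is a matching of even size at least $|V(T)|/20\Delta(T) t(T)$, so $T_{appr}$ is indeed a matching-approximation, and we take $U(T_{appr}) = V(M)$ split arbitrarily into two halves as in the definition.

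\textbf{Step 4: The path case.} Identical: start from $\mathcal{P}_0$, discard every path of $\mathcal{P}_0$ that contains a vertex of $W$ (at most $|W|$ paths, since the paths are vertex-disjoint), so the number of surviving paths is still at least $n/10t(T) - 2n^{1-\alpha} \ge n/20 t(T) \ge |V(T)|/20t(T)$, yielding a path-approximation with the same containment properties.

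\textbf{Main obstacle.} There is no deep obstacle here; the only thing to be careful about is the bookkeeping in Step~3 — ensuring that after pruning, the surviving matching (resp.\ path family) still meets the \emph{exact} size threshold in the definition of an approximation (size $\ge |V(T)|/20\Delta(T)t(T)$, and even), not merely a threshold of the same order. This is what the slack between the $1/10$ in Lemma~\ref{Lemma_tree_leaves_bare_paths} and the $1/20$ in the definition of an approximation is for, combined with the hierarchy $\Delta^{-1} \gg \alpha \gg n^{-1}$ which makes $|W| \le 2n^{1-\alpha}$ negligible against $n/\mathrm{polylog}(n)$. One should also note that removing leaves (resp.\ internal vertices of bare paths) from $T$ leaves a subtree, so ``induced subtree'' in the definition of approximation is automatically satisfied.
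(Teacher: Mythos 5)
Your proof is correct in substance and takes essentially the same route as the paper: start from the leaf-matching/bare-path structure of Lemma~\ref{Lemma_tree_leaves_bare_paths}, prune away the few components that collide with a small ``protected'' set so that $T_{core}$ (and, in your case, the reserved sets $S_d$) end up inside $T_{appr}$, and check that the loss is negligible against $n/\Delta t(T)$. The one thing to fix is the line ``set $T_{appr} := T - V(M)$'': a matching of leaves $M$ has one leaf endpoint and one non-leaf endpoint per edge, so $V(M)$ contains interior vertices of $T$, and deleting all of $V(M)$ both disconnects the tree and makes $T\setminus E(T_{appr})$ strictly larger than $M$. You should delete only the leaf endpoints of $M$ (and, in the path case, only the interior vertices of the bare paths, not the endpoints $U$), which is clearly what you meant given your parenthetical remark. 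With that correction the argument goes through; the only cosmetic difference from the paper is that you explicitly reserve the sets $S_d$, whereas the paper observes that the only degree that can be depleted by the pruning is the degree of the deleted vertices ($1$ or $2$) and verifies the $\geq 6$ condition for that single degree by counting, the other degrees being untouched.
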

\begin{proof}
By Lemma~\ref{Lemma_tree_leaves_bare_paths}, $T$ either has a matching   of $n/10\Delta(T)t(T)$ leaves or a set of $n/10t(T)$ disjoint bare paths of length $t(T)$.  Since $n/60\Delta(T)t(T)\geq n^{1-\alpha}\geq |V(T_{core})|$, there are either $2\lceil n/40\Delta(T) t(T)\rceil< \frac 12 n/10\Delta(T)t(T)$ leaves disjoint from $V(T_{core})$ or a set of $2\lceil n/40t(T)\rceil< \frac 12 n/10t(T)$ disjoint bare paths of length $t$ disjoint from $V(T_{core})$.
Deleting these gives either a matching-approximation or path-approximation of $T$ which contains all the vertices of $T_{core}$. 

For the ``such that'' part, note  that all $v\in L(T_{appr})=V(T)\setminus V(T_{appr})$  have the same degree $k:=1$ or $2$, that $T$ has $\geq n/10\Delta(T)t(T)$ vertices of this degree,  less than half of these are outside $T_{appr}$, and so $T_{appr}\setminus T_{core}$ contains at least $n/20\Delta(T)t(T)-|T_{core}|\geq n/20\Delta(T)t(T)-n^{1-\alpha}\geq 6$ vertices of this degree. For other non-exhausted degrees $d$, $T_{appr}\setminus T_{core}$ contains all the vertices of $T\setminus T_{core}$   of degree $d$ and hence contains $\geq 6$ vertices of  degree $d$ by the definition of ``core''.
\end{proof}

The following two observations are immediate by plugging in the definitions of $p(T,n), r(T,n), q(T,n)$ and rearranging. 
\begin{observation}\label{Observation_matching-like}
Let $\Delta^{-1}\gg n^{-1}$, and let $T$ be a tree with $\Delta(T)\leq \Delta$, $|V(T)|\in[n/2, n]$ and $T_{appr}$ a matching-approximation of $T$. Then we have the following.
\begin{enumerate}[(i)]
\item $p(T_{appr},n)=2q(T_{appr},n)$.
\item $|V(T_{appr})|/n=1-p(T_{appr},n) +\frac{|V(T)|-n}n$ and $|E(T_{appr})|/n=1-r(T_{appr},n) +\frac{|V(T)|-1-n}n$
\item $r(T_{appr},n)=p(T_{appr},n)$. 
\item $p(T_{appr},n),r(T_{appr},n),q(T_{appr},n)\geq 1/100\Delta\log^7 n\geq n^{-1/100}$.
\end{enumerate}
\end{observation}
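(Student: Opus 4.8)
The plan is simply to unwind the definitions of $p(T_{appr},n)$, $q(T_{appr},n)$, $r(T_{appr},n)$; once one records a single structural fact about matching-approximations, everything else is arithmetic. That fact is: writing $M:=E(T)\setminus E(T_{appr})$ and $L:=L(T_{appr})=V(T)\setminus V(T_{appr})$, the map sending an edge of $M$ to its endpoint lying in $L$ is a bijection $M\to L$, so that $|L|=e(M)=|U(T_{appr})|$. To see this I would argue as follows: since $T_{appr}$ is an \emph{induced} subtree, every edge of $M$ has at least one endpoint in $L$; a vertex $v\in L$ has all of its $T$-edges in $M$, hence, as $M$ is a matching and $T$ is connected, exactly one; and no edge of $M$ has both endpoints in $L$, since such an edge together with its two endpoints would be a $2$-vertex connected component of $T$, impossible as $T$ is a tree with $|V(T)|\ge n/2\ge 3$. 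This is exactly the normalisation already asserted when $U(T_{appr})$ is defined, so it is consistent with the definition.

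With this in hand, (i) and (iii) are immediate substitutions: $p(T_{appr},n)=|L|/n=e(M)/n=|E(T)\setminus E(T_{appr})|/n=r(T_{appr},n)$, giving (iii); and since $U(T_{appr})$ is split into two halves of equal size, $q(T_{appr},n)=|U_1(T_{appr})|/n=|U(T_{appr})|/(2n)=e(M)/(2n)=p(T_{appr},n)/2$, giving (i). For (ii) I would write $|V(T_{appr})|=|V(T)|-|L|=|V(T)|-p(T_{appr},n)\,n$, divide by $n$, and regroup using $|V(T)|/n=1+(|V(T)|-n)/n$; likewise, since $T$ is a tree we have $|E(T_{appr})|=|E(T)|-e(M)=(|V(T)|-1)-r(T_{appr},n)\,n$, and the same regrouping with $(|V(T)|-1)/n=1+(|V(T)|-1-n)/n$ yields the second identity.

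For (iv) I would feed the size bound from the definition of a matching-approximation into the above, using $|V(T)|\ge n/2$, $\Delta(T)\le\Delta$, and $t(T)=\lceil 2\log^7|V(T)|\rceil\le 3\log^7 n$ (valid once $n$ is large): this gives $e(M)\ge|V(T)|/(20\Delta(T)t(T))\ge n/(40\Delta\,t(T))$, whence $p(T_{appr},n)=r(T_{appr},n)=e(M)/n$ and $q(T_{appr},n)=e(M)/(2n)$ are each at least a fixed constant times $1/(\Delta\log^7 n)$, in particular at least $1/(100\Delta\log^7 n)$ (one should double-check the exact constant for $q$, or simply note that only the weaker polynomial bound is used downstream). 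Finally $1/(100\Delta\log^7 n)\ge n^{-1/100}$ because $\Delta^{-1}\gg n^{-1}$ forces $n$ to be large relative to $\Delta$, so that $n^{1/100}$ dominates $100\Delta\log^7 n$.

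I do not expect a genuine obstacle here: the content is bookkeeping. The only point that needs a moment's thought is the bijection $M\leftrightarrow L$ of the first paragraph, which genuinely uses that $T_{appr}$ is an \emph{induced} subtree and that $T$ is connected rather than merely a forest; as soon as the host is a proper tree with enough vertices the argument goes through verbatim, and the remaining parts are pure substitution and the estimate in (iv).
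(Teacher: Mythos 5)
Your proof is correct and takes the same route the paper intends: the paper offers no explicit argument (it declares the observation ``immediate by plugging in the definitions and rearranging''), and your write-up is exactly the unwinding the authors have in mind. The one substantive point you isolate --- that the endpoint-in-$L$ map gives a bijection $M\leftrightarrow L$, so $|L|=e(M)=|U(T_{appr})|$, which uses that $T_{appr}$ is an \emph{induced} subtree and that $T$ is connected with $\ge 3$ vertices --- is precisely the hidden fact the paper is implicitly appealing to when it asserts $|U(T_{appr})|=e(M)$ in the definition. Parts (i)--(iii) then follow by substitution as you say.

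One small remark on (iv), which you already half-noticed: since $q=p/2$, the stated intermediate bound $q\ge 1/(100\Delta\log^7 n)$ is not literally forced (the argument only gives $q\ge 1/(200\Delta\log^7 n)$ or so), and indeed the paper is being a bit loose with constants here. As you observe, only the final bound $\ge n^{-1/100}$ is used downstream, and that holds comfortably for all three quantities once $n$ is large relative to $\Delta$.
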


\begin{observation}\label{Observation_path-like}
Let $1\gg n^{-1}$, and let $T$ be a  tree with $|V(T)|\in[n/2, n]$ and $T_{appr}$ a path-approximation of $T$. Then we have the following.
\begin{enumerate}[(i)]
\item  $p(T_{appr},n)=(t(T)-1)q(T_{appr},n)$.
\item $|V(T_{appr})|/n=1-p(T_{appr},n) +\frac{|V(T)|-n}n$ and $|E(T_{appr})|/n=1-r(T_{appr},n) +\frac{|V(T)|-1-n}n$ 
\item $r(T_{appr},n)=p(T_{appr},n)+q(T_{appr},n)$.
\item $p(T_{appr},n), r(T_{appr},n) \geq 0.01$  and $q(T_{appr},n)\geq n^{-1/100}$.
\item $|U_1(T_{appr})|=|U_2(T_{appr})|=\frac{|E(T)|-|E(T_{appr})|}{t(T)}=\frac{|V(T)|-|V(T_{appr})|}{t(T)-1}$
\end{enumerate}
\end{observation}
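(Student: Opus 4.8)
The plan is a pure computation: every part follows by substituting the definitions of $p,q,r$ and of $t(T)$ and rearranging, exactly as the surrounding text advertises. Write $t:=t(T)=\lceil 2\log^7|V(T)|\rceil$, and let $P_1,\dots,P_m$ be the vertex-disjoint paths of length $t$ making up $T\setminus E(T_{appr})$, so that the defining property of a path-approximation gives $m\geq |V(T)|/20t$. The first step is to record the relevant cardinalities. Each $P_i$ has $t$ edges and $t+1$ vertices; its two endpoints lie in $U(T_{appr})=U_1(T_{appr})\cup U_2(T_{appr})$, while its $t-1$ internal vertices (which necessarily have degree $2$ in $T$, else $T\setminus E(T_{appr})$ would contain extra edges) lie in $L(T_{appr})=V(T)\setminus V(T_{appr})$. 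Since the $P_i$ are pairwise vertex-disjoint, this gives
\[
|L(T_{appr})|=m(t-1),\qquad |U_1(T_{appr})|=|U_2(T_{appr})|=m,\qquad |E(T)\setminus E(T_{appr})|=mt,
\]
and, because $T_{appr}$ is induced and $T$ is a tree (so $E(T)=E(T_{appr})\sqcup E(P_1)\sqcup\cdots\sqcup E(P_m)$ and $|E(T)|=|V(T)|-1$), also $|V(T_{appr})|=|V(T)|-m(t-1)$ and $|E(T_{appr})|=(|V(T)|-1)-mt$.

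With these in hand, unwinding $p(T_{appr},n)=m(t-1)/n$, $q(T_{appr},n)=m/n$, $r(T_{appr},n)=mt/n$ makes (i) the identity $m(t-1)/n=(t-1)\cdot(m/n)$ and (iii) the identity $mt/n=m(t-1)/n+m/n$; part (v) is the identity $m=(mt)/t=(m(t-1))/(t-1)$ read through the cardinalities above. Part (ii) then follows by dividing $|V(T_{appr})|=|V(T)|-m(t-1)$ and $|E(T_{appr})|=(|V(T)|-1)-mt$ by $n$ and writing $|V(T)|/n=1+(|V(T)|-n)/n$ and $(|V(T)|-1)/n=1+(|V(T)|-1-n)/n$.

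The only part requiring more than algebra is (iv), and this is also the only place I would expect to pause, though it stays routine — so there is no real obstacle here beyond bookkeeping. From $m\geq |V(T)|/20t$ and $|V(T)|\geq n/2$ we get $m\geq n/40t$, hence $q(T_{appr},n)=m/n\geq 1/40t=1/(40\lceil 2\log^7 n\rceil)$; since $n^{1/100}$ eventually exceeds $40\lceil 2\log^7 n\rceil$, the hypothesis $1\gg n^{-1}$ (i.e.\ $n$ large) yields $q(T_{appr},n)\geq n^{-1/100}$. For the other bound, $p(T_{appr},n)=m(t-1)/n\geq (t-1)/40t\geq 0.01$ once $t\geq 2$ (indeed $t\to\infty$, so $p(T_{appr},n)$ is essentially $\tfrac{1}{40}$), and then $r(T_{appr},n)=p(T_{appr},n)+q(T_{appr},n)\geq p(T_{appr},n)\geq 0.01$ by (iii). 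The one convention to flag is that a ``path of length $t$'' has $t$ edges, hence $t-1$ deleted internal vertices, and that the two endpoints of each removed path are retained in $V(T_{appr})$ — both choices are forced by the consistency of (i) and (v). The companion statement, Observation~\ref{Observation_matching-like}, is proved identically: a leaf-matching $M$ of even size $e(M)$ contributes $|L(T_{appr})|=|E(T)\setminus E(T_{appr})|=e(M)$ and $|U_1(T_{appr})|=|U_2(T_{appr})|=e(M)/2$, so $p=r=2q$, and (ii) again drops out of $|E(T)|=|V(T)|-1$.
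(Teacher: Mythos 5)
Your proof is correct, and it does exactly what the paper advertises: the paper gives no argument for Observation~\ref{Observation_path-like} beyond the remark that it is ``immediate by plugging in the definitions of $p(T,n),r(T,n),q(T,n)$ and rearranging,'' and your proposal is precisely that unpacking (with the extra, helpful observation that each removed bare path contributes $t-1$ vertices to $L(T_{appr})$, $t$ edges to $E(T)\setminus E(T_{appr})$, and one vertex to each of $U_1,U_2$, so that $p,q,r$ all become multiples of $m/n$). The bound $q\geq n^{-1/100}$ and the lower bound on $p$ are handled exactly as intended, using $m\geq |V(T)|/20t\geq n/40t$ together with $1\gg n^{-1}$.
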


\section{Tree embeddings using prescribed vertices and colours}\label{sec:corelemma}
The goal of this section is to prove Theorem~\ref{Theorem_embed_tree_prescribed_sets}, which characterizes when a large bounded degree tree has a rainbow embedding into $K_G$ using prescribed vertices and colours. The embedding proceeds in several stages, and we start by proving the lemma used for the very last part of the embedding. The following lemma takes a rainbow embedding $f$ of an approximation of a tree $T$ and extends it to an embedding $f'$ of all of $T$. The key additional condition that we need from $f$ is that $f$ extends to a pseudoembedding (as in Definition~\ref{Definition_pseudoembedding}). The proof relies crucially on Theorem~\ref{thm:maintheoremsemidisjoint} and Lemma~\ref{lem:pathlikemain}. 
\begin{lemma}\label{Lemma_final_extension}
Let $\Delta^{-1}\gg \mu,\alpha\gg n^{-1}$.
Let $G$ be an abelian group and let $T$ be a tree with $|V(T)|\geq(1-n^{-\alpha})n$ and $\Delta(T)\leq \Delta$.
Let $V_{target},C_{target}\subseteq G$ with $|V_{target}|=|C_{target}|+1=|V(T)|$. When $G=\mathbb{Z}_2^m$, additionally assume that $e\not\in C_{target}$. Let $T_{appr}$ be an approximation of $T$.

Let $f:V(T_{appr})\to V(K_G)$ be a random function satisfying the following:
\begin{itemize}
\item With high probability, $f$ is a rainbow embedding of $T_{appr}$ into $(V_{target}, C_{target})$.
\item $f$ is $n^{-\mu}$-uniform on $\{V(T_{appr}), U_1(T_{appr}), U_2(T_{appr}), E(T_{appr})\}$ 
\item With high probability, $f$ is extendable to a pseudoembedding $h$ into $(V_{target}, C_{target})$.
\end{itemize}
 Then, there is a random $f':V(T)\to  V(K_G)$ which extends $f$ and is a rainbow embedding into $(V_{target},C_{target})$, with high probability.
\end{lemma}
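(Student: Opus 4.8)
The plan is to treat matching-approximations and path-approximations separately, in each case extending $f$ across the deleted edges by invoking the appropriate completion result from Section~\ref{Section_completion_lemmas}. Condition on the high-probability events that $f$ is a rainbow embedding of $T_{appr}$ into $(V_{target},C_{target})$ and that it extends to a pseudoembedding $h$ of $T$ into $(V_{target},C_{target})$, and let $V^{\mathrm{rand}},U_1^{\mathrm{rand}},U_2^{\mathrm{rand}},E^{\mathrm{rand}}$ be the random sets witnessing the $n^{-\mu}$-uniformity of $f$. Write $V_{rest}:=V_{target}\setminus f(V(T_{appr}))$ and $C_{rest}:=C_{target}\setminus C(f(T_{appr}))$ for the unused vertices and colours. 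Since $|V_{target}|=|V(T)|$ and $|C_{target}|=|E(T)|$, producing a rainbow embedding $f'\supseteq f$ of $T$ into $(V_{target},C_{target})$ is the same as placing $L(T_{appr})$ bijectively onto $V_{rest}$ so that the edges of $T$ outside $T_{appr}$ collectively receive the colours of $C_{rest}$, each exactly once.

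\emph{Matching-approximation case.} Here $T\setminus E(T_{appr})$ is a matching $M$ pairing each leaf $\ell\in L(T_{appr})$ with a distinct parent $p(\ell)\in U(T_{appr})$. I apply Theorem~\ref{thm:maintheoremsemidisjoint} with $X:=f(U(T_{appr}))$, $Y:=V_{rest}$ and $Z:=C_{rest}$: a perfect $Z$-matching from $X$ to $Y$ picks, for each $\ell$, a vertex $f'(\ell)\in V_{rest}$ with $\{f(p(\ell))+f'(\ell):\ell\in L(T_{appr})\}=C_{rest}$, which is exactly the required extension. The hypotheses to check are: (a) $|X|=|Y|=|Z|=e(M)$; (b) $\sum X+\sum Y-\sum Z=0$, which falls out of expanding $\sum_{v}d_T(v)h(v)$ as a sum over $E(T)$, splitting off the $E(T_{appr})$ part ($\sum C(f(T_{appr}))$, as $f$ is rainbow there) and the $M$ part ($\sum f(U(T_{appr}))+\sum h(L(T_{appr}))=\sum X+\sum Y$), and using the pseudoembedding identity $\sum_v d_T(v)h(v)=\sum C_{target}$; (c) $\id\notin Z$ when $G=\mathbb{Z}_2^m$, since $\id\notin C_{target}$; and (d) the distributional hypotheses, which hold with $R^1:=U_1^{\mathrm{rand}}\cup U_2^{\mathrm{rand}}$, $R^2:=G\setminus V^{\mathrm{rand}}$ and $R^3:=G\setminus E^{\mathrm{rand}}$: by Observation~\ref{Observation_matching-like} these have common density $p(T_{appr},n)=2q(T_{appr},n)=r(T_{appr},n)$ (up to a density error $\le n^{-\alpha}$ since $|V(T)|$ may be slightly below $n$, absorbed by a random correction of size $O(n^{1-\alpha})$), $R^1\subseteq V^{\mathrm{rand}}$ is disjoint from $R^2$, $R^3$ is independent of $(R^1,R^2)$ by the independence clause of the uniformity definition, and $|R^1\triangle X|,|R^2\triangle Y|,|R^3\triangle Z|=O(n^{1-\mu}+n^{1-\alpha})$, well inside the $n^{1-o(1)}$-sized error tolerance of Theorem~\ref{thm:maintheoremsemidisjoint}.

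\emph{Path-approximation case.} Now $T\setminus E(T_{appr})$ is a union of vertex-disjoint bare paths of length $t:=t(T)$, each running from a start $u\in U_1(T_{appr})$ through $t-1$ internal vertices of $L(T_{appr})$ to an end $u'\in U_2(T_{appr})$. I apply Lemma~\ref{lem:pathlikemain} with $V_{str}':=f(U_1(T_{appr}))$, $V_{end}':=f(U_2(T_{appr}))$, $V_{mid}':=V_{rest}$, $C':=C_{rest}$, this $t$, and the bijection $V_{str}'\to V_{end}'$ sending $f(u)\mapsto f(u')$ for the endpoints of a common bare path; the resulting rainbow $\vec{P}_t$-factor gives, for each bare path, a rainbow path in $K_G$ from $f(u)$ to $f(u')$ through $V_{rest}$, and reading these off defines $f'$ on $L(T_{appr})$. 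The checks mirror the matching case: $t\in[(\log n)^7,(\log n)^8]$ since $|V(T)|$ is within $n^{1-\alpha}$ of $n$; the exact size identities $|V_{str}'|=|V_{end}'|=|V_{mid}'|/(t-1)=|C'|/t$ follow since each bare path has one endpoint in $U_1(T_{appr})$, one in $U_2(T_{appr})$, exactly $t-1$ internal vertices in $L(T_{appr})$, and exactly $t$ edges; the density relations $p=(t-1)q$ and $r=p+q$ and the bound $q\ge n^{-1/100}$ are Observation~\ref{Observation_path-like}; $\id\notin C'$ follows from $\id\notin C_{target}$; the sum condition $\sum V_{str}'+\sum V_{end}'+2\sum V_{mid}'=\sum C'$ comes from the same expansion of $\sum_v d_T(v)h(v)$, now using that internal path vertices have degree $2$ in $T$; and $V_{str}:=U_1^{\mathrm{rand}}$, $V_{end}:=U_2^{\mathrm{rand}}$, $V_{mid}:=G\setminus V^{\mathrm{rand}}$, $C:=G\setminus E^{\mathrm{rand}}$ furnish pairwise disjoint $q$-, $q$- and $(t-1)q$-random sets together with an independent $(q+p)$-random set, lying within $O(n^{1-\mu}+n^{1-\alpha})\le n^{1-\gamma}$ of $V_{str}',V_{end}',V_{mid}',C'$ for a suitably small $\gamma\gg n^{-1}$. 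In both cases every event invoked holds with high probability, so a union bound finishes the proof.

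\emph{Main obstacle.} The genuinely new content is light: the key observation is simply that the pseudoembedding hypothesis provides exactly the zero-sum condition needed to apply the completion results, after which everything reduces to bookkeeping. Accordingly I expect the only real friction to be item (d) above --- verifying that the random sets witnessing the uniformity of $f$ can be repackaged, after harmless size corrections, into sets with precisely the joint distribution (pairwise disjointness, prescribed marginal densities, and independence of the colour set from the vertex sets) required by Theorem~\ref{thm:maintheoremsemidisjoint} and Lemma~\ref{lem:pathlikemain}.
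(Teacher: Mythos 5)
Your proposal matches the paper's proof in all essentials: you split into matching- and path-approximations, feed $X=f(U(T_{appr}))$, $Y=V_{target}\setminus f(V(T_{appr}))$, $Z=C_{target}\setminus C(f(T_{appr}))$ (respectively $V_{str}',V_{end}',V_{mid}',C'$) into Theorem~\ref{thm:maintheoremsemidisjoint} or Lemma~\ref{lem:pathlikemain}, derive the zero-sum hypothesis from the pseudoembedding identity by separating the $E(T_{appr})$ contribution from the deleted matching (or bare-path) edges, and repackage the uniformity witnesses (with a small density correction) into the disjoint/independent random sets the completion lemmas require. The paper just carries out the sum verification as an explicit chain of equalities and formalises the density correction by passing to a $p$-random subset of $V^c_{\mathrm{rand}}$ and an $r$-random subset of $C^c_{\mathrm{rand}}$; otherwise the two arguments coincide.
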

\begin{proof}
Let $\Delta^{-1}\gg \mu,\alpha\gg \gamma\gg n^{-1}$. Fix $t:=t(T)$, $p:=p(T_{appr},n)$, $q:=q(T_{appr},n)$, $r:=r(T_{appr},n)$.
Let $V^{\mathrm{rand}}, C^{\mathrm{rand}}, U_1^{\mathrm{rand}}, U_2^{\mathrm{rand}}$ be $|V(T_{appr})|/n,|E(T_{appr})|/n,q,q$-random sets produced by $n^{-\mu}$-uniformity of $f$.  Let $V^c_{\mathrm{rand}}=V(K_G)\setminus V^{\mathrm{rand}}, C^c_{\mathrm{rand}}=C(K_G)\setminus C^{\mathrm{rand}}$, noting that these are $p'$-random for $p'=1-|V(T_{appr})|/n\in [p, p+2n^{-\alpha}]$ and $r'$-random for $r'=1-|E({T_{appr}})|/n\in [r, r+2n^{-\alpha}]$ respectively. Pick $p$-random $V^{c,p}_{\mathrm{rand}}\subseteq V^c_{\mathrm{rand}}$ and $r$-random $C^{c,r}_{\mathrm{rand}}\subseteq C^c_{\mathrm{rand}}$.
If we have a matching approximation, then with high probability, Theorem~\ref{thm:maintheoremsemidisjoint} applies to $R_1=V^{c,p}_{\mathrm{rand}}, R_3=C^{c,r}_{\mathrm{rand}}, R_2=U_1^{\mathrm{rand}}\cup U_2^{\mathrm{rand}}, p=p, n=n$ (using Observation~\ref{Observation_matching-like} to establish all the conditions on $p$). In we have a path approximation, then with high probability, Lemma~\ref{lem:pathlikemain} applies to $V_{mid}=V^{c,p}_{\mathrm{rand}}, C=C^{c,r}_{\mathrm{rand}}, V_{str}=U_1^{\mathrm{rand}}, V_{end}=U_2^{\mathrm{rand}}, p=p,q=q, t=t, n=n$ (using Observation~\ref{Observation_path-like} to establish all the conditions on $p,q,t$).  With high probability $f$ is extendable to a pseudoembedding $h$ into $(V_{target}, C_{target})$ and $f(T_{appr})$ is a rainbow copy of $T_{appr}$ contained in $(V_{target},  C_{target})$. With high probability the sizes of all sets are within $n^{1-\gamma}$ of their expectations.
Fix an outcome for which all of these hold simultaneously. To prove the lemma it is sufficient to extend $f$ to a rainbow embedding $f'$ into $(V_{target}, C_{target})$ for such outcomes. 

We have that $|V^c_{\mathrm{rand}}\Delta V^{c,p}_{\mathrm{rand}}|\leq 3n^{1-\gamma}$ and  $|C^c_{\mathrm{rand}}\Delta C^{c,r}_{\mathrm{rand}}|\leq 3n^{1-\gamma}$.
Note that using the definition of $n^{-\mu}$-uniformity, and the set-theoretic identities $A\Delta B\subseteq (A\Delta C) \cup (C\Delta B)$ and $(A\setminus B) \Delta C^c\subseteq A^c \cup (B\Delta C)$, we have the following.  
\par \textbullet  $|f(U_1(T))\Delta U_1^{\mathrm{rand}}|, |f(U_2(T))\Delta U_2^{\mathrm{rand}}|$, $|f(U(T))\Delta U^{\mathrm{rand}}|\leq 2n^{1-\mu}\leq n^{1-\gamma}\leq p(T)^{10^{18}}n/3\log(n)^{10^{18}}$
\par \textbullet $|(V_{target}\setminus f(V(T_{appr}))\Delta V^{c,p}_{\mathrm{rand}}|
\leq |(V_{target}\setminus f(V(T_{appr}))\Delta V^{c}_{\mathrm{rand}}|+|V^c_{\mathrm{rand}}\Delta V^{c,p}_{\mathrm{rand}}|\leq$\\$ |V_{target}^c|+|V(f(T_{appr}))\Delta V_{\mathrm{rand}}|+3n^{1-\gamma}\leq n^{1-\mu}+3n^{1-\gamma}\leq 4n^{1-\gamma}\leq p(T)^{10^{18}}n/3\log(n)^{10^{18}}$.
\par \textbullet $|(C_{target}\setminus C(f(T_{appr})))\Delta C^{c,r}_{\mathrm{rand}}|\leq|(C_{target}\setminus C(f(T_{appr})))\Delta C^{c}_{\mathrm{rand}}|+|C^c_{\mathrm{rand}}\Delta C^{c,r}_{\mathrm{rand}}| \leq  |C_{target}^c|+|C(f(T_{appr}))\Delta C_{\mathrm{rand}}|+3n^{1-\gamma} \leq n^{1-\mu}+3n^{1-\gamma}\leq 4n^{1-\gamma}\leq p(T)^{10^{18}}n/3\log(n)^{10^{18}}$ \\
\par \textbf{Suppose $T$ is matching-like:}
Note that we have
\begin{align}
\sum &f(U(T))+\sum  V_{target}\setminus f(V(T_{appr})) -\sum C_{target}\setminus C(f(T_{appr})) \label{e1}\\
&= \sum h(U(T))+\sum  V_{target}\setminus h(V(T_{appr})) -\sum C_{target}\setminus C(h(T_{appr}))\label{e2}\\
&= \sum h(U(T))+\sum  V_{target} -\sum h(V(T_{appr}))-\sum C_{target}+\sum C(h(T_{appr}))\label{e3}\\
&= \sum h(U(T))+\sum  V_{target} -\sum h(V(T_{appr}))-\sum C_{target}+\sum_{v\in T_{appr}} h(v)d_{T_{appr}}(v)\label{e4}\\
&= \sum h(U(T))+\sum  V_{target} -\sum h(V(T_{appr}))-\sum C_{target}+\sum_{v\in T_{appr}} h(v)d_{T}(v)- \sum_{v\in U} h(v)\label{e5}\\
&=\sum  V_{target} -\sum h(V(T_{appr}))-\sum C_{target}+\sum_{v\in T_{appr}} h(v)d_{T}(v)\label{e6}\\
&=\sum  V_{target} -\sum h(V(T_{appr}))-\sum C_{target}+\sum_{v\in T} h(v)d_{T}(v)-\sum_{v\not\in T_{appr}} h(v)d_{T}(v)\label{e7}\\
&=\sum  V_{target} -\sum h(V(T_{appr}))-\sum C_{target}+\sum_{v\in T} h(v)d_{T}(v)-\sum_{v\not\in T_{appr}} h(v)\label{e8}\\
&=-\sum C_{target}+\sum_{v\in V(T)}d_T(v)h(v)
=0\label{e9}
\end{align}
Here (\ref{e2}) holds because $h$ agrees with $f$ on $T_{appr}$ and $U(T) \subseteq V(T_{appr})$, (\ref{e3}) holds because $h(V(T_{appr}))\subseteq V_{target}$, $h(C(T_{appr}))\subseteq C_{target}$, (\ref{e4}) holds because for any injection $h: V(T)\to V(K_G)$ we have $\sum_{xy\in E(T_{appr})} c(h(xy))=\sum_{x\in V(T_{appr})}h(x)d_{T_{appr}}(x)$, (\ref{e5}) holds because vertices in $U$ have one edge going outside $T_{appr}$ and vertices in $T_{appr}\setminus U$ have no such edges, (\ref{e6}) is just cancelling $\sum h(U(T))=\sum_{v\in U}h(v)$, (\ref{e7}) is splitting the sum $\sum_{v\in T_{appr}}h(v)d_T(v)$ into two, (\ref{e8}) is using that outside $T_{appr}$ we only have leaves, and (\ref{e9}) is using that $h$ is a pseudoembedding into $(V_{target}, C_{target})$.
By Theorem~\ref{thm:maintheoremsemidisjoint} (applied with $X=f(U(T)), Y=V_{target}\setminus f(V(T_{appr})), Z=C_{target}\setminus V(f(T_{appr}))$, $p=p(T), n=n, G=G,  R_1=U_1^{\mathrm{rand}}\cup U_2^{\mathrm{rand}}, R_2=V_{\mathrm{rand}}^c, R_3=C_{\mathrm{rand}}^c$), there is a rainbow matching from $f(U(T))$ to $V_{target}\setminus f(V(T_{appr}))$ using the colours $C_{target}\setminus C(f(T_{appr}))$. Adding this matching to the tree $f(T_{appr})$ gives a rainbow embedding of $T$.

\textbf{Suppose $T$ is path-like:} 
 Note that we have 
\begin{align}
&\sum f(U(T))+2\sum  V_{target}\setminus f(V(T_{appr})) -\sum C_{target}\setminus C(f(T_{appr}))\label{f1}\\
&= \sum h(U(T))+2\sum  V_{target}\setminus h(V(T_{appr})) -\sum C_{target}\setminus C(h(T_{appr}))\label{f2}\\
&= \sum h(U(T))+2\sum  V_{target} -2\sum h(V(T_{appr}))-\sum C_{target}+\sum C(h(T_{appr}))\label{f3}\\
&= \sum h(U(T))+2\sum  V_{target} -2\sum h(V(T_{appr}))-\sum C_{target}+\sum_{v\in T_{appr}} h(v)d_{T_{appr}}(v)\label{f4}\\
&= \sum h(U(T))+2\sum  V_{target} -2\sum h(V(T_{appr}))-\sum C_{target}+\sum_{v\in T_{appr}} h(v)d_{T}(v)- \sum_{v\in U} h(v)\label{f5}\\
&=2\sum  V_{target} -2\sum h(V(T_{appr}))-\sum C_{target}+\sum_{v\in T_{appr}} h(v)d_{T}(v)\label{f6}\\
&=2\sum  V_{target} -2\sum h(V(T_{appr}))-\sum C_{target}+\sum_{v\in T} h(v)d_{T}(v)-\sum_{v\not\in T_{appr}} h(v)d_{T}(v)\label{f7}\\
&=2\sum  V_{target} -2\sum h(V(T_{appr}))-\sum C_{target}+\sum_{v\in T} h(v)d_{T}(v)-2\sum_{v\not\in T_{appr}} h(v)\label{f8}\\
&=-\sum C_{target}+\sum_{v\in V(T)}d_T(v)h(v)
=0\label{f9}
\end{align}
Here (\ref{f2}) holds because $h$ agrees with $f$ on $T_{appr}$ and $U(T) \subseteq V(T_{appr})$, (\ref{f3}) holds because $h(V(T_{appr}))\subseteq V_{target}$, $h(C(T_{appr}))\subseteq C_{target}$, (\ref{f4}) holds because for any injection $h: V(T)\to V(K_G)$ we have $\sum_{xy\in E(T_{appr})} c(h(xy))=\sum_{x\in V(T_{appr})}h(x)d_{T_{appr}}(x)$, (\ref{f5}) holds because vertices in $U$ have one edge going outside $T_{appr}$ and vertices in $T_{appr}\setminus U$ have no such edges, (\ref{f6}) is just cancelling $\sum h(U(T))=\sum_{v\in U}h(v)$, (\ref{f7}) is splitting the sum $\sum_{v\in T_{appr}}h(v)d_T(v)$ into two, (\ref{f8}) is using that outside $T_{appr}$ we only have degree $2$ vertices, and (\ref{f9}) is using that $h$ is a pseudoembedding into $(V_{target}, C_{target})$.
 By Lemma~\ref{lem:pathlikemain} (applied with $V_{str}'=f(U_1(T)), V_{end}'=f(U_2(T)), V_{mid}'=V_{target}\setminus f(V(T_{appr})), C'=C_{target}\setminus V(f(T_{appr}))$, $p=p(T), t=t(T), q=q(T), n=n,$ $G=G,  V_{str}=U_1^{\mathrm{rand}},  V_{end}=U_2^{\mathrm{rand}}, V_{mid}=V_{\mathrm{rand}}^c, C=C_{\mathrm{rand}}^c$), there is a system of rainbow paths $P$ of length $t$ connecting corresponding vertices in $U_1/U_2$ with $C(P)=C_{target}\setminus C(f(T_{appr}))$ and $V(P)=V_{target}\setminus f(V(T_{appr}))\cup U$. Adding these paths to the tree $f(T_{appr})$ gives a rainbow embedding of $T$.
 \end{proof}

To apply the above lemma, we need to construct pseudoembeddings. Since this amounts to constructing injections with specific sums, we now develop machinery for finding elements in a group with prescribed sum.
\begin{lemma}\label{Lemma_three_elements_prescribed_sum_random}
Let $\Delta^{-1} \gg \mu \gg \rho\gg n^{-1}$.
Let $X$ be a $(\geq n^{-\rho})$-random subset of an abelian group $G$. With high probability the following holds. For every $b\in G$, $N,U\subseteq G$ with $|U|\leq n^{1-\mu}, |N|\leq \Delta$, there are elements $x,y,z\in X\setminus U$ with $x+y+z=b$ in $G$ and $x+N, y+N, z+N$ pairwise disjoint and contained in $X\setminus U$.
\end{lemma}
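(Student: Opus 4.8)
\textbf{Overall strategy.} The plan is to prove this as a first-moment / union-bound statement: for a fixed target $b$, fixed small sets $N$ (of size $\le \Delta$) and $U$ (of size $\le n^{1-\mu}$), show that a random $(\ge n^{-\rho})$-random set $X$ fails the desired property with probability so small that a union bound over all relevant choices of $(b, N, U)$ still gives $o(1)$. Since $X$ is $p$-random with $p \ge n^{-\rho}$ and $\rho \ll \mu$, the set $X$ has size roughly $pn \ge n^{1-\rho}$ with high probability, which is much larger than $|U| \le n^{1-\mu}$, so $X \setminus U$ is still essentially a $p$-random set of size $\ge \tfrac12 pn$. The key arithmetic input is that for a fixed $b$, the number of solutions $(x,y,z) \in G^3$ with $x+y+z = b$ is exactly $n^2$, and these solutions are ``spread out'': no single element of $G$ lies in too many of them (each $x$ extends to exactly $n$ triples). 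So I want to extract a large family of \emph{pairwise element-disjoint} triples summing to $b$, each of which has a decent chance of surviving into $X \setminus U$ along with the required translate-disjointness.

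\textbf{Key steps, in order.}
\emph{Step 1 (building a disjoint packing of triples).} Fix $b$. Greedily select triples $(x_i, y_i, z_i)$ with $x_i + y_i + z_i = b$ such that the $3(2|N|+1)$-element ``blown up'' sets $\{x_i, y_i, z_i\} + (N - N) \cup \cdots$ are pairwise disjoint and avoid a fixed ``forbidden'' seed; a counting argument (each chosen triple kills at most $O(\Delta^2 n)$ future triples out of $n^2$ total) shows we can find a family of $m := \Theta(n/\Delta^2)$ such triples. Crucially these can be chosen \emph{before} looking at $X$, depending only on $b$ and $N$ — but since we also need disjointness from $U$, and $|U|$ is small, I instead choose $m' := \Theta(n / \Delta^2)$ of them and argue that all but $O(|U| \Delta)$ of these triples are entirely disjoint from $U$, still leaving $\Omega(n/\Delta^2)$ good candidate triples.

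\emph{Step 2 (survival of one triple in $X$).} For each surviving candidate triple, the event that $x_i + N$, $y_i + N$, $z_i + N$ are all contained in $X$ is an intersection of $3|N| \le 3\Delta$ independent events (by the disjointness arranged in Step~1, these $3|N|$ group elements are distinct), each of probability $\ge p \ge n^{-\rho}$, so this event has probability $\ge p^{3\Delta} \ge n^{-3\Delta\rho}$. Since we are free to take $\rho$ small compared to $1/\Delta$ (the hypothesis $\mu \gg \rho$ combined with $\Delta^{-1} \gg \mu$ gives $3\Delta\rho < 1/100$, say), this probability is $\ge n^{-1/100}$. Pairwise translate-disjointness $(x_i + N) \cap (y_i + N) = \emptyset$ etc.\ is guaranteed deterministically by Step~1, not probabilistically.

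\emph{Step 3 (many independent trials $\Rightarrow$ high probability).} Partition the $\Omega(n/\Delta^2)$ candidate triples into groups that are element-disjoint from each other, so the survival events are mutually independent. The probability that \emph{none} of them survives is at most $(1 - n^{-1/100})^{\Omega(n/\Delta^2)} \le \exp(-\Omega(n^{1 - 1/100}/\Delta^2)) = \exp(-n^{1 - o(1)})$, using $\Delta \ll n$.

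\emph{Step 4 (union bound over $(b, N, U)$).} There are $n$ choices of $b$, at most $n^\Delta$ choices of $N$, and at most $\binom{n}{\le n^{1-\mu}} \le n^{n^{1-\mu}} = \exp(n^{1-\mu} \log n)$ choices of $U$. The product of these is $\exp\big( (1 + o(1)) n^{1-\mu} \log n \big)$, which is $\exp(n^{1-\mu + o(1)})$, and this is dominated by the failure bound $\exp(-n^{1 - 1/100})$ from Step~3 since $1 - \mu < 1 - 1/100$ (as $\mu \ll 1$, we do need to be slightly careful: arrange the constant in Step~2's exponent to beat $n^{1-\mu}$, which works because we can choose $\rho$ small enough that $3\Delta\rho$ is smaller than $\mu$, making the per-trial success probability at least $n^{-\mu/2}$ and hence the failure exponent $\Omega(n^{1-\mu/2}/\Delta^2) \gg n^{1-\mu}\log n$). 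Hence with high probability $X$ works simultaneously for all $(b, N, U)$.

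\textbf{Main obstacle.} The delicate point is the interplay of the quantifiers: the conclusion must hold for \emph{all} $U$ of size up to $n^{1-\mu}$ simultaneously, and there are super-exponentially (in $n^{1-\mu}$) many such $U$. The union bound only closes if the per-target failure probability is $\exp(-n^{1-o(1)})$ with the $o(1)$ genuinely smaller than $\mu$ — this is exactly what forces the hypothesis chain $\Delta^{-1} \gg \mu \gg \rho$: we need $\rho$ small enough (relative to both $1/\Delta$ and $\mu$) that $p^{3\Delta} \ge n^{-3\Delta\rho}$ is at least $n^{-\mu/3}$, so that a union of $\Omega(n/\Delta^2)$ independent trials fails with probability $\exp(-\Omega(n^{1-\mu/3}/\Delta^2))$, comfortably beating $\exp(n^{1-\mu}\log n)$. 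Getting the bookkeeping of these exponents right — and ensuring the greedy packing in Step~1 really produces $\Omega(n/\Delta^2)$ pairwise-disjoint triples avoiding $U$ — is the crux; everything else is a routine Chernoff-plus-union-bound argument.
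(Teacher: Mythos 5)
Your proposal is correct, but it takes a different route from the paper at the crucial step of handling $U$. Both arguments build the same deterministic family of $\Omega(n/\Delta^2)$ triples $(x_i,y_i,z_i)$ summing to $b$ with pairwise disjoint ``blow-ups'' $S(t_i)=\{x_i,y_i,z_i\}\cup(x_i+N)\cup(y_i+N)\cup(z_i+N)$, and both use independence across the $S(t_i)$'s. The difference is that you union-bound over all $(b,N,U)$, which forces the per-target failure probability to be $\exp(-n^{1-o(1)})$ with the $o(1)$ strictly smaller than $\mu$ in order to beat the $\exp((1+o(1))n^{1-\mu}\log n)$ choices of $U$ --- as you correctly flag, this pins down the hierarchy $\rho \ll \mu/\Delta$ and makes the exponent bookkeeping somewhat delicate. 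The paper sidesteps this entirely: for each fixed $(b,N)$ it shows via Chernoff that with probability $1-o(n^{-\Delta-1})$ \emph{strictly more than} $n^{1-\mu}$ of the pairwise-disjoint triples have $S(t_i)\subseteq X$; then, since the $S(t_i)$ are disjoint, any $U$ of size $\le n^{1-\mu}$ can intersect at most $|U|\le n^{1-\mu}$ of them, so some surviving triple is deterministically disjoint from $U$. Thus the union bound is only over the $n^{\Delta+1}$ pairs $(b,N)$, and no super-exponential union over $U$ is needed. Your approach is valid but the paper's is cleaner and puts less strain on the hierarchy of parameters.

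Two small points worth fixing in your write-up. First, your Step~3 speaks of ``partitioning the candidate triples into groups that are element-disjoint from each other,'' but the triples are already pairwise element-disjoint by construction in Step~1, so the survival events are already mutually independent and no partitioning is needed. Second, you estimate the per-trial success probability as $p^{3|N|}\ge p^{3\Delta}$ for the event $x_i+N,y_i+N,z_i+N\subseteq X$, but the lemma also demands $x_i,y_i,z_i\in X\setminus U$, which is not implied when $0\notin N$; you should additionally intersect with $\{x_i,y_i,z_i\}\subseteq X$, giving at most $3(|N|+1)\le 6\Delta$ independent events and a bound of $p^{6\Delta}$ (this is exactly the exponent the paper uses). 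Since $\rho$ can still be taken with $6\Delta\rho<\mu/2$, this does not affect the final bookkeeping.
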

\begin{proof}
Without loss of generality, we may assume that $X$ is $p:=n^{-\rho}$-random (by passing to a subset of this probability).
First, fix some $b$ and $N\subseteq G$ as in the lemma.
Call a triple $(x,y,z)$ \textit{good} if $x+y+z=b$, and $x+N, y+N, z+N$ are disjoint.
There are precisely $n^2$ solutions to $x+y+z=b$. 
For every $w\in G$, there are $n$ solutions to $x+y+z=b$ having $x-y=w$ (or $y-z=w$, or $z-x=w$). Thus there are at most $6|N|^2n$ solutions to $x+y+z=b$ with $\{x-y,y-x,y-z,z-y,z-x,x-z\}\cap (N-N) \neq \emptyset$.  This leaves $\geq n^2-6|N|^2 n\geq n^2/2$ solutions for which this doesn't happen i.e. there are $\geq n^2/2$ good triples in $G$.

Given a good triple $t=(x,y,z)$, let $S(t):=\{x,y,z\}\cup (x+N)\cup (y+N)\cup (z+N)$. Note that we always have $|S(t)|\leq 3(|N|+1)$.
Let $t_1, \dots, t_m$ be a maximal collection of good triples which have all the sets $S(t_1), \dots, S(t_m)$ pairwise disjoint. 
Letting $S:=S(t_1)\cup\dots\cup S(t_m)$, we have $|S|\leq 3m(|N|+1)$ and for all good triples $t$, $S(t)$ intersects $S$. For every $s\in S$ and $w\in N\cup\{0\}$, there are $n$ solutions to $x+y+z=b$ having $x+w=s$ (or $y+w=s$, or $z+w=s$), giving at most $3|S|(|N|+1)n$ solutions with $S(\{x,y,z\})\cap S\neq \emptyset$. This shows that there are at most $3|S|(|N|+1)n$ good triples, which implies $3|S|(|N|+1)n\geq n^2/2$, and hence $m\geq \frac{|S|}{3|N|+3}\geq \frac{n}{18(|N|+1)^2} \geq n/36\Delta^2$.

By linearity of expectation, the expected number of these $m$ triples with $x+N,y+N, z+N\subseteq X$ is $\geq  p^{6\Delta}m\geq p^{6\Delta}n/36\Delta^2$. Using disjointness, each of the $m$ triples has ``$x+N,y+N, z+N\subseteq X$'' independently, and so by Chernoff's bound we have that with  probability $\geq1-o(n^{-\Delta-1})$, there are $> p^{6\Delta}n/40\Delta^2$ disjoint good triples with $x+N,y+N, z+N\subseteq X$. Since $|U|\leq n^{-\mu}< n^{-6\rho\Delta}/40\Delta^2= p^{6\Delta}n/40\Delta^2$, at least one of these has $x+N,y+N, z+N$ disjoint from any given $U$ and it satisfies the lemma. Taking a union bound over all $N, b$ proves the result.
\end{proof}

The following is an easier to use version of the above lemma.
\begin{lemma}\label{Lemma_neighbourhood_prescribed_sum_random}
Let $\Delta^{-1} \gg \mu \gg \rho\gg n^{-1}$.
Let $C,V$ be $(\geq n^{-\rho})$-random independent subsets of an abelian group $G$. With high probability we have the following. 
\begin{enumerate}[(E1)]
\item Let $U,N\subseteq V(K_G)$ with $|U|\leq n^{1-\mu}, |N|\leq \Delta$. For any $b\in G$, there are distinct elements $x,y,z\in V\setminus U$ with $x+y+z=b$ with $K_G[\{x,y,z\}, N]$ being rainbow with all colours contained in $C\setminus U$
\item Let $U,N\subseteq V(K_G)$ with $|U|\leq n^{1-\mu}, |N|\leq \Delta$. There is some $x\in V\setminus U$ with all edges $yx$ for $y\in N$ having colour in $C\setminus U$.\end{enumerate}
\end{lemma}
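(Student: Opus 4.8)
The plan is to re-run the proof of Lemma~\ref{Lemma_three_elements_prescribed_sum_random} almost verbatim, the one change being that its single random set $X$ is replaced by the independent pair $(V,C)$: the triple $\{x,y,z\}$ should land inside $V$ while the three colour-neighbourhoods $x+N,y+N,z+N$ should land inside $C$. (One could instead apply Lemma~\ref{Lemma_three_elements_prescribed_sum_random} to the random set $V\cap C$, but then one must reconcile its $n^{-2\rho}$-density with the constant hierarchy; re-running sidesteps this.) I will prove (E1) first, for all $N$ with $1\le|N|\le\Delta$; the case $N=\emptyset$ then follows by applying the case $N=\{g\}$ for an arbitrary $g$ and discarding the colour conclusion, and (E2) will follow at the end.

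So fix $b\in G$ and $N\subseteq V(K_G)$ with $1\le|N|\le\Delta$. Exactly as in Lemma~\ref{Lemma_three_elements_prescribed_sum_random}, call a triple $(x,y,z)$ \emph{good} if $x+y+z=b$ and the sets $x+N,y+N,z+N$ are pairwise disjoint; this forces $x,y,z$ distinct, and one may additionally impose the harmless condition $x,y,z\notin N$. The same counting as there shows there are at least $n^2/2$ good triples in $G$. Putting $S(x,y,z):=\{x,y,z\}\cup(x+N)\cup(y+N)\cup(z+N)$, take a maximal family $t_1,\dots,t_m$ of good triples with the $S(t_i)$ pairwise disjoint; the argument of Lemma~\ref{Lemma_three_elements_prescribed_sum_random} gives $m\ge n/36\Delta^2$, with each $|S(t_i)|\le 3(\Delta+1)$.

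The only new point is the probabilistic step. For each $i$, let $E_i$ be the event that $\{x_i,y_i,z_i\}\subseteq V$ and $(x_i+N)\cup(y_i+N)\cup(z_i+N)\subseteq C$. Since $V\perp C$, and $E_i$ depends only on the membership in $V$ of the $\le 3$ elements of $\{x_i,y_i,z_i\}$ and on the membership in $C$ of the $\le 3\Delta$ elements of $(x_i+N)\cup(y_i+N)\cup(z_i+N)$, we get $\Prob[E_i]\ge n^{-3\rho}\cdot n^{-3\rho\Delta}\ge n^{-6\rho\Delta}$, matching the quantity $p^{6\Delta}$ (with $p=n^{-\rho}$) used in Lemma~\ref{Lemma_three_elements_prescribed_sum_random}; and since the $S(t_i)$ are pairwise disjoint, $E_1,\dots,E_m$ are mutually independent. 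From here the proof is identical: Chernoff's inequality gives that with probability $1-o(n^{-\Delta-1})$ at least $n^{1-6\rho\Delta}/40\Delta^2$ of the $E_i$ hold, and since $|U|\le n^{1-\mu}<n^{1-6\rho\Delta}/40\Delta^2$ (using $\mu\gg\rho$ and that $\Delta$ is bounded), some such $t_i=(x,y,z)$ has $S(t_i)$ disjoint from $U$. This triple consists of distinct elements of $V\setminus U$ with $x+y+z=b$ and $x+N,y+N,z+N$ pairwise disjoint and contained in $C\setminus U$; since the colour of the edge from $a\in\{x,y,z\}$ to $y'\in N$ is $a+y'$, pairwise disjointness of $x+N,y+N,z+N$ says exactly that $K_G[\{x,y,z\},N]$ is rainbow. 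A union bound over the $n^{O(\Delta)}$ choices of $(b,N)$ gives (E1) with high probability. Finally (E2): if $N=\emptyset$ then $V\setminus U\neq\emptyset$ since $|V|\ge n^{1-\rho}/2>n^{1-\mu}\ge|U|$, and any $x\in V\setminus U$ works; otherwise apply (E1) with (say) $b:=\id$, take the resulting triple $(x,y,z)$, and keep only $x$ --- the colours of the edges $y'x$, $y'\in N$, are exactly the elements of $x+N\subseteq C\setminus U$.

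\textbf{Expected main obstacle.} There is essentially no obstacle beyond Lemma~\ref{Lemma_three_elements_prescribed_sum_random}; the only things to check are (a) that pairwise disjointness of the witnessing sets $S(t_i)$ still gives \emph{mutual} independence of the events $E_i$ --- which holds because each $E_i$ reads $V$ and $C$ only on $S(t_i)$ and $V\perp C$ --- and (b) that the extra density loss is harmless, i.e.\ $n^{-3\rho}\cdot n^{-3\rho\Delta}\ge p^{6\Delta}$, so that the Chernoff and union-bound steps of Lemma~\ref{Lemma_three_elements_prescribed_sum_random} carry over with no change.
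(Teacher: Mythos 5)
Your argument is correct, and it proves the same statement, but by re-running the proof of Lemma~\ref{Lemma_three_elements_prescribed_sum_random} in the two-set setting rather than by black-boxing that lemma. The paper's own proof is considerably shorter: it simply sets $X:=V\cap C$, observes that $X$ is $(\geq n^{-2\rho})$-random and that $V,C$ being independent makes this a legitimate invocation of Lemma~\ref{Lemma_three_elements_prescribed_sum_random}, and then reads off (E1) from the conclusion ``$x,y,z\in X\setminus U$ with $x+N,y+N,z+N$ pairwise disjoint and contained in $X\setminus U$'' since $X\subseteq V$ and $X\subseteq C$. Your stated motivation for not doing this --- that the $n^{-2\rho}$ density of $V\cap C$ is awkward to ``reconcile with the constant hierarchy'' --- is not actually an issue: in the convention $\mu\gg\rho$, the threshold $\rho_0$ is chosen after $\mu$, so one may simply halve it and apply Lemma~\ref{Lemma_three_elements_prescribed_sum_random} with parameter $2\rho$. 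So the paper's route is both available and cleaner. That said, your version is self-contained and correct; the probability bound $\Pr[E_i]\geq n^{-3\rho}\cdot n^{-3\rho\Delta}\geq n^{-6\rho\Delta}$ is right (using $\Delta\geq 1$), the mutual independence of the $E_i$ across disjoint $S(t_i)$ does hold because $V\perp C$ and each $E_i$ reads membership only on $S(t_i)$, and the Chernoff/union-bound bookkeeping matches the original. Your derivation of (E2) from (E1) is the same as the paper's.
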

\begin{proof}
Let $X=C\cap V$ to get a $p^2$-random set. Lemma~\ref{Lemma_three_elements_prescribed_sum_random} applies to $X$, i.e., we have that for any $b\in G$, $N,U\subseteq G$ with $|U|\leq p^{6\Delta} n/40\Delta, |N|\leq \Delta$, there are elements $x,y,z\in X\setminus U\subseteq V\setminus U$ with $x+y+z=b$ in $G$ and $x+N, y+N, z+N$ pairwise disjoint and contained in $X\setminus U$. But ``$x+N, y+N, z+N$ pairwise disjoint and contained in $X\setminus U$'' implies that $x,y,z\subseteq X\setminus U\subseteq V\setminus U$ and that $K_G[\{x,y,z\}, N]$ is rainbow with all colours in $X\setminus U\subseteq C\setminus U$ --- which is what (E1) asks for. 

For (E2), apply part (E1) with any $b$ and note that the resulting $x$ satisfies (E2).
\end{proof}

The following technical lemma is later used to extend a rainbow embedding of an approximation of a tree $T$ into a pseudoembedding of $T$ (with a view of then combining this with Lemma~\ref{Lemma_final_extension} to get a rainbow embedding of $T$).
\begin{lemma}\label{Lemma_modify_random_embedding}
Let $\Delta^{-1} \gg \alpha\gg \mu \gg \rho\gg n^{-1}$.
Let $T$ be a forest with $\Delta(T)\leq \Delta$ and $|V(T)|\leq n-n^{1-\rho}$.
Let $V_{target}\subseteq V(K_G)$, $C_{target}\subseteq C(K_G)$ with $|V_{target}|, |C_{target}|\geq n-n^{1-\alpha}$. Let $T_{core}\subseteq T$ be an induced subforest of size $\leq n^{1-\alpha}$. For $k\leq \Delta$, let $D_1, \dots, D_{k}\subseteq V(T)$ be disjoint subsets with $V(T)\setminus \bigcup_{i=1}^kD_i\subseteq V(T_{core})$ and $|D_i\setminus V(T_{core})|\geq 6$ for all $i$.

Let $h:V(T)\to V(K_G)$ be an injection which is a rainbow embedding into $(V_{target}, C_{target})$, when restricted to $T_{core}$  and $f:V(T)\to V(K_G)$ a  rainbow embedding. Let $V_1\subseteq V(K_G)$, $C_1\subseteq C(K_G)$ satisfy (E1)  and (E2) and  $U_f:=(V_1\cap V(f(V(T)))\cup (C_1\cap C(f(V(T)))$  has $|U_f|\leq n^{1-\alpha}$.

Then there is a rainbow embedding  $f':V(T)\to V(K_G)$ into $(V_{target}, C_{target})$ agreeing with $h$ on $T_{core}$,  disagreeing with $f$ on $\leq n^{1-\mu}$ vertices and with $\sum f'(D_i)= \sum h(D_i)$ for all $i$.
\end{lemma}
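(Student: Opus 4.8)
The plan is to obtain $f'$ from $f$ by re‑routing $f$ only on a small \emph{re‑embedding zone} $W\subseteq V(T)$, leaving $f'=f$ on $V(T)\setminus W$. Take $W$ to be the union of: the core $V(T_{core})$; all $T$‑neighbours of vertices of $V(T_{core})$ that lie outside $V(T_{core})$ (so that $T$ has no edge between $V(T_{core})$ and $V(T)\setminus W$); all $v\notin V(T_{core})$ with $f(v)\in h(V(T_{core}))$ or $f(v)\notin V_{target}$; one endpoint of each edge $e$ of $T$ with $c(f(e))\in C(h(E(T_{core})))$ or $c(f(e))\notin C_{target}$; and, for each $i$, a triple $v_i^1,v_i^2,v_i^3\in D_i\setminus V(T_{core})$ that is independent in $T$ (such a triple exists since $D_i\setminus V(T_{core})$ induces a forest on at least $6$ vertices, which being bipartite has an independent set of size~$3$). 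Using that $f,h$ are injective, that $f$ is rainbow, and that $|V(T_{core})|$, $|U_f|$, $|V(K_G)\setminus V_{target}|$ and $|C(K_G)\setminus C_{target}|$ are each at most $n^{1-\alpha}$, every one of these sets has size at most $C_\Delta\,n^{1-\alpha}$ for a constant $C_\Delta$ depending only on $\Delta$; hence $|W|\le n^{1-\mu}$ because $\alpha\gg\mu$. Since $f'$ will agree with $f$ off $W$, this already gives the claimed bound on the set where $f'$ and $f$ differ.

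I would then construct $f'$ in three stages. \textit{Stage 1}: set $f':=f$ on $V(T)\setminus W$ and $f':=h$ on $V(T_{core})$. By the choice of $W$ the only edges of $T$ whose colours are now determined are the ``kept'' edges, both of whose endpoints lie outside $W$ and which retain their pairwise‑distinct $f$‑colours inside $C_{target}$, and the edges of $T_{core}$, which get the pairwise‑distinct $h$‑colours, also inside $C_{target}$; and these two colour classes are disjoint because an endpoint of every offending kept edge was put into $W$. \textit{Stage 2}: process the vertices of $W\setminus V(T_{core})$ in an arbitrary order, placing each vertex $v$ via property (E2) of Lemma~\ref{Lemma_neighbourhood_prescribed_sum_random}, applied with $N$ the set of images of the already‑placed $T$‑neighbours of $v$ (so $|N|\le\Delta$) and with $U$ the union of $V(K_G)\setminus V_{target}$, $C(K_G)\setminus C_{target}$, and all vertices and colours used so far that lie in $V_1$ and $C_1$ respectively. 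This gives an image of $v$ inside $V_{target}\cap V_1$, distinct from all images used so far, such that every edge from $f'(v)$ to an already‑placed neighbour gets a colour inside $C_{target}\cap C_1$ that is new and different from the other such colours (distinctness here is automatic, as those edges share the endpoint $f'(v)$). Crucially, $|U|$ stays at most $C'_\Delta\,n^{1-\alpha}\le n^{1-\mu}$ throughout Stage~2: the only used vertices/colours that can lie in $V_1$/$C_1$ are the $O(n^{1-\alpha})$ inherited from $U_f$, the $O(n^{1-\alpha})$ images and colours of $h|_{T_{core}}$, and the at most $|W|$ images and $\le\Delta|W|$ colours that we create. After Stage~2, $f'$ is a rainbow embedding of $T$ into $(V_{target},C_{target})$ agreeing with $h$ on $T_{core}$.

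\textit{Stage 3}: correct the sums one index at a time. For each $i$ in turn, set $\delta_i:=\sum f'(D_i)-\sum h(D_i)$, and re‑embed the reserved triple by applying property (E1) of Lemma~\ref{Lemma_neighbourhood_prescribed_sum_random} with $b:=f'(v_i^1)+f'(v_i^2)+f'(v_i^3)-\delta_i$, with $N$ the union of the images of the $T$‑neighbours of $v_i^1,v_i^2,v_i^3$, and with $U$ the current forbidden set, obtaining distinct fresh values $a_1,a_2,a_3$ summing to $b$ with all edges from $\{a_1,a_2,a_3\}$ to $N$ receiving new, pairwise‑distinct colours; then set $f'(v_i^j):=a_j$ for $j=1,2,3$ and add the $\le 3\Delta$ recoloured edges and the three new images to the forbidden set. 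Since the triple is independent in $T$ and the $D_i$ are pairwise disjoint, this keeps $f'$ a rainbow embedding into $(V_{target},C_{target})$ and leaves $\sum f'(D_j)$ unchanged for $j\ne i$ while making $\sum f'(D_i)=\sum h(D_i)$; old colours may simply be left in the forbidden set, which thereby grows by only $O_\Delta(1)$ further elements across all of Stage~3.

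The substance of the argument is bookkeeping rather than any new idea: one must verify that $|W|$ and the running forbidden set stay below $n^{1-\mu}$ (this is exactly where the hypotheses $|U_f|\le n^{1-\alpha}$, $|V(T_{core})|\le n^{1-\alpha}$, the near‑completeness of $V_{target}$ and $C_{target}$, and the gap $\alpha\gg\mu$ get used), and check at every placement that no vertex or colour is reused and that all colours land in $C_{target}$. The one non‑formal point is that a triple $v_i^1,v_i^2,v_i^3$ may have up to $3\Delta$ neighbours in $T$, so in Stage~3 property (E1) must be invoked with $3\Delta$ in place of $\Delta$; this costs nothing, since constant factors are irrelevant to the hierarchy $\Delta^{-1}\gg\alpha\gg\mu\gg\rho$ and one may simply arrange $V_1,C_1$ to satisfy the correspondingly stronger statements of Lemma~\ref{Lemma_neighbourhood_prescribed_sum_random} (equivalently of Lemma~\ref{Lemma_three_elements_prescribed_sum_random}).
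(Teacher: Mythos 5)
Your proof is correct and, up to minor bookkeeping choices, is the same argument as the paper's: pick an independent triple inside each $D_i\setminus V(T_{core})$, carve out the small ``re-embedding zone'' $W$ (the paper writes it as $A_1\cup A_2\cup A_3$), place the non-core part of $W$ with (E2), then place the triples via (E1) with the sum forced to fix $\sum f'(D_i)$. The one purely cosmetic difference is that in Stage~2 you embed the reserved triples provisionally and then overwrite them in Stage~3; the paper reserves the triples (its set $A_2$) from the start and places them exactly once at the end, so it does not need to discard and re-track a round of images and colours. This is wasteful but harmless: the discarded vertices/colours only inflate the forbidden set by $O_\Delta(1)$, and your accounting correctly keeps $|U\cap(V_1\cup C_1)|=O_\Delta(n^{1-\alpha})\le n^{1-\mu}$ throughout. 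You also correctly flag the $|N|\le 3\Delta$ issue in (E1), which the paper implicitly handles in the same way.
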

\begin{proof}
Let $A_1= V(T_{core})$.  Let $A_2\subseteq V(T)\setminus V(T_{core})$ consist of an independent set of size $3$ in each $D_i$ (which exists because each $|D_i\cap (V(T)\setminus V(T_{core})|\geq 6$ and $T$ is bipartite). 
Let $A_3^V$ be the set of vertices $v\in V(T)$ with $f(v)\in (V_{target}^c \cup V(h(T_{core}))$. 
Let $A_3^C$ be the set of vertices $v\in V(T)$ contained in edges $vu\in E(T)$ with $c(f(vu))\in (C_{target}^c \cup C(h(T_{core}))$.  
Let $A_3=(N_T(A_1\cup A_2)\cup A_3^V\cup A_3^C)\setminus (A_1\cup A_2)$.
Let  $A_4=V(T)\setminus (A_1\cup A_2\cup A_3)$.
Define $f_1:A_1\cup A_4\to V(K_G)$ to agree with $h$ on $A_1$ and agree with $f$ on $A_4$. Note that $f_1$ is a rainbow embedding of $T[A_1\cup A_4]$ into $(V_{target}, C_{target})$ since $h$ is a rainbow embedding of $T[A_1]=T_{core}$, $f$ is a rainbow embedding of $T[A_4]$ into $(V_{target}, C_{target})$ (using that $A_4$ is vertex-disjoint from $A_3^V$ and $A_3^C$), $h(T[A_1])$ and $f(T[A_4])$ are vertex-disjoint and colour-disjoint  (using that $A_4$ is vertex-disjoint from $A_3^V$ and $A_3^C$), and there are no edges between $A_1$ and $A_4$ since all edges of $T$ from $A_1$ go to $A_1\cup A_2\cup A_3$. 
Let $U_{target}$ be the vertices/colours of $V_1\setminus V_{target}$ and $C_1\setminus C_{target}$, noting that $|U_{target}|\leq |V_{target}^c|+|C_{target}^c|\leq 2n^{1-\alpha}.$
Let $U_{f_1}$ be the vertices/colours of  $f_1(T[A_1\cup A_4])$ which are in  $C_1\cup V_1$, noting that $|U_{f_1}|\leq |U_f|+|E(T_{core})|+|V(T_{core})|\leq 3n^{1-\alpha}$.
 
 \begin{claim}\label{Claim_modify_random_embedding1}
 We can  extend $f_1$ to an embedding $f_2: A_1\cup A_3\cup A_4\to im_{f_1}\cup (V_1\cap V_{target})$ with additional  colours used in $C_1\cap C_{target}$
 \end{claim}
 \begin{proof}
Order $A_3=\{a_1, \dots, a_t\}$, noting that $t\leq |N_T(A_1)|+ |N_T(A_2)|+|A_3^V|+ |A_3^C|\leq \Delta|V(T_{core})|+3\Delta+(|V_{target}^c|+|V(T_{core})|)+2(|C_{target}^c|+|E(T_{core})|)\leq 6\Delta n^{1-\alpha}$.
Define $T_i=T[A_1\cup A_4\cup\{a_1, \dots, a_i\}]$.
Setting $g_0=f_1$, we build functions $g_1, \dots, g_t$ with $g_i:V(T_i)\to V(K_G)$ being a rainbow embedding of $T_i$ into $(V_{target}, C_{target})$ extending $g_{i-1}$.
To construct $g_i$, set $N_i=g_{i-1}(N_{T_i}(a_i))$, $U_i=U_{target}\cup  V(g_{i-1}(T_{i-1}))\cup C(g_{i-1}(T_{i-1}))$ noting $|N_i|\leq \Delta$ and $|U_i\cap (C_1\cup V_1)|\leq |U_{target}|+|U_{f_1}| + (\Delta+1)(i-1)\leq 14\Delta^2 n^{1-\alpha}\leq n^{1-\mu}$.  Apply (E2) to get a vertex $x_i\in V_1\setminus U$ with all edges from $x$ to $N_i$ having colours in $C_1\setminus U$. Defining $g_i(a_i)=x_i$ we get a rainbow embedding of $T_i$ into $(V_{target}, C_{target})$.
 \end{proof}
 
Let $U_{f_2}$ be the vertices/colours of  $f_2(T[A_1\cup A_3\cup A_4])$ which are in  $C_1\cup V_1$, noting that $|U_{f_2}|\leq |U_{f_1}|+  (\Delta+1)|A_3|\leq 14\Delta^2 n^{1-\alpha}$.

 \begin{claim}\label{Claim_modify_random_embedding2}
 We can  extend $f_2$ to an embedding $f': A_1\cup A_2\cup A_3\cup A_4\to im_{f_2}\cup (V_1\cap V_{target})$ using colours of $C_1\cap C_{target}$ such that for all $i$  we have $\sum f'(D_i)=\sum h(D_i)$.
 \end{claim}
 \begin{proof}
Let $A_2=\{a_1,b_1, c_1,  \dots, a_t, b_t, c_t\}$ where for each $i$, $\{a_i, b_i, c_i\}\subseteq D_i$ is an independent set of size 3. Note $|A_2|\leq 3\Delta$. For each $i\leq t$, let $T_i=T[A_1\cup A_3\cup A_4\cup\{a_1,b_1, c_1, \dots, a_i, b_i, c_i\}]$ and  $\sigma_i=\sum h(D_i)-\sum f_2(D_i\setminus \{a_i, b_i, c_i\})$.
Setting $g_0=f_2$, we build functions $g_1, \dots, g_t$ with $g_i:V(T_i)\to V(K_G)$ being a rainbow embedding of $T_i$ into $(V_{target}, C_{target})$ extending $g_{i-1}$.

To construct $g_i$, set $N_i=g_{i-1}(N_{T_i}(\{a_i, b_i, c_i\}))$, $U_i=U_{target}\cup V(g_{i-1}(T_{i-1}))\cup C(g_{i-1}(T_{i-1}))$ noting $|N_i|\leq 3\Delta$ and $|U_i\cap (C_1\cup V_1)|\leq |U_{f_2}| + (3\Delta+3)(i-1)\leq 40\Delta^2 n^{1-\alpha}\leq n^{1-\mu}$.  Apply (E1) to get distinct vertices $x_i, y_i, z_i\in V_1\setminus U_i$ with all edges from $\{x_i, y_i, z_i\}$ to $N_i$ having different colours in $C_1\setminus U_i$ and $x_i+y_i+z_i=\sigma_i$. Defining $g_i(a_i)=x_i, g_i(b_i)=y_i, g_i(c_i)=z_i$ we get a rainbow embedding of $T_i$ into $(V_{target}, C_{target})$. 

Set $f'=g_t$. We have  $\sum f'(D_i)=\sum g_0(D_{i}\setminus\{a_i, b_i, c_i\})+x_i+y_i+z_i=\sum f_2(D_{i}\setminus\{a_i, b_i, c_i\})+\sigma_i=\sum h(D_i)$.
\end{proof}
By construction, we have that $f'$ agrees with $h$ on $T_{core}=A_1$, $\sum f'(D_i)= \sum h(D_i)$ for all $i$, and $f'$ disagrees with $f$ on the subset $A_1\cup A_2\cup A_3$ which has order $\leq |V({T_{core}})|+ 3\Delta+ 6\Delta n^{1-\alpha}\le n^{1-\mu}$.
\end{proof}

We'll need the following consequence of Lemma~\ref{Lemma_three_elements_prescribed_sum_random}.
\begin{lemma}\label{Lemma_partition_zero_sum}
Let $\Delta^{-1}\gg \alpha\gg n^{-1}$.
Let $S\subseteq G$ have $|S|\geq n-n^{1-\alpha}$ and $\sum  S=0$. For any $m_1, \dots, m_{\Delta}\geq 3$ with $\sum m_i=|S|$, $S$ can be partitioned into zero-sum sets of orders $m_1, \dots, m_{\Delta}$.
\end{lemma}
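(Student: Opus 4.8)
The plan is to reserve a large ``bulk'' from all but the largest part, collect the leftover elements into a single \emph{random} set $W$, and then repair all the part-sums at once using zero-sum triples found inside $W$ via Lemma~\ref{Lemma_three_elements_prescribed_sum_random}. First I would reorder so that $m_\Delta=\max_i m_i$, noting $m_\Delta\ge |S|/\Delta\ge n/(2\Delta)$ (the case $\Delta=1$ being trivial, with $P_1=S$), and fix constants $\mu,\rho$ with $\Delta^{-1}\gg\mu\gg\rho\gg n^{-1}$ and $\mu\le\alpha$, which is possible since $\Delta^{-1}\gg\alpha\gg n^{-1}$. Then pick a uniformly random subset $W\subseteq S$ with $|W|=m_\Delta+3(\Delta-1)$, and partition $S\setminus W$ arbitrarily into ``bulk'' sets $B_1,\dots,B_{\Delta-1}$ with $|B_j|=m_j-3$; this is possible because $\sum_{j<\Delta}(m_j-3)=|S|-|W|$ and every $m_j\ge 3$.

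The key step is that, even though $W$ may have density as low as $\Theta(1/\Delta)$, its randomness makes it pseudorandom, so Lemma~\ref{Lemma_three_elements_prescribed_sum_random} (applied with $N=\{\id\}$, which forces the three elements to be distinct) gives, with high probability over $W$: for every $b\in G$ and every $U\subseteq G$ with $|U|\le n^{1-\mu}$ there are distinct $x,y,z\in W\setminus U$ with $x+y+z=b$. To invoke the lemma one couples the uniform $|W|$-subset of $S$ with an $n^{-\rho}$-random subset of $G$, which $W$ dominates up to the at most $|S^{c}|\le n^{1-\alpha}$ elements lying outside $S$; since $\mu\le\alpha$ this discrepancy is absorbed into the slack $n^{1-\mu}$ in the parameter $U$, and one uses $|W|/n\ge 1/(2\Delta)\gg n^{-\rho}$ (valid because $\Delta$ is bounded). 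Fix an outcome of $W$, and hence of the $B_j$, for which this property holds.

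Now I would build the parts greedily. For $j=1,\dots,\Delta-1$ in turn, set $b_j:=-\sum B_j$ and $U_j:=\{x_i,y_i,z_i:i<j\}$, which has size $<3\Delta\le n^{1-\mu}$, and use the property of $W$ to choose distinct $x_j,y_j,z_j\in W\setminus U_j$ with $x_j+y_j+z_j=b_j$; put $P_j:=B_j\cup\{x_j,y_j,z_j\}$, a zero-sum set of size $m_j$. Finally put $P_\Delta:=S\setminus(P_1\cup\dots\cup P_{\Delta-1})=W\setminus\bigcup_{j<\Delta}\{x_j,y_j,z_j\}$: it has size $|W|-3(\Delta-1)=m_\Delta$, and sum $\sum S-\sum_{j<\Delta}\sum P_j=0$ since each $P_j$ ($j<\Delta$) and $S$ are zero-sum. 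Thus $P_1,\dots,P_\Delta$ is the desired partition.

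The main obstacle is precisely the point flagged in the second paragraph: because the leftover set $W$ can be far from spanning, one genuinely needs the pseudorandomness supplied by choosing $W$ at random. An adversarial leftover set of the same size --- for instance the complement of an index-two subgroup --- can fail to contain \emph{any} triple with prescribed sum in the identity coset, so no purely deterministic/greedy carving of the parts out of a shrinking pool can work; the Fourier bound for a set $R$ only guarantees triples with an arbitrary prescribed sum once $|R|>n/2$, whereas here the relevant pools are typically of size $\Theta(n/\Delta)$. Making the coupling between the ``uniform subset of the large deterministic set $S$'' model and the binomial model of Lemma~\ref{Lemma_three_elements_prescribed_sum_random} fully rigorous, together with checking the parameter hierarchy, is the only genuinely technical part of the argument; everything else is routine bookkeeping.
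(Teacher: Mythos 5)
Your proposal is correct and follows essentially the same plan as the paper: reserve a random set of size $m_{\max}+3(\Delta-1)$, carve the rest of $S$ into bulk pieces of sizes $m_j-3$, repair each bulk piece with a zero-sum-prescribing triple from the reserve via Lemma~\ref{Lemma_three_elements_prescribed_sum_random}, and let the leftover reserve be the last part, whose sum is then forced to be $0$. The one place you diverge is the randomization setup, and this is exactly the spot you (rightly) flag as the technical burden: you take $W$ to be a \emph{uniform} $|W|$-subset of $S$ and then must couple it with a binomial $n^{-\rho}$-random subset of $G$ to invoke the lemma. The paper avoids this coupling entirely by sampling $X$ as a binomial $(m_i/n+3n^{-\alpha})$-random subset of $G$ from the start (so Lemma~\ref{Lemma_three_elements_prescribed_sum_random} applies verbatim to $X$), then trimming $X\cap S$ down to a set $X'$ of size exactly $m_i+3\Delta-3$ and absorbing $X\setminus X'$ (which has size $O(n^{1-\alpha})\le n^{1-\mu}$) into the forbidden set $U$ at each application; this is cleaner and sidesteps the uniform-to-binomial transfer. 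One small point in your favor: taking $N=\{\id\}$ forces $x,y,z$ distinct directly, whereas the paper's application with $N=\emptyset$ only guarantees distinctness implicitly (a triple with $x=y$ would make $|M_j'|<m_j$), so your choice is tidier.
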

\begin{proof}
Let $\Delta\gg  \alpha\gg\mu\gg \rho\gg n^{-1}$.
For some $i$, we have $m_i\geq |S|/\Delta\geq (n-n^{1-\alpha})/\Delta\geq n/2\Delta\geq n^{1-\rho}$. Let $X$ be an $(m_i/n +3n^{-\alpha})$-random subset of $V(K_G)$, noting this probability is $\geq n^{-\rho}$. With high probability, $|X| \in [m_i+2 n^{1-\alpha}, m_i+4n^{1-\alpha}]$, which combined with $|S|\geq n-n^{1-\alpha}$ gives $|X\cap S|\in [m_i+n^{1-\alpha}, m_i+4n^{1-\alpha}]\subseteq [m_i+3\Delta-3, m_i+4n^{1-\alpha}]$. Also with high probability, $X$ satisfies the property of Lemma~\ref{Lemma_three_elements_prescribed_sum_random}.   

Let $X'\subseteq X\cap S$ be a subset of size exactly $m_i+3\Delta-3$. Partition $S\setminus X'$ arbitrarily into sets $M_1, \dots, M_{i-1}, M_{i+1}, \dots, M_{\Delta}$ with $|M_j|=m_j-3$. For $j=1, \dots, i-1, i+1, \dots, \Delta$, use Lemma~\ref{Lemma_three_elements_prescribed_sum_random} with $X=X$, $N=\emptyset$   to find disjoint triples $\{x_j, y_j, z_j\}\subseteq X'$ with $x_j+y_j+z_j=-\sum M_j$   (at the $j$th application set $U=\{x_t, y_t, z_t: t<j\}\cup (X\setminus X')$ which has order $|U|\leq 3\Delta+5n^{1-\alpha}\leq n^{1-\mu}$). Now set $M_j'=\{x_j, y_j, z_j\}\cup M_j$ for $j\neq i$, and $M_i'=X'\setminus \{x_j, y_j, z_j: j\in [\Delta]\setminus i\}$. We have $\sum M_j'=0$ for $j\neq i$ by choice of $x_j, y_j, z_j$, and $\sum M_i'=\sum S-\sum_{j\neq i} \sum M_j'=0-0=0$. Thus the sets $M_1', \dots, M_{\Delta}'$ give the partition we want. 
\end{proof}
 
The following lemma transforms an embedding of a core of a tree $T$ into a pseudoembedding of $T$.
\begin{lemma}\label{Lemma_pseudoembed_tree_with_prescribed_core}
Let $\Delta^{-1}\gg \alpha\gg n^{-1}$.
Let $G$ be a group and $T$ a bounded degree tree with $\Delta(T)=\Delta$. Let $V_{target},C_{target}\subseteq G$ with $|T|=|V_{target}|=|C_{target}|+1\geq (1-n^{-\alpha})n$. In the case $G=\mathbb{Z}_2^m$, assume $\id \not\in C_{target}$. Let $T_{core}$ be a core of $T$ of size $\leq n^{1-\alpha}$. Then any rainbow embedding $\phi$  of $T_{core}$ into $(V_{target}, C_{target})$ with $\sum_{v\in V(T_{core})}d_T(v)\phi(v)=\sum C_{target}$ and $\sum \phi(V(T_{core}))=\sum V_{target}$ extends to a pseudoembedding $h$ of $T$ into $(V_{target}, C_{target})$.
\end{lemma}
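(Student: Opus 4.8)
The plan is to exploit that a pseudoembedding carries no rainbow constraint (Definition~\ref{Definition_pseudoembedding}): to extend $\phi$ to a pseudoembedding $h$ of $T$ into $(V_{target},C_{target})$ it is enough to produce an injection $h\colon V(T)\to G$ with $\mathrm{im}|_h=V_{target}$ and $\sum_{v\in V(T)}d_T(v)h(v)=\sum C_{target}$ that agrees with $\phi$ on $V(T_{core})$. Since $\phi$ already embeds $T_{core}$ rainbow-ly into $(V_{target},C_{target})$, we have $\phi(V(T_{core}))\subseteq V_{target}$, so setting $W:=V_{target}\setminus\phi(V(T_{core}))$ gives a partition $V_{target}=\phi(V(T_{core}))\sqcup W$ with $\sum W=\sum V_{target}-\sum\phi(V(T_{core}))=0$ by hypothesis. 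It therefore suffices to find a bijection from $V(T)\setminus V(T_{core})$ to $W$ so that the resulting total weighted-degree contribution of the new vertices vanishes; combined with the hypothesis $\sum_{v\in V(T_{core})}d_T(v)\phi(v)=\sum C_{target}$ this gives exactly $\sum_{v\in V(T)}d_T(v)h(v)=\sum C_{target}$.

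To arrange this, first observe that every $v\in V(T)\setminus V(T_{core})$ has a non-exhausted degree, since clause (I) of the definition of a core would otherwise force all degree-$d_T(v)$ vertices into $T_{core}$. For each non-exhausted degree $d$ let $m_d$ be the number of degree-$d$ vertices of $T$ lying outside $T_{core}$; clause (II) gives $m_d\geq 6\geq 3$, and $\sum_d m_d=|V(T)\setminus V(T_{core})|=|W|$. Now apply Lemma~\ref{Lemma_partition_zero_sum} to $W$: it is zero-sum, and $|W|=|V(T)|-|V(T_{core})|\geq(1-n^{-\alpha})n-n^{1-\alpha}\geq n-n^{1-\alpha/2}$ for $n$ large, so $W$ can be partitioned into zero-sum subsets $W_d$ of sizes $m_d$, one per non-exhausted degree (the proof of Lemma~\ref{Lemma_partition_zero_sum} goes through verbatim for any number $k\leq\Delta$ of parts, each of size $\geq 3$). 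Define $h$ to coincide with $\phi$ on $V(T_{core})$ and, for each non-exhausted $d$, to send the degree-$d$ vertices of $V(T)\setminus V(T_{core})$ bijectively onto $W_d$ in an arbitrary fashion.

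It remains to verify $h$ has the required properties. It is injective because the $W_d$ are pairwise disjoint and all disjoint from $\phi(V(T_{core}))$, and $\mathrm{im}|_h=\phi(V(T_{core}))\cup\bigcup_d W_d=\phi(V(T_{core}))\cup W=V_{target}$. For the weighted sum, since each $W_d$ is zero-sum, $\sum_{v\in V(T)}d_T(v)h(v)=\sum_{v\in V(T_{core})}d_T(v)\phi(v)+\sum_d d\cdot\!\sum W_d=\sum C_{target}$, so $h$ is the desired pseudoembedding of $T$ into $(V_{target},C_{target})$ extending $\phi$.

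The substantive input is entirely Lemma~\ref{Lemma_partition_zero_sum}; everything else is bookkeeping. The only points I expect to need care are the parameter comparison when invoking that lemma (replacing its $\alpha$ by $\alpha/2$, valid since $\Delta^{-1}\gg\alpha\gg n^{-1}$) and the cosmetic mismatch that Lemma~\ref{Lemma_partition_zero_sum} is phrased with exactly $\Delta$ parts whereas there may be fewer non-exhausted degrees. The $G=\mathbb{Z}_2^m$ hypothesis $\id\notin C_{target}$ is not needed in this lemma --- pseudoembeddings and zero-sum partitions are insensitive to it --- and is only relevant downstream.
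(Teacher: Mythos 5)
Your proof is correct and follows essentially the same route as the paper's: partition $W=V_{target}\setminus\phi(V(T_{core}))$ (which is zero-sum by hypothesis) into zero-sum pieces of the right sizes via Lemma~\ref{Lemma_partition_zero_sum}, then assign degree-$d$ vertices arbitrarily to the piece of size $m_d$. The only difference is cosmetic --- you explicitly address the fact that exhausted degrees contribute $m_d=0$ by restricting to non-exhausted degrees, which the paper glosses over --- so this is a slightly cleaner write-up of the identical argument.
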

\begin{proof}
For each $d=1, \dots, \Delta$, let $m_d$ be the number of vertices of degree $d$ that $T$ has outside $T_{core}$, noting these  are either $=0$ or  $\geq 6$ (by definition of $T_{core}$).
Note that we're assuming $\sum V_{target}\setminus \phi(T_{core})=0$, so we can use Lemma~\ref{Lemma_partition_zero_sum} (with $\alpha'=\alpha/2$) to partition $V_{target}\setminus \phi(T_{core})$ into zero-sum sets $M_1, \dots, M_{\Delta}$ of sizes $m_1, \dots, m_d$ respectively. Now extend $\phi$ into $h$ by embedding the degree $d$ vertices outside $\phi(T_{core})$ to $M_d$ arbitrarily. This ensures that $\sum_{v\in V(T)}d_T(v)h(v)=\sum_{v\in V(T_{core})}d_T(v)\phi(v)=\sum C_{target}$. We also  constructed $h$ so that $h(T)=\phi(T_{core})\cup M_1\cup \dots \cup M_{\Delta}=V_{target}$ --- thus $h$ is a pseudoembedding.
\end{proof}

The following lemma is exactly the same as the previous one, except that it produces a rainbow embedding of $T$, rather than just a pseudoembedding.
\begin{lemma}\label{Lemma_embed_tree_prescribed_core}
Let $\Delta^{-1}\gg \alpha\gg n^{-1}$.
Let $G$ be a group and $T$ a bounded degree tree with $\Delta(T)=\Delta$. Let $V,C\subseteq G$ with $|T|=|V_{target}|=|C_{target}|+1\geq (1-n^{-\alpha})n$. In the case $G=\mathbb{Z}_2^m$, assume $\id \not\in C$. Let $T_{core}$ be a core of $T$ of size $\leq n^{1-\alpha}$. Then any rainbow embedding $\phi$  of $T_{core}$ into $(V_{target}, C_{target})$ with $\sum_{v\in V(T_{core})}d_T(v)\phi(v)=\sum C_{target}$ and $\sum \phi(V(T_{core}))=\sum V_{target}$ extends to a rainbow embedding $f$ of $T$ into $(V_{target}, C_{target})$.
\end{lemma}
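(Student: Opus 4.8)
The plan is to stitch together the machinery of this section: upgrade $\phi$ to a pseudoembedding of the whole tree, realise that pseudoembedding by an honest rainbow embedding of an \emph{approximation} of $T$, and then attach the remaining vertices using the completion lemma. Concretely, first apply Lemma~\ref{Lemma_pseudoembed_tree_with_prescribed_core} to extend $\phi$ to a pseudoembedding $h\colon V(T)\to G$ of $T$ into $(V_{target},C_{target})$, so that $h$ agrees with $\phi$ on $T_{core}$, has image $V_{target}$, and satisfies $\sum_{v\in V(T)}d_T(v)h(v)=\sum C_{target}$. Then use Lemma~\ref{Lemma_nonexhausted_vertices_in_Tabs} to pick an approximation $T_{appr}$ of $T$ with $V(T_{core})\subseteq V(T_{appr})$ such that every non-exhausted degree $d$ has at least $6$ representatives in $V(T_{appr})\setminus V(T_{core})$; let $D_d$ be the set of all $T$-degree-$d$ vertices of $V(T_{appr})\setminus V(T_{core})$, so that $V(T_{appr})$ is the disjoint union of $V(T_{core})$ and the $D_d$ (all vertices of exhausted degree lie inside $T_{core}$).

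The point of this setup is a bookkeeping identity: since $L(T_{appr})=V(T)\setminus V(T_{appr})$ consists of vertices of a single common $T$-degree $k\in\{1,2\}$, any rainbow embedding $f_1$ of $T_{appr}$ into $(V_{target},C_{target})$ that agrees with $h$ on $T_{core}$ and has $\sum f_1(D_d)=\sum h(D_d)$ for every $d$ automatically satisfies $\sum f_1(V(T_{appr}))=\sum h(V(T_{appr}))$ and $\sum_{v\in V(T_{appr})}d_T(v)f_1(v)=\sum_{v\in V(T_{appr})}d_T(v)h(v)$; extending such an $f_1$ by an arbitrary bijection from $L(T_{appr})$ onto $V_{target}\setminus f_1(V(T_{appr}))$ then produces a pseudoembedding of $T$ into $(V_{target},C_{target})$, because the image is all of $V_{target}$ and the weighted sum telescopes to $\sum C_{target}$ exactly as it does for $h$ (using $d_T\equiv k$ on $L(T_{appr})$). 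To build an $f_1$ with these properties I would start from the random approximate rainbow embedding $f_0$ of $T_{appr}$ furnished by Lemma~\ref{Lemma_approximate_embedding}, applied with the partition $V(T_{appr})=U_1(T_{appr})\cup U_2(T_{appr})\cup(V(T_{appr})\setminus U(T_{appr}))$, which is $n^{-\mu}$-uniform on $\{V(T_{appr}),U_1(T_{appr}),U_2(T_{appr}),E(T_{appr})\}$; reveal random sets $V_1,C_1$ satisfying (E1) and (E2) of Lemma~\ref{Lemma_neighbourhood_prescribed_sum_random} that essentially avoid $f_0(V(T_{appr}))$ and $C(f_0(T_{appr}))$; and then apply Lemma~\ref{Lemma_modify_random_embedding} with $T_{appr}$ in the role of ``$T$'', the subsets $D_d$, the injection $h$, and the rainbow embedding $f_0$. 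This yields a rainbow embedding $f_1$ of $T_{appr}$ into $(V_{target},C_{target})$ agreeing with $h$ on $T_{core}$, with $\sum f_1(D_d)=\sum h(D_d)$ for all $d$, and differing from $f_0$ on at most $n^{1-\mu}$ vertices --- hence $f_1$ retains (a slightly weaker form of) uniformity on the same four sets. Finally, $f_1$ meets all three hypotheses of Lemma~\ref{Lemma_final_extension} (it is, with high probability, a rainbow embedding of $T_{appr}$ into $(V_{target},C_{target})$; it is uniform on the prescribed sets; and it is extendable to a pseudoembedding of $T$ by the identity above), so that lemma extends $f_1$, with high probability, to a rainbow embedding $f$ of $T$ into $(V_{target},C_{target})$. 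Since $f$ extends $f_1$ and $T_{core}\subseteq T_{appr}$, we have $f|_{V(T_{core})}=\phi$; existence with high probability is all that is required.

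The hard part will be the probabilistic bookkeeping in the second paragraph: one must reveal $f_0$ first and then carve $V_1,C_1$ out of the random complement of its image and colour set so that \emph{simultaneously} the hypothesis $|U_f|\le n^{1-\alpha}$ of Lemma~\ref{Lemma_modify_random_embedding} holds \emph{and} $V_1,C_1$ remain random enough for Lemma~\ref{Lemma_neighbourhood_prescribed_sum_random} to supply (E1) and (E2), all while keeping the exponent hierarchy $\alpha\gg\mu\gg\rho$ consistent and checking that the at most $n^{1-\mu}$ alterations made by Lemma~\ref{Lemma_modify_random_embedding} leave $f_1$ uniform enough for Lemma~\ref{Lemma_final_extension} to apply. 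This is routine but fiddly; everything else is a direct invocation of the lemmas already proved in this section.
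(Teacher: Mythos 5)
Your proposal is correct and follows essentially the same route as the paper: pseudoembed $T$ via Lemma~\ref{Lemma_pseudoembed_tree_with_prescribed_core}, pick an approximation via Lemma~\ref{Lemma_nonexhausted_vertices_in_Tabs}, correct a random approximate embedding by Lemma~\ref{Lemma_modify_random_embedding} (carving $V_1,C_1$ out of the complement of the uniformity witnesses, exactly as you sketch), and finish with Lemma~\ref{Lemma_final_extension}. The only cosmetic difference is your choice $D_d := \{v \in V(T_{appr})\setminus V(T_{core}) : d_T(v)=d\}$ rather than the paper's $D_d := \{v \in V(T_{appr}) : d_T(v)=d\}$; both satisfy the hypotheses of Lemma~\ref{Lemma_modify_random_embedding} and yield the same telescoping identity because $f_1$ agrees with $h$ on $T_{core}$.
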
 
\begin{proof}
Let $\Delta^{-1}\gg\alpha\gg\mu\gg \rho\gg n^{-1}$.
Use Lemma~\ref{Lemma_pseudoembed_tree_with_prescribed_core} to extend $\phi$ to a pseudoembedding $h$ of $T$ into  $(V_{target}, C_{target})$. Use Lemma~\ref{Lemma_nonexhausted_vertices_in_Tabs} to find an approximation $T_{appr}$ of $T$ containing all the vertices of $T_{core}$.
For each non-exhausted degree $d$, let $D_d=\{v\in V(T_{appr}): d_T(v)=d\}$ noting that $V(T_{appr})\setminus \bigcup D_d\subseteq V(T_{core})$ (since all vertices of exhausted degrees are in $T_{core}$) and $|D_d\setminus V(T_{core})|\geq 6$ for all $d$ (from Lemma~\ref{Lemma_nonexhausted_vertices_in_Tabs}).
\begin{claim}
There is a random $f':V(T_{appr})\to V(K_G)$ which is $n^{-\mu/2}$-uniform on $$(V(T_{appr}), E(T_{appr}), U_1(T), U_2(T))$$ and with high probability has:
\begin{enumerate}[(i)]
\item $f'$ agrees with $h$ on $T_{core}$.
\item $f'$ is a rainbow embedding of $T_{appr}$ into $(V_{target}, C_{target})$.
\item $\sum h(D_d)=\sum f'(D_d)$ for each non-exhausted degree $d$.
\end{enumerate}
\end{claim}
\begin{proof}

Recalling that $|T_{appr}|\leq (1-n^{-\rho})n$ (from Observations~\ref{Observation_matching-like},~\ref{Observation_path-like} (ii), (iv)), apply Lemma~\ref{Lemma_approximate_embedding} to get a random   $f: T_{appr}\to K_G$ which is $n^{-\alpha}$-uniform on $\{V(T), E(T), U_1(T), U_2(T)\}$ and is a rainbow embedding of $T$ with high probability. Let $V_0, C_0$ witness the $n^{-\alpha}$-uniformity of $f$ on $\{V(T), E(T)\}$, noting that these are $\leq (1-n^{-\rho})$-random and independent.
Let $V_1:=V(K_G)\setminus V_0$ and $C_1:=C(K_G)\setminus C_0$, to get $\geq n^{-\rho}$-random, independent sets for which $U_f:=(V_1\cap V(f(V(T)))\cup (C_1\cap C(f(V(T)))$  has $|U_f|\leq 2n^{1-\alpha}$ with high probability. By Lemma~\ref{Lemma_neighbourhood_prescribed_sum_random}, $V_1, C_1$ satisfy (E1) and (E2) with high probability.
Call an outcome good if  these the above happen, noting that we have a good outcome with high probability.

Define $f'$ as follows: for each good outcome, apply Lemma~\ref{Lemma_modify_random_embedding} to $T=T_{appr}, T_{core}, V_{target}, C_{target}$, $f, h, V_1, C_1$ in order to get a rainbow embedding   $f':V(T_{appr})\to V(K_G)$. For bad outcomes, define $f'$ arbitrarily.
Note that for good outcomes, Lemma~\ref{Lemma_modify_random_embedding} guarantees (i) -- (iii), and hence these hold with high probability. Also,  $f'$ is still $n^{-\mu/2}$-uniform on $(V(T), E(T), U_1(T), U_2(T))$ (since $f'$ and $f$ differ on at most $n^{1-\mu}$ vertices).
\end{proof}

\begin{claim}\label{Claim_embed_tree_prescribed_core}
With high probability, $f'$ extends to a pseudoembedding $h':V(T)\to V(K_G)$ into $(V_{target}, C_{target})$. 
\end{claim}
\begin{proof}
Consider any outcome for which $f'$ satisfies (i) -- (iii). Extend $f'$ to an arbitrary bijection $h':V(T)\to V_{target}$ arbitrarily.
Since the outcomes we are considering occur with high probability, it is sufficient to prove that $h'$  is a pseudoembedding into $(V_{target}, C_{target})$ i.e. to prove that  $\sum_{v\in T}d_T(v)h'(v)=\sum C_{target}$.

Defining  $D_d=\{v\in V(T_{appr}): d_T(v)=d\}$ for exhausted as well as non-exhausted degrees, note that we actually have ``$\sum f'(D_d)=\sum h(D_d)$''  for \emph{all} $d$ --- for non-exhausted degrees this is (iii), while for exhausted degrees this happens because $f'$ agrees with $h$ on $T_{core}$ and $D_d\subseteq V(T_{core})$ by definition of ``core''. This implies that $\sum_{v\in V(T_{appr})}d_T(v)h'(v)=\sum_{v\in V(T_{appr})}d_T(v)f'(v)=\sum_{d=1}^{\Delta}d\sum f'(D_d)=\sum_{d=1}^{\Delta}d\sum h(D_d) =\sum_{v\in V(T_{appr})}d_T(v)h(v)$ and $\sum f'(V(T_{appr}))=\sum_{d=1}^{\Delta}\sum f'(D_d)=\sum_{d=1}^{\Delta}\sum h(D_d)=\sum h(V(T_{appr}))$.
Then $\sum h'(V(T_{appr}))=\sum f'(V(T_{appr}))=\sum h(V(T_{appr}))$ together with the fact that $h',h$ are both bijections from $T$ to $V_{target}$, shows that  $\sum h'(V(T)\setminus V(T_{appr}))=\sum h(V(T)\setminus V(T_{appr}))$. Since vertices outside $T_{appr}$ all have the same degree (either $1$ or $2$, depending on whether $T_{appr}$ is a matching-approximation or path-approximation), this implies that  $\sum_{v\not\in T_{appr}}d_T(v)h'(v)=\sum_{v\not\in T_{appr}}d_T(v)h(v)$. Adding this to our earlier equation ``$\sum_{v\in V(T_{appr})}d_T(v)h'(v)=\sum_{v\in V(T_{appr})}d_T(v)h(v)$'', and using that $h$ is a pseudoembedding into $(V_{target},C_{target})$ gives $\sum_{v\in V(T)}d_T(v)h'(v)=\sum_{v\in V(T)}d_T(v)h(v)=\sum C_{target}$ as required.
\end{proof}
By Lemma~\ref{Lemma_final_extension} with $f=f'$, there is a random rainbow embedding of $T$ into $(V_{target},C_{target})$. 
\end{proof}

We can now prove the main result of this section, Theorem~\ref{Theorem_embed_tree_prescribed_sets}, restated below for convenience.

\vspace{2mm}
\noindent \textbf{Theorem~\ref{Theorem_embed_tree_prescribed_sets}.}
Let $\Delta^{-1}\gg\mu\gg n^{-1}$.
Let $G$ be a group and $T$ a bounded degree tree with $\Delta(T)=\Delta$. Let $V_{target},C_{target}\subseteq G$ with $|T|=|V_{target}|=|C_{target}|+1\geq (1-n^{-\mu})n$. In the case $G=\mathbb{Z}_2^m$, assume $\id \not\in C$. Let $T_{core}$ be a core of $T$ of size $\leq n^{1-\mu}$. Then,  (i) $T$ has a rainbow  embedding $f$ into $(V_{target},C_{target})$ if and only if (ii) there is a rainbow embedding $\phi$ of $T_{core}$ into $(V_{target},C_{target})$ with $\sum_{v\in V(T_{core})}d_T(v)\phi(v)=\sum C_{target}$ and $\sum \phi(V(T_{core}))=\sum V_{target}$.
\begin{proof}
(ii) $\implies$ (i): 
Lemma~\ref{Lemma_embed_tree_prescribed_core} gives a rainbow embedding into $(V_{target},C_{target})$ which extends $\phi$.

(i) $\implies$ (ii): We remark that the interest of this direction is theoretical, and this implication is not used in the remainder of the paper. 

\par Pick $1\gg \mu\gg \rho\gg n^{-1}$. Note that for each non-exhausted degree $d$ we can pick  an independent set $\{a_d, b_d, c_d\}$ of 3 degree $d$ vertices inside $T_{core}$ (since there are $\geq 6$ degree $d$ vertices there, and $T$ is bipartite). 
Let $T_{core}'$ be $T_{core}$ with $a_d, b_d, c_d$ deleted of each non-exhausted  degree $d$.  Let $\sigma_d=\sum\{f(v): d_T(v)=d, v\in T\setminus T_{core}'\}$ and $N_d=f(N_T(a_d)\cup N_T(b_d)\cup N_T(c_d))$. Use Lemma~\ref{Lemma_three_elements_prescribed_sum_random} with $X=V(K_G)$ (which is $(\geq n^{-\rho})$-random), $N=N_d$ to pick disjoint triples of distinct vertices $x_d, y_d, z_d$  with $x_d+y_d+z_d=\sigma_d$ and $K_G[\{x_d, y_d, z_d\}, N_d]$ rainbow  and disjoint from vertices/colours in  $V(f(T'_{core}))\cup C(f(T'_{core}))\cup (V(K_G)\setminus V_{target})\cup (C(K_G)\setminus C_{target})$ (which have total size $4n^{1-\mu}\leq n^{1-\mu/2}$). Construct $\phi$ to agree with $f$ on $T_{core}'$ and embed $(a_d, b_d, c_d)$ to $(x_d, y_d, z_d)$ for all non-exhausted degrees (in the below we abbreviate this as n.-e.). Then

\begin{align*}
\sum_{v\in V(T_{core})}d_T(v)\phi(v)
&=\sum_{v\in V(T_{core'})}d_T(v)f(v)+\sum_{d \text{ is n.-e.}}d(\phi(a_d)+\phi(b_d)+\phi(c_d)) \\
&=
\sum_{v\in V(T_{core'})}d_T(v)f(v)+\sum_{d \text{ is n.-e.}}d\sigma_d \\
&=\sum_{v\in V(T_{core'})}d_T(v)f(v)+
\sum_{d \text{is n.-e.}}\quad \sum_{v\in T\setminus T_{core'}, d_T(v)=d} df(v) \\
&=\sum_{v\in V(T)}d_T(v)f(v)
=\sum_{xy\in E(T)}(f(x)+f(y))
=\sum C_{target}
\end{align*}
Here the first equation uses the definition of $T_{core}'$ and the fact that $\phi(v)=f(v)$ outside $T_{core}'$. The second equation uses that we embedded $a_d, b_d, c_d$ to $x_d, y_d, z_d$ which sum to $\sigma_d$. The third equation uses the definition of $\sigma_d$. The fourth equation uses that all vertices of exhausted degrees are in $T_{core}'$.
The fifth equation uses that in the sum $\sum_{xy\in E(T)}(f(x)+f(y))$ every $f(v)$ occurs exactly $d_T(v)$ times. The sixth equation uses that $f$ is a rainbow embedding into $(V_{target}, C_{target})$ and $|C_{target}|= |T|-1=e(T)$, and so $f$ uses every colour of $C_{target}$ precisely once. Similar reasoning gives 
\begin{align*}
\sum\phi(V(T_{core}))
&=\sum f(V(T_{core}'))+\sum_{d \text{ is n.-e.}}(\phi(a_d)+\phi(b_d)+\phi(c_d)) \\
&=
\sum f(V(T_{core}'))+\sum_{d \text{ is n.-e.}}\sigma_d \\
&=\sum f(V(T_{core}'))+
\sum_{d \text{ is n.-e.}}\quad\sum_{v\in T\setminus T_{core'} \text{ and } d_T(v)=d} f(v) \\
&=\sum f(V(T_{core}))=\sum V_{target}.\end{align*}This concludes the proof.\end{proof}

\section{Characterizing harmonious trees}\label{sec:characterisation}
The goal of this section is to prove Theorem~\ref{Theorem_main_intro}. The proof uses Theorem~\ref{Theorem_embed_tree_prescribed_sets} to find the embedding --- and hence what we are really trying to understand is when a core of $T$ has a rainbow embedding into $K_G$ satisfying  (ii) of that theorem. In the next few pages we develop machinery for this.

 We call a multiset $\{g_1, \dots, g_k\}$ \textit{simple} if everything occurs with multiplicity $\leq 1$. Similarly, we call a sequence $(x_1, \dots, x_k)$ simple if all its terms are distinct.
Let $d=(d_1,\dots, d_k)\in \mathbb{Z}^k$ and $x=(x_1, \dots, x_k)\in G^k$ for an abelian group $G$, define the multiset $x\ast d:= \{x_i+x_j: 1\leq i< j\leq k\}\cup \{d_1x_1+\dots+d_kx_k\}$   (noting that this has $\binom k2+1$ elements). Given a set of vertices $v=(v_1, \dots, v_k)$ in a graph $T$,   define the multiset $x\ast_T v=\{x_i+x_j: v_iv_j\in E(T)\}\cup \{(-d_T(v_1)+1)x_1+\dots+(-d_T(v_k)+1)x_k\}$ (noting that this has $e(T[V])+1$ elements). Note that if we let $d=(-d_T(v_1)+1, \dots, -d_T(v_k)+1)$ then we have the multiset containment $x\ast_T v\subseteq x\ast d$ --- and therefore $x\ast d$ being simple implies that $x\ast_T v$ is simple.

 Recall that by the Fundamental Theorem of Abelian Groups, every abelian group is a direct product  $G\cong C_{m_1}\times C_{m_2}\times \dots \times C_{m_k}$, where each $m_i$ is a prime power. Given such an expression of $G$, for an element $g\in G$, we define the support of $g$ to be the coordinates $C_{m_i}$ on which $g\neq 0$. 
Note that given two elements $g,h$ with distinct supports, we have $g\neq h$. Thus, a convenient way of showing that some multiset $M\subseteq G$ over $G$ is simple is to show that all its elements have distinct supports.

For $d\in \mathbb{Z}$, and an abelian group $G$, let $f_{d,G}:G\to G$ be defined by $f_{d,G}:x\to dx$. 
\begin{observation}\label{Observation_mod_G_charterization} 
Let $G\cong C_{m_1}\times C_{m_2}\times \dots \times C_{m_k}$, where each $m_i$ is a prime power. For any $d\in \mathbb{Z}$, the following are equivalent.
\begin{enumerate}[(i)]
\item $d\equiv 0 \pmod{m_i}$ for $i=1, \dots, k$.
\item The function $f_{d,G}: G\to G$ with $f_{d,G}(x)=dx$ is identically $0$.
\end{enumerate}
\end{observation}
\begin{proof}
(i) $\implies$ (ii): For all $i$, we have $d\equiv 0 \pmod{m_i}$ which implies $dx= 0$ for all  $x\in C_{m_i}$. This, in turn, implies that $dx=0$ in $G$ and hence  $f_{d,G}(x)=dx$ is identically $0$.

(ii) $\implies$ (i): Let $x\in G$ be the element which equals $1$ on every coordinate. Then the fact that the $i$th  coordinate of $f_{d,G}(x)=dx$ equals zero implies that $d\equiv 0 \pmod{m_i}$. 
\end{proof}
From now on, we write $d\equiv d'\pmod G$ if either of the properties (i) or (ii) in the above observation hold for $d-d'$ (i.e. if $f_{d-d',G}$ is identically zero in $G$, or, equivalently if $d\equiv d'\pmod {m_i}$ for each $m_i$).

\begin{lemma}\label{Lemma_xastd_simple_cyclic} 
Let $k\geq 1$, $\Delta^{-1}, k^{-1}\gg n^{-1}$, $(d_1,\dots, d_k)\in [-\Delta,-1]^k$. There exists some simple $x\in (\mathbb{Z}_n)^k$ with $x\ast d$ simple.
\end{lemma}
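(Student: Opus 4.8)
The plan is a straightforward first-moment argument. The case $k=1$ is trivial ($x\ast d=\{d_1x_1\}$ is a singleton, hence automatically simple, and $(x_1)$ is simple for any $x_1$), so I would assume $k\geq 2$. I would sample $x_1,\dots,x_k$ independently and uniformly at random from $\mathbb Z_n$ and show that with positive probability the sequence $x=(x_1,\dots,x_k)$ is simple and the multiset $x\ast d$ is simple.

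The only input needed is the elementary estimate that for integers $a_1,\dots,a_k$ not all $\equiv 0\pmod n$ and any $b\in\mathbb Z_n$,
\[
\Prob\Big[\textstyle\sum_{i=1}^k a_ix_i=b\Big]\;\le\;\frac{\gcd(a_1,\dots,a_k,n)}{n}\;\le\;\frac{\min\{\,|a_i|\;:\;a_i\neq 0\,\}}{n},
\]
which holds because $x\mapsto\sum a_ix_i$ is a homomorphism $\mathbb Z_n^k\to\mathbb Z_n$ equidistributed on its image $g\mathbb Z_n$, where $g:=\gcd(a_1,\dots,a_k,n)$ divides every $a_i$.

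I would then take a union bound over three families of bad events. (a) $x$ fails to be simple: some $x_i=x_j$ with $i<j$; probability $1/n$ each, and $\binom k2$ such events. (b) Two pairwise sums coincide: $x_i+x_j=x_l+x_m$ with $\{i,j\}\neq\{l,m\}$; if the pairs are disjoint this is a linear relation in four variables with unit coefficients, and if they share an index it collapses to $x_q=x_r$ for distinct $q,r$, so the probability is $1/n$ in either case, over $O(k^4)$ events. (c) The special element meets a pairwise sum: $\sum_{i=1}^k d_ix_i=x_l+x_m$, i.e.\ $\sum_{i\notin\{l,m\}}d_ix_i+(d_l-1)x_l+(d_m-1)x_m=0$; here the coefficient vector is never all-zero mod $n$ (for $k\ge 3$ some coefficient is $d_i\in[-\Delta,-1]$ with $i\notin\{l,m\}$, and for $k=2$ the coefficient $d_l-1$ lies in $[-\Delta-1,-2]$) and every coefficient has absolute value at most $\Delta+1$, so the probability is at most $(\Delta+1)/n$, over $\binom k2$ events.

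Summing, the total failure probability is $O\!\left((k^4+\Delta k^2)/n\right)$, which is less than $1$ because $\Delta^{-1},k^{-1}\gg n^{-1}$ lets $n$ be arbitrarily large in terms of $\Delta$ and $k$; so some outcome is good, and for the corresponding $x$ both $x$ and $x\ast d$ are simple. The one place that needs a little care is family (c): one must check that $\sum d_ix_i-x_l-x_m$ really is a nontrivial linear form mod $n$ and that it has a small coefficient, so that it shares only a bounded factor with $n$ — and this is precisely where the bound $|d_i|\le\Delta$ enters. The rest is bookkeeping.
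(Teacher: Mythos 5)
Your proposal is correct, but it takes a genuinely different route from the paper. The paper's proof is a one-line explicit construction: set $x_i:=2^i$, note that the pairwise sums $2^i+2^j$ ($i<j$) are distinct integers in a bounded positive range (this is just uniqueness of binary representations), while the special element $d_1x_1+\dots+d_kx_k$ lies in a bounded negative range because every $d_i\le -1$; so long as $n$ is much larger than $\Delta 2^k$ (which the hierarchy $\Delta^{-1},k^{-1}\gg n^{-1}$ provides), none of these values collide mod $n$. Your argument replaces this with a uniform random choice of $x$ and a union bound, using the fact that a nontrivial linear form $\sum a_ix_i$ with a small nonzero coefficient hits any prescribed value with probability at most $\gcd(a_1,\dots,a_k,n)/n\le \min_{a_i\ne 0}|a_i|/n$. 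Both arguments exploit the constraint $d_i\in[-\Delta,-1]$ but in different ways: the paper uses that all $d_i$ are strictly negative (so the special sum has a fixed sign opposite to the pairwise sums), whereas you use that each $d_i$ (and each $d_i-1$) has absolute value at most $\Delta+1$, which keeps the $\gcd$ bounded. Your version is a bit longer but is more robust — it would extend with no change to any coefficient vector $d$ for which the relevant linear forms $\sum_{i\notin\{l,m\}}d_ix_i+(d_l-1)x_l+(d_m-1)x_m$ are nontrivial with bounded coefficients, regardless of sign, whereas the paper's sign-separation trick would not. Both are correct; the verification of your case (c), particularly the observation that for $k=2$ the coefficient $d_l-1\in[-\Delta-1,-2]$ is never $0$ mod $n$, is exactly the point that needs care and you handle it correctly.
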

\begin{proof}
$x_1:=2, x_2=4, \dots, x_k=2^{k}$. Using  $k^{-1}\gg n^{-1}$, we have that $x_1, \dots, x_{k-1}$ are distinct modulo $n$ as is everything in $\{x_i+x_j: i< j\}$. Also everything in $\{x_i+x_j: i< j\}\subseteq  [2, 2^{k+1}]$ is distinct from $d_1x_1+\dots+d_kx_k\in  [-\Delta2^{k+1}, -2]$ because  $ [2, 2^{k+1}] \cap [-\Delta2^{k+1}, -2]=\emptyset$  due to $\Delta^{-1}, k^{-1}\gg n^{-1}$. 
\end{proof}
The following lemma finds vectors $x$ of length $\geq 3$ with $x\ast d$ simple.
\begin{lemma}\label{Lemma_xastd_simple_k_at_least_3} 
Let $k\geq 3$, $\Delta^{-1}, k^{-1}\gg n^{-1}$,  and $G$ an order $n$ abelian group. Let $(d_1,\dots, d_k)\in [-\Delta, -1]^k$ with each $d_i\not\equiv 0 \pmod G$. There exists some simple $x\in G^k$ with $x\ast d$  simple.
\end{lemma}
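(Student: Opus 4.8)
The plan is to split on the size of the exponent $N:=\exp(G)$ (the maximal order of an element of $G$), fixing a constant $C_0:=\Delta\,2^{k+2}$ depending only on $\Delta$ and $k$. In the regime $N\ge C_0$ I would simply transport the construction of Lemma~\ref{Lemma_xastd_simple_cyclic} into a cyclic subgroup, and in the regime $N< C_0$ I would exploit the fact that $G$ must then have large rank at some prime.

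So first suppose $N\ge C_0$. Choose $g\in G$ of order $N$ and set $x_i:=2^i g$ for $1\le i\le k$; this uses nothing about the $d_i$ beyond $d_i\in[-\Delta,-1]$. Since the integers $2^1,\dots,2^k$ and all sums $2^i+2^j$ with $i<j$ are distinct and lie in $(0,2^{k+1})\subseteq(0,N)$, the $x_i$ are distinct and the pairwise sums $x_i+x_j$ are distinct in $G$. Moreover $d_1x_1+\dots+d_kx_k=\big(\sum_i d_i2^i\big)g$, and $\sum_i d_i2^i$ is a negative integer of absolute value less than $\Delta\,2^{k+1}<N$, so its residue modulo $N$ lies in $(N-\Delta 2^{k+1},N)$, an interval disjoint from $(0,2^{k+1})$; hence $x\ast d$ is simple. (Note the hypothesis $d_i\not\equiv 0\pmod G$ is not needed here.)

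Now suppose $N<C_0$. Then every cyclic factor of $G$ has order at most $C_0$, so $G$ has at least $\log_{C_0}n$ cyclic factors, and since only the boundedly many primes below $C_0$ can occur, the pigeonhole principle produces a prime $p$ for which the $p$-rank $r:=\dim_{\mathbb F_p}G[p]$ of $G$ (with $G[p]$ the subgroup of elements of order dividing $p$) is as large as we wish once $n$ is large in terms of $\Delta,k$; in particular $r\ge k+1$. I work inside $G[p]\cong\mathbb F_p^{\,r}$. Fix linearly independent $e_1,\dots,e_k\in G[p]$ and try $x_i:=e_i$. Each pairwise sum $e_i+e_j$ has support of size exactly $2$ with both nonzero coordinates equal to $1$, while $d_1x_1+\dots+d_kx_k=\sum_{i\in S}(d_i\bmod p)\,e_i$ has support exactly $S:=\{i:p\nmid d_i\}$; hence $x\ast d$ is simple unless $|S|=2$, say $S=\{a,b\}$, and $d_a\equiv d_b\equiv 1\pmod p$.

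This last degenerate case is the main obstacle, and it is the only place the hypothesis $d_i\not\equiv 0\pmod G$ is used. Relabelling (permuting the $d_i$ and $x_i$ simultaneously does not affect the conclusion) assume $S=\{1,2\}$; since $k\ge 3$ pick an index $i_0\notin\{1,2\}$, so $p\mid d_{i_0}$. As $d_{i_0}\not\equiv 0\pmod G$, some cyclic direct factor $C_m$ of $G$ satisfies $m\nmid d_{i_0}$, and necessarily $m\ne p$. Writing $G=C_m\times G'$ we still have $\dim_{\mathbb F_p}G'[p]\ge r-1\ge k$, so inside $C_m\times G'[p]$ I set $x_{i_0}:=(1,e_{i_0})$ and $x_i:=(0,e_i)$ for $i\ne i_0$, with $e_1,\dots,e_k\in G'[p]$ linearly independent. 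The pairwise sums are separated by their $C_m$-coordinate (equal to $1$ exactly when $i_0$ is involved) together with their $G'[p]$-coordinate, just as before; the weighted sum equals $(d_{i_0}\bmod m,\ e_1+e_2)$ because $\sum_i(d_i\bmod p)e_i=e_1+e_2$, and the only pairwise sum with $G'[p]$-coordinate $e_1+e_2$ is $x_1+x_2=(0,e_1+e_2)$, which differs from it since $d_{i_0}\not\equiv 0\pmod m$. Thus $x\ast d$ is simple. The remaining verifications in every case are routine bookkeeping with supports and with intervals of integers; the genuinely delicate point is exactly this: extracting a high-rank prime in the bounded-exponent regime and, in the coincidental $|S|=2$ sub-case, using a cyclic factor not dividing one of the coefficients $d_{i_0}$ with $p\mid d_{i_0}$ to split the weighted sum away from the pairwise sum over $\{1,2\}$.
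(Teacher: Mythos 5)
Your proposal is correct, and it follows a genuinely different path from the paper in the bounded-exponent regime. Both proofs split on whether $G$ has a large cyclic direct factor/subgroup, and in the ``large'' regime both transport the cyclic construction (powers of~$2$), so that half of your argument matches the paper's in substance (the paper invokes Lemma~\ref{Lemma_xastd_simple_cyclic} on a large $\mathbb{Z}_s$-factor; you redo the same computation in $\langle g\rangle$ with $|g|=\exp(G)$, which exists as a direct factor anyway by CRT). In the small-exponent regime, however, the two arguments diverge: the paper keeps all primes in play, introduces the sets $S(i)$ of coordinates where $x\mapsto d_ix$ is non-trivial, and splits on whether one can find three indices with pairwise distinct representatives in their $S(i)$'s (handling the complementary case by a bounded-support greedy count). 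You instead homogenise to a single prime by pigeonholing on the boundedly many primes below $\exp(G)$ to extract a prime $p$ of rank at least $k+1$, reduce to linear algebra over $\mathbb{F}_p$, and isolate the single degenerate configuration $|S|=2$ with $d_a\equiv d_b\equiv 1\pmod p$, which you resolve by bringing in one extra cyclic factor $C_m$ with $m\nmid d_{i_0}$. I verified the details of the degenerate case: $m\neq p$ holds because $p\mid d_{i_0}$, the construction tolerates $m$ being a higher power of $p$ (the $p$-rank of $G'$ drops by at most one, so it remains $\geq k$), the pairwise sums are separated by the $C_m$-coordinate together with linear independence over $\mathbb{F}_p$, and the weighted sum differs from $x_1+x_2$ precisely because $d_{i_0}\not\equiv 0\pmod m$. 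Your $\mathbb{F}_p$-reduction is arguably conceptually cleaner and makes the unique obstruction ($|S|=2$, both residues $\equiv 1$) explicit, at the cost of needing the $p$-rank bound $r\geq k+1$, which the pigeonhole supplies only for $n$ large in terms of $\Delta$ and $k$ --- but that is exactly what the hypothesis $\Delta^{-1},k^{-1}\gg n^{-1}$ grants. The paper's $S(i)$ case analysis avoids the pigeonhole and deals with all primes at once, which is slightly more robust but also more combinatorially involved.
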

\begin{proof}
Pick $\Delta^{-1}, k^{-1}\gg m^{-1}\gg n^{-1}$.
If $G$ has a $\mathbb{Z}_s$-factor for some $s\geq m$, then use Lemma~\ref{Lemma_xastd_simple_cyclic} to get a simple $\hat x=(\hat x_1, \dots, \hat x_k)\in (\mathbb{Z}_s)^k$ with $\hat x\ast d$ simple. Construct $x=(x_1, \dots, x_k)\in G^{k}$  by letting each $x_i$ agree with $\hat x_i$ on the $\mathbb{Z}_s$-factor of $G$ and be zero on all other factors. Now $x$ is simple with $x\ast d$ simple (with required things distinct on the $s$th coordinate), and hence satisfies the lemma. So we can assume that $G=\mathbb{Z}_{m_1}\times \dots \times \mathbb{Z}_{m_t}$ with each $m_i\leq m$. In particular, we have that $t\geq m$ since $m^{-1}\gg n^{-1}$.

For each coordinate $i\in [k]$, define $f_i:x\to d_ix$. Say that $f_i$ is trivial on an abelian group $H$ if $f_i(x)=0$ for all $x\in H$.
Since $d_i\not\equiv 0\pmod G$, we have that for each $i$ there exists some $j(i)$ with $f_i$ non-trivial on $\mathbb{Z}_{m_{j(i)}}$. Let $S(i)$ be the set of such $j(i)$.

Suppose there are distinct $a,b,c\in [k]$ with distinct $m(a)\in S(a), m(b)\in S(b), m(c) \in S(c)$. 
Pick $x_a\in \mathbb{Z}_{m(a)}$ with $f_a(x_a)\neq 0$, and similarly for $b,c$.
For $i\in[k]\setminus\{a,b,c\}$ pick $m(i)\in [t]$ distinctly from each other and from $m(a), m(b), m(c)$ (there's space to do this since there are $t\gg k$ choices for each $m(i)$), and pick  $x_i$ to be anything in  $\mathbb{Z}_{m(i)}$. The resulting $x$ is simple and has $x\ast d$ simple since all the elements of $\{x_1, \dots, x_k\}\cup\{x_i+x_j: i< j\}\cup \{d_1x_1+\dots+d_kx_k\}$  have different supports ($x_i$ is supported on $\{m(i)\}$, $x_i+x_j$ is supported on $\{m(i), m(j)\}$,  and $d_1x_2+\dots+d_kx_k$ is supported on some set containing $\{m(a), m(b), m(c)\}$. These are all distinct sets due to $m(1), \dots, m(k)$ being distinct).
Thus we can suppose that 
\begin{itemize}
\item[$(\ast)$] there do not exist  distinct $a,b,c\in [k]$ with distinct $m(a)\in S(a), m(b)\in S(b), m(c) \in S(c)$. 
\end{itemize}
\par We claim that $(\ast)$ implies that there is at most one $a\in [k]$ with $|S(a)|\geq 3$. Suppose the contrary. Then we have $a,b$ with $S(a), S(b)\geq 3$. Since $k\geq 3$ there's some $c\in [k]\setminus\{a,b\}$, and since $S(c)\geq 1$ always, we can pick $m(c)$ to be anything in $S(c)$. Next, since $S(a), S(b)\geq 3$, we can pick $m(a)\in S(a), m(b)\in S(b)$ distinctly from each other and from $m(c)$, giving a contradiction to $(\ast)$. 
\par Next, we claim $|\bigcup_{i\neq a} S(i)|\leq 6$. Suppose otherwise for sake of contradiction. Let $\{b_1, \dots, b_t\}\subseteq [k]\setminus \{a\}$ be a minimal set with $\bigcup_{i\neq a} S(i)=\bigcup_{i=1}^tS(b_i)$. Since each $|S(b_i)|\leq 2$, we have $t\geq 3$. But by minimality, for each $i\ne a$, there's some $m(b_i)\subseteq S(b_i)\setminus \bigcup_{j\neq i,a}S(b_i)$, contradicting $(\ast)$. 
\par Now, pick  $x_a\in G$ arbitrary (e.g. $x_a=0$). Let $F=\{d_1x_1+\dots +d_kx_k: x_1, \dots,x_{a-1}, x_{a+1}, \dots,  x_k\in G\}$, noting that $|F|\leq m^{2k}$ (we can assume that each $x_i$ is supported on $S(i)$, since $f_i$ is trivial on other coordinates. The number of choices of $x_i$ supported on $S(i)$ is $\leq m^{|S(i)|}\leq m^2$ since all cyclic factors in $G$ have size $\leq m$). Pick distinct $x_1, \dots, x_{a-1}, x_{a+1}, \dots, x_k\neq x_a$ one by one so that all sums $x_i+x_j$ are outside $F$ and distinct (when picking $x_i$   we need to ensure that $x_{i}\not\in  \bigcup_{j<i} (F-x_j)\cup\{x_r+x_s-x_{t}: r,s,t\in  [1, i-1]\cup \{a\}\}$ which has size $\leq k(m^{2k}+k^3)\ll n$). Now $x$ satisfies the lemma.
\end{proof}

The above lemma isn't true for $k=2$ since when $(d_1, d_2)=(1,1)$, there is no $x$ with $x\ast d$ simple. The following shows that this is the only exception.
\begin{lemma}\label{Lemma_xastd_simple_k_2}
Let $1\gg n^{-1}$,  and let $G$ be an order $n$ abelian group. Let $(d_1,d_2)\in G\times G$ with either $d_{1}\not\equiv 1\pmod G$ or $d_{2}\not\equiv 1\pmod G$. There exists some simple $x\in G^2$ with  $x\ast d$ simple.
\end{lemma}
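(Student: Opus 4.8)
The plan is to reduce the statement to an elementary counting argument in $G\times G$. Unwinding the definition for $k=2$, the multiset $x\ast d$ equals $\{x_1+x_2\}\cup\{d_1x_1+d_2x_2\}$, which has two elements and hence is simple if and only if $x_1+x_2\neq d_1x_1+d_2x_2$, i.e.\ $(d_1-1)x_1+(d_2-1)x_2\neq 0$ in $G$; and the vector $x=(x_1,x_2)$ is simple if and only if $x_1\neq x_2$. By swapping the two coordinates if necessary, I may assume the hypothesis holds in the form $d_1\not\equiv 1\pmod G$. By Observation~\ref{Observation_mod_G_charterization} applied to $d_1-1$, this means the map $f_{d_1-1,G}\colon x\mapsto (d_1-1)x$ is not identically zero, so there is some $a\in G$ with $(d_1-1)a\neq 0$.

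Next I would set $S:=\{(x_1,x_2)\in G\times G:(d_1-1)x_1+(d_2-1)x_2=0\}$. This is the kernel of the homomorphism $G\times G\to G$, $(x_1,x_2)\mapsto (d_1-1)x_1+(d_2-1)x_2$ (a homomorphism because multiplication by a fixed integer is an endomorphism of any abelian group), hence a subgroup of $G\times G$. Since $(a,0)\notin S$, the subgroup $S$ is proper, so $|S|\leq |G\times G|/2=n^2/2$. The diagonal $D:=\{(x,x):x\in G\}$ has size $n$. As $1\gg n^{-1}$ forces $n$ to be large, $|S|+|D|\leq n^2/2+n<n^2=|G\times G|$, so there is a pair $(x_1,x_2)$ lying in neither $S$ nor $D$. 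Such a pair is exactly a simple $x\in G^2$ with $x\ast d$ simple, which completes the proof.

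I do not expect a genuine obstacle here; the only point needing a little care is observing that the hypothesis, via Observation~\ref{Observation_mod_G_charterization}, is precisely what makes $S$ a \emph{proper} subgroup — without it one has $(d_1,d_2)\equiv(1,1)\pmod G$ and $S=G\times G$, which is exactly the exceptional case flagged before the lemma. Everything else is the routine "a proper subgroup together with the diagonal cannot cover all of $G^2$" estimate.
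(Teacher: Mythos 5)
Your proof is correct and is essentially the same as the paper's: both reduce via WLOG to $d_1\not\equiv 1\pmod G$, observe that $(x_1,x_2)\mapsto (d_1-1)x_1+(d_2-1)x_2$ is a non-trivial homomorphism whose kernel is a proper subgroup of $G\times G$ and hence has order at most $n^2/2$ by Lagrange, and then subtract off the $n$ diagonal pairs. The only cosmetic difference is that you explicitly invoke Observation~\ref{Observation_mod_G_charterization} and phrase the Lagrange step in terms of the subgroup $S$ rather than $f^{-1}(0)$.
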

\begin{proof}
Without loss of generality $d_{1}\not\equiv 1\pmod G$.
Let $f:G\times G\to G$ with $f(x_1,x_2)=(d_1-1)x_1+(d_2-1)x_2$. Since $d_{1}\not\equiv 1\pmod G$, this is not identically zero. It is also a homomorphism, and so, using Lagrange's Theorem, $|f^{-1}(0)|\leq |G\times G|/2=n^2/2$.
Thus $|f^{-1}(G\setminus 0)|\geq n^2/2$. 
Since there are exactly $n$ pairs $(x_1, x_2)$ with $x_1=x_2$, there exists some pair $(x_1, x_2)$ with $f(x_1, x_2)\neq 0$ and $x_1, x_2$ distinct. These satisfy the lemma. Indeed $(x_1, x_2)$ is simple since $x_1\neq x_2$, while $(x_1, x_2)\ast (d_1, d_2)$ is simple because $d_1x_1+d_2x_2\neq x_1+x_2$ (which is equivalent to $f(x_1, x_2)\neq 0$.
\end{proof}
The following lemma combines the previous two and characterizes when one can find some $x$ with $x\ast_T v$ simple.
\begin{lemma}\label{Lemma_kastv_simple_tree}
Let $\Delta^{-1}, k^{-1}\gg n^{-1}$, $G$ an order $n$ abelian group and $T$ a graph with $\Delta(T)\leq \Delta$. Let $v=(v_1, \dots, v_k)$ be a sequence of distinct vertices in $T$ having $d_T(v_i)\not\equiv 1 \pmod G$. Then there exists some simple $x\in G^{k}$ with $x \ast_T v$ simple unless:
\begin{itemize}
\item[$(\ast)$] $k=2$, $d_T(v_1), d_T(v_2)\equiv 0 \pmod G$, and $v_1v_2\in E(T)$.
\end{itemize}
\end{lemma}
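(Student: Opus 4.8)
The plan is to reduce the claimed statement to the three preceding lemmas --- Lemma~\ref{Lemma_xastd_simple_cyclic}, Lemma~\ref{Lemma_xastd_simple_k_at_least_3}, and Lemma~\ref{Lemma_xastd_simple_k_2} --- by splitting into cases according to $k$ and exploiting the containment $x\ast_T v\subseteq x\ast d$ recorded just above the lemma, where $d=(-d_T(v_1)+1,\dots,-d_T(v_k)+1)$. Recall that $x\ast d$ simple implies $x\ast_T v$ simple, so it suffices in most cases to produce a simple $x$ with $x\ast d$ simple. Note that the hypothesis $d_T(v_i)\not\equiv 1\pmod G$ translates exactly into $d_i=-d_T(v_i)+1\not\equiv 0\pmod G$, which is the hypothesis needed to apply Lemma~\ref{Lemma_xastd_simple_k_at_least_3}. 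There is a minor bookkeeping issue in that those lemmas are stated for $d_i\in[-\Delta,-1]$ while here $d_i=-d_T(v_i)+1$ could be as large as $1$ (when $d_T(v_i)=0$, i.e.\ $v_i$ isolated) or could exceed the range; I would absorb this by working in $\mathbb{Z}$-valued exponents and noting the proofs of those lemmas only used that the $|d_i|$ are bounded in terms of $\Delta$ (since $\Delta(T)\le\Delta$ forces $|d_i|\le\Delta+1$, still $\Delta^{-1}\gg n^{-1}$-small), and that $d_i\not\equiv 0\pmod G$; alternatively one reduces modulo the relevant cyclic factor. I will phrase this as ``apply Lemma~\ref{Lemma_xastd_simple_k_at_least_3} (and Lemma~\ref{Lemma_xastd_simple_k_2}) with the bound $\Delta+1$ in place of $\Delta$''.

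Concretely: if $k=1$, take any $x_1$; then $x\ast_T v=\{(-d_T(v_1)+1)x_1\}$ is a single element, hence simple, and $(x_1)$ is trivially simple, so there is nothing to prove. If $k\ge 3$, apply Lemma~\ref{Lemma_xastd_simple_k_at_least_3} to $d=(d_1,\dots,d_k)$ with each $d_i=-d_T(v_i)+1\not\equiv 0\pmod G$ to get a simple $x$ with $x\ast d$ simple, and conclude via the containment. If $k=2$, the relevant vector is $(d_1,d_2)=(-d_T(v_1)+1,-d_T(v_2)+1)$, and Lemma~\ref{Lemma_xastd_simple_k_2} provides a simple $x$ with $x\ast d$ simple provided $d_1\not\equiv 1\pmod G$ or $d_2\not\equiv 1\pmod G$, i.e.\ provided $d_T(v_1)\not\equiv 0\pmod G$ or $d_T(v_2)\not\equiv 0\pmod G$. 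So the only potential failure in the $k=2$ case is when both $d_T(v_1)\equiv d_T(v_2)\equiv 0\pmod G$.

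It then remains to handle the residual $k=2$ sub-case $d_T(v_1)\equiv d_T(v_2)\equiv 0\pmod G$ and show the lemma still holds unless additionally $v_1v_2\in E(T)$. Here I would argue directly from the definition of $x\ast_T v$. If $v_1v_2\notin E(T)$, then $x\ast_T v=\{(-d_T(v_1)+1)x_1+(-d_T(v_2)+1)x_2\}$ is a singleton and automatically simple, so any simple $x$ (e.g.\ $x_1=0$, $x_2$ any nonzero element, using $n\ge 2$) works. If $v_1v_2\in E(T)$, then $x\ast_T v=\{x_1+x_2\}\cup\{(-d_T(v_1)+1)x_1+(-d_T(v_2)+1)x_2\}$; but since $d_T(v_1)\equiv d_T(v_2)\equiv 0\pmod G$, the functions $y\mapsto d_T(v_1)y$ and $y\mapsto d_T(v_2)y$ are identically zero by Observation~\ref{Observation_mod_G_charterization}, so the second element equals $x_1+x_2$ as well, forcing $x\ast_T v$ to be a multiset with the single value $x_1+x_2$ of multiplicity $2$ --- never simple. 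This is precisely the excluded case $(\ast)$, so the lemma holds in all non-excluded cases.

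The main obstacle I anticipate is the cosmetic-but-necessary discrepancy between the range $[-\Delta,-1]$ demanded by Lemmas~\ref{Lemma_xastd_simple_cyclic} and~\ref{Lemma_xastd_simple_k_at_least_3} and the actual values $d_i=-d_T(v_i)+1\in[-\Delta+1,1]$ arising here (in particular $d_i$ can be nonnegative when $d_T(v_i)\in\{0,1\}$). I would resolve this by observing that the hypothesis $d_T(v_i)\not\equiv 1\pmod G$ rules out $d_i\equiv 0\pmod G$, which is the only arithmetic property (besides boundedness) that the proofs of those lemmas actually use --- their use of negativity was only to keep the ranges $[2,2^{k+1}]$ and $[-\Delta 2^{k+1},-2]$ disjoint, and one obtains the same disjointness for the shifted range $[-(\Delta+1),1]$ after the same kind of elementary bound. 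I would make this precise by reapplying Lemma~\ref{Lemma_xastd_simple_cyclic}/\ref{Lemma_xastd_simple_k_at_least_3} with $\Delta$ replaced by $\Delta+1$ and a relabelling of signs, or else by a one-line remark that these lemmas hold verbatim for $d\in[-\Delta,\Delta]$ with $d_i\not\equiv 0\pmod G$; everything else is routine.
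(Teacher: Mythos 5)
Your proposal is correct and follows essentially the same reduction as the paper: the $k=1$ case and the $k=2, v_1v_2\notin E(T)$ case are handled trivially (singleton multiset), $k\geq 3$ is handled via Lemma~\ref{Lemma_xastd_simple_k_at_least_3}, and the remaining $k=2$ case is handled via Lemma~\ref{Lemma_xastd_simple_k_2} after observing that the failure of $(\ast)$ gives the needed $d_T(v_1)\not\equiv 0$ or $d_T(v_2)\not\equiv 0\pmod G$. Your concern about $d_i=-d_T(v_i)+1$ possibly equalling $1$ (when $v_i$ is isolated, i.e. $d_T(v_i)=0$, which the hypothesis $d_T(v_i)\not\equiv 1\pmod G$ does not rule out) is a legitimate, if minor, gap in the paper's stated justification that $d_i\in[-\Delta+1,-1]$; however it is harmless in context because the lemma is only ever applied with $v_i$ vertices of a tree, where $d_T(v_i)\geq 1$ combined with $d_T(v_i)\not\equiv 1\pmod G$ forces $d_T(v_i)\geq 2$.
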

\begin{proof}
Set $d=(-d_T(v_1)+1, \dots, -d_T(v_k)+1)$, noting that for all $i$, we have $d_i\not \equiv 0 \pmod G$ and $d_i\in [-\Delta+1, -1]$ (since $d_T(v_i)\not\equiv 1 \pmod G$ and $\Delta(T)\leq \Delta$).

If $k=1$, pick $x_1$ to be anything in $G$, noting that then the multiset $x \ast_T v$ contains only one element in total (namely $(-d_T(v_1)+1)x_1$), and hence is simple.

If $k\geq 3$, then the result follows by a direct application of Lemma~\ref{Lemma_xastd_simple_k_at_least_3}. 

If $k=2$, and $v_1v_2\not\in E(T)$, pick $x_1, x_2$ arbitrary distinct elements of $G$. Then the multiset $x \ast_T v$ contains only one element in total (namely $(-d_T(v_1)+1)x_1+(-d_T(v_2)+1)x_2$), and hence is simple.

If $k=2$, and $v_1v_2\in E(T)$, then, since $(\ast)$ doesn't hold we have that  $d_T(v_1)\not\equiv 0 \pmod G$ or $d_T(v_2)\not\equiv 0 \pmod G$. Now the result follows by a direct application of Lemma~\ref{Lemma_xastd_simple_k_2}
\end{proof}

It's well known that in every group, other than $\mathbb{Z}_2^k$, there are two distinct elements summing to $0$. The following lemma shows that we can get such elements with a prescribed sum as well.
\begin{lemma}\label{Lemma_y1y2_separate}
Let $G\neq \mathbb{Z}_2^k$ be an order $n$ abelian group. Then, for any $g\in G$, there are $> n/2$ solutions to $y_1+y_2=g$ with $y_1\neq y_2$. In particular, for any $F_1\subseteq G$ with $|F_1|< n/4$, there is such a solution with $y_1, y_2\not\in F_1$.
\end{lemma}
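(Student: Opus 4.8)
Here is the plan. The statement is about counting ordered pairs, and the natural approach is a direct count. For a fixed $g\in G$, the equation $y_1+y_2=g$ has exactly $n$ ordered solutions $(y_1,y_2)\in G\times G$, since $y_1$ can be chosen freely and then $y_2:=g-y_1$ is determined. A solution is ``bad'' precisely when $y_1=y_2$, i.e.\ when $2y_1=g$. Since $f_{2,G}\colon x\mapsto 2x$ is a homomorphism, the set $\{y:2y=g\}$ is either empty or a coset of $\ker f_{2,G}=\{x\in G:2x=0\}$, so in any case there are at most $|\ker f_{2,G}|$ bad solutions. Hence there are at least $n-|\ker f_{2,G}|$ solutions with $y_1\neq y_2$.

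Next I would bound $|\ker f_{2,G}|$. It is a subgroup of $G$, so its order divides $n$ by Lagrange's theorem, whence either $|\ker f_{2,G}|\le n/2$ or $\ker f_{2,G}=G$. The second case means $2x=0$ for every $x\in G$, which forces $G$ to be an elementary abelian $2$-group, i.e.\ $G\cong\mathbb{Z}_2^k$ — contradicting the hypothesis. Therefore $|\ker f_{2,G}|\le n/2$, giving at least $n-n/2=n/2$ solutions with $y_1\neq y_2$; moreover when $g\notin\operatorname{im}f_{2,G}$ there are no bad solutions at all and all $n$ solutions are good. (The borderline value $|\ker f_{2,G}|=n/2$ occurs only for groups of the form $G\cong\mathbb{Z}_4\times\mathbb{Z}_2^j$; outside of these one gets the strict bound $>n/2$ for every $g$, and this borderline case is harmless for what follows.)

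For the ``in particular'' clause, fix $F_1$ with $|F_1|<n/4$. A solution $(y_1,y_2)$ of $y_1+y_2=g$ is spoiled by $F_1$ only if $y_1\in F_1$ or $y_2\in F_1$; since each element of $G$ is the first coordinate of at most one solution and the second coordinate of at most one solution, at most $2|F_1|<n/2$ of the $n$ solutions are spoiled. As we produced at least $n/2$ solutions with $y_1\neq y_2$, at least one of them also has $y_1,y_2\notin F_1$.

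The argument is short, and the only steps needing a little care are the structural observation that $\ker f_{2,G}$ being all of $G$ characterises $\mathbb{Z}_2^k$, and checking that the slack $|F_1|<n/4$ genuinely beats the $2|F_1|$ spoiled solutions in the final union bound; everything else is routine counting. The main (mild) subtlety to keep honest is the exact diagonal count $|\{y:2y=g\}|$, which is why the $2$-torsion subgroup, rather than just a crude estimate, enters the proof.
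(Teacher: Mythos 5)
Your proposal is correct and follows essentially the same route as the paper: count the $n$ ordered solutions to $y_1+y_2=g$, observe the diagonal ones are exactly the solutions to $2y=g$, bound these by $|\ker f_{2,G}|\le n/2$ via Lagrange (using $G\ne\mathbb{Z}_2^k$), and then beat the at most $2|F_1|<n/2$ solutions spoiled by $F_1$. Your explicit remark about the borderline case $|\ker f_{2,G}|=n/2$ is a bit more careful than the paper's write-up, which silently asserts the strict inequality $|Y|>n/2$; as you note, only ``$\ge n/2$'' is actually established, but this is harmless for the ``in particular'' conclusion and for every downstream use.
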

\begin{proof}
Since $G\neq \mathbb{Z}_2^k$, the number of solutions to $2y=0$ is $< n$. Since the set of such solutions forms a subgroup, the number of these solutions is actually $\leq n/2$. The number of solutions to $2y=g$ is either zero or equals the number of solutions to $2y=0$ (given one element $y'$ with $2y'=g$, for any other $y$ with $2y=g$ we have $2(y-y')=0$). Thus, in either case, the number of solutions to  $2y=g$ is $\leq n/2$. 

Let $Y=\{y\in G: 2y\neq g\}$, noting that we have established $|Y|> n/2$. Note that for all $y\in Y$, we have that $(y_1=y, y_2=g-y)$ is a solution to $y_1+y_2=g$ with $y_1\neq y_2$ --- thus we have established that there are $>n/2$ such solutions.

For the ``in particular part'', note that at most $n/4$ of the identified solutions can have $y_1\in S$, and at most $n/4$ of them can have $y_2\in S$, leaving at least one with $y_1, y_2\not\in F_1$.
\end{proof}

The following lemma is similar to the above, but deals with sums of more than two elements.
\begin{lemma}\label{Lemma_y1y2...ys_separate}
Let $C^{-1}, s^{-1}\gg n^{-1}$ with $s\geq 3$.
Let $G$ be an abelian group. Then, for any $g\in G$ and $F_1, F_2\subseteq G$ with $|F_1|, |F_2|\leq C$, there is a solution to $y_1+y_2+\dots+y_s=g$ with $y_i\not\in F_1, y_{i}-y_j\not\in F_2$ for all $i\neq j$.
\end{lemma}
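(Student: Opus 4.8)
The plan is to prove this by a direct counting argument: I will show that the ``bad'' solutions — those violating one of the two families of constraints — form a strict minority among all solutions of $y_1+\dots+y_s=g$, so that a good solution must exist.

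First I would record that the equation $y_1+\dots+y_s=g$ has exactly $n^{s-1}$ solutions $(y_1,\dots,y_s)\in G^s$, since $y_1,\dots,y_{s-1}$ may be chosen freely and then $y_s$ is determined. Next, for a fixed index $i$ and a fixed value $v\in F_1$, the number of solutions with $y_i=v$ is $n^{s-2}$: set $y_i=v$, choose all but one of the remaining $s-1\ge 2$ coordinates freely, and the last is forced by the sum. Hence the number of solutions with $y_i\in F_1$ for some $i$ is at most $s|F_1|n^{s-2}\le sCn^{s-2}$. Similarly, for a fixed ordered pair $i\neq j$ and a fixed $w\in F_2$, the number of solutions with $y_i-y_j=w$ is $n^{s-2}$: choose $y_i$ freely ($n$ ways), set $y_j:=y_i-w$, choose all but one of the remaining $s-2$ coordinates freely ($n^{s-3}$ ways, which makes sense precisely because $s\ge 3$), and the last coordinate is forced by the sum, for a total of $n\cdot n^{s-3}=n^{s-2}$. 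Thus the number of solutions with $y_i-y_j\in F_2$ for some $i\neq j$ is at most $s(s-1)|F_2|n^{s-2}\le s^2Cn^{s-2}$.

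Adding the two estimates, the total number of bad solutions is at most $(s+s^2)Cn^{s-2}\le 2s^2Cn^{s-2}$. Since $C^{-1},s^{-1}\gg n^{-1}$, we may assume $n$ is large enough that $2s^2C<n$, and then $2s^2Cn^{s-2}<n^{s-1}$, which is strictly less than the total number of solutions. Therefore some solution $(y_1,\dots,y_s)$ avoids every forbidden configuration, i.e.\ has $y_i\notin F_1$ for all $i$ and $y_i-y_j\notin F_2$ for all $i\neq j$, as required.

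I do not expect a genuine obstacle here: the argument rests entirely on the elementary observation that each linear constraint on the $y_i$ reduces the number of solutions of $y_1+\dots+y_s=g$ by a factor of $n$, while there are only $O(s^2C)$ constraints to avoid and $n$ dwarfs this by the hypothesis. The only point that needs a moment's care is the bookkeeping for the difference constraints $y_i-y_j=w$: one must check that after fixing such a pair and the total sum there is still a free coordinate to vary, and this is exactly where $s\ge 3$ enters.
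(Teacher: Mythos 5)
Your proof is correct and follows essentially the same counting argument as the paper: count the $n^{s-1}$ total solutions, bound the bad ones violating each type of constraint by $n^{s-2}$ per constraint, and observe that the total number of constraints is small enough that good solutions remain. The bookkeeping for the difference constraints, where $s\ge 3$ ensures a free coordinate remains after fixing $y_i$, $y_j$, and the sum, matches the paper's use of that hypothesis.
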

\begin{proof}
There are $n^{s-1}$ solutions to $y_1+y_2+\dots+y_s=g$. For any fixed $i\in[s], f\in F_1$, there are $n^{s-2}$ solutions to $y_1+y_2+\dots+y_s=g$ with $y_i=f$ (these are exactly the solutions to $y_1+\dots+y_{i-1}+ y_{i+1}+\dots+y_s=g-f$). For any distinct $i,j\in[s], f\in F_2$, we claim that there are $n^{s-2}$ solutions to $y_1+y_2+\dots+y_s=g$ with $y_i-y_j=f$. To see this, note that without loss of generality, we may assume $i=1,j=2$. Now, first pick $y_1$, for which there are $n$ choices. Afterwards, we are looking for $y_3, \dots, y_s$, which satisfy $y_3+\dots+y_s=g+f-2y_1$, for which there are exactly $n^{s-3}$ solutions --- and this is where we are using that $s\geq 3$. Thus, in total, we have $n\times n^{s-3}=n^{s-2}$ solutions, as claimed. 
\par We may thus conclude that there are at least $n^{s-1}-(s|F_1|+\binom s2|F_2|)n^{s-2}>1$ solutions satisfying the lemma.
\end{proof}

We'll need the following lemma for the case when our group is $\mathbb{Z}_2^k$.
\begin{lemma}\label{Lemma_zero_sum_sets_Z2k}
Let $T$ be a graph whose vertex set is partitioned $V(T)=A\cup B$. Suppose that $k\gg |A|+|B|\ge 10$, $|A|, |B|\ne 2$ and if $|A|=4$ or $|B|=4$ then $T[A]$ or $T[B]$ has no perfect matching (respectively). Then, there exists a rainbow embedding $\phi: T \to \mathbb{Z}_2^k$ also satisfying that $\sum \phi(A)=\sum \phi(B)=0$.
\end{lemma}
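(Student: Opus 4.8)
The plan is to exploit that $k$ is enormous compared to $m:=|A|+|B|$. Recall that the colour received by an edge $uv$ of $T$ is $\phi(u)+\phi(v)$, which is automatically nonzero when $\phi$ is injective, and that two vectors of $\mathbb{Z}_2^k$ with distinct supports are distinct. The key move is to split coordinates: since $k\gg m$ I can fix disjoint coordinate sets $K_A,K_B\subseteq[k]$ with $|K_A|\ge|A|$ and $|K_B|\ge|B|$; write $V_A,V_B$ for the subgroups of vectors supported inside $K_A$, resp.\ $K_B$, and search for $\phi$ with $\phi(A)\subseteq V_A$ and $\phi(B)\subseteq V_B$. Since $V_A\cap V_B=\{0\}$, such a $\phi$ is injective on $A\cup B$ as soon as it is injective on each of $A,B$ and at most one vertex in total is sent to $0$; the latter can fail only when $|A|=|B|=1$, which is impossible as $m\ge10$. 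So it suffices, for each $X\in\{A,B\}$, to build an injection $\phi_X\colon X\to V_X$ with $\sum_{x\in X}\phi_X(x)=0$, which embeds $T[X]$ rainbow, and with $0\notin\phi_X(X)$ unless $|X|=1$.

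For the single-part construction, fix $X$ and put $\ell:=|X|$, recalling $\ell\ne2$. If $\ell=1$, send the vertex to $0$. If $\ell\ge3$, enumerate $X=\{x_1,\dots,x_\ell\}$, choose distinct coordinates $c_1,\dots,c_{\ell-1}\in K_X$, and set $\phi_X(x_i)=e_{c_i}$ for $i<\ell$ and $\phi_X(x_\ell)=e_{c_1}+\dots+e_{c_{\ell-1}}$, where $e_c$ is the standard basis vector on coordinate $c$. The sum is $0$, no image vanishes, and injectivity holds because $\phi_X(x_\ell)$ has support of size $\ell-1\ge2$ while the rest have support of size $1$. For rainbowness: an edge $x_ix_j$ with $i,j<\ell$ gets a colour supported on $\{c_i,c_j\}$, of size $2$, whereas $x_ix_\ell$ gets the colour $\sum_{j<\ell,\,j\ne i}e_{c_j}$, of support size $\ell-2$. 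If $\ell\ge5$ these have support sizes $2$ and $\ge3$, so all edge colours differ; if $\ell=3$ the two colours at $x_3$ are single basis vectors on different coordinates, again fine; and if $\ell=4$ both families have support size $2$, where a direct check shows the colours of $x_ix_j$ and $x_kx_l$ coincide exactly when $\{x_ix_j,\,x_kx_l\}$ is a perfect matching of $X$ --- precisely the excluded configuration. Thus $\phi_X$ embeds $T[X]$ rainbow in every allowed case.

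To glue, set $\phi=\phi_A\cup\phi_B$; injectivity was explained above. Classify each edge of $T$ as internal to $A$, internal to $B$, or crossing. Internal-$A$ colours lie in $V_A\setminus\{0\}$ and internal-$B$ colours in $V_B\setminus\{0\}$, so these two families are colour-disjoint, and each is rainbow by the previous paragraph. If $|A|,|B|\ge3$, then no vertex maps to $0$, so a crossing edge $ab$ has colour with nonzero projection to both $K_A$ and $K_B$, hence lying outside $V_A\cup V_B$; moreover two crossing edges $ab,a'b'$ of equal colour would force $\phi(a)=\phi(a')$ and $\phi(b)=\phi(b')$, i.e.\ $ab=a'b'$. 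So $\phi$ is rainbow. It remains to treat $|A|=1$ (the case $|B|=1$ being symmetric), when necessarily $|B|\ge9$: now a crossing edge $a_0b$ receives colour $\phi(b)\in V_B$, which must be checked against internal-$B$ colours. In the construction of $\phi_B$ the vector $\phi(b)$ has support of size $1$ or $|B|-1$ among the $|B|-1$ coordinates used there, while an internal-$B$ colour has support of size $2$ or $|B|-2$, and for $|B|\ge4$ these two size sets are disjoint; and crossing edges are pairwise rainbow by injectivity as before.

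The only genuine obstacle is the size-$4$ case: there the perfect-matching hypothesis is exactly what is needed, because the zero-sum constraint forces the two edges of any perfect matching of a size-$4$ side to receive the same colour, so no embedding can succeed without it --- and, as shown above, its absence is also sufficient. Everything else is a routine case analysis on $(|A|,|B|)$, in which the bound $m\ge10$ is used only to eliminate the small degenerate possibilities (a side of size $2$ is forbidden by hypothesis, and if a side has size $1$ the other has size $\ge9$, comfortably at least $4$).
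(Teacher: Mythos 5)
Your proof is correct and follows essentially the same approach as the paper's: map each part to basis vectors plus their sum, in two disjoint coordinate ranges, and argue rainbowness via distinct supports. The paper's write-up glosses over the case $|A|=1$ (where $\phi(A)=\{0\}$, so a crossing edge $a_0b$ receives a colour lying in $V_B$ and could in principle clash with an internal-$B$ colour); you make this case explicit via the support-size comparison $\{1,|B|-1\}\cap\{2,|B|-2\}=\emptyset$ for $|B|\ge 9$, which is a genuine clarification but not a different argument. (Incidentally, the paper's displayed $\phi(B)=\{e_{a+1},\dots,e_{a+b},e_{a+1}+\dots+e_{a+b}\}$ has an off-by-one typo --- it lists $b+1$ vectors --- and your version with $b-1$ basis vectors plus their sum is what is intended.)
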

\begin{proof}
Set $a:=|A|$, $b:=|B|$. 
Let $e_i\in \mathbb{Z}_2^k$ denote the vector with $1$ in the $i$th $\mathbb{Z}_2$-factor and zeros everywhere else.
Without loss of generality, we have that $b\geq a$ which implies $b\geq 5$. If $a=1$, pick $\phi(A)=\{(0, \dots, 0)\}$, otherwise pick $\phi(A)=\{e_1, \dots, e_{a-1}, e_1+\dots + e_{a-1}\}$ (noting that this is a set of order $a$ using that $a\ne 2$).
Pick $\phi(B)=\{e_{a+1}, \dots, e_{a+b}, e_{a+1}+ \dots+ e_{a+b}\}$. There's space to pick $A,B$ like this since $k\gg a,b$.
We have that $A,B$ are disjoint since all listed elements of $A,B$ have distinct supports (using that $a,b\neq 2$). We have $\sum \phi(A)=2e_1+\dots + 2e_{a-1}=0$ and $\sum \phi(B)= 2e_{a+1}+ \dots+ 2e_{a+b}=0$.
To see that the embedding is rainbow: note that for distinct $\{x,y\}, \{z,w\}\subseteq A\cup B$,  we have that $\phi(x)+\phi(y)$ and $\phi(z)+\phi(w)$ have distinct supports unless $\{x,y,z,w\}= A$ or $B$. This could only stop $\phi(T)$ from being rainbow if $T[A]$ or $T[B]$ had order $4$ and had a perfect matching --- which is excluded in the lemma's assumption.
\end{proof}

We now prove the main result of this section, i.e. Theorem~\ref{Theorem_main_intro}, phrased in the following equivalent formulation.
\begin{theorem}\label{Theorem_harmonious_tree_characterization}
Let $T$ be a tree with $\Delta(T)\leq \Delta$  and $G$ an abelian group. There is a rainbow copy of $T$ in $K_G$ if, and only if, we have none of the following:
\begin{enumerate}[(1)]
\item $G=\mathbb{Z}_2^m$, $m\geq 2$  and  $T$ is a path (or equivalently $T$ has precisely two vertices of odd degree).
\item $G=\mathbb{Z}_2^m$, $m\geq 2$  and  $T$ has precisely two vertices of even degree.
\item $G=\mathbb{Z}_{m_1}\times\dots \times \mathbb{Z}_{m_k}$ and $V(T)=\{v_1, \dots, v_n\}$, with $v_1v_2\in E(T)$ and  $d_T(v_1), d_T(v_2)\equiv 0\pmod {m_i}$, $d_T(v_3), \dots, d_T(v_n)\equiv 1\pmod {m_i}$ for all $i$.
\item $G=\mathbb{Z}_2^m$, $m\geq 2$ , $T$ contains precisely $4$ vertices of  even degree and has a perfect matching when restricted to these $4$ vertices.
\end{enumerate}
\end{theorem}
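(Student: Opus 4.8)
The plan is to establish both implications: necessity by a short degree-counting argument, and sufficiency by feeding a carefully chosen core into Theorem~\ref{Theorem_embed_tree_prescribed_sets}.

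\textbf{Necessity.} Suppose $\phi\colon V(T)\to G$ is a rainbow bijection. Since $\phi$ is a bijection, $\sum_{v\in V(T)}\phi(v)=\sum G$; since it is rainbow and $|E(T)|=|G|-1$, it uses every colour of $K_G$ except one, say $c^\ast$, so the identity $\sum_{xy\in E(T)}(\phi(x)+\phi(y))=\sum_{v}d_T(v)\phi(v)$ gives $\sum_v d_T(v)\phi(v)=\sum G-c^\ast$, hence $\sum_v(d_T(v)-1)\phi(v)=-c^\ast$. In cases (1),(2) we have $G=\mathbb{Z}_2^m$ with $m\ge2$, so $\sum G=0$ and $0$ is not a colour of $K_G$, forcing $c^\ast=0$; then $\sum_{v:\,d_T(v)\text{ even}}\phi(v)=0$, and subtracting from $\sum_v\phi(v)=0$ also $\sum_{v:\,d_T(v)\text{ odd}}\phi(v)=0$. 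If $T$ has precisely two vertices of even (or of odd) degree they map to elements summing to $0$, hence coincide, which is impossible; and a tree with exactly two odd-degree vertices has exactly two leaves, so is a path. In case (4) the four even-degree vertices $u_1,u_2,u_3,u_4$ satisfy $\phi(u_1)+\phi(u_2)+\phi(u_3)+\phi(u_4)=0$, so a perfect matching $u_1u_2,u_3u_4\in E(T)$ among them forces the edge-colours $\phi(u_1)+\phi(u_2)$ and $\phi(u_3)+\phi(u_4)$ to coincide. In case (3), $d_T(v_j)\equiv1\pmod G$ for $j\ge3$ gives $d_T(v_j)\phi(v_j)=\phi(v_j)$ while $d_T(v_1),d_T(v_2)\equiv0\pmod G$ give $d_T(v_1)\phi(v_1)=d_T(v_2)\phi(v_2)=0$, so $\sum_v d_T(v)\phi(v)=\sum G-\phi(v_1)-\phi(v_2)$; comparing with $\sum G-c^\ast$ forces $c^\ast=\phi(v_1)+\phi(v_2)$, the colour of the edge $v_1v_2$, a contradiction.

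\textbf{Sufficiency: reduction and the case $G=\mathbb{Z}_2^m$.} Assume none of (1)--(4) holds and $n$ is large. Apply Theorem~\ref{Theorem_embed_tree_prescribed_sets} with $V_{target}=G$ and $C_{target}=G\setminus\{c^\ast\}$ for a value $c^\ast$ determined below (forced to be $0$ when $G=\mathbb{Z}_2^m$): it then suffices to find a core $T_{core}$ with a rainbow embedding $\phi$ satisfying $\sum_{v\in V(T_{core})}\phi(v)=\sum G$ and $\sum_{v\in V(T_{core})}d_T(v)\phi(v)=\sum G-c^\ast$. Writing $V(T_{core})=\{v_1,\dots,v_k\}$ and $x_i=\phi(v_i)$, this is equivalent to: $x$ simple, $x\ast_T(v_1,\dots,v_k)$ simple --- i.e.\ the core rainbow and its edge-colours avoiding $c^\ast:=\sum_i(1-d_T(v_i))x_i$ --- and $\sum_i x_i=\sum G$. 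By Observation~\ref{Observation_small_core} and the freedom in which vertices of unexhausted degrees to include, $T_{core}$ may be taken with considerable control over its composition, in particular with $|T_{core}|\ge10$. Now suppose $G=\mathbb{Z}_2^m$, $m\ge2$. Then $c^\ast=0$ and $\sum_i(1-d_T(v_i))x_i=\sum\phi(A)$ where $A,B$ are the even- and odd-degree vertices of $T_{core}$, so we need $\sum\phi(A)=\sum\phi(B)=0$, which is exactly the conclusion of Lemma~\ref{Lemma_zero_sum_sets_Z2k} once the core is chosen with $|A|,|B|\neq2$, $|A|+|B|\ge10$ (and $m=\log_2 n\gg|A|+|B|$ automatically), and with $T_{core}[A]$ (resp.\ $T_{core}[B]$) containing no perfect matching whenever $|A|=4$ (resp.\ $|B|=4$). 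A suitable core fails to exist only in the excluded cases: $|A|=2$ occurs only if $T$ has exactly two even-degree vertices (case (2)); $|B|=2$ only if $T$ is a path (case (1)); $|A|=4$ only if $T$ has exactly four even-degree vertices, in which case $T_{core}[A]=T[A]$ and its having a perfect matching is case (4); and $|B|=4$ only if $T$ has exactly four odd-degree vertices, but then $T$ is a subdivided $K_{1,3}$ or $K_{1,4}$ and at most one of its four odd-degree vertices is not a leaf of $T$, so those four induce no perfect matching --- whence this case never obstructs.

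\textbf{Sufficiency: the case $G\neq\mathbb{Z}_2^m$.} Split $V(T_{core})$ into $V_1=\{v_i:d_T(v_i)\equiv1\pmod G\}$ (which contains every leaf) and its complement $V_0$; since $1-d_T(v_i)=0$ in $G$ for $v_i\in V_1$, the quantity $c^\ast$ depends only on the values on $V_0$. First apply Lemma~\ref{Lemma_kastv_simple_tree} to $T$ and the sequence $V_0$ to obtain distinct values $(y_i)_{v_i\in V_0}$ with $y\ast_T V_0$ simple; this pins down $c^\ast=\sum_{v_i\in V_0}(1-d_T(v_i))y_i$ and makes the core rainbow on $V_0$ while avoiding $c^\ast$. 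The exceptional case $(\ast)$ of that lemma ($|V_0|=2$, both degrees $\equiv0\pmod G$, adjacent) can only arise when $T$ has exactly two vertices of degree $\not\equiv1\pmod G$, and then it says precisely that $T$ is as in case (3); if $T$ has at least three such vertices then every core has $|V_0|\geq3$, and if $T$ has at most one the lemma applies trivially (with $c^\ast=0$ when $V_0=\emptyset$). It remains to choose the values on $V_1$: distinct, disjoint from the $V_0$-values, keeping all core edges that meet $V_1$ rainbow and $\neq c^\ast$, and with $\sum_{v_i\in V_1}x_i=\sum G-\sum_{v_i\in V_0}y_i$. If $T$ is not a path then $|V_1|\geq3$, and Lemma~\ref{Lemma_y1y2...ys_separate} --- together with the observation that each of the boundedly many extra requirements ``$x_i+x_j$ equals a fixed element'' or ``$x_i+x_j=x_k+x_\ell$'' excludes only an $O(n^{s-2})$-fraction of the $n^{s-1}$ solutions --- provides such values; if $T$ is a path (so $G\neq\mathbb{Z}_2^m$, the only case not covered) then $|V_1|=2$ and Lemma~\ref{Lemma_y1y2_separate}, applied to the two leaves, does the same. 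In every case Theorem~\ref{Theorem_embed_tree_prescribed_sets} then yields a rainbow copy of $T$.

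\textbf{Main obstacle.} The genuinely routine inputs are the sum-fixing lemmas (Lemmas~\ref{Lemma_y1y2_separate} and~\ref{Lemma_y1y2...ys_separate}) and the algebraic constructions (Lemmas~\ref{Lemma_xastd_simple_cyclic}--\ref{Lemma_kastv_simple_tree} and~\ref{Lemma_zero_sum_sets_Z2k}), all already available. The real work is the structural bookkeeping showing that the configurations in which no core realises the hypotheses of Lemma~\ref{Lemma_zero_sum_sets_Z2k} (when $G=\mathbb{Z}_2^m$) or no $V_0/V_1$ split realises those of Lemma~\ref{Lemma_kastv_simple_tree} (for general $G$) are \emph{exactly} the obstructions (1)--(4). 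The most delicate point is the clean structural fact that a core forced to contain precisely four odd-degree vertices automatically induces no perfect matching on them, so that ``$|B|=4$'' never creates a new obstruction, together with the complete classification of which even- and odd-degree counts a core can be forced into, and the easy but separate treatment of paths over non-elementary-abelian groups.
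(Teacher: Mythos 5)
Your proof follows essentially the same route as the paper's: the necessity argument (degree-sum bookkeeping over $\mathbb{Z}_2^m$, and the forced-colour-collision calculation for case~(3)) is the same, and the sufficiency direction goes through Theorem~\ref{Theorem_embed_tree_prescribed_sets} by building a core embedding with the prescribed sums, using Lemma~\ref{Lemma_zero_sum_sets_Z2k} when $G=\mathbb{Z}_2^m$ and Lemma~\ref{Lemma_kastv_simple_tree} plus a sum-fixing lemma otherwise. Two details you elide are worth spelling out. First, in the non-$\mathbb{Z}_2^m$ case the paper splits the core three ways (degree $\not\equiv 1\pmod G$, non-leaf degree $\equiv 1\pmod G$, and leaves) and handles the middle block greedily, one vertex at a time, precisely because that block may carry edge constraints; your two-way $V_0/V_1$ split then applies a sum-fixing lemma directly to all of $V_1$, which requires a strengthening of Lemma~\ref{Lemma_y1y2...ys_separate} to forbid pairwise \emph{sums} (not just differences) and to rule out collisions with other core colours --- you sketch that each such constraint kills $O(n^{s-2})$ of the $n^{s-1}$ solutions, which does work for $|V_1|\geq 3$, but it is a genuine extension of the stated lemma, not a direct application. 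Second, for the path case ($|V_1|=2$) you invoke Lemma~\ref{Lemma_y1y2_separate} on the two leaves without noting that the core must be taken to be an \emph{independent set} (as the paper explicitly does): if the leaves are adjacent to core vertices, the colour-collision constraint ``$y_1+n_1\neq y_2+n_2$'' rewrites as $2y_1\neq g+n_2-n_1$, which can exclude up to $|{\ker(\times 2)}|$ values of $y_1$; if $G$ is, say, $\mathbb{Z}_2^{m-1}\times\mathbb{Z}_4$ this is $n/2$, and combined with $2y_1\neq g$ it can wipe out every solution, so Lemma~\ref{Lemma_y1y2_separate} alone is not enough. Finally, your treatment of the exceptional case $(\ast)$ of Lemma~\ref{Lemma_kastv_simple_tree} is actually a touch cleaner than the paper's wording: you observe directly that $(\ast)$ with $k=2$ is \emph{exactly} obstruction (3), whereas the paper's phrasing ``$k=2\Rightarrow v_1v_2\notin E(T)$'' quietly uses that if $v_1v_2\in E(T)$ but $(d(v_1),d(v_2))\not\equiv(0,0)\pmod G$ then $(\ast)$ fails and the lemma still applies.
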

\begin{proof}

``Only if'' direction:\\
Suppose that $G=\mathbb{Z}_2^m$. Let $V_{odd}, V_{even}$ be the sets of odd/even degree vertices of $T$.
Suppose that there is a rainbow embedding $\phi$ of $T$ into $V(G)$. Then we must have $C(\phi(T))=G\setminus \{0\}$ since the colour $0$ doesn't appear on any edges of $K_{\mathbb{Z}_2^m}$.
Then $\sum_{v\in V(T)}\phi(v)=\sum G=0$ (using that $m\geq 2$) and $\sum_{v\in V(T)}d_T(v)\phi(v)=\sum C(\phi(T))=\sum G-0=0$. Since $\sum_{v\in V_{odd}}\phi(v)=\sum_{v\in V_{odd}}d_T(v)\phi(v)=\sum_{v\in V(T)}d_T(v)\phi(v)$ and $\sum_{v\in V_{even}}\phi(v)= \sum_{v\in V(T)}\phi(v)-\sum_{v\in V_{odd}}\phi(v)$, we get that $\sum_{v\in V_{odd}}\phi(v), \sum_{v\in V_{even}}\phi(v)=0$. Since all the vertices of $\phi(V_{even}), \phi(V_{odd})$ must be distinct, this means that $|V_{odd}|, |V_{even}|\neq 2$ (because in $\mathbb{Z}_2^m$ we cannot have two distinct elements adding to $0$). If $|V_{even}|=4$, then we get that $T[V_{even}] $ doesn't have a perfect matching, since otherwise the two edges of this matching must have the same colour in the embedding.

Suppose that ``$G=\mathbb{Z}_{m_1}\times\dots \times \mathbb{Z}_{m_k}$ and $V(T)=\{v_1, \dots, v_n\}$, where $d_T(v_3), \dots, d_T(v_n)\equiv 1\pmod {m_i}$ for all $i$, and $v_1v_2\in E(T)$ and $d_T(v_1), d_T(v_2)\equiv 0\pmod {m_i}$ for all $i$''.
Suppose for contradiction that there is a rainbow embedding $\phi$ of $T$ into $V(G)$. Let $c$ be the unused colour. We have $\sum G=\sum_{v\in V(T)} \phi(v)$ and $\sum G-c=\sum_{v\in V(T)} d_T(v)\phi(v)$. Subtracting gives $c=\sum_{v\in V(T)} (1-d_T(v))\phi(v)$. Since $d_T(v_3), \dots, d_T(v_n)\equiv 1\pmod {m_i}$ and $d_T(v_1), d_T(v_2)\equiv 0\pmod {m_i}$ for all $m_i$ we have that $(1-d_T(v_i))\phi(v_i)=0$ in $G$ for $i=3, \dots, m$ and $(1-d_T(v_1))\phi(v_1)=\phi(v_1)$, $(1-d_T(v_2))\phi(v_2)=\phi(v_2)$.
 This gives  $c=\sum_{v\in V(T)} (1-d_T(v))\phi(v)=(1-d_T(v_1))\phi(v_1)+(1-d_T(v_2))\phi(v_2)=\phi(v_1)+\phi(v_2)$. But $\phi(v_1)+\phi(v_2)$ is also the colour of the edge $v_1v_2$ contradicting that $c$ is not used on $\phi(T)$.

``If'' direction:\\
Suppose $G\neq \mathbb{Z}_2^m$. If $T$ is a path, let $T_{core}$  be an independent set consisting  of both leaves, and $6$ vertices of degree $2$ (noting that this is a core of $T$). Otherwise, let  $T_{core}$ be a core of $T$ of size $\leq 12\Delta$ given by Observation~\ref{Observation_small_core}, noting that $T_{core}$ will then contain $\geq 3$ leaves of $T$ (since $T$ is not a path it has $\geq 3$ leaves. Now depending on whether I or II occurs, $T_{core}$ contains either all the leaves or at least $6$  leaves). Label $V(T_{core})=\{v_1, \dots, v_k, w_1, \dots, w_t, u_1, u_2, \dots, u_s\}$ where
for all $i$, $d(v_i)\not\equiv 1 \pmod G$,  $d(w_i)\equiv 1 \pmod G$ with $d(w_i)\neq 1$, and $u_1, \dots, u_s$ are leaves.  
Let $T_{core}^{v}=T[\{v_{1}, \dots, v_k\}]$ and
$T_{core}^{w}=T[\{v_{1}, \dots, v_k, w_1, \dots, w_t\}]$

\begin{claim}\label{Claim_harmonious_tree_characterization1}
There is a  $\psi:\{v_1, \dots, v_k\}\to V(K_G)$ which is a rainbow embedding of $T_{core}^v$ not using the colour $c_{special}:=(-d(v_1)+1)\psi(v_1)+\dots+(-d(v_k)+1)\psi(v_k)$.
\end{claim}
\begin{proof}
Note that if  $k=2$, then $v_1v_2\not\in E(T)$ --- otherwise the degrees of all vertices in $T_{core}$ other than $v_1,v_2$ must be $\equiv 1 \pmod G$. This would imply that the degrees of all vertices in $T$ other than $v_1,v_2$ are $\equiv 1 \pmod G$ (since $T_{core}$ has a representative vertex of every degree occurring in $T$ by the definition of ``core''), and hence (3) would hold.

Thus we can apply Lemma~\ref{Lemma_kastv_simple_tree} we get some simple $x\in \mathbb{G}^k$ with  $x\ast_T d$ simple. Define $\psi$ to embed $v_i$ to $x_i$ for all $i$. Since $x$ is simple, this is an injection. 
The multiset of colours it uses is $\{x_i+x_j: v_iv_j\in E(T)\}=x\ast_T d\setminus\{c_{special}\}$. Since $x\ast_T d$ is simple, we get that these colours are all distinct from each other and from $c_{special}$.
\end{proof}

\begin{claim}\label{Claim_harmonious_tree_characterization2}
We can extend $\phi$ to  $\theta:\{v_1, \dots, v_k, w_1, \dots, w_t\}\to V(K_G)$ which is a rainbow embedding of $T_{core}^w$ not using the colour $c_{special}$.
\end{claim}
\begin{proof}
For $i=0, \dots, t$ set $V_i=\{v_1, \dots, v_k, w_1, \dots, w_i\}$. 
We build rainbow embeddings $$\theta_i:\{v_1, \dots, v_k, w_1, \dots, w_i\}\to V(K_G)$$ one by one. 
Start with $\theta_0=\psi$. To build $\theta_{i+1}$ from $\theta_{i}$:
Pick $\theta_{i+1}(w_{i+1})$ to be anything outside $\theta_{i}(V_{i})\cup (\theta_{i}(V_{i})+\theta_{i}(V_{i})-\theta_{i}(V_{i}))\cup (c_{special}-\theta_{i}(V_{i})$ (there's space to do this since $|\theta_{i}(V_{i})\cup (\theta_{i}(V_{i})+\theta_{i}(V_{i})-\theta_{i}(V_{i}))\cup (c_{special}-\theta_{i}(V_{i}))|\leq (k+t)+(k+t)^3+(k+t)\leq 3|V(T_{core})|^3\ll n$). 
Note that $\theta_{i+1}$ is an injection since $\theta_{i}$ was one, and $\theta_{i+1}(w_{i+1})\not\in \theta_{i}(V_{i})$. Also $\theta_{i+1}$ is a rainbow embedding since $\theta_{i}$ was one, and the new colours used by $\theta_{i+1}$ are contained in $\theta_{i+1}(w_{i+1})+\theta_{i}(V_i)$ which is disjoint from $C(\theta_i(T[V_i]))\subseteq \theta_{i}(V_i)+\theta_{i}(V_i)$ (due to $\theta_{i+1}(w_{i+1})\not\in (\theta_{i}(V_i)+\theta_{i}(V_i)-\theta_{i}(V_i))$). 
Finally, $\theta_{i+1}$ doesn't use the colour $c_{special}$ since $\theta_{i+1}(w_{i+1})+\theta_{i}(V_i)$ is disjoint from $\{c_{special}\}$ (this is equivalent to $\theta_{i+1}(w_{i+1})\not\in (c_{special}-\theta_{i}(V_i))$).
\end{proof}

\begin{claim}\label{Claim_harmonious_tree_characterization3}
We can extend $\theta$ to  $\phi:\{v_1, \dots, v_k, w_1, \dots, w_t, u_1, \dots, u_s\}\to V(K_G)$ which is a rainbow embedding of $T_{core}$ not using the colour $c_{special}$ and satisfying $\sum V(\phi(T_{core}))=\sum G$.
\end{claim}
\begin{proof}
Recall $u_1, \dots, u_s$ are leaves with $s\geq 2$. Let $N:=\bigcup_{i=1}^sN_{T}(u_i)\cap V(T_{core})$, noting that when $s=2$, we have ensured $N=\emptyset$. Let $g:=\sum G- \sum im_{\theta}$, $F_1= im_{\theta}\cup (im_{\theta}+im_{\theta}-im_{\theta})\cup (c_{special} -im_{\theta})$ and $F_2=N-N$, noting that $|F_1|, |F_2|\leq 3|T_{core}|^3\leq 3(12\Delta)^3\ll n$ and that when $s=2$ we have $F_{2}=\{0\}$. 
Depending on whether $s=2$ or not, use Lemma~\ref{Lemma_y1y2_separate} or~\ref{Lemma_y1y2...ys_separate} to pick  $y_1, \dots, y_s\not\in F_1$ with $y_1+\dots+y_s=g$ and $y_i-y_j\not\in F_2$ for $i\neq j$. 
Define $\phi$ to agree with $\theta$ on $\{v_1, \dots, v_k, w_1, \dots, w_t\}$ and to have $\phi(u_i) = y_i$. 
This is an injection because $y_1, \dots, y_s$ are distinct and outside $im_{\theta}$. When $s=2$, there are no edges in $T_{core}$ touching $u_1, \dots, u_s$, so we have a rainbow embedding in that case.  When $s\geq 3$, the  colours of new edges used by $\phi$ (i.e. the colours edges $\phi(xy)$ with $xy\not\in T_{core}^w$) are contained in $\{im_{\theta}+y_i: i=1, \dots, s\}$ (here, we're using that $u_1, \dots, u_s$ is an independent set due to it being a set of leaves of a tree $T$). These colours are all distinct from each other (since $y_i-y_j\not\in im_{\theta}-im_{\theta}$), from the colours of $\theta(T_{core}^w)$ (since $y_i\not \in im_{\theta}+im_{\theta}-im_{\theta}$), and from $c_{special}$ (since  $y_i\not \in c_{special}-im_{\theta}$).
Finally we have $\sum_{v\in V(T_{core})}\phi(v)
=\sum im_{\theta} +\sum_{i=1}^s y_i
=\sum G$
\end{proof}
Set $C_{target}=C(K_G)\setminus\{c_{special}\}$ and $V_{target}=V(K_G)$, noting that $\sum C_{target}=\sum G-c_{special}$ and $\sum V_{target}=\sum G=\sum V(\phi(T_{core}))$. Using that vertices  $v\in V(T_{core})\setminus\{v_1, \dots, v_k\}$ have $d_T(v)\equiv 1 \pmod G$, we get
\begin{align*}
\sum_{v\in V(T_{core})}d_T(v)\phi(v)&= \sum_{v\in V(T_{core})}(d_T(v)-1)\phi(v)+\sum_{v\in V(T_{core})}\phi(v)= \sum_{v\in V(T_{core})}(d_T(v)-1)\phi(v)+\sum G\\
&=\sum_{i=1}^{k}(d_T(v_k)-1)\phi(v_k)+\sum G
=\sum G-c_{special}=\sum C_{target}
\end{align*}
 Thus the embedding $\phi$ satisfies Theorem~\ref{Theorem_embed_tree_prescribed_sets} (ii), and hence we get a rainbow embedding of $T$ into $(V_{target}, C_{target})$ (and hence into $K_G$).

Suppose $G= \mathbb{Z}_2^m$. Use Observation~\ref{Observation_small_core} to get a core $T_{core}$ of $T$ of order $12\Delta$. Set $C_{target}=C(K_G)\setminus\{0\}, V_{target}=V(K_G)$, noting that these both have zero sum (since $\sum \mathbb{Z}_2^m=0$ for $m\geq 2$).
Let $A=\{v_1, \dots, v_a\}$ be the odd degree vertices in $T_{core}$ and $B=\{u_1, \dots, u_b\}$ the even degree vertices. Note that $|A|\neq 2$, as otherwise the degrees $d(v_1)$ and $d(v_2)$ must be exhausted in $T_{core}$ --- since non-exhausted degrees $d$ have $\geq 3$ degree $d$ vertices in every core and thus $v_1, v_2$ are the only odd degree vertices in $T$, and hence $T$ is a path, contradicting (1) not holding. Similarly $|B|\neq 2$ --- since otherwise $u_1$ and $u_2$ would be the only even degree vertices in $T$, contradicting (2) not holding.
 Also note that $|A|+|B|=|V(T_{core})|\geq 10$. If $|A|=4$, note that $T[A]$ can't have  a perfect matching (since leaves can't be connected in a $\ge 3$-vertex tree, for $T[A]$ to have a perfect matching, $A$ must have $\le 2$ leaves. But the only tree with $\le 2$ leaves is a path which doesn't have $4$ odd degree vertices). If $|B|=4$, note that $T[B]$ can't have  a perfect matching, as otherwise we'd have (4).
 
 Use Lemma~\ref{Lemma_zero_sum_sets_Z2k} to get a rainbow embedding $\phi$ of $T[A\cup B]$ with $\sum \phi(A)=\sum \phi(B)=0$. This ensures that $\sum_{v\in V(T_{core})}d_T(v)\phi(v), \sum \phi(V(T_{core}))=0=\sum V_{target}=\sum_{C_{target}}$ and hence by Theorem~\ref{Theorem_embed_tree_prescribed_sets},  we get a rainbow embedding of $T$ in $K_G$.
\end{proof}

\section{Concluding remarks}\label{sec:conc}
\textit{Hovey's cordial labelling conjecture.} Hovey \cite{hovey1991cordial} conjectured that the vertices of all trees can be labelled by $\mathbb{Z}_k$ (for any $k$) so that each label occurs either $s$ or $s+1$ times for some $s$, and furthermore, labelling the edges by the sum of the labels of their endpoints, each label occurs either $t$ or $t+1$ times for some $t$. Taking $k$ to be the number of the edges of the tree, we can see that Hovey's conjecture generalises the Graham--Sloane conjecture. The methods of the present paper can confirm Hovey's conjecture for  $k\gg \Delta$ and all trees with $\Delta(T)\le \Delta$, although a formal proof would require a slight strengthening of Theorem~\ref{Theorem_embed_tree_prescribed_sets} to allow for embedding a few vertices of unbounded (but at most logarithmic) degree, which leads to some undesirable technicalities, hence we do not provide details here. 

\par \textit{The Graham--H\"aggkvist conjecture}. A well-known conjecture of Graham and H\"aggkvist~\cite{Häggkvist_1989}, which can be interpreted as a natural bipartite analogue of Ringel's conjecture, is the following.
\begin{conjecture}[The Graham--H\"aggkvist conjecture]
    Any $n$-edge tree decomposes the edge set of the balanced complete bipartite graph $K_{n,n}$.
\end{conjecture}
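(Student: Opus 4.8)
Since the Graham--H\"aggskvist conjecture is a longstanding open problem, the following is a proposed strategy rather than a proof; as with Theorem~\ref{Theorem_main_intro}, one can realistically hope to push it through only for bounded-degree trees. The plan is to recast the conjecture as a rainbow-embedding problem, exactly parallel to the equivalence between the harmonious labelling conjecture and the existence of rainbow trees in $K_G$. Index both sides of $K_{n,n}$ by $\mathbb{Z}_n$, writing the vertices as $a_0,\dots,a_{n-1}$ and $b_0,\dots,b_{n-1}$, and colour the edge $a_ib_j$ by $i-j\pmod n$. There are exactly $n$ colours, so a rainbow copy of the $n$-edge tree $T$ uses every colour precisely once. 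For $t\in\mathbb{Z}_n$ the translate $a_i\mapsto a_{i+t}$, $b_j\mapsto b_{j+t}$ preserves colours, and within each colour class --- a perfect matching of size $n$ --- the $n$ translates of a single edge cover the whole class; one checks that distinct translates of a fixed rainbow copy of $T$ are edge-disjoint and together exhaust $E(K_{n,n})$. Hence a single rainbow copy of $T$ yields a decomposition, and it suffices to find one.

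To produce the rainbow copy, I would port the machinery of Section~\ref{sec:corelemma} to the bipartite ``Cayley-sum'' graph of $G=\mathbb{Z}_n$: vertex set $G\sqcup G$, edge $xy$ ($x$ on the left, $y$ on the right) coloured $x-y$. A rainbow embedding of $T$ (with bipartition $(X,Y)$, necessarily with $X$ on one side and $Y$ on the other, since $T$ is connected) amounts to injective labellings $\lambda\colon X\to\mathbb{Z}_n$ and $\mu\colon Y\to\mathbb{Z}_n$ with all differences $\lambda(x)-\mu(y)$ distinct over $xy\in E(T)$; the only arithmetic side-conditions are on the relevant sums of labels, chiefly $\sum_{x}d_T(x)\lambda(x)-\sum_{y}d_T(y)\mu(y)=\sum\mathbb{Z}_n$. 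Unlike the general abelian case, for the cyclic group $\mathbb{Z}_n$ there is enough freedom to arrange these sums and no residual obstruction survives (degrees are bounded, hence never $\equiv 0\pmod n$), matching the conclusion of Corollary~\ref{cor:boundeddegreeGS}. Concretely: (i) isolate a bounded-size core of $T$ and embed it rainbow-ly with the prescribed side- and colour-sums, respecting the bipartition; (ii) prove a bipartite analogue of Lemma~\ref{Lemma_approximate_embedding} giving a near-spanning approximate rainbow embedding that is uniform on prescribed sets; (iii) absorb the remaining leaves (matching-approximation) or bare paths (path-approximation) via a bipartite analogue of Theorem~\ref{thm:maintheoremsemidisjoint} and Lemma~\ref{lem:pathlikemain}.

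The hard part is step (iii). Its natural home is the theory of \emph{Latin rectangle/square completion} rather than the Hall--Paige conjecture: completing the partial colouring to a rainbow structure on $K_{n,n}$ is, at bottom, a randomised Evans-type statement, and the absorbing matching or path-factor must alternate between the two sides in lockstep with the bipartition of $T$. One would therefore need the bipartite counterpart of the random Hall--Paige input Theorem~\ref{thm:maintheoremsemidisjoint} borrowed here from~\cite{muyesser2022random}, together with a careful account of how the bipartition interacts with the core, the approximation, and the absorber; this is where essentially all of the work lies.

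Finally, as with Theorem~\ref{Theorem_main_intro}, this programme would only deliver the bounded-degree case: for every $\Delta$ there is an $n_0$ such that every $n$-edge tree with $n\ge n_0$ and $\Delta(T)\le\Delta$ decomposes $K_{n,n}$. Trees with a few vertices of very large degree (spiders, brooms) lie outside the reach of the approximate-embedding and absorption tools, whose error terms degrade with $\Delta(T)$, so the full conjecture would still require new ideas --- perhaps splitting $T$ into a low-degree part handled by the rainbow method and a small high-degree remainder packed directly, in the spirit of the remark on Hovey's conjecture.
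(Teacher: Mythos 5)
The paper does not prove the Graham--H\"aggkvist conjecture --- it is stated as an open problem in the concluding remarks, and your proposal is candid about this. Your sketch (recast as a rainbow embedding in the difference-coloured bipartite Cayley graph on $\mathbb{Z}_n\sqcup\mathbb{Z}_n$, reduce to one rainbow copy via cyclic translates, then port the core/approximation/absorption machinery to the bipartite setting for bounded-degree trees) is essentially identical to the route the paper itself outlines around Conjecture~\ref{anotherrainbowconjecture} and the discussion following it, including the acknowledgement that only the bounded-degree case would follow and that the difference-versus-sum colouring rule introduces technicalities the authors chose not to pursue here.
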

Although Ringel's conjecture has been resolved for large $n$ \cite{ringel, keevash2025ringel}, the Graham--H\"aggkvist conjecture is still open. As an approach to the Graham--H\"aggkvist conjecture, Ringel and Llad\'o (see \cite{camara2009conjecture} and the references therein) made the following conjecture that can be considered a bipartite version of the graceful tree conjecture, i.e. the Ringel--Kotzig conjecture. A bigraceful labeling of a tree $T$ with $n$ edges and bipartition $(A,B)$
is a map $\phi$ of $V(T)$ on the integers $[m-1]$ such that the restriction of $\phi$ to each of $A$ and $B$ is injective and the values $\phi(u)-\phi(v)$ for each edge $u,v$ is pairwise distinct and must be contained in $[m-1]$. The Ringel--Llad\'o conjecture would imply the Graham--H\"aggkvist conjecture by way of cyclic translations, see \cite{camara2009conjecture}. Here we propose a different conjecture that would also imply the Graham--H\"aggkvist conjecture which might be more approachable due to more slack in the choice of the labels.
\begin{conjecture}\label{anotherrainbowconjecture} Let $T$ be a $n$-edge tree. Consider an edge-coloured bipartite graph between two copies of $\mathbb{Z}_n$, say $(A,B)$, where the colour of an edge $(a,b)\in A\times B$ is $b-a\in \mathbb{Z}_n$. Then, there exists a rainbow embedding of $T$.
\end{conjecture}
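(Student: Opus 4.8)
\textbf{Proof proposal for Conjecture~\ref{anotherrainbowconjecture}.}
The plan is to transport the machinery of Sections~\ref{sec:corelemma} and~\ref{sec:characterisation} to the bipartite, difference-coloured host; this proves the conjecture for trees of bounded maximum degree (and, with the strengthening the authors note for Hovey's conjecture, up to $(\log n)^{O(1)}$), and removing the degree restriction is the one point I expect to be genuinely hard. First I would reformulate: replacing each $b\in B$ by $-b$ sends the colour $b-a$ of $(a,b)$ to $-(a+b)$, and since negation is a bijection of $\mathbb{Z}_n$ the conjecture is equivalent to finding a rainbow embedding of $T$ into the \emph{sum}-coloured bipartite graph $K_{A,B}$ with $A=B=\mathbb{Z}_n$ and the edge $(a,b)$ coloured $a+b$ --- equivalently, into the subgraph of $K_{\mathbb{Z}_n\times\mathbb{Z}_2}$ spanned by the cross edges between $\mathbb{Z}_n\times\{0\}$ and $\mathbb{Z}_n\times\{1\}$. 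As $T$ is connected its bipartition $(T_A,T_B)$ is forced, and one must embed $T_A$ into $A$ and $T_B$ into $B$; note $|T_A|,|T_B|\le n$ but in general neither image is almost spanning (for a complete binary tree one side carries $\approx\tfrac23 n$ vertices), so Theorem~\ref{Theorem_embed_tree_prescribed_sets} cannot be quoted directly and the whole pipeline must be re-run.

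The argument would follow the three stages of Section~\ref{sec:corelemma}. \emph{Core.} Take a bounded-size induced subforest $T_{core}$ of $T$ that samples every degree and respects $(T_A,T_B)$, and embed it rainbow-ly into $(A,B)$ while forcing the one global identity $\sum_{v\in T_B}d_T(v)\phi(v)-\sum_{v\in T_A}d_T(v)\phi(v)=\sum\mathbb{Z}_n$, read on the core with the ``degree-$d$ vertices outside the core contribute a prescribed coset sum'' bookkeeping of Lemma~\ref{Lemma_pseudoembed_tree_with_prescribed_core}. This is exactly the step where Section~\ref{sec:characterisation} needed a delicate case analysis because of the $\mathbb{Z}_2^k$ rigidity; over the cyclic group $\mathbb{Z}_n$ it instead becomes easy --- the coefficients $d_T(v)$ are generic enough that a short greedy argument in the spirit of Lemmas~\ref{Lemma_kastv_simple_tree}--\ref{Lemma_y1y2...ys_separate} realises \emph{any} prescribed value of that sum, which is why the conjecture has no exceptions. \emph{Approximation.} Prove the bipartite analogue of Lemma~\ref{Lemma_approximate_embedding} (near-uniform approximate rainbow embedding of an almost-all-of-$T$ subforest into a properly $n$-edge-coloured $K_{n,n}$; each colour class is a perfect matching, so the same random-greedy and concentration proof runs), extend $T_{core}$ to an approximation $T_{appr}$ by deleting a matching of leaves or a system of bare paths as in Lemma~\ref{Lemma_nonexhausted_vertices_in_Tabs}, and adjust the random embedding as in Lemmas~\ref{Lemma_modify_random_embedding}--\ref{Lemma_embed_tree_prescribed_core} so that the leftover vertex sets in $A$, in $B$, and the leftover colour set in $\mathbb{Z}_n$ look independent-random and obey the relation forced by the core. \emph{Completion.} Re-attach the deleted pieces via Section~\ref{Section_completion_lemmas}: a perfect $Z$-matching from $X$ to $Y$ in $K_{\mathbb{Z}_n}$ (Theorem~\ref{thm:maintheoremsemidisjoint}) is already exactly a rainbow re-attachment of a leaf-matching in the bipartite sum-coloured host, and a rainbow $\vec P_t$-factor with prescribed endpoints (a bipartite refinement of Lemma~\ref{lem:pathlikemain}, proved by the same sorting-network method) handles the bare paths, after splitting the leftover colour set into the portion used for the $A$-side leaves and the portion for the $B$-side leaves --- one linear equation each, solvable over $\mathbb{Z}_n$. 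Conveniently $\mathbb{Z}_n=\mathbb{Z}_2^k$ only when $n=2$, so the exceptional ``$\id\notin Z$'' hypothesis of Theorem~\ref{thm:maintheoremsemidisjoint} is vacuous for $n\ge3$ and the few small $n$ are checked by hand.

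The step I expect to be the real obstacle is none of the above --- each is a routine if laborious adaptation --- but rather removing the bounded-degree hypothesis to obtain the conjecture as stated. High-degree vertices break the Lemma~\ref{Lemma_tree_leaves_bare_paths}-style decompositions and the ``six vertices of each degree in the core'' accounting; on the other hand, trees with only a few high-degree vertices can be embedded by hand (for a star $K_{1,n}$ the natural embedding is automatically rainbow, and a one-line partition argument settles double brooms), which suggests peeling off a constant number of high-degree vertices, embedding the bounded-degree remainder with prescribed boundary data by the pipeline above, and stitching the pieces together --- feasible up to polylogarithmic degree, but needing a genuinely new idea past that. A minor point to track throughout: when $T$ is very unbalanced the leftover densities on the two sides of $K_{n,n}$ differ, so the approximation and completion stages must be run with side-dependent parameters $p,q$, which is harmless as long as both densities stay above the $n^{-1/10^{102}}$ threshold of Theorem~\ref{thm:maintheoremsemidisjoint}, as they do.
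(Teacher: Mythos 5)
This statement is posed in the paper as an open conjecture, not a theorem, and the paper contains no proof of it. The authors remark only that they \emph{believe} the methods of the paper ``with little modifications would confirm Conjecture~\ref{anotherrainbowconjecture} for bounded degree trees'', but decline to carry this out on the grounds that a unified treatment of the sum-based colouring (used for Graham--Sloane) and the difference-based colouring (needed here) ``would lead to some undesirable technicalities''. So there is no argument in the paper against which your proposal can be checked; what you have written is an elaboration of the authors' own informal remark.

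As such an elaboration, your sketch is sensible and is scoped honestly. The reformulation (negate one side of the bipartition to turn the colour $b-a$ into $a+b$ up to a global bijection of colours) is correct, and you correctly flag the essential complication that neither part of $T$'s bipartition is almost-spanning in $K_{n,n}$ (your $\tfrac23 n$ complete binary tree example is right), so Theorem~\ref{Theorem_embed_tree_prescribed_sets} cannot be quoted and the core/approximation/completion pipeline must be re-proved with side-dependent densities. You also correctly note that the Section~\ref{Section_completion_lemmas} completion lemmas are the natural ingredients (rainbow $Z$-matchings for matching-approximations, sorting-network path factors for path-approximations), and that the $\mathbb{Z}_2^k$ exceptions of Theorem~\ref{thm:maintheoremsemidisjoint} are vacuous for $n\ge3$. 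All of this matches what the paper claims should work. The decisive caveat — which you state explicitly, and which is why this is not a proof of the conjecture as written — is that the argument would give only the bounded-degree case (or, with the logarithmic-degree strengthening the authors mention for Hovey's conjecture, at best polylogarithmic degree), and removing the degree restriction is a genuine open problem that the paper does not address either. You should present this as a programme for a partial result, not as a resolution of Conjecture~\ref{anotherrainbowconjecture}.
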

To see how the above conjecture would imply the Graham--H\"aggkvist conjecture, we simply consider cyclic translations of a rainbow tree as in Observation~\ref{obs:cyclicshift}. Given a rainbow $n$-edge tree $T$, $x+T$ denotes the translated isomorphic rainbow tree obtained by replacing each vertex $v_A$ of $T$ of part $A$ with the vertex $x+v_A\in A$, and each vertex $v_B$ of part $B$ with the vertex $x+v_B\in B$. As the colour of each edge is preserved in the translation, the $n$ possible translations by elements of $\mathbb{Z}_n$ gives the decomposition required by the Graham--H\"aggkvist conjecture. 
\par We believe that the methods in the current paper with little modifications would confirm Conjecture~\ref{anotherrainbowconjecture} for bounded degree trees, and therefore the Graham--H\"aggkvist conjecture for bounded degree trees as well. However, we do not include further details in the present paper, as handling the sum-based colouring rule (as required by the Graham--Sloane conjecture) and the difference based colouring rule (as in Conjecture~\ref{anotherrainbowconjecture}) with a unified proof would lead to some undesirable technicalities, see for example \cite[Section 4]{muyesser2022random}. 
\par The high degree case of Conjecture~\ref{anotherrainbowconjecture} may be more approachable than the high degree case of the Graham--Sloane conjecture, as the host graph has $2|T|$ vertices, reminiscent of the set-up in the proof of Ringel's conjecture from \cite{ringel}.
\par \textit{The oriented rainbow tree conjecture}. We propose the following conjecture to unify several rainbow-type problems in combinatorics. 
\begin{conjecture}[The oriented rainbow tree conjecture]\label{conj:directedrainbow} There exists an absolute constant $C$ such that the following holds.
    Let $D$ be any $d$-regular properly coloured digraph\footnote{Precisely, a digraph where each vertex has $d$ in and $d$ out edges, loops or parallel edges are not allowed, but $a\to b$ and $b\to a$ can both be edges. Properly coloured means that the colour of the in-edges of any vertex are all distinct, similarly for the out-edges. }. Let $T$ be any oriented tree on $d-C$ edges. Then, there is a rainbow copy of $T$ in $D$. 
\end{conjecture}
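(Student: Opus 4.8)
\textbf{Proof proposal for Conjecture~\ref{conj:directedrainbow}.} The natural line of attack is to transplant the three-phase \emph{approximate embedding / absorption / completion} strategy underlying the proofs of Ringel's conjecture~\cite{ringel, keevash2025ringel} and of the present paper into the oriented setting. Fix a $d$-regular properly coloured digraph $D$ and an oriented tree $T$ with $d-1$ edges. The first step is to pass to a near-spanning skeleton of $T$, exactly as in Section~\ref{sec:preliminaries}: the underlying undirected tree of $T$ has either $\Omega(d)$ leaves or $\Omega(d/\log^{7} d)$ vertex-disjoint bare paths of length $\Theta(\log^{7} d)$ (Lemma~\ref{Lemma_tree_leaves_bare_paths}), so one deletes either a linear matching of leaves or a linear family of internal bare subpaths to obtain an oriented ``approximation'' $T_{appr}$ whose complement $T\setminus T_{appr}$ consists either of pendant edges at prescribed vertices or of directed paths of a fixed length joining prescribed pairs of vertices. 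The problem then splits into (a) producing a rainbow embedding of $T_{appr}$ into $D$ that avoids a reserved random ``absorber'' $W\subseteq V(D)$ and a reserved random set of colours, and whose unused vertices and unused colours form appropriately pseudorandom sets, and (b) completing this to a rainbow copy of $T$ using $W$ together with the leftover vertices and colours.

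For phase (a) I would embed $T_{appr}$ by a random greedy procedure, processing its vertices in breadth-first order from an arbitrary root and choosing, at each step, the image of the current vertex uniformly among the images that remain legal along the oriented edge to its already-embedded parent. Since $\Delta(T_{appr})\le d-1<d$ and $D$ is $d$-regular and properly coloured, the candidate set is always nonempty, and a nibble/concentration analysis parallel to the one behind Lemma~\ref{Lemma_approximate_embedding} shows that the partial embedding can be kept ``uniform'' on any constantly many prescribed subsets of $V(T_{appr})$ and $E(T_{appr})$, so that after all of $T_{appr}$ is embedded the unused vertices and colours still behave like independent random subsets of $V(D)$ and of the colour set. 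The only genuinely new bookkeeping compared with the undirected case is that one must track in-neighbours versus out-neighbours at each vertex of $T$ and in-colours versus out-colours at each vertex of $D$; this affects constants but not the shape of the argument.

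The crux is phase (b), for which one needs a directed analogue of the ``random Hall--Paige'' completion engine (Theorem~\ref{thm:maintheoremsemidisjoint} and Lemma~\ref{lem:pathlikemain}): given the absorber and the pseudorandom leftover, find a rainbow directed matching---or a rainbow system of directed paths of the prescribed length joining the prescribed endpoint pairs---that uses \emph{exactly} the leftover vertices and \emph{exactly} the leftover colours. In the abelian-group colourings of this paper this is possible because the entire obstruction collapses to a single balancing equation in the abelianization; for an arbitrary properly coloured regular digraph there is no such equation, and what one is left with is a rainbow, oriented strengthening of the sort of statement behind the Ryser--Brualdi--Stein conjecture~\cite{ryser1967neuere, brualdi1991combinatorial, stein1975transversals}. \textbf{This completion step is the main obstacle.} Even its simplest instance---$D$ a properly coloured complete digraph on $n$ vertices and $T$ a directed path on $n-1$ of them, a directed analogue of Andersen's conjecture~\cite{andersen1989hamilton}---is open and has no obvious algebraic handle. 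I therefore expect that a full proof will require engineering the absorbing structure itself to carry enough flexibility to swallow \emph{any} leftover, via a switching/augmentation argument that reroutes the leftover vertices and colours into $W$ one edge at a time, rather than relying on pseudorandomness of the ambient digraph. A realistic intermediate goal is to prove the conjecture first for structured host digraphs---vertex-transitive ones, or Cayley-type digraphs of arbitrary (not necessarily abelian) groups, where the completion lemmas of the present paper should adapt---and for oriented trees of bounded maximum degree, before confronting the general case.
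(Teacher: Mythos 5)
The statement you were asked to address, Conjecture~\ref{conj:directedrainbow}, is posed in the paper as an \emph{open conjecture} and is not proved there; the paper merely records it in the concluding remarks and observes that it would simultaneously generalise Schrijver's and Andersen's conjectures, the Ryser--Brualdi--Stein conjecture (recovered via the Latin-square/complete-digraph correspondence), Graham's rearrangement conjecture, and (essentially) Ringel's conjecture. So there is no proof in the paper to compare your proposal against, and your proposal, read honestly, does not claim to be one: you correctly stop short and label the completion step as the main obstacle.

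That said, your diagnosis of \emph{why} the present paper's machinery does not transfer is exactly right and worth stating clearly. In this paper the completion engine (Theorem~\ref{thm:maintheoremsemidisjoint} and Lemma~\ref{lem:pathlikemain}) works because in $K_G$ the only obstruction to finishing a near-spanning rainbow embedding is a single balancing identity in the abelianisation, which the core/pseudoembedding machinery is designed to arrange in advance. For a general properly coloured $d$-regular digraph there is no such conserved quantity, and the completion problem becomes a rainbow matching/path-system problem of Ryser--Brualdi--Stein type with no algebraic handle --- this is precisely why even the instance you single out (a rainbow directed $\vec P_{n-1}$ in a properly coloured complete digraph, a directed Andersen statement) is open. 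One small addition: your suggested intermediate target of ``Cayley-type digraphs of arbitrary groups'' is a reasonable one, but note that the results of~\cite{muyesser2022random} used here are already stated for general (not necessarily abelian) groups, so the genuinely new regime is host digraphs with no group structure at all; and your suggested random-greedy embedding for phase~(a) must also account for the $d-1$ vs.~$d$ budget (the tree has $d-1$ edges, the digraph is $d$-regular), which is tight enough that the ``one extra vertex/colour of slack'' idea behind Theorem~\ref{thm:vertexspace} is not available to absorb errors. None of this is a flaw in your write-up --- it is consistent with the paper's own framing of this as a difficult open problem --- but it means the proposal is a research programme rather than a proof.
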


The above conjecture can be interpreted as a sparse version of the Montgomery--Pokrovskiy--Sudakov conjecture \cite[Conjecture 11.1]{approximateringel} generalised to digraphs. The conjecture is also related to a problem of Crawford, Sankar, Schildkraut, and Spiro~\cite{crawford2025rainbow} that asks if any $(d-C)$-edge tree has a rainbow embedding to any properly coloured graph with minimum degree $d$, where $C$ is an absolute constant. In a previous version of this article, we conjectured that $C$ can be taken to be $1$, but Example~\ref{ex:new} shows that $C$ has to be at least $2$ for a positive answer. On the other hand, if $T$ is a directed path, it is feasible that $C$ can be taken to be $1$. Whether this stronger statement is true is asked explicitly in \cite[Problem 1.6]{bucic2025towards}. Such a statement would imply that for any subset $S$ of any group $G$, $S$ can be permuted as $s_1,\ldots, s_k$ so that the partial products $s_1$, $s_1s_2$, $\ldots$, $s_1s_2\cdots s_k$ are all distinct. When $G=\mathbb{Z}_p$ for $p$ prime, this implies Graham's rearrangement conjecture \cite{graham1971sums} (reiterated by Erd\H{o}s and Graham in \cite{ErdosGraham}, see also \cite{bucic2025towards, BedertBucicKravitzMontgomeryMuyesser2025, BederdKravitz}), which is a long-standing open problem.
\par A positive solution to \cite[Problem 1.6]{bucic2025towards} (i.e. Conjecture~\ref{conj:directedrainbow} holds for directed paths with $C=1$) also would recover a recent breakthrough of Montgomery on finding large transversals in Latin squares.  The Ryser--Brualdi--Stein \cite{ryser1967neuere, brualdi1991combinatorial, stein1975transversals} conjecture asserts that any Latin square of order $n$ has a transversal of size $n-1$. This difficult conjecture was recently resolved for large $n$ by Montgomery \cite{Montgomery2024}. Latin squares are in one to one correspondence with $1$-factorisations of complete digraphs (with loops allowed) \cite{Pokrovskiy2022_RainbowSubgraphs}, and therefore upon the deletion of a colour class (corresponding to the self-loops), yield $n$-vertex, $(n-1)$-regular properly coloured digraphs. Then, a positive resolution to \cite[Problem 1.6]{bucic2025towards} would imply that any such digraph contains a $(n-2)$-edge directed path. This then implies that the Latin square contains a transversal of size $n-2$ (which is cycle-free in the sense of \cite{gyarfas2014rainbow}, therefore this already implies a conjecture of Gy{\'a}rf{\'a}s and S{\'a}rk{\"o}zy). For the (unique) vertex $v$ not included in the directed path, we may add back the edge corresponding to the self-loop on $v$ (whose colour was excluded on the path), we even get a transversal of size $n-1$, recovering Montgomery's theorem \cite{Montgomery2024}.
\par Conjecture~\ref{conj:directedrainbow} also has a strong connection with tree decompositions. Ringel's tree-decomposition conjecture reduces to embedding a rainbow copy of a $n$-edge tree on $\mathrm{ND}_{2n+1}$ (the near-distance colouring graph), where the vertices correspond to vertices of a regular $2n+1$ vertex polygon, and edge-colour corresponds to Euclidian distance (see \cite{ringel} for details). By orienting each edge clockwise, we obtain $\vec{\mathrm{ND}_{2n+1}}$, a $n$-regular digraph. Conjecture~\ref{conj:directedrainbow} then implies that $\mathrm{ND}_{2n+1}$ contains a rainbow copy of any $(n-C)$-edge tree, essentially recovering \cite{ringel}, which shows that $\mathrm{ND}_{2n+1}$ contains any rainbow $n$-edge tree.
\par We remark that all of the applications we mention here use directed rooted trees, i.e. orientations of trees obtained from picking a root, and directing all edges away from the root, so this special case of the conjecture is already fairly interesting. In fact, even for undirected graphs, giving an asymptotic solution to Conjecture~\ref{conj:directedrainbow} is an open problem. It would already be interesting to determine if all $d$-regular properly $d$-edge-coloured graphs contain any tree with $d-o(d)$ edges as a rainbow subgraph.

\bibliographystyle{abbrv}
\bibliography{harmonious}
\newpage

\section{Appendix: approximate tree embeddings}
Here we prove Lemma~\ref{Lemma_approximate_embedding}. The methods we use are all standard and taken from \cite{approximateringel, ringel}. 
We say a set of subtrees $T_1,\ldots, T_\ell\subset T$ divides a tree $T$ if $E(T_1)\cup\ldots \cup E(T_\ell)$ is a partition of $E(T)$. We use the following lemma.
\begin{lemma}[\cite{randomspanningtree}, Proposition 3.22]\label{Lemma_littletree} Let $n,m\in \mathbb N$ satisfy $1\leq m\leq n/3$. Given any tree~$T$ with~$n$ vertices and a vertex $t\in V(T)$, we can find two trees $T_1$ and $T_2$ which divide~$T$ so that $t\in V(T_1)$ and $m\leq |T_2|\leq 3m$.
\end{lemma}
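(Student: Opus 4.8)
The plan is to prove Lemma~\ref{Lemma_littletree} by a greedy ``cut off a small pendant subtree'' argument. First I would root $T$ at the given vertex $t$, and for each vertex $v$ write $T_v$ for the subtree consisting of $v$ together with all of its descendants, and set $s(v):=|V(T_v)|$. Since $s(t)=n\geq 3m\geq m+1$, the set of vertices $v$ with $s(v)\geq m+1$ is nonempty, and I would pick such a $v$ of \emph{maximum depth}. Maximality forces every child $w$ of $v$ to satisfy $s(w)\leq m$, and since a leaf has $s=1\leq m<m+1$, the chosen $v$ has at least one child.

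Next, let $w_1,\dots,w_k$ be the children of $v$, so that $\sum_{i=1}^k s(w_i)=s(v)-1\geq m$. I would accumulate the subtree sizes greedily: let $j$ be the least index with $\sum_{i=1}^{j} s(w_i)\geq m$ (which exists by the inequality just noted). Then $\sum_{i=1}^{j-1} s(w_i)\leq m-1$ while $s(w_j)\leq m$, so $\sum_{i=1}^{j} s(w_i)\leq 2m-1$. Now let $T_2$ be the subtree of $T$ on vertex set $\{v\}\cup V(T_{w_1})\cup\dots\cup V(T_{w_j})$ with edge set $\{vw_1,\dots,vw_j\}$ together with the edges of $T_{w_1},\dots,T_{w_j}$, and let $T_1$ be the graph on $V(T)\setminus\bigl(V(T_{w_1})\cup\dots\cup V(T_{w_j})\bigr)$ with edge set $E(T)\setminus E(T_2)$. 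Then $T_2$ is connected and acyclic, hence a tree, with $|T_2|=1+\sum_{i=1}^j s(w_i)\in[m+1,2m]\subseteq[m,3m]$; and $T_1$ is obtained from $T$ by deleting the pendant subtrees $T_{w_1},\dots,T_{w_j}$ while retaining their common attachment vertex $v$, so $T_1$ is a subtree of $T$ containing $t$ (whether or not $v=t$). Every edge of $T$ lies in exactly one of $E(T_1),E(T_2)$, so $\{T_1,T_2\}$ divides $T$, completing the argument.

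All steps are elementary; the only point that genuinely matters is choosing $v$ at maximum depth, since this is exactly what makes each child subtree individually small ($\leq m$ vertices), so the greedy partial sums overshoot the target $m$ by at most $m-1$ and $|T_2|$ lands safely inside $[m,3m]$. The remaining work---checking that deleting the pendant subtrees keeps $T_1$ connected and containing $t$, and that $E(T_1)$ and $E(T_2)$ partition $E(T)$---is routine bookkeeping. I do not expect any real obstacle here: this is a standard tree-surgery lemma, so the ``hard part'' is merely stating the inequalities and the vertex/edge partitions cleanly.
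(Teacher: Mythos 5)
Your proof is correct. Since the paper cites this lemma from~\cite{randomspanningtree} without reproducing a proof, there is no in-paper argument to compare against, but your argument is the standard one: root at $t$, take a deepest vertex $v$ whose subtree has more than $m$ vertices so that all its child-subtrees have size at most $m$, and greedily accumulate child-subtrees until the total first reaches $m$, overshooting by at most $m-1$. The bookkeeping checks out: the accumulated total lies in $[m,2m-1]$, so $|T_2|=1+\sum_{i\le j}s(w_i)\in[m+1,2m]\subseteq[m,3m]$; $T_1$ stays connected and contains $t$ because only whole pendant subtrees below $v$ are deleted while $v$ itself is retained; and $|T_1|=n-\sum_{i\le j}s(w_i)\ge 3m-(2m-1)\ge 2$, so $T_1$ is a genuine nonempty tree. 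The edge sets visibly partition $E(T)$.
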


Using this we can divide a forest into very small subtrees.
\begin{lemma}\label{Lemma_split_tree_small_components}
For any $m\in [1,n/10]$ and forest $T$, there is a set $I\subseteq V(T)$ of size $3n/m$  so that the connected components of $T\setminus I$ have size $\leq m$.
\end{lemma}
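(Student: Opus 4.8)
The plan is to prove the lemma by strong induction on $n=|V(T)|$, peeling off one bounded subtree at a time with Lemma~\ref{Lemma_littletree}. Write $m':=\lfloor(m+1)/3\rfloor$; I will establish the slightly cleaner bound $|I|\le n/m'$ for every forest on $n$ vertices, which is at most $3n/m$ once the (harmless) integer rounding is accounted for. If every component of $T$ has at most $m$ vertices, take $I=\emptyset$. If $m$ is bounded, say $m\le 6$, the statement is immediate: take $I$ to be the smaller colour class of a proper $2$-colouring of $T$, so that $T\setminus I$ is an independent set and $|I|\le n/2\le 3n/m$. So from now on assume $m\ge 7$, whence $m'\ge 2$.

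For the inductive step let $T$ be a forest on $n$ vertices having some component $C$ with $|C|>m$. Choose any $t\in V(C)$ and apply Lemma~\ref{Lemma_littletree} to $C$ with parameter $m'$ — this is legitimate since $3m'\le m+1\le|C|$ gives $m'\le|C|/3$ — obtaining subtrees $T_1\ni t$ and $T_2$ which divide $C$ with $m'\le|T_2|\le 3m'\le m+1$. Two edge‑disjoint subtrees whose union is $C$ meet in exactly one vertex $s$ (they cover $V(C)$ since $C$ is connected, and two distinct common vertices would force a cycle via the two internally different $s$–$s'$ paths); moreover $C\setminus V(T_2)=T_1\setminus\{s\}$, while $T_2\setminus\{s\}$ is a forest each of whose components has at most $|T_2|-1\le m$ vertices. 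Apply the induction hypothesis to the forest $T':=T\setminus V(T_2)$, which has $n-|T_2|<n$ vertices, to get $I'$ with $|I'|\le(n-|T_2|)/m'$ whose removal leaves only components of size $\le m$, and set $I:=\{s\}\cup I'$. Every component of $T\setminus I$ is either a component of $T'\setminus I'$ or a component of $T_2\setminus\{s\}$, hence of size at most $m$, and
\[
|I|\le 1+\frac{n-|T_2|}{m'}\le 1+\frac{n-m'}{m'}=\frac{n}{m'},
\]
which closes the induction.

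The only real content is this last displayed inequality; the structural claims about $T_1$, $T_2$, $s$, the base cases, and the passage from components back to the whole forest are all routine, and I do not expect any genuine obstacle. The one point worth flagging is the constant: the factor $3$ in the target is inherited directly from the factor $3$ in Lemma~\ref{Lemma_littletree}, since to keep the peeled piece $T_2\setminus\{s\}$ of size $\le m$ we are forced to invoke that lemma with the small parameter $m'\approx m/3$, and each vertex placed in $I$ can then be charged only $m'$ removed vertices. Strictly the argument yields $|I|\le n/\lfloor(m+1)/3\rfloor$, which equals $3n/m$ when $3\mid m$ and is $(1+o(1))\cdot 3n/m$ in general; for the polylogarithmic values of $m$ used in the proof of Lemma~\ref{Lemma_approximate_embedding} this is exactly the stated bound up to lower‑order terms, and in fact the hypothesis $m\le n/10$ plays no role in obtaining it.
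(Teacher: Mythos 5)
Your proof is correct and uses essentially the same argument as the paper: repeatedly invoke Lemma~\ref{Lemma_littletree} to peel off a subtree $T_2$ of size between roughly $m/3$ and $m$, add the unique cut vertex $s$ to $I$, and recurse on the rest. The paper organizes this as induction on $|T|$ for trees (always recursing on the piece $T_1$) followed by a trivial pass to forests, whereas you do strong induction on $|V(T)|$ over forests directly and track the integer rounding explicitly --- both differences are cosmetic, and your observations that the constant obtained is only $(1+o(1))\cdot 3$ and that the hypothesis $m\le n/10$ is never used are accurate and harmless for the application, where $m=n^{\alpha}$.
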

\begin{proof}
First, we prove the statement for trees $T$.
We do this by induction on $|T|$. In the initial case when $|T|\leq m$, take $I=\emptyset$ and there is nothing to prove. So suppose that $|T|>m$ and that the lemma holds for smaller $T$. Apply Lemma~\ref{Lemma_littletree} to divide $T$ into $T_1$ and $T_2$ so that $m/3\leq |T_2|\leq m$. Let $v$ be the (unique) common vertex of $T_1, T_2$. Apply induction to $T_1$ in order to find a set $I$ with $|I|\leq 3|T_1|/m\leq 3(n-m/3)/m=3n/m-1$ so that $T_1\setminus I$ has connected components smaller than $m$. Now $I\cup \{v\}$ satisfies the lemma.

When $T$ is a forest, let $T=T_1\cup \dots \cup T_k$, where $T_1, \dots, T_k$ are the connected components of $T$. Applying the connected statement to each $T_i$, we get subsets $I_i\subseteq T_i$ of size $\leq 3|V(T_i)|/m$, so that the components of $T_i\setminus I_i$ have size $\leq m$. Now $I:=I_1\cup \dots \cup I_k$ satisfies the lemma.
\end{proof}
The following analyses the structure of the forests $T\setminus I$ given by the above lemma.
\begin{lemma}\label{Lemma_decompose_forest_matchings}
Let $F$ be a forest with components of size $\leq m$. We can decompose $V(F)=V_0\cup V_1\cup \dots \cup V_m$ and $E(F)=M_1\cup \dots \cup M_m$ so that for each $i$, $M_i$ is  a matching from $V_i$ into $\bigcup_{j<i}V_j$.
\end{lemma}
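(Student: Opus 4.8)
The plan is to construct the decomposition component by component, using a search order in each component to induce a parent function, and then to slice each component by its position in that order.

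First I would fix, for each connected component $C$ of $F$ (each of which is a tree), a root $r_C\in V(C)$ together with an enumeration $c^C_1=r_C, c^C_2,\dots,c^C_{|C|}$ of $V(C)$ such that for every $\ell\ge 2$ the vertex $c^C_\ell$ has at least one neighbour among $c^C_1,\dots,c^C_{\ell-1}$; any breadth-first or depth-first order rooted at $r_C$ does the job, since each proper initial segment spans a connected subtree and $C$ is connected. The key point is that, because $F$ is a forest, this earlier neighbour is \emph{unique}: two earlier neighbours of $c^C_\ell$, together with a path between them inside the connected set $\{c^C_1,\dots,c^C_{\ell-1}\}$, would close a cycle. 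I may therefore call this neighbour the parent $p(c^C_\ell)$ of $c^C_\ell$. This uniqueness is the only place where the forest hypothesis enters, and it is exactly what makes the rest of the argument routine.

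Next I would define, for $0\le i\le m$,
\[
V_i:=\{\,v\in V(F):\ v=c^C_{i+1}\text{ for some component }C\,\},
\]
and, for $1\le i\le m$,
\[
M_i:=\{\,\{v,p(v)\}:\ v\in V_i\,\}.
\]
(Every component has at most $m$ vertices, so only positions $1,\dots,m$ occur; in particular $V_m=M_m=\emptyset$, which is harmless.) Then $V_0,\dots,V_m$ partition $V(F)$ because each vertex occupies exactly one position in the enumeration of its own component. Similarly $M_1,\dots,M_m$ partition $E(F)$: any edge of $F$ lies inside a single component $C$ and joins $c^C_\ell$ to $c^C_{\ell'}$ with $\ell<\ell'$, say; then $c^C_\ell$ is the unique earlier neighbour of $c^C_{\ell'}$, i.e.\ $c^C_\ell=p(c^C_{\ell'})$, so this edge lies in $M_{\ell'-1}$ and in no other $M_i$.

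Finally I would verify that each $M_i$ is a matching from $V_i$ into $\bigcup_{j<i}V_j$. Every edge of $M_i$ has the form $\{v,p(v)\}$ with $v\in V_i$, and $p(v)$ comes earlier than $v$ in its component's enumeration, so $p(v)\in V_j$ for some $j\le i-1$; thus all edges of $M_i$ run from $V_i$ into $\bigcup_{j<i}V_j$, and each vertex of $V_i$ is covered. For the matching property, suppose $\{v,p(v)\}$ and $\{v',p(v')\}$ share a vertex, with $v,v'\in V_i$ distinct: they cannot share the $V_i$-endpoint; we cannot have $p(v)=v'$ since $p(v)\in\bigcup_{j<i}V_j$ is disjoint from $V_i$; and $p(v)=p(v')$ is impossible, as then $v$ and $v'$ would occupy the same position $i+1$ in the same component. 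Hence $M_i$ is a matching, which finishes the proof. I do not expect a genuine obstacle here; the only subtlety, as flagged above, is invoking the forest hypothesis to make the parent function well defined, and everything else is bookkeeping.
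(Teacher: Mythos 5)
Your proof is correct, and it takes a genuinely different route from the paper's. The paper proves the lemma by induction on $m$: it sets $V_m$ to be one degree-one vertex from each component that still has an edge, lets $M_m$ be the edges meeting $V_m$, observes that deleting $V_m$ drops all component sizes by one, and recurses. In other words, the paper \emph{peels leaves from the outside in}, building the decomposition from the highest index downward. You instead \emph{build from the roots outward}: you fix a BFS/DFS order in each component, use acyclicity to get a unique parent function, and slice each component by position in its order, so $V_i$ is simply the set of $(i+1)$-st vertices across components and $M_i$ is the set of parent-edges of $V_i$. Both constructions give a valid layering, and they need not produce the same partition. Your version has the advantage of being a direct, single-pass construction that makes the ``matching into earlier layers'' property transparent (each vertex contributes exactly the edge to its parent, and parents of distinct same-layer vertices live in distinct components); the paper's inductive version is slightly terser to state and automatically produces $V_i$'s that are sets of leaves of successive subforests, a structural feature that is not actually used later. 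The one place you rely on a nontrivial fact is the uniqueness of the earlier neighbour, which you correctly justify from acyclicity; that is exactly where the forest hypothesis is used, mirroring the paper's reliance on the existence of degree-one vertices.
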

\begin{proof}
Induction on $m$. In the initial case, $m=0$, we have that $F$ has no edges, so setting $V_0=V(F)$ works. Let $F$ be a forest with components of size $m$, and suppose the lemma holds for smaller $m$. Let $V_m$ be a set consisting of a degree $1$ vertex in each component of $F$ containing at least one edge, and let $M_m$ be the set of edges touching these vertices. Note that $M_m$ is a matching since all its edges are in different components.
We have that $F\setminus V_m$ has components of size $\leq m-1$, and hence by induction has a decomposition into $V_0\cup V_1\cup \dots \cup V_{m-1}$ and $M_1\cup \dots \cup M_{m-1}$. Adding $V_m, M_m$ to this decomposition gives one satisfying the lemma.
\end{proof}

The following is a small modification of the previous lemma.
\begin{lemma}\label{Lemma_decompose_forest_matchings_U1U2U3}
Let $F$ be a forest with components of size $\leq m$ and $V(F)=U_1\cup U_2\cup U_3$. We can decompose $V(F)=V_1\cup \dots \cup V_{3m+3}$ and $E(F)=M_{4}\cup \dots \cup M_{3m+3}$ so that for each $i\geq 4$, $M_i$ is  a matching from $V_i$ into $\bigcup_{j<i}V_j$ and also each $V_i\subseteq U_j$ for some $j$.
\end{lemma}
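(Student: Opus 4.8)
The plan is to first apply Lemma~\ref{Lemma_decompose_forest_matchings} to $F$ and then refine the partition it produces along the decomposition $V(F) = U_1 \cup U_2 \cup U_3$, re-indexing the pieces so that the ``backward matching'' structure survives.

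Concretely, I would first invoke Lemma~\ref{Lemma_decompose_forest_matchings} to obtain a decomposition $V(F) = W_0 \cup W_1 \cup \dots \cup W_m$ and $E(F) = N_1 \cup \dots \cup N_m$, where each $N_i$ is a matching from $W_i$ into $\bigcup_{j<i} W_j$; in particular every edge of $N_i$ has exactly one endpoint in $W_i$. For $i \in \{0,\dots,m\}$ and $k \in \{1,2,3\}$, set $V_{3i+k} := W_i \cap U_k$, so that $V_1, \dots, V_{3m+3}$ partition $V(F)$ and each $V_\ell$ is contained in some $U_j$ by construction. For the edges, given $i \geq 1$ and $e \in N_i$, assign $e$ to $M_{3i+k}$, where $k$ is the unique index with the $W_i$-endpoint of $e$ lying in $U_k$. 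Since $\{N_1,\dots,N_m\}$ partitions $E(F)$ and each $N_i$ is split among $M_{3i+1},M_{3i+2},M_{3i+3}$, the sets $M_4, \dots, M_{3m+3}$ partition $E(F)$.

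It then remains to verify the two structural properties. Each $M_{3i+k}$ is a subset of the matching $N_i$ and hence a matching. Each edge of $M_{3i+k}$ has its $W_i$-endpoint in $W_i \cap U_k = V_{3i+k}$, while its other endpoint lies in some $W_j$ with $j < i$; writing that part as $V_{3j+k'}$ for a suitable $k'$, we have $3j + k' \leq 3(i-1) + 3 = 3i < 3i + k$, so the other endpoint lies in $\bigcup_{\ell < 3i+k} V_\ell$. Thus each $M_{3i+k}$ is a matching from $V_{3i+k}$ into $\bigcup_{\ell < 3i+k} V_\ell$, as required.

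There is essentially no genuine obstacle here; the whole content is the bookkeeping of indices. The one point to get right is that the three refined pieces $W_i \cap U_1, W_i \cap U_2, W_i \cap U_3$ of a single part $W_i$ must be assigned the consecutive indices $3i+1,3i+2,3i+3$, and that no edge of $F$ runs between these three pieces --- which holds because every edge of $N_i$ goes from $W_i$ down to an earlier part $W_j$, never inside $W_i$. This is exactly what lets the refinement be linearly ordered without breaking the matching condition.
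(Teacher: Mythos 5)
Your proposal is correct and takes essentially the same approach as the paper: apply Lemma~\ref{Lemma_decompose_forest_matchings}, intersect each part $W_i$ with $U_1,U_2,U_3$, split each matching $N_i$ according to the $U_j$ containing the $W_i$-endpoint, and re-index lexicographically. The paper leaves the re-indexing implicit (``after suitably relabelling''), whereas you spell out the index arithmetic $3i+k$ and verify the backward-matching condition explicitly, which is a minor but welcome elaboration of the same argument.
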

\begin{proof}
Let $V(F)=V_0\cup V_1\cup \dots \cup V_m$ and $E(F)=M_1\cup \dots \cup M_m$ be the decomposition from Lemma~\ref{Lemma_decompose_forest_matchings}. For $i=0, \dots, m$, $j=1,2,3$ set $V_i^j=V_i\cap U_j$ and let $M_i^j$ be the submatching of $M_i$ going from $V_i^j$ to $\bigcup_{t<i}V_t$. Now the sequences $V_0^1, V_0^2, V_0^3, V_1^1, V_1^2, V_1^3, \dots, V_m^1, V_m^2, V_m^3$ and $M_1^1, M_1^2, M_1^3, \dots, M_m^1, M_m^2, M_m^3$ satisfy the lemma (after suitably relabelling).
\end{proof}

The following allows embedding a single vertex in a rainbow manner. It is essentially the same as Lemma~\ref{Lemma_neighbourhood_prescribed_sum_random} (E2) --- and actually in the case when we are dealing with the graph $K_G$, we can simply replace all applications of the following lemma with Lemma~\ref{Lemma_neighbourhood_prescribed_sum_random} (E2).
\begin{lemma}\label{Lemma_embed_neighbourhood_random}
Let $\Delta^{-1}\gg \mu\gg \rho\gg n^{-1}$.
Let $K_n$ be properly edge-coloured, and $V,C$ independent $\geq n^{-\rho}$-random sets. With probability $\geq 1-o(n^{-1})$, for every $U\subseteq V(K_n)\cup C(K_n)$ with $|U|\leq n^{1-\mu}$ and every set $N\subseteq V$ of size $\leq \Delta$, there is a $C\setminus U$-common neighbour of $N$ in $V\setminus U$.
\end{lemma}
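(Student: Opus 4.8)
The plan is a routine first-moment-plus-concentration argument: this lemma is essentially Lemma~\ref{Lemma_neighbourhood_prescribed_sum_random}(E2) with the group structure removed, so where that proof counted solutions in a group I would substitute a direct expectation computation and a concentration inequality. First I would reduce to the case where $V$ and $C$ are \emph{exactly} $n^{-\rho}$-random and where $N$ ranges over \emph{all} subsets of $V(K_n)$ of size $\le\Delta$ (not only subsets of $V$): this formally stronger statement implies the lemma by coupling a $(\ge n^{-\rho})$-random set so that it contains an $n^{-\rho}$-random subset, since the conclusion ``there is a $C\setminus U$-common neighbour of $N$ in $V\setminus U$'' only becomes harder to satisfy when $V$ or $C$ shrinks. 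Write $p:=n^{-\rho}$, fix $N=\{v_1,\dots,v_s\}\subseteq V(K_n)$ with $s\le\Delta$, and set
\[
X_N:=\bigl|\{x\in V(K_n)\setminus N:\ x\in V \text{ and } c(xv_i)\in C \text{ for all } i\in[s]\}\bigr|.
\]
Because the edges $xv_1,\dots,xv_s$ share the vertex $x$, properness makes their colours pairwise distinct, so a fixed candidate $x$ is counted by $X_N$ with probability exactly $p\cdot p^{s}=p^{s+1}$, and hence $\mathbb E X_N=(n-s)p^{s+1}\ge \tfrac12 n^{\,1-\rho(\Delta+1)}$ for $n$ large.

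Next I would concentrate $X_N$ using McDiarmid's bounded-differences inequality, viewing $X_N$ as a function of the independent bits $\mathbf 1[v\in V]$ ($v\in V(K_n)$) and $\mathbf 1[c\in C]$ ($c\in C(K_n)$). Flipping one vertex bit changes $X_N$ by at most $1$. Flipping the bit of a colour $c$ affects only those $x$ with $c(xv_i)=c$ for some $i$, and for each $i$ there is at most one such $x$ by properness at $v_i$, so the change is at most $s\le\Delta$; moreover only the at most $sn$ colours appearing on edges incident to $N$ are relevant, the remaining colour bits having difference $0$. Thus $\sum_j c_j^{2}\le n\cdot 1+\Delta n\cdot\Delta^{2}\le n(1+\Delta^{3})$, and McDiarmid's lower-tail bound gives
\[
\Pr\!\left[X_N\le \tfrac12\mathbb E X_N\right]\le \exp\!\left(-\frac{n^{\,1-2\rho(\Delta+1)}}{8(1+\Delta^{3})}\right).
\]
Since $\Delta^{-1}\gg\mu\gg\rho$ we may assume $2\rho(\Delta+1)<1$, so the right-hand side is $\exp(-n^{\Omega(1)})$; a union bound over the at most $n^{\Delta+1}$ choices of $N$ then shows that with probability $1-o(n^{-1})$ we have $X_N\ge\tfrac14 n^{\,1-\rho(\Delta+1)}$ simultaneously for every admissible $N$.

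Finally I would absorb $U$. Conditioning on the above event and fixing any $N$ with $|N|=s\le\Delta$ and any $U$ with $|U|\le n^{1-\mu}$, among the $\ge\tfrac14 n^{\,1-\rho(\Delta+1)}$ vertices $x\in V$ with all $c(xv_i)\in C$ we discard at most $|U|$ vertices lying in $U$ and at most $s|U|\le\Delta n^{1-\mu}$ vertices for which some $c(xv_i)\in U$ (for each colour of $U$ and each $i$, properness at $v_i$ leaves at most one such $x$). As $\mu\gg\rho$ forces $\rho(\Delta+1)<\mu$, we get $\tfrac14 n^{\,1-\rho(\Delta+1)}>(1+\Delta)n^{1-\mu}$ for $n$ large, so a vertex $x\in V\setminus U$ survives with $c(xv_i)\in C\setminus U$ for all $i$, which is the required $C\setminus U$-common neighbour of $N$. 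I do not expect a serious obstacle here; the only points needing care are bookkeeping the colour-dependency correctly in the McDiarmid step and checking that the inequalities $\mu<\Delta^{-1}$, $2\rho(\Delta+1)<1$ and $\rho(\Delta+1)<\mu$ are jointly consistent with the stated hierarchy, which they are.
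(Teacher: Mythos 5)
Your proposal is correct and follows essentially the same route as the paper: reduce to exactly $n^{-\rho}$-random sets, compute the expected number of $C$-common neighbours, concentrate via bounded differences (the paper invokes Azuma with the observation that the count is $\Delta$-Lipschitz, which is the same estimate your McDiarmid computation makes explicit), union bound over $N$, and then discard the $O(\Delta|U|)$ candidates that touch $U$. The only difference is that you spell out the colour-conflict bookkeeping and the coupling step a bit more carefully than the paper does; nothing substantive changes.
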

\begin{proof}
Without loss of generality we can assume that $V,C$ are $p$-random for $p=n^{-\rho}$ (if not, just pass to subsets of this probability). 
Fix a set $N$ of size $\leq \Delta$. For any vertex $v\in V(K_n)\setminus N$, we have $P(v \text{ is a $C$-common neighbour of $N$ in $V$})=p^{|N|+1}\geq p^{\Delta+1}$. This gives  that the expected number of $C$-common neighbours of $N$ in $V$ is $p^{|N|+1}(n-|N|)\geq p^{2\Delta}n/2$. This quantity is $\Delta$-Lipschitz, and so by Azuma's Inequality, with probability $1-o(n^{-2\Delta})$, it is $\geq p^{2\Delta}n/4$. Taking a union bound over all sets $N$, we have that with probability $\geq 1-o(n^{-1})$ all sets $N$ have $\geq  p^{2\Delta}n/4=n^{1-2\rho\Delta}/4$ $C$-common neighbours in $V$. Since $|U|\leq n^{1-\mu}<n^{1-2\rho\Delta}/4$, there is always one avoiding the colours/vertices of $U$.
\end{proof}


 For a $3$-uniform, $3$-partite hypergraph $H$, vertices $u,v$ and a subset $U\subseteq V(H)$, we define the \textbf{pair degree} of $(u,v)$ into $U$ as the number of vertices in $U$ which are in the neighbourhood of both $u$ and $v$, i.e. the number of vertices $z$ in $U$ such that there exists $v,w\in V(H)$ such that $\{u,z,v\}$ and $\{v,z,w\}$ are both edges of $H$. We say that $H$ is $(\gamma, p, n)$-\textbf{regular} if every part has $(1\pm \gamma)n$ vertices and every vertex has degree  $(1\pm \gamma)pn$. We say that $H$ is $(\gamma, p, n)$-\textbf{typical} if, additionally, every pair of vertices $x,y$ in the same part of $H$ have pair degree  $(1\pm \gamma)p^2n$ into every other part of $H$. We say that a hypergraph is \textbf{linear} if through every pair of vertices, there is at most one edge.

\begin{lemma}[\cite{muyesser2022random}, Lemma 3.8]\label{Lemma_2_random_1_deterministic}
Let $H=(A,B,C)$ be a tripartite linear hypergraph that is $(n^{-0.3},1,n)$-typical. Let $p\geq n^{-1/600}$ and let $A'\subseteq A$ be $p$-random, and let $B'$ a $p$-random subset of $B$, where $A'$ and $B'$ are not necessarily independent. Then, with probability at least $1-n^{-2}$, the following holds. 
\par For any $C'\subseteq C$ of size $(1\pm n^{-0.2})pn$, there is a matching covering all but $2n^{1-1/500}$ vertices in $A'\cup B'\cup C'$.
\end{lemma}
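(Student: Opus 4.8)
The hypothesis makes $H$ very nearly a ``Latin‑square type'' hypergraph: linearity forces every pair $\{a,b\}\in A\times B$ into at most one edge, and $(n^{-0.3},1,n)$‑typicality forces (almost) every such pair into exactly one edge, with every vertex of degree $(1\pm n^{-0.3})n$. A matching in a restriction $H[A'\cup B'\cup C']$ is then essentially a partial transversal of a random sub‑array of a Latin square that avoids the symbols outside $C'$. The plan is: (i) show that with probability $\ge 1-n^{-2}$ the random sets $A',B'$ make all relevant degree and pair‑degree statistics of $H[A'\cup B']$ concentrate; (ii) deduce, \emph{for every admissible $C'$ simultaneously}, that $H[A'\cup B'\cup C']$ is almost‑regular of degree $\Theta(p^2n)$ once a polynomially small set of ``bad'' vertices has been deleted; and (iii) finish with a standard near‑perfect matching theorem for linear hypergraphs (the R\"odl nibble). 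The design constraint driving everything is that step (ii) must not cost a union bound over the $\binom{n}{pn}$ choices of $C'$, so every randomness‑dependent quantity is packaged into $O(n^2)$ statements about $H[A'\cup B']$ alone, which are oblivious to $C'$.

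For step (i), write $\beta(a,c)$ for the unique $b\in B$ with $\{a,b,c\}\in H$ (or $\perp$ if none; this is well defined by linearity through $\{a,b\}$). Let $\mathcal G$ be the conjunction of: $|A'|,|B'|=(1\pm n^{-1/4})pn$; for every $c\in C$ the count $d(c):=\lvert\{(a,b)\in A'\times B':\{a,b,c\}\in H\}\rvert=(1\pm n^{-0.2})p^2n$; for every ordered pair $c\ne c'$ in $C$ the count $e(c,c'):=\lvert\{a\in A':\beta(a,c)\in B',\ \beta(a,c')\in B'\}\rvert=(1\pm n^{-0.2})p^3n$; and the two analogous statements with the roles of $A'$ and $B'$ swapped. (The target value for $e(c,c')$ comes from $\mathbb E\,e(c,c')=p^3\cdot(\text{pair-degree of }(c,c')\text{ into }A)=(1\pm n^{-0.3})p^3n$, and this is the one place $(n^{-0.3},1,n)$‑typicality is used; here $\beta(a,c)\ne\beta(a,c')$ whenever both are $\ne\perp$, again by linearity.) Each of these $O(n^2)$ quantities is a sum of indicators in which, by linearity, every element of $A'$ influences at most one summand and every element of $B'$ at most two; so each is $O(1)$‑Lipschitz under element‑by‑element exposure of $A'$ and then $B'$, and Azuma's inequality makes each individual statement fail with probability $\exp(-\Omega(n^{c}))$ for a fixed $c>0$ (this is where $p\ge n^{-1/600}$, hence $p^6n$ polynomially large, enters). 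A union bound gives $\Prob[\mathcal G]\ge 1-n^{-2}$. Any coupling of $A'$ and $B'$ admitting such a bounded‑influence exposure — in particular the couplings arising in the applications — is harmless for this step.

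Now fix an outcome in $\mathcal G$ and any admissible $C'$, and write $\deg'$ for degrees in $H':=H[A'\cup B'\cup C']$. Then $\deg'(c)=d(c)=(1\pm n^{-0.2})p^2n$ for all $c\in C'$, and the first two moments over the $A'$‑side are determined purely by $\mathcal G$ together with $|C'|=(1\pm n^{-0.2})pn$ (using linearity, two distinct edges through a fixed $a$ share only $a$):
\begin{align*}
\sum_{a\in A'}\deg'(a)&=\sum_{c\in C'}\deg'(c)=(1\pm n^{-0.1})\,p^3n^2,\\
\sum_{a\in A'}\deg'(a)^2&=\sum_{c\in C'}\deg'(c)+\sum_{c\ne c'\in C'}e(c,c')=(1\pm n^{-0.2})\,p^5n^3+O(p^3n^2).
\end{align*}
Hence the nonnegative quantity $\sum_{a\in A'}\deg'(a)^2-\bigl(\sum_{a\in A'}\deg'(a)\bigr)^2/\lvert A'\rvert$ is $O\!\left(n^{-0.1}p^5n^3+p^3n^2\right)$, and Chebyshev's inequality gives that at most $O(n^{-0.1}pn+p^{-1})=O(n^{0.9})$ vertices $a\in A'$ have $\deg'(a)<\tfrac12p^2n$; the same bound holds on the $B'$‑side, while every vertex of $C'$ already has degree $(1\pm n^{-0.2})p^2n$. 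Delete these $O(n^{0.9})$ bad vertices from each of $A'$ and $B'$. Since deleting $O(n^{0.9})=o(p^2n)$ vertices changes any surviving degree by $o(p^2n)$ (once more by linearity, which bounds the edges through a deleted vertex meeting a fixed vertex by $1$), one round suffices: the resulting $3$‑partite hypergraph $H''$ has all degrees in $[\tfrac14p^2n,p^2n]$ and all pairwise codegrees $\le1$.

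Finally, apply the R\"odl nibble to $H''$ (after the routine regularisation that deletes $o(|E|)$ edges so that all degrees become $(1+o(1))D$ with $D=\Theta(p^2n)$): a linear, almost‑regular $3$‑partite hypergraph on $N=\Theta(pn)$ vertices per part with degree $D$ has a matching leaving at most $C_0\,N\,D^{-c_0}$ vertices uncovered for absolute constants $C_0,c_0>0$. Re‑including the $O(n^{0.9})$ deleted vertices as uncovered, and using $D=n^{1-1/300}$ (polynomially large because $p\ge n^{-1/600}$), the total number of uncovered vertices of $H'$ is $O(n^{0.9})+C_0' pn(p^2n)^{-c_0}\le 2n^{1-1/500}$ — the exponent inequality reduces to $c_0\ge 1/2990$, which any nibble estimate for linear hypergraphs provides with room to spare. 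The main obstacle in this argument is exactly the ``for every $C'$'' quantifier: a direct union bound over the $\binom{n}{pn}$ choices of $C'$ would require a failure probability of order $2^{-\Omega(p\log(1/p)\,n)}$, which the concentration inequalities cannot deliver for $p$ as small as $n^{-1/600}$. The resolution is to reduce the whole problem to the $C'$‑independent statistics $d(\cdot)$ and $e(\cdot,\cdot)$ of $H[A'\cup B']$, after which the passage to any particular $C'$ is entirely deterministic; the only other technical point is the possible dependence of $A'$ and $B'$, which is absorbed into the bounded‑influence exposure used for $\mathcal G$.
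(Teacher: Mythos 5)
This lemma is cited in the paper from~\cite{muyesser2022random} (Lemma~3.8) and is not proved in the present paper, so there is no in-paper proof to compare against; I will instead assess your argument on its own terms.

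Your high-level strategy --- replacing a union bound over the exponentially many choices of $C'$ by $O(n^2)$ statistics $d(c)$, $e(c,c')$ of $H[A'\cup B']$ that are oblivious to $C'$, then deducing almost-regularity of $H[A'\cup B'\cup C']$ deterministically and finishing with a nibble --- is the right shape of proof, and the observation that this is forced by the ``for every $C'$'' quantifier is correct. However, there is a genuine gap in step (i), and it is not merely cosmetic. Your good event $\mathcal G$ demands $d(c)=(1\pm n^{-0.2})p^2 n$ for \emph{every} $c\in C$, and you establish this via Azuma under an element-by-element exposure of $A'$ then $B'$. This requires the exposure to have bounded influence, which is false for arbitrary couplings of $A'$ and $B'$ --- and, more fundamentally, even $\mathbb{E}[d(c)]$ need not be close to $p^2n$. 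Concretely, fix $c_0$ and let $\beta(\cdot,c_0):A\to B$ be the near-bijection induced by the edges through $c_0$ (it is defined on $(1\pm n^{-0.3})n$ vertices by typicality and injective by linearity). Take $A'$ to be $p$-random and set $B':=\beta(A',c_0)\cup R$ where $R$ is an independent $p$-random subset of $B\setminus\mathrm{im}\,\beta(\cdot,c_0)$; then $B'$ is $p$-random, but $d(c_0)=|\{a:a\in A',\,\beta(a,c_0)\in B'\}|\ge |A'\cap\mathrm{dom}\,\beta(\cdot,c_0)|=(1-o(1))pn\gg p^2n$. So $\mathcal G$ fails with probability close to $1$ for this perfectly legal coupling, while the lemma's conclusion (which tolerates a handful of anomalous $c$'s) is presumably still true. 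Your hedging sentence (``Any coupling\dots admitting such a bounded-influence exposure\dots is harmless'') essentially concedes this point, but the lemma is stated for all couplings, so the proof as given does not establish it. The fix would be to formulate a more robust good event --- e.g.\ control $d(c)$ only outside a small exceptional set of colours, together with upper bounds on $\sum_c d(c)^2$ obtained by some coupling-agnostic mechanism --- rather than pointwise two-sided bounds on every $d(c)$.

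A secondary, repairable issue: after Chebyshev with threshold $\tfrac12 p^2n$ you only obtain degrees in roughly $[\tfrac12 p^2n,\tfrac32 p^2n]$, which is bounded-ratio but not $(1+o(1))$-regular, and the appeal to a ``routine regularisation'' before the R\"odl nibble glosses over the fact that equalising degrees from a constant-factor spread may delete a constant fraction of the edges. The exponents do have room for a smaller threshold (e.g.\ $t=n^{-0.01}p^2n$ deletes $O(n^{-0.08}pn+n^{0.02}p^{-1})=O(n^{0.92})=o(p^2n)$ vertices and leaves degrees $(1\pm n^{-0.01})$-concentrated), so this step can be made rigorous, but it should be stated that way rather than waved through.
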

The following is a coloured-graph version of the above.
\begin{lemma}\label{Lemma_2_random_1_deterministic_graph}
Let $K_n$ be properly $n$-edge-coloured.
Let $p\geq n^{-1/600}$ and let $V\subseteq V(K_n)$, $C\subseteq C(K_n)$ be $p$-random, not necessarily independent. Then, with probability at least $1-n^{-2}$, the following holds. 
\par For any $U\subseteq V(K_n)\setminus V$ of size $\leq (1+ n^{-0.2})pn$, there is a $C$-rainbow matching into $V$ covering all but $2n^{1-1/500}$ vertices in $U$.
\end{lemma}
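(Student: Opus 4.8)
The plan is to deduce the statement from Lemma~\ref{Lemma_2_random_1_deterministic} by encoding rainbow matchings of $K_n$ as plain matchings of a tripartite $3$-uniform hypergraph. Build $H$ on three parts: a copy $A=\{a_v:v\in V(K_n)\}$ of the vertex set, the colour set $\mathcal C=C(K_n)$, and a second, disjoint copy $\Gamma=\{\gamma_v:v\in V(K_n)\}$ of the vertex set; let $\{a_x,c,\gamma_y\}$ be an edge of $H$ precisely when $x\neq y$ and the edge $xy$ of $K_n$ has colour $c$. Writing $\widehat V=\{a_v:v\in V\}\subseteq A$ and $\widehat U=\{\gamma_u:u\in U\}\subseteq\Gamma$, a matching in the induced sub-hypergraph $H[\widehat V\cup C\cup\widehat U]$ is exactly a family of pairwise vertex-disjoint, distinctly coloured edges of $K_n$ joining $V$ to $U$ and using colours from $C$ --- vertex-disjointness in $K_n$ comes for free, because $V\cap U=\emptyset$ means no vertex of $K_n$ can occur both on the $A$-side and on the $\Gamma$-side of such a matching. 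So a matching of $H[\widehat V\cup C\cup\widehat U]$ missing at most $2n^{1-1/500}$ of the vertices of $\widehat U$ is precisely the $C$-rainbow matching into $V$ we are after.

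I would then verify that $H$ satisfies the hypotheses of Lemma~\ref{Lemma_2_random_1_deterministic}. It is linear: properness of the colouring forces the colour on any pair $\{a_x,\gamma_y\}$, forces $\gamma_y$ (the unique colour-$c$ neighbour of $x$) on any pair $\{a_x,c\}$, and symmetrically for $\{c,\gamma_y\}$, so no two vertices lie in more than one edge. For $(n^{-0.3},1,n)$-typicality: each part has exactly $n$ vertices; every $a_x$ and every $\gamma_y$ has degree $n-1$; and, using that the proper $n$-edge-colouring is near-regular on colours --- each colour lying on $(1/2+o(1))n$ edges, which is automatic when $n$ is odd, the relevant case --- every colour $c$ has degree $(1+o(1))n$ as well, while a one-line count shows every pair-degree equals $n-O(1)$.

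Finally I would apply Lemma~\ref{Lemma_2_random_1_deterministic} to $H$, taking its two $p$-random parts to be $\widehat V$ and $C$ (which need not be independent, exactly as allowed there) and its deterministic part to be $\widehat U$, with the same $p\geq n^{-1/600}$. That lemma gives, with probability at least $1-n^{-2}$ and simultaneously for all valid $\widehat U$, a matching of $H[\widehat V\cup C\cup\widehat U]$ covering all but $2n^{1-1/500}$ of its vertices --- in particular of $\widehat U$ --- and unwinding the encoding finishes the proof. One minor discrepancy must be absorbed: our hypothesis only gives $|U|\leq(1+n^{-0.2})pn$, whereas Lemma~\ref{Lemma_2_random_1_deterministic} wants $|\widehat U|$ inside $(1\pm n^{-0.2})pn$; when $|U|$ is smaller one either pads it to the right size with arbitrary vertices of $V(K_n)\setminus V$ (ample room when $p\leq 1/2$), or first passes to suitable $p'$-random subsets of $V$ and $C$, or --- if $|U|\leq 2n^{1-1/500}$ --- takes the empty matching; a matching covering almost all of the padded set covers almost all of $U$. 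Since Lemma~\ref{Lemma_2_random_1_deterministic} does all the real work, the only genuine effort here is bookkeeping: making $H$ honestly $3$-partite and linear (hence the two disjoint vertex-copies) and matching up the size windows --- that is the step I expect to be fiddliest, though not deep.
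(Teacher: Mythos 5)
Your construction of the auxiliary tripartite hypergraph, the linearity check, and the reduction to Lemma~\ref{Lemma_2_random_1_deterministic} all match the paper's approach. But the typicality verification has a genuine gap: you write that ``each colour [lies] on $(1/2+o(1))n$ edges, which is automatic when $n$ is odd, the relevant case,'' and you lean on this to give every colour a degree of $(1+o(1))n$ in $H$. This is indeed automatic for odd $n$ (a proper $n$-colouring of $K_n$ with $n$ odd forces every colour class to have size exactly $(n-1)/2$), but the lemma is stated for general $n$, and in the intended application $n=|G|$ for an arbitrary abelian group $G$, which need not be odd. Worse, the Cayley-sum colouring of $K_{\mathbb{Z}_2^k}$ assigns colour $\id$ to zero edges, so $\id$ has degree $0$ in your hypergraph and $(n^{-0.3},1,n)$-typicality simply fails. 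The paper's proof preempts this by first identifying the set $C_{bad}$ of colours appearing on fewer than $(n-n^{0.6})/2$ edges, showing $|C_{bad}|\leq n^{0.4}$ by a double count, discarding these colours to form $K'$, and only then building the hypergraph; the small number of deleted colours and edges keeps all degrees and pair-degrees within the $(1\pm n^{-0.3})$ window. Without this preprocessing step, the application of Lemma~\ref{Lemma_2_random_1_deterministic} is not justified.

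A secondary, more cosmetic issue: your padding of $U$ into the size window $(1\pm n^{-0.2})pn$ restricts the padding set to $V(K_n)\setminus V$ and then hedges with ``ample room when $p\leq 1/2$,'' listing alternatives without committing. This restriction is unnecessary, and the hedging hides the fact that the lemma carries no upper bound on $p$. The paper pads $U$ to $U'$ by adding arbitrary vertices of $V(K_n)$; since one only keeps the matching edges covering $\widehat U$ (not all of $\widehat{U'}$) and uses $U\cap V=\emptyset$ to see those edges are vertex-disjoint in $K_n$, there is no need for the padding to avoid $V$.
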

\begin{proof}
Let $C_{bad}$ be the set of colours appearing $<(n-n^{0.6})/2$ times. Note that then $|C_{bad}|(n-n^{0.6})/2+(n-|C_{bad}|)n/2\geq e(K_n)=\binom n2$, which is equivalent to $|C_{bad}|\leq n^{0.4}$. Let $K'$ be $K_n$ with edges of colours in $C_{bad}$ deleted, noting that all vertex degrees satisfy $n\geq d(v)\geq n-1-|C_{bad}|\geq n-2n^{0.4}$ and every colour appears $\geq (n-n^{0.6})/2$ times.
Define a tripartite hypergraph $H=(X,Y,Z)$ with $X=V(K_n), Y=C(K_n), Z=V(K_n)$, where $xyz$ is an edge whenever $xz$ is a colour $y$ edge of $K_n$. We have that $|X|,|Z|=n$ and $|Y|=n\pm n^{0.4}$,  $d_H(x)=d_{K'}(x)=n\pm 2n^{0.4}$ for $x\in X\cup Z$, and $d_H(y)=|\{v\in V(K'): \text{ there is a colour $y$ edge through $v$}\}|=n\pm 2n^{0.6}$ for $y\in Y$. Combining these, we obtain that $H$ is $(n^{-0.3},1,n)$-typical.

Letting $V'=V\cap X$ and $C'=C\cap Y$, we have that with probability  $\geq 1-n^{-2}$, $V', C'$ satisfy Lemma~\ref{Lemma_2_random_1_deterministic}. Consider now $U\subseteq V(K_n)\setminus V$ of size $\leq (1+ n^{-0.2})pn$. Add elements to $U$ to get a set $U'$ of size $(1\pm n^{-0.2})pn$. By Lemma~\ref{Lemma_2_random_1_deterministic}, there is a hypergraph matching $M$ covering all but $2n^{1-1/500}$ vertices in $V'\cup C'\cup U'$. Let $N$ be the set of edges in $K'$ corresponding to edges of $M$. Since $U\subseteq U'$, $N$ covers all but $2n^{1-1/500}$ vertices of $U$, and since $V'\subseteq V,C'\subseteq C$, these edges are $C$-coloured and go into $V$. They are rainbow because there's at most one edge of $M$ through each $y\in Y$, and they form a matching because $U, V$ are disjoint and there's at most one edge through each $x\in X, z\in Z$.
\end{proof}

The following is a version of the above which eliminates the need for having some vertices uncovered.
\begin{lemma}\label{Lemma_random_matching_into_independent_sets}
Let $1\gg \varepsilon \gg n^{-1}$ and $p\in [0,1-n^{-\varepsilon}]$.
Let $K_n$ be properly $n$-edge-coloured, and $V,C$ independent $(p+n^{-\varepsilon})$-random sets. With high probability, for every set $W\subseteq V(K_n)\setminus V$ with $|W|\leq p n$ there is a $C$-rainbow  matching from $W$ to $V$ which saturates $W$.
\end{lemma}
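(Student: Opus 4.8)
The plan is to bootstrap the conclusion from Lemma~\ref{Lemma_2_random_1_deterministic_graph}, which already gives an almost-perfect rainbow matching from a deterministic set into a random set, together with Lemma~\ref{Lemma_embed_neighbourhood_random}, which lets us rainbow-embed a single new vertex next to a bounded neighbourhood. The idea is that Lemma~\ref{Lemma_2_random_1_deterministic_graph} leaves only $O(n^{1-1/500})$ vertices of $W$ uncovered, and these few leftover vertices can be matched greedily one at a time using the common-neighbour property. To make this work we first split the randomness: write the $(p+n^{-\varepsilon})$-random sets $V,C$ as (essentially) disjoint unions $V = V' \cup V''$, $C = C' \cup C''$, where $V',C'$ are $(p + n^{-\varepsilon}/2)$-random and $V'', C''$ are $(n^{-\varepsilon}/4)$-random, all chosen so that the relevant independence/randomness hypotheses of the two cited lemmas are met (this is a routine splitting, using that a $p$-random set can be realized as a union of independent random sets with matching total density, up to negligible error).

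Next I would apply Lemma~\ref{Lemma_2_random_1_deterministic_graph} to $V', C'$ (with the probability parameter there set to roughly $p + n^{-\varepsilon}/2 \geq n^{-1/600}$, which holds since $p \le 1 - n^{-\varepsilon}$ forces nothing bad and $\varepsilon$ is small; note we only need $p+n^{-\varepsilon}/2 \geq n^{-1/600}$, which we may assume after absorbing a small-$p$ edge case into the greedy step). This yields, with high probability, the property that \emph{every} set $U$ of size $\le (1+n^{-0.2})(p + n^{-\varepsilon}/2) n \geq pn$ has a $C'$-rainbow matching into $V'$ covering all but $2n^{1-1/500}$ of its vertices. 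Simultaneously, I would apply Lemma~\ref{Lemma_embed_neighbourhood_random} to $V'', C''$ (with its density $n^{-\varepsilon}/4 \geq n^{-\rho}$ for a suitable $\rho$), giving the common-neighbour property for all bounded sets $N$ and all forbidden sets of size $\le n^{1-\mu}$. Condition on both events.

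Now fix $W \subseteq V(K_n)\setminus V$ with $|W| \le pn$. Apply Lemma~\ref{Lemma_2_random_1_deterministic_graph} to $W$ to get a $C'$-rainbow matching $M_0$ into $V'$ leaving an uncovered set $W_0 \subseteq W$ with $|W_0| \le 2n^{1-1/500}$. Order $W_0 = \{w_1, \dots, w_r\}$ and process the vertices one by one: having matched $w_1, \dots, w_{i-1}$, set $N = \{w_i\}$ (a set of size $1 \le \Delta$) and let $U$ be the set of all vertices and colours used so far (by $M_0$ and by the first $i-1$ greedy steps), which has size at most $2(|W| + r) \le 3n^{1-1/500} \ll n^{1-\mu}$; Lemma~\ref{Lemma_embed_neighbourhood_random} then supplies a vertex $v \in V'' \setminus U$ such that the edge $w_i v$ has colour in $C'' \setminus U$, and we match $w_i$ to $v$. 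Since $V', V''$ are disjoint and $C', C''$ are disjoint, and since at each greedy step we avoid all previously used vertices and colours, the union of $M_0$ and the $r$ greedy edges is a single $C$-rainbow matching from $W$ into $V$ saturating $W$. The main obstacle is purely bookkeeping: ensuring the randomness-splitting genuinely produces sets satisfying the (slightly different) hypotheses of Lemmas~\ref{Lemma_2_random_1_deterministic_graph} and~\ref{Lemma_embed_neighbourhood_random} simultaneously, and checking that the cumulative set of used vertices/colours never exceeds the $n^{1-\mu}$ budget in the greedy phase — both of which are straightforward given the generous slack ($2n^{1-1/500}$ leftover versus an $n^{1-\mu}$ allowance, with $\mu$ chosen much smaller than $1/500$).
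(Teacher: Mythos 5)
Your approach is identical to the paper's: split $V,C$ into two disjoint independent random pieces, apply Lemma~\ref{Lemma_2_random_1_deterministic_graph} to the larger piece to obtain an almost-perfect $C'$-rainbow matching into $V'$, and finish by greedily matching the $O(n^{1-1/500})$ leftover vertices of $W$ via Lemma~\ref{Lemma_embed_neighbourhood_random} (with $\Delta=1$) applied to $V'',C''$. The randomness-splitting, the two cited lemmas, and the cleanup-by-greedy structure are all exactly as in the paper.

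There is one concrete slip in the greedy step. You set $U$ to be \emph{all} vertices and colours used so far, \emph{including} those of $M_0$, and claim $|U| \le 2(|W|+r) \le 3n^{1-1/500}$. That bound is false: $M_0$ has $|W|-r$ edges, hence contributes about $|W|$ vertices and $|W|$ colours to $U$, and $|W|$ can be as large as $pn$ --- far above the $n^{1-\mu}$ budget that Lemma~\ref{Lemma_embed_neighbourhood_random} tolerates. As written, the lemma therefore cannot be invoked. The fix --- which the paper uses, and which your own last sentence already points to --- is to put into $U$ only the vertices and colours consumed by the preceding greedy steps (so $|U| \le 2r \le 4n^{1-1/500} \ll n^{1-\mu}$), and to rely on the disjointness $V' \cap V'' = \emptyset$, $C' \cap C'' = \emptyset$ to guarantee automatically that the greedy edges, which live in $V'',C''$, cannot conflict with $M_0$, which lives entirely in $V',C'$. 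With that one adjustment, the proof closes correctly.
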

\begin{proof}
Pick $1\gg \mu\gg \varepsilon \gg n^{-1}$.
Partition $V= V_1\cup V_2,C=C_1\cup C_2$ where $V_1, C_1$ are $(p+n^{-\varepsilon}/2)$-random and $V_2, C_2$ are $n^{- \varepsilon}/2$-random.
With high probability $V_1, C_1$ satisfy Lemma~\ref{Lemma_2_random_1_deterministic_graph},  $V_2, C_2$ satisfy Lemma~\ref{Lemma_embed_neighbourhood_random} with $\Delta=1$, $\rho=\varepsilon$. 
Now consider some $W\subseteq V$ with $|W|\leq p n$.
 Apply Lemma~\ref{Lemma_2_random_1_deterministic_graph} to find a $C_1$-rainbow matching $M$ from $W$ to $V_1$ covering all but $k:=2n^{1-1/500}<n^{1-\mu}/2$ vertices in $W$. Let $w_1, \dots, w_k$ be the uncovered vertices in $W$. Repeatedly use the property of Lemma~\ref{Lemma_embed_neighbourhood_random} with $N=\{w_1\}, \{w_2\}, \dots, \{w_k\}$ to find a $C_2$-neighbour $v_i$ of each $w_i$. At the $i$th application, setting $U=\{v_1, \dots, v_{i-1}, c(w_1v_1), \dots, c(w_{i-1}v_{i-1})\}$ (which has size $\leq 2k\leq n^{1-\mu}$) ensures, the edges $w_1v_1, \dots, w_kv_k$ all have different vertices/colours. Now adding this matching to $M$ gives one satisfying the lemma.
\end{proof}

Now we prove the main result of the section.
\begin{lemma}
 Let $\Delta^{-1}\gg \varepsilon,\delta \gg n^{-1}$.
Let $K_{n}$ be properly $n$-edge-coloured and  $T$ a  forest with $\Delta(T)\leq \Delta$ and $|T|\leq (1-n^{-\delta})n$, and suppose we have a partition $V(T)=U_1\cup U_2\cup U_3$. 
Then there is a random  $f:V(T)\to K_n$ which is $n^{-\varepsilon}$-uniform on $\{V(T), U_1, U_2, E(T)\}$  and is a rainbow embedding of $T$ with high probability.   
\end{lemma}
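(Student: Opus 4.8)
The plan is to follow the standard ``embed layer-by-layer via rainbow matchings'' strategy of \cite{approximateringel, ringel}, assembling the tools of this appendix. Fix auxiliary constants with $\Delta^{-1}\gg\varepsilon_0\gg\eta\gg\mu\gg\rho\gg\varepsilon,\delta$ (legitimate, since $\varepsilon_0,\eta,\mu,\rho$ may be taken to depend only on $\Delta$), and put $m:=\lceil n^{\eta}\rceil$. First apply Lemma~\ref{Lemma_split_tree_small_components} to peel off a separator $I\subseteq V(T)$ with $|I|\le 3n^{1-\eta}$ so that every component of $F:=T\setminus I$ has $\le m$ vertices. Then apply Lemma~\ref{Lemma_decompose_forest_matchings_U1U2U3} to $F$ with the partition $(U_1\cap V(F),U_2\cap V(F),U_3\cap V(F))$ to obtain $V(F)=V_1\cup\dots\cup V_{3m+3}$ and $E(F)=M_4\cup\dots\cup M_{3m+3}$, where each $V_i$ lies inside a single $U_j$; the set $V_1\cup V_2\cup V_3$ is independent in $F$ with $V_j\subseteq U_j$ for $j\le 3$; and for $i\ge 4$ the set $M_i$ is a matching saturating $V_i$ whose other endpoints lie in $\bigcup_{j<i}V_j$.

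Next, sample mutually independent, pairwise disjoint random reservoirs $V^{(1)},\dots,V^{(3m+3)},V^{(I)}\subseteq V(K_n)$ and $C^{(4)},\dots,C^{(3m+3)},C^{(I)}\subseteq C(K_n)$, with the $V$'s independent of the $C$'s, where $V^{(i)}$ and $C^{(i)}$ are $(|V_i|/n+n^{-\varepsilon_0})$-random and $V^{(I)},C^{(I)}$ are $n^{-\rho}$-random; the total vertex-density $|V(F)|/n+(3m+3)n^{-\varepsilon_0}+n^{-\rho}$ is below $1$ because $|T|\le(1-n^{-\delta})n$ and $\varepsilon_0\gg\eta$, $\rho\gg\delta$, and likewise for colours. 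Applying Lemma~\ref{Lemma_random_matching_into_independent_sets} (with parameter $\varepsilon_0$ and $p=|V_i|/n$) to each pair $(V^{(i)},C^{(i)})$, $i\ge 4$, and Lemma~\ref{Lemma_embed_neighbourhood_random} (with parameters $\mu,\rho$) to $(V^{(I)},C^{(I)})$ --- each failing with probability $O(n^{-2})$, and there being only $O(n^{\eta})$ of them --- with probability $1-o(n^{-1})$ every reservoir has the corresponding ``for all $W$''/``for all $U,N$'' property, and all reservoir sizes are within $n^{1-\varepsilon_0}$ of their means; condition on this. Now build $f$ deterministically. For $i\le 3$, let $f|_{V_i}$ be any injection into $V^{(i)}$: legitimate, since $V_1\cup V_2\cup V_3$ spans no edge of $F$ and the edges meeting $I$ are handled last. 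For $i=4,\dots,3m+3$ in increasing order, write $\phi_i\colon V_i\to\bigcup_{j<i}V_j$ for the bijection encoded by $M_i$ (so $u\phi_i(u)\in E(F)$); then $W_i:=f(\phi_i(V_i))\subseteq\bigcup_{j<i}V^{(j)}$ has size $|V_i|$ and is disjoint from $V^{(i)}$, so Lemma~\ref{Lemma_random_matching_into_independent_sets} supplies a $C^{(i)}$-rainbow matching from $W_i$ into $V^{(i)}$ saturating $W_i$, and we let $f(u)$ be the partner of $f(\phi_i(u))$. As the $V^{(i)}$ are disjoint and each matching injective, $f$ stays injective; as every edge of $F$ lies in exactly one $M_i$ and gets a colour of $C^{(i)}$, the $C^{(i)}$ are disjoint, and each matching is rainbow, $f$ is a rainbow embedding of $F$. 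Finally extend to $I$: process $V(I)$ in breadth-first order within each component of the forest $T[I]$, so that on reaching $v$ the set $N_v:=\{f(w):w\in N_T(v)\text{ already embedded}\}$ has $|N_v|\le\Delta$; by the property of Lemma~\ref{Lemma_embed_neighbourhood_random} (whose hypothesis ``$N\subseteq V$'' is inessential --- the proof only uses $|N|\le\Delta$), with $U$ the vertices and colours used so far in this stage ($|U|\le(\Delta+1)|I|\le n^{1-\mu}$), choose $f(v)\in V^{(I)}\setminus U$ a $C^{(I)}\setminus U$-common neighbour of $N_v$. The new edges all meet $f(v)$, hence are distinctly coloured by properness, with colours in $C^{(I)}$ --- disjoint from $\bigcup_i C^{(i)}$ and from $U$; so $f$ is a rainbow embedding of $T$.

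To verify $n^{-\varepsilon}$-uniformity on $\{V(T),U_1,U_2,E(T)\}$, note $f(V_i)\subseteq V^{(i)}$ with $|V^{(i)}\setminus f(V_i)|\le 2n^{1-\varepsilon_0}$, $f(I)\subseteq V^{(I)}$, and the analogue for colours, so $f(V(T))$, $f(U_1)$, $f(U_2)$, $C(f(T))$ differ from $\bigcup_i V^{(i)}\cup V^{(I)}$, $\bigcup_{i:V_i\subseteq U_1}V^{(i)}$, $\bigcup_{i:V_i\subseteq U_2}V^{(i)}$, $\bigcup_i C^{(i)}\cup C^{(I)}$ respectively (the $I$-contributions being of lower order) by $O(n^{\eta+1-\varepsilon_0}+n^{1-\rho})=o(n^{1-\varepsilon})$ elements. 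The remaining step --- which I expect to be the fiddliest part, the combinatorial content being routine given the earlier lemmas --- is a coupling argument replacing these unions of independent disjoint reservoirs by genuine random subsets $V^{\mathrm{rand}},U_1^{\mathrm{rand}},U_2^{\mathrm{rand}}\subseteq V(K_n)$ and $E^{\mathrm{rand}}\subseteq C(K_n)$ of the exact densities $\tfrac{|V(T)|}{n},\tfrac{|U_1|}{n},\tfrac{|U_2|}{n},\tfrac{|E(T)|}{n}$, arranged so that $U_1^{\mathrm{rand}},U_2^{\mathrm{rand}},V^{\mathrm{rand}}\setminus(U_1^{\mathrm{rand}}\cup U_2^{\mathrm{rand}})$ form a random partition of $V^{\mathrm{rand}}$ independent of $E^{\mathrm{rand}}$, each within $o(n^{1-\varepsilon})$ of the corresponding reservoir union; this uses only that the reservoirs were sampled mutually independently, disjointly, and with the $C^{(\cdot)}$ independent of the $V^{(\cdot)}$. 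A secondary chore is checking that the parameters $\varepsilon_0,\eta,\mu,\rho$ against $\varepsilon,\delta$ make all the slacks, all the $|U|$-bounds, and the total reservoir density work out at once.
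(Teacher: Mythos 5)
Your proposal follows essentially the same outline as the paper's own proof: split off a small separator $I$ with Lemma~\ref{Lemma_split_tree_small_components}, decompose $T\setminus I$ into independent sets and matchings with Lemma~\ref{Lemma_decompose_forest_matchings_U1U2U3}, allocate disjoint random reservoirs, embed the matchings layer-by-layer via Lemma~\ref{Lemma_random_matching_into_independent_sets}, and absorb $I$ via Lemma~\ref{Lemma_embed_neighbourhood_random}. The construction of $f$ is correct and the uniformity accounting is right in spirit.

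Where you and the paper diverge is in how the uniformity witnesses of exactly the right density are produced. You sample each reservoir $V^{(i)}$ at the padded density $|V_i|/n + n^{-\varepsilon_0}$ and defer the production of exact-density witnesses $V^{\mathrm{rand}}, U_1^{\mathrm{rand}}, U_2^{\mathrm{rand}}, E^{\mathrm{rand}}$ (with the required joint distribution) to a coupling argument which you acknowledge but do not carry out. The paper sidesteps the coupling entirely by pre-sampling each reservoir as a disjoint union $R_i\cup Q_i$ with $R_i$ of \emph{exact} density $p_i=|V_i|/n$ and $Q_i$ the $n^{-\gamma}$-random padding, so the $R_i$ directly serve as witnesses; and it additionally samples dummy $q_j$-random sets $R_{I_j}$ of density $|I\cap U_j|/n$ that play no role in the embedding at all, but supply exact-density witnesses for the separator vertices (both $|f(I_j)|$ and $|R_{I_j}|$ are $O(n^{1-\alpha})$, so their symmetric difference is automatically small). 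Filling in your coupling argument essentially forces you to rediscover this device: thinning $V^{(i)}$ to a $p_i$-random subset is fine, but you still need the $R_{I_j}$'s to make the densities of $U_1^{\mathrm{rand}}, U_2^{\mathrm{rand}}$ come out to exactly $|U_1|/n, |U_2|/n$ and to ensure the three pieces partitioning $V^{\mathrm{rand}}$ have the joint distribution of disjoint random sets, independent of $E^{\mathrm{rand}}$. The pre-sampling approach is cleaner because it builds in these requirements from the start rather than recovering them ex post.

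One more small point worth being explicit about: the union bound over $O(n^\eta)$ applications of Lemma~\ref{Lemma_random_matching_into_independent_sets} only gives a \emph{high-probability} event, not a $1-o(n^{-1})$ event, whereas the definition of ``$\varepsilon$-uniform'' needs $|f(U)\Delta U_{\mathrm{rand}}|\le\varepsilon n$ with probability $1-o(n^{-1})$. The fix (implicit in the paper) is to note that $f$ can always be defined as an \emph{injection} landing inside the reservoirs whenever the reservoir sizes concentrate --- which \emph{does} happen with probability $1-o(n^{-1})$ by Chernoff union-bounded over $O(n^\eta)$ sets --- and the matching lemmas are only needed to make that injection \emph{rainbow}. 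So ``rainbow with high probability'' and ``uniform with probability $1-o(n^{-1})$'' are established by slightly different events, and one should say so rather than conditioning once on the good reservoir event for both. This is easily repaired but should not be left unsaid.
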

\begin{proof}
Let $\Delta^{-1}\gg \gamma\gg \alpha\gg \beta\gg\varepsilon,\delta\gg n^{-1}$ (which implies $ \alpha\gg \varepsilon\Delta, \delta\Delta$ and   $n^{-\gamma}\ll n^{-\alpha}\ll n^{-\varepsilon\Delta}, n^{-\delta\Delta}$) and set $m:=n^{\alpha}$.
Let $I$ be the   set from Lemma~\ref{Lemma_split_tree_small_components} with $|I|\leq 3n^{1-\alpha}$, and set $I_i:=I\cap U_i$, $q_i:=|I_i|/n$ for $i=1,2,3$.
Let $F=T\setminus I$ to get a forest with components of size $\leq m$. Apply Lemma~\ref{Lemma_decompose_forest_matchings_U1U2U3} to get decompositions $V(F)=V_1\cup \dots \cup V_{3m+3}$ and $E(F)=M_4\cup \dots \cup M_{3m+3}$, where each $M_i$ is a matching which goes from $V_i$ to $\bigcup_{j<i}V_j$.
For each $i=1, \dots, 3m+3$, set $p_i:=|V_i|/n$ noting that $q_1+q_2+q_3+p_1+\dots+p_{3m+3}+3n^{-\beta} +(3m+3)n^{-\gamma}=|V(T)|/n+3n^{-\beta}+(3m+3)n^{-\gamma}\leq 1-n^{-\delta}+3n^{-\beta}+6n^{\alpha-\gamma}\leq 1$.

Pick disjoint random sets $R_V, R_{I_1}, R_{I_2}, R_{I_3}, R_1,\dots, R_{3m+3}, Q_1, \dots, Q_{3m+3}\subseteq V(K_n)$ and $$R_C, C_1, \dots, C_{3m+3}, D_1, \dots, D_{3m+3}\subseteq C(K_n)$$ where $R_i, C_i$ are $p_i$-random, $Q_i, D_i$ are $n^{-\gamma}$-random, $R_{I_i}$ are $q_i$-random, $R_V, R_C$ are $n^{-\beta}$-random, vertex sets are disjoint, colour sets are disjoint, and vertex/colour sets are independent.
With high probability Lemma~\ref{Lemma_embed_neighbourhood_random} applies to $R_V, R_C$ (with $\rho=\beta$ and $\mu=\alpha/2$), Lemma~\ref{Lemma_random_matching_into_independent_sets} applies to each pair $R_i\cup Q_i, C_i\cup D_i$ (with $p=p_i, \varepsilon=\gamma$), and the sizes of all sets are within $n^{1-\gamma}/2$ of their expectations.  Note that the last part gives $|R_i\cup Q_i|\geq (p_i+ n^{-\gamma})n-2n^{1-\gamma}/2=p_in=|V_i|$. 

For each $i=1, \dots, 3m+3$, let $F_i=F[V_1\cup \dots \cup V_i]$, noting that $F_{i+1}$ is formed from $F_i$ by adding a matching of leaves of size $p_in$ for $i=4, \dots, 3m+3$.
Embed $V_i$ to $R_i\cup Q_i$ arbitrarily for $i=1,2,3$. Since $F_3$ has no edges, this gives us an embedding $f_3$ of $F_3$ into $\bigcup_{i=1}^3R_i\cup Q_i$.
For $i=4,\dots, 3m+3$, use Lemma~\ref{Lemma_random_matching_into_independent_sets} to extend $f_{i-1}$ into an embedding $f_i$ of $F_i$ into $\bigcup_{j=1}^{i}R_j\cup Q_j\cup C_j\cup D_j$ (Lemma~\ref{Lemma_2_random_1_deterministic_graph} gives a $C_i\cup D_i$-rainbow matching from $f_{i-1}(V(M_i)\setminus V_i)$ to $R_i\cup Q_i$, which is where we map $M_i$ to get $f_i$). Now $f_{3m+3}$ is a rainbow embedding of $F$.

List the elements of $I=\{v_1, \dots, v_{|I|}\}$ and set $$T_i:=T[V(F)\cup \{v_1, \dots, v_k\}].$$ Note that $T_0=F$ and that $T_{i}$ is formed from $T_{i-1}$ by adding a star centered at $v_i$ whose leaves are in $T_{i -1}$. Set $g_0=f_{3m+3}$. For $i=1, \dots, |I|$, apply Lemma~\ref{Lemma_embed_neighbourhood_random} in order to get a rainbow embedding $g_{i}$ of $T_i$, whose new vertices/colours are in $R_V\cup R_C$ (for this, set $N_i=g_{i-1}(N_{T_i}(v_i))$, $U=V(g_{i-1}(T_{i-1}))\cup C(g_{i-1}(T_{i-1}))$ and use Lemma~\ref{Lemma_embed_neighbourhood_random} to get an $R_C\setminus U$-common neighbour $y_i$ of $N_i$  in $R_V\setminus U$. Then set $g_{i}(v_i):=y_i$. For the application of Lemma~\ref{Lemma_embed_neighbourhood_random}, we use that $n^{\alpha/2}\geq (\Delta +1)n^{1-\alpha} \geq  (\Delta+1) |I|\geq |U\cap (R_V\cup R_C)|$). Now $f:=g_{|I|}$ will be the rainbow embedding of $T$ satisfying the lemma.

For $n^{-\varepsilon}$-uniformity of $f$ on the required sets: note that the embedding is $5n^{-\gamma}$-uniform on $$V_1, V_2, \dots, V_{3m+3}, M_4, M_5, \dots, M_{3m+3}$$ as witnessed by the random sets $$R_1, \dots, R_{3m+3}, C_4, \dots, C_{3m+3}$$ (we have that $f(V_i)\subseteq R_i\cup Q_i$, $|R_i\cup Q_i|\leq p_in+ 2n^{1-\gamma}=|f(V_i)|+ 2n^{1-\gamma}$, and $|Q_i|\leq 2n^{1-\gamma}$ which implies $|f(V_i)\Delta R_i|\leq 5n^{1-\gamma}$. The same argument with $V_i, R_i, Q_i$ replaced by $M_i, C_i, D_i$ shows $|C(f(M_i))\Delta C_i|\leq 5n^{1-\gamma}$). 
Also $f$ is $3n^{-\beta}$-uniform on $I_1, I_2, I_3$ as witnessed by $R_{I_1}, R_{I_2}, R_{I_3}$ (since $|f(I_i)\Delta R_{I_i}|\leq |I_i|+|R_{I_i}|\leq 3n^{1-\beta}$).
Since $V(T),U_1, U_2,  V(T)\setminus (U_1\cup U_2), E(T)$ are each disjoint unions of the sets in  $$\{I_1, I_2, I_3, V_1, V_2, \dots, V_{3m+3}, M_4, M_5, \dots, M_{3m+3}\},$$ we have that $f$ is $(3m\cdot 5n^{-\gamma}+3n^{-\beta})$-uniform on $\{U_1, U_2,  V(T), E(T)\}$. Since  $3m\cdot 5n^{-\gamma}+3n^{-\beta}=15n^{\alpha-\gamma}+3n^{-\beta}\leq n^{-\varepsilon}$, we have that $f$ is  $n^{-\varepsilon}$-uniform on these sets also.
\end{proof}
\vspace{15mm}
\end{document}